\pgfplotsset{compat=1.14}
\DeclareFontFamily{U}{wncy}{}
\DeclareFontShape{U}{wncy}{m}{n}{<->wncyr10}{}
\DeclareSymbolFont{mcy}{U}{wncy}{m}{n}
\DeclareMathSymbol{\Sh}{\mathord}{mcy}{"58} 
\DeclareMathSymbol{\RuK}{\mathord}{mcy}{"4B}
\DeclareMathSymbol{\Run}{\mathord}{mcy}{"6E}
\DeclareMathSymbol{\RuZ}{\mathord}{mcy}{"5A}
\DeclareMathSymbol{\Rua}{\mathord}{mcy}{"61}
\DeclareMathSymbol{\Rud}{\mathord}{mcy}{"64}
\DeclareMathSymbol{\RuD}{\mathord}{mcy}{"44}
\newcommand{\RoM}[1]
{\MakeUppercase{\romannumeral #1}}
\newtheorem {lemma} {Lemma}
\newtheorem {proposition} [lemma] {Proposition}
\newtheorem {theorem} [lemma] {Theorem}
\newtheorem {corollary} [lemma] {Corollary}
\newtheorem {definition}[lemma] {Definition}
\theoremstyle{definition}
\begin{document}
\setlength{\parindent}{0pt} % no indentation
\title {\textbf{A gentle introduction to Drinfel'd associators}}

\author{Martin Bordemann, Andrea Rivezzi and Thomas Weigel}
  
\begin{titlingpage}

\date{}
\maketitle
\vspace{-1cm}

\begin{abstract}
\noindent
In this paper we  give an introduction to Drinfel'd's associator coming from the Knizhnik-Zamolodchikov connections and a self-contained proof of the hexagon and pentagon equations by means of minimal amounts of analysis or differential geometry: we rather use limits of concrete parallel transports.
\end{abstract}

\tableofcontents
\vskip6pt
{\small{\it 2020 Mathematics Subject Classification}: 16T05, 16W60, 34M25, 53B05.}\\ 
{\small{\it Keywords}: Drinfel'd associator, Pentagon equation, Hexagon equation,  Knizhnik-Zamolod\-chikov connection, Parallel transports.}
\end{titlingpage}

\newpage

%
% ********** Here the text starts **********
%
\section*{Introduction}
 \addcontentsline{toc}{section}{Introduction}

This work will not contain any new result since the 
mid-1990s, but is meant to be a pedagogical approach to
the celebrated work by V.G.Drinfel'd about the associator $\Phi$ (constructed by means of the Knizhnik-Zamolodchikov connections) satisfying the hexagon and pentagon equations, see \cite{Dri89} and \cite{Dri90}. 

Since its introduction, Drinfel'd's associator has seen many important applications such as the solution of the problem of
the quantization of Lie bialgebras by P.I.Etingof and 
D.A.Kazhdan \cite{EK96} (1996) (see also
P.~\v{S}evera's work \cite{Sev16} (2016)), D.E.Tamarkin's approach \cite{Tam99} (1999) to 
M.L.Kontsevich's formality theorem in deformation quantization 
\cite{Kon03} (1997),
and the solution of the problem of
the quantization of Lie quasibialgebras by B.Enriquez and G.Halbout \cite{EH10I} and \cite{EH10II} in 2010, see also
\cite{SS15}. Moreover, regarding the associator as a formal power series
in the free algebra generated by two distinct elements,
its coefficients are directly related to multiple zeta values,
see e.g.~\cite[p.209-213]{ES02}, thus
establishing an important link to number theory.

The aim of this work is to give a detailed and --as we hope-- self-contained account of
the definition of the particular associator coming from the Knizhnik-Zamolodchikov connections and the proof of
the hexagon and pentagon identities, which 
--in a nutshell-- means the following: let 
$\mathcal{A}$ be an arbitrary complex unital associative algebra. For any given elements $A,B\in\mathcal{A}$
a Drinfel'd associator $\Phi(A,B)$ is an invertible formal 
power series of non-commutative polynomials in $A,B$ (with formal parameter $\lambda$) for which we assume $\Phi(A,B)^{-1}=\Phi(B,A)$ and which satisfies two identities in the following context: for a positive integer $n\geqslant 2$ a finite family
$\left(A_{ij}\right)_{1\leqslant i\neq j\leqslant n}$
of elements of $\mathcal{A}$
satisfies the \emph{infinitesimal braid relations} iff
\begin{subequations}
	\begin{align}
	A_{ij}-A_{ji}&= 0 \qquad \forall ~1\leqslant i\neq j\leqslant n,
	\label{EqIntroIB-definition-one}\\
	\left[A_{ij}+A_{ik}, A_{jk}\right] &= 0 \qquad \forall i,j,k \in \{1, \ldots, n \} \ \text{ such that } \# \{i,j,k \} = 3, \label{EqIntroIB-definition-two} \\
	\left[A_{ij},A_{kl}\right] &= 0 
	\qquad \forall i,j,k,l \in \{1, \ldots, n \} \ \text{ such that } \# \{i,j,k,l \} = 4.
	\label{EqIntroIB-definition-three}
	\end{align}
\end{subequations}
The \emph{Hexagon Equation} for $\Phi$ is the following
identity for $n=3$,
\begin{equation}
\label{eq:hexagon-equation-introduction}
e^{\lambda\mathbf{i}\pi\left( A_{13}+A_{23}\right)} 
~= ~
\Phi\left(A_{13},A_{12}\right)~
e^{\lambda\mathbf{i}\pi A_{13}}~
\Phi\left(A_{23},A_{13}\right)~
e^{\lambda\mathbf{i}\pi A_{23}}~
\Phi\left(A_{12},A_{23}\right),
\end{equation}
and the \emph{Pentagon Equation} for $\Phi$ is the following identity
for $n=4$,
\begin{equation}\label{EqPentDefPentagonEquation}
\Phi\left(A_{12},A_{23}+A_{24}\right)
~\Phi\left(A_{13}+A_{23},A_{34}\right)
=  \Phi\left(A_{23},A_{34}\right)~
\Phi\left(A_{12}+A_{13},A_{24}+A_{34}\right)~
\Phi\left(A_{12},A_{23}\right).
\end{equation}
The problem of finding solutions is quite non-trivial:
for instance, the naive choice $\Phi(A,B)=1$ would solve the pentagon equation, but for non-commuting $A_{13},A_{23}$ it would clearly not
solve the hexagon equation. \\
We shall only mention, but not treat at all the following nonexhaustive list of important results: there exist
rational associators, see \cite{Dri89}, \cite{Dri90}, and
the work by Bar-Natan, \cite{BNa98} (1998). Other
associators have been constructed linked to the Kashiwara-Vergne conjecture by A.Y.Alekseev, B.Enriquez, C.Torossian \cite{AET10} (2010). Moreover, non-trivial solutions of the pentagon equations automatically satisfy
a certain hexagon equation, see H.Furusho's work \cite{Fur10}.\\
Drinfel'd's original method, see \cite{Dri89}, \cite{Dri90}, and also 
e.g.~\cite{Kas95}, \cite{ES02}, consists in  the comparison of different global solutions (in certain simply connected regions of
$\mathbb{R}^N$) of the linear system (a first order linear partial differential equation) defined by the
Knizhnik-Zamolodchikov connection (see \cite{KZ84}) with respect to their pole structure at certain complex or real
hyperplanes, referred to as `monodromy', see also T.Kohno's work \cite{Koh85}. 
This approach is partially motivated by the theory of complex differential equations with singularities,
compare for instance \cite{CL55} or \cite{Del70}. From a point of view of differential geometry this amounts to the computation
of covariantly constant sections of a trivial vector bundle with respcect to a flat connection which are uniquely determined by their 
value at a given point. \\
As far as we know most of the treatments of the Drinfel'd associator and its identities in the literature are
somewhat sketchy, and in the beginning it had not been so clear to us how much analysis is really needed to understand the details: for instance,
we had been confused by symbols like
`$z^{\lambda B}$' --where $z$ is a complex coordinate in the
some open domain of the complex plane, and $B$ is a formal series in a given associative algebra-- which appear in
some texts without explanation and seem to require a proper treatment of complex logarithms. They resemble `multivalued functions' which --according to Deligne, see \cite[p.37, D\'{e}f.~6.2.]{Del70}-- should be defined as functions on the universal cover of the domain which in turn does not seem to be easily accessible to
computations already in the important case of the doubly punctured plane.
Another difficulty for us has been the understanding of the details of the
limiting procedure encoded in Drinfel'd's `zones' used in particular to prove the pentagon equation.

We have chosen a slightly more elementary method
requiring only rudimentary analysis which may --as we hope-- be interesting even to the most misoanalytic
algebraist: since the value of a covariantly constant section (the solutions of the linear system) at some 
point $x$ can be defined by the parallel transport along a continuous piecewise smooth path joining a reference point to $x$, we find it reasonable to focus on formal parallel transports (in the algebra of all formal power series in a given associative unital complex algebra, $\mathcal{A}[[\lambda]]$), i.e.~first order ordinary linear differential equations along concrete continuous piecewise smooth paths in explicitly
given contractible regions of $\mathbb{R}$ and 
$\mathbb{R}^2$ with respect to some flat formal connection deriving from the Knizhnik-Zamolodchikov connection.
However, in Drinfel'd's approach the `reference point'
is --in some sense-- `at a singularity', and in order to capture that we use the well-known regularization procedure: 
first, we make paths within the domain depend on a strictly
positive `small' parameter $\delta$ such that
in the limit $\delta\to 0$ these
$\delta$-dependent paths $c_\delta$ would be pushed
to the boundary of these regions where the connection becomes singular. Then we compute the $\delta$-dependent
parallel transports within the domain where as usual a
 composition of
paths corresponds to multiplication of the corresponding
parallel transports in $\mathcal{A}[[\lambda]]$. It will turn out that each such parallel transport 
$W^{(c_\delta)}$ factorizes in a product of invertible formal power series
as
\begin{equation}\label{EqIntroFactorizationOfParTrans}
    W^{(c_\delta)} =S^{(c_\delta)} G^{(c_\delta)}
     H^{(c_\delta)}  
\end{equation}
where $S^{(c_\delta)}$ is `singular', i.e.~diverging 
--in powers of $|\ln(\delta)|$-- for
$\delta\to 0$, $G^{(c_\delta)}$ is `good',  i.e.~converging to
a wanted term for $\delta\to 0$, and $H^{(c_\delta)}$ is `harmless', i.e.~converging to $1$ for $\delta\to 0$
where the terms proportional to $\lambda^n$, $n>0$,
tend to zero dominated by a `power law' $\delta^\beta$, $\beta>0$. Harmless terms will turn out to be stable by conjugation with
singular terms. 
Parallel transports along different (composed) paths
having the same initial and final points will be equal due
to the flatness of the used connection --for instance the famous Knizhnik-Zamolodchikov connection-- thereby inducing
algebraic identities: in all the important identities
the singular terms cancel out for all strictly positive 
$\delta$, and the remaining terms
give the wanted identities in the limit $\delta\to 0$.

The paper is organised as follows: $\S$\ref{SecSomeAnalysisETC} is an introduction to (formal) linear parallel transports in
open subsets of $\mathbb{R}^N$ to make explicit the 
methods used in $\S$\ref{SecDrinfeldAssociator} where Drinfel'd's associator and its Hexagon and Pentagon identities are treated.
\\
 We recall in $\S$\ref{SubSecElementaryAnalysis}
the analysis of \emph{piecewise $\mathcal{C}^\infty$ functions}
(in the particular sense of a finite number of discontinuities where
left-hand and right-hand limits exist): it turns out to be quite useful to view them as functions only defined on the open subset of the defining interval given by the complement of the finite set of singularities and not to define values at the singularities except when they are continuous. We believe that
mentioning some details about these functions may be useful when later smooth paths are composed in the
sense of algebraic topology thereby remaining continuous
but being in general only piecewise smooth at the glueing
points, see for instance \cite[p.71, Prop.3.3]{KN63}. It comes with no surprise that
there is a Theorem completely analogous to the Fundamental
Theorem of Analysis for the piecewise case, see 
Theorem \ref{TPiecewiseFundTheoCalc}.\\
In $\S$\ref{SubSecFormalLinearODEs} we briefly 
review certain \emph{formal linear ordinary differential equations}
 (which is a particular case of K.-T.Chen's classical work \cite{Che61})
defined by a left multiplication
in the space of all formal power series with coefficients
in an arbitrary associative unital algebra $\mathcal{A}$
\[
    \dfrac{\mathsf{d}W_{\cdot\alpha}}{\mathsf{d}s}
    =\lambda YW_{\cdot\alpha} 
    ~~~\mathrm{and}~~~W_{\alpha\alpha} =1
\] 
where $Y$ is a suitable formal power series of
piecewise $\mathcal{C}^\infty$-functions taking values in 
$\mathcal{A}$: here each component $Y_r$, $r\in\mathbb{N}$, is in the algebraic tensor product of the algebra of piecewise smooth complex valued functions on an interval and $\mathcal{A}$. Thanks to the presence of the formal parameter, the existence and uniqueness theory of the solutions
$s\mapsto W_{s\alpha}$ (which are required to be continuous)
is quite elementary and algebraic
and does not require any additional topological structures on $\mathcal{A}$. The formulation by the well-known \emph{iterated integrals} (see (\ref{EqFormalLinODEIteratedIntegrals})) is automatical,
and the relations with the `vector fields' $Y$ and their
`propagators' $W_{\beta\alpha}$ with respect to reparametrization is
resumed in Proposition \ref{PFormalLinODEPropagatorPropert}.\\
$\S$\ref{SubSec Norms and limits} is devoted to
a simple framework to consider \emph{limits of propagators}
when the vector fields $Y$ depend on one or more nonformal
`small' parameters $\boldsymbol{\delta}=(\delta,\epsilon,\ldots)$: for these limits, an arbitrary norm is chosen on the complex vector space $\mathcal{A}$
(which will in general not be a normed algebra), and there can be many non equivalent ones. However, for each power of the formal parameter the limits will be done separately,
and it is crucial from the choice of the restricted frame in which
$Y$ and hence $W$ are considered that for each power of the formal
parameter the subspace of $\mathcal{A}$ spanned by all the relevant coefficients is always finite-dimensional, hence the
restriction of the chosen norm to these subspaces is
well-known to be equivalent to the restriction of any other norm on 
$\mathcal{A}$ to the same subspaces. 
This allows to define limits $\delta,\epsilon\to 0$ compatible with the algebra structure independently on the norm
chosen on $\mathcal{A}$, see Proposition \ref{PNormsLimLimitRules}. We also include a definition and
some properties of the subspace of parameter dependent
elements whose norm (order by order) is bounded by powers
of the parameter or by powers of its logarithm, see
Proposition \ref{PFilNormsRBHProperties} and Lemma
\ref{LNormsLimIntegralBounds}. This will become important to make precise the terms appearing in the factorization equation (\ref{EqIntroFactorizationOfParTrans}).
\\
The last two Sections $\S$\ref{SubSecFormalConnectionsAndParallelTransports}
and $\S$\ref{SubSecFlatFormalConnections} deal with \emph{formal
linear connections} $\Gamma$ (in trivial bundles) linking
given \emph{continuous piecewise smooth paths} with formal linear
ODEs: their propagators are the well-known parallel transports of differential geometry, 
see e.g.~\cite[p.68]{KN63}. We recall their  invariance by reparametrization and their multiplicativity
with respect to composition of paths and the simple relations with pull-backs, see Theorem \ref{TFormConnOperationsForParTransp}. In case the connection is
flat (see e.g.~\cite[p.92]{KN63}) parallel transports
are independent of paths in star-shaped open sets of
$\mathbb{R}^N$. We provide the proofs of these theorems,
see Theorem \ref{TFormFlatPathIndep}, Lemma \ref{LFlatFormConnSmoothing}, and Corollary
\ref{CFormFlatContractibleLoops} in detail as well as
the relation to complex coordinates in the Appendix.

In the second part we recall the \emph{Drinfel'd-Kohno (Lie) algebras}, 
\emph{complex configuration spaces} $Y_n$, and the 
\emph{Knizhnik-Zamolodchikov connections}
in $\S$\ref{SubSecDKAlgebrasKnZhConnections}. In $\S$\ref{SubSecDrinfeldAssociatorDefinitionAndProperties}
we give a definition of the \emph{Drinfel'd associator} as 
the non singular part of a parallel transport in the
open unit interval from $\delta>0$ to $1-\epsilon<1$
with $0<\delta,\epsilon\leqslant \frac{1}{4}$: this
has been inspired by the quantity
$G_a(1-a)$ (for $0<a<1$) in 
\cite[p.465]{Kas95}, and we prove the factorization equation (see \cite[p.465, Lemma XIX.6.3]{Kas95}) in
Theorem \ref{TDrinfeldAssDefAssociator}: we are using the trick to split the path $\delta\to 1-\epsilon$ in two
halves (using the midpoint $1/2$) parametrized in an
`exponential way' which give two `square roots' of the parallel
transport each having only one singular factor, see
Lemma \ref{LDrinfeldAssFactorizationHalfAss}, and are 
surprisingly easy to estimate. By parametrization invariance of parallel transports this leads to Drinfel'd's original definition
\cite[p.833, equation (2.1)]{Dri90}.  Having two parameters 
proves to be useful later  when proving the Hexagon and Pentagon identities: this will be done in Sections
$\S$\ref{TheHexagonEquation} and
$\S$\ref{SubSecPentagonEquation}. \\
For the proof of the \emph{Hexagon equation}, see \eqref{eq:hexagon-equation-introduction} and (\ref{EqHexDefHexagonEqn}), in $\S$\ref{TheHexagonEquation}
one chooses a pull-back
of the Knizhnik-Zamolodchikov connection from $Y_3$ to the doubly punctured plane $\mathbb{C}^{\times\times}$, and we consider
the parallel transport along a loop (dependent on the small parameter $\delta$) in the complement of the upper half plane
composed of three line segments
and three lower half circles. Using a certain cyclic symmetry of the doubly punctured plane (given by three simple explicit
complex rational functions) 
we just have to compute the parallel transport along one
of the half circles which gives an exponential function
times a `harmless' term going to $1$ for $\delta\to 0$
whereas the parallel transports along the line segments
give associators and singular terms: the latter are shown to cancel out of the equation.\\
Finally, $\S$\ref{SubSecPentagonEquation} is devoted
to the proof of the \emph{Pentagon Equation}, see
(\ref{EqPentDefPentagonEquation}): here, as in Drinfel'd's work \cite[p.834]{Dri90} we first use the wedge $x_1<x_2<x_3<x_4$
in $\mathbb{R}^4$, set $x_1=0$ and $x_4=1$, and choose five line segments in the remaining $x_2$-$x_3$-plane depending on two scales $\delta$ and $\delta^2$ according to an interpretation of Drinfel'd's `zones', cf.~\cite[p.1454]{Dri89} or
\cite[p.834]{Dri90}. The identities of the parallel transports along these five paths (w.r.t~the pull-back of the Knizhnik-Zamolodchikov connection on $Y_4$)
will give the 
Pentagon Equation
in the limit $\delta\to 0$ after having shown the factorization into singular,
 bounded, and harmless terms and the cancellation of the
 singular terms, see Theorems \ref{TPentEqFiveParTrDeltaPositive} and
  \ref{TPentEqFinalProofOfPentEq}.

\subsection*{Conventions and Notations}

 The symbol $\mathcal{A}$ will always denote a unital associative algebra over the field of all complex numbers $\mathbb{C}$. Unadorned tensor products will
 always refer to tensor products over $\mathbb{C}$,
 i.e.~$\otimes=\otimes_\mathbb{C}$. For a non-empty set $M$ and a complex vector space $E$
 the symbol $\mathrm{Fun}(M,E)$ will denote the complex vector space of all $E$-valued maps $M\to E$.
 Next, our formal parameter
 $\lambda$ is equal to Drinfel'd's $\frac{h}{2\pi\mathbf{i}}$, the latter being denoted by $\overline{h}$ in \cite{Kas95}. 
 %Finally, for a slightly
 %better `readability' we have chosen the right-to-left %notation
 %$\Phi(B,A)$ for Drinfel'd's associator usually written as
 %$\Phi(A,B)$.

\subsection*{Acknowledgements}

The authors would like to thank A.Appel, P.Clavier,
F.Corniquel, S.Goette, B.Hurle, D.Manchon, D.Panazzolo and S.Waldmann for useful discussions.

%\newpage

%%%%%%%%%%%%%%%%%%%%%%%%%%%%%%%%%
%%%%
%%%%%%%%%%%%%%%%%%%%%%%%%%%%%%%%%
\section{Some analysis for formal parallel transports}
%\addcontentsline{toc}{section}{Some analysis for %parallel transports}
\label{SecSomeAnalysisETC}
      
Let $\mathcal{A}$ be a fixed associative unital complex algebra. We write $\mathcal{A}[[\lambda]]$ for the
$\mathbb{C}[[\lambda]]$-module of all formal power series with coefficients in $\mathcal{A}$ which itself
is an associative unital algebra over the ring
$\mathbb{C}[[\lambda]]$.

\subsection{Elementary analysis of piecewise $\mathcal{C}^\infty$-functions} 
%\addcontentsline{toc}{subsection}{Elementary analysis %of piecewise $\mathcal{C}^k$-functions}
\label{SubSecElementaryAnalysis}

This Section recalls piecewise smooth functions because
later on we shall need parallel transports along composed paths
which are in general not smooth at the `gluing point'.
The generalization to piecewise $\mathcal{C}^k$-functions
is immediate, but not necessary for the sequel.
      
For two given real numbers $a<b$ let
$]a,b[$ denote the open interval of all real numbers $s$ such that $a<s<b$. 
Recall the complex vector space $\mathcal{C}^k\big(]a,b[,\mathbb{C}\big)$ of all complex-valued functions on $]a,b[$ whose $r$th derivatives all exist and are continuous
for all orders $r$ if $k=\infty$, respectively.
By means of pointwise multiplication it is a unital 
commutative and associative complex algebra. Write
$\mathcal{C}^\infty\big(U,\mathbb{C}\big)$ for the obvious generalization whenever $U\subset \mathbb{R}$ is a
finite union of open intervals.\\
Next, we have to speak about piecewise such functions
in the more restricted sense of finitely many `nice' discontinuities, see e.g.~\cite[p.158, Def.~IV.4.2]{AF88}:
we have chosen a formulation avoiding the choice of values at the `singular points'. More precisely: let
$[a,b]$ be the closed interval of all real
numbers $s$ such that $a\leqslant s\leqslant b$. Choose
a finite subset $D\subset [a,b]$ which contains $a,b$, hence $D$ is of the general form
$D=\{a=a_0<a_1<\cdots<a_m
<a_{m+1}:=b\}$ where $m$ is a non-negative integer.
We shall refer to $D$ as the (potential) \emph{singular set}
and to its open dense complement $[a,b]\setminus D$ as the \emph{regular set}. Define the space of all \textbf{piecewise $\mathcal{C}^\infty$-functions on $[a,b]$ with singular set $D\subset [a,b]$}
in the following slightly unusual way:
\begin{eqnarray}
  \mathcal{C}_D^\infty([a,b],\mathbb{C}) & := &
   \Big\{ f\in \mathcal{C}^\infty([a,b]\setminus D,\mathbb{C})~\Big|~
       \forall~r,i\in\mathbb{N}~\mathrm{with}~ 
       1\leqslant i\leqslant m+1: 
       \nonumber \\
       & &  
          ~~  \lim_{\epsilon\downarrow 0}
       f^{(r)}(a_i-\epsilon) ~\mathrm{exists \ and}~~
            \forall~0\leqslant i\leqslant m:
            \lim_{\epsilon\downarrow 0}
            f^{(r)}(a_i+\epsilon)~
            \mathrm{exists}\Big\} \nonumber \\
            \label{EqPiecewiseDefCKD}
\end{eqnarray}
where $\epsilon \downarrow 0$ means that only strictly positive real numbers $\epsilon$ are considered in the
limit. In other words, each element $f$ of
$\mathcal{C}_D^\infty([a,b],\mathbb{C})$ is a 
$\mathcal{C}^\infty$-function
outside the singular set $D$, and left-side and
right-side limits of all the higher derivatives at the singular points both have to exist, but need not be equal. It follows that the restriction of each
$f^{(r)}$, $r\in\mathbb{N}$, to each open interval $]a_i,a_{i+1}[$, $0\leqslant i\leqslant m$, 
uniquely extends to a continuous function $f^{(r)}_i$ defined on the
closed interval $[a_i,a_{i+1}]$ by means of the
right-side limit at $a_i$ and the left-side limit at
$a_{i+1}$.
It is clear that for each finite set $D$ such that $\{a,b\}\subset D\subset [a,b]$   the
complex vector space $\mathcal{C}_D^\infty([a,b],\mathbb{C})$
is a unital commutative associative algebra (with respect to pointwise multiplication on the regular subset) thanks to the Leibniz rule of higher order derivatives, see
e.g.~\cite[p.~178]{AF88}, and we have the canonical maps
\begin{equation*}%\label{EqPiecewiseCanInj}
  ~\forall~D\subset D':
     \mathcal{C}_D^\infty([a,b],\mathbb{C})
     \hookrightarrow \mathcal{C}_{D'}^\infty([a,b],\mathbb{C})
\end{equation*}
induced by the obvious restrictions which are injections since the regular sets are dense in
the closed interval $[a,b]$. These injections
are morphisms of unital complex algebras. We shall not denote them explicitly. Moreover,
the usual derivative (defined only on the regular set $[a,b]\setminus D$) clearly is compatible with
left- and right-sided limits and thus induces a derivation
of algebras
\begin{equation}\label{EqPiecewiseDerivative}
   \mathcal{C}_D^\infty([a,b],\mathbb{C})\to \mathcal{C}_D^\infty([a,b],\mathbb{C}):f\mapsto \frac{\mathsf{d}f}{\mathsf{d}s}.
\end{equation}
We shall very often need the following subalgebra
of $\mathcal{C}_D^\infty([a,b],\mathbb{C})$:
\begin{equation} \label{EqPiecewiseCkContinuous}
  \mathcal{C}_D^\infty([a,b],\mathbb{C})^0:=
  \big\{f\in\mathcal{C}_D^\infty([a,b],\mathbb{C})~\big|~
     f~\mathrm{extends~to~a~continuous~function~}
     [a,b]\to \mathbb{C} \big\}.
\end{equation}
Clearly, the existence of this continuous extension is equivalent to the fact that left-side and right-side limits at the singular points of $f$ (but not necessarily of its higher order derivatives) coincide, whence it is unique if it exists. Note that for these spaces the usual evaluation at $\alpha\in [a,b]$ of a function makes
sense as opposed to the general case where no value
at the singular points is defined.
We shall also need to compose these piecewise 
$\mathcal{C}^\infty$-functions: in addition to the closed interval $[a,b]$ and the singular subset 
$\{a,b\}\subset D\subset [a,b]$ choose another closed
interval $[a',b']$ (where $a'<b'$ are real numbers)
and a finite subset $D'=\{a'=a'_0<a'_1<\cdots<a'_{m'}<a'_{m'+1}=b'\}$ of $[a',b']$ such that
$\{a',b'\} \subset D'\subset [a',b']$.  We shall call 
a piecewise $\mathcal{C}^{\infty}$ function $\theta$ on $[a',b']$
with singular set $D'$ \emph{compatible with $[a,b]$ and $D$}
if the following condition is satisfied:
\begin{equation}\label{EqPiecewiseCompositionCompatible}
   \theta\big([a',b'] \setminus D \big)\subset [a,b]\subset \mathbb{R}~~~\mathrm{and}~~~
   \theta|^{-1}(D)~\mathrm{is~finite~subset~of~}[a',b']
\end{equation}
where $\theta|$ denotes the  
$\mathcal{C}^{\infty}$-function 
$[a',b']\setminus D'\to [a,b]$ outside its singular set.
It is immediate that the composition $f\circ \theta$
is a well-defined function on 
$[a',b']\setminus \big(D'\cup\theta|^{-1}(D) \big)$,
and the iterated chain rule (also called Faa di Bruno Theorem, see e.g.~\cite[p.291, equation (3)]{AF88})
shows that it is a
$\mathcal{C}^{\infty}$-function.
The left-side and right-side limits of the $r$th derivative $f\circ \theta$ at the singular points in 
$D'\cup\theta|^{-1}(D)$ exist which easily follows from
the continuity of the continuous extensions $\theta^{(r)}_j:[a'_j,a'_{j+1}]\to [a,b]$ of the restriction of $\theta^{(r)}$ to $]a'_j,a'_{j+1}[$
for all integers $r\geqslant 0$ and $0\leqslant j\leqslant m'$.
Hence, we can define the composition
\begin{equation}\label{EqPiecewiseDefComposition}
   f\circ \theta \in 
\mathcal{C}^{\infty}_{D'\cup \theta|^{-1}(D)}
     \big([a',b'],\mathbb{C}\big).
\end{equation}
It is not hard to see that the chain rule works for
this composition and differentiation (\ref{EqPiecewiseDerivative}).

Next, we need to use the well-known \emph{Riemann integral}: note that
for every element $f\in \mathcal{C}_D^\infty([a,b],\mathbb{C})$
and $\alpha,\beta\in [a,b]$ we can define the Riemann integral
\begin{equation}\label{EqPiecewiseIntegralSign}
   I_\alpha^\beta(f):=\left\{ 
     \begin{array}{cl}
       \int_\alpha^\beta \hat{f}(s)ds & \mathrm{if}~
                                         \alpha\leqslant \beta \\
      - \int_\beta^\alpha \hat{f}(s)ds & \mathrm{if}~
       \alpha\geqslant \beta        
     \end{array}\right.
\end{equation}
where $\hat{f}$ is any extension of $f$ from $[a,b]\setminus D$ to $[a,b]$ (for instance
$\hat{f}(a_i)=0$ for all $0\leqslant i\leqslant N+1$: it is well-known that any such extension  is Riemann integrable and that the integral does not depend on the extension, that is which values of $\hat{f}$
are chosen at the singular points contained in the domain of integration, see e.g.~\cite[p.273]{Lan97}. Recall Chasles's rule for any
$f\in \mathcal{C}_D^\infty([a,b],\mathbb{C})$:
\begin{equation*}
   \forall~\alpha,\beta,\gamma\in [a,b]:~~~
    I_\alpha^\gamma(f)=I_\alpha^\beta(f)
                       +I_\beta^\gamma(f).
\end{equation*}
Recall that the complex-valued function $[a,b]\to 
\mathbb{C}$ defined for every $f\in \mathcal{C}_D^\infty([a,b],\mathbb{C})$ by
\begin{equation*}
    I_\alpha(f)(s):= I_\alpha^s(f)
\end{equation*}
is the usual \emph{primitive of $f$}. We resume the following
properties of the primitive which are well-known variants of the fundamental theorem of calculus and
standard integration techniques:

\begin{theorem}\label{TPiecewiseFundTheoCalc}
	Let $a,b \in \mathbb{R}$ with $a<b$, and let $D \subset \mathbb{R}$ be a finite set such that $\{a,b\}\subset D\subset [a,b]$. Then for any 
	$f,g\in C^\infty_D\big([a,b],\mathbb{C}\big)$ and
	$h\in C^{\infty}_D\big([a,b],\mathbb{C}\big)^0$ the following holds:
	\begin{enumerate}
		\item For any $\alpha\in [a,b]$ the primitive
		  $I_\alpha$ defines a $\mathbb{C}$-linear map  $C^\infty_D\big([a,b],\mathbb{C}\big)
		  \to C^{\infty}_D\big([a,b],\mathbb{C}\big)^0$ 
		  whence $I_\alpha(f)$ is always continuous.  Moreover,
		  \begin{equation}
		  \label{EqPiecewisePrimitiveNormalization}
		  I_\alpha(f)(\alpha)=0.
		  \end{equation}
		\item Fundamental theorem of calculus: for the derivatives (in the sense of (\ref{EqPiecewiseDerivative})) we get 
		  \begin{equation}
		  \label{EqPiecewisePrimitiveFundaTheo}
		    \frac{\mathsf{d}I_\alpha(f)}
		         {\mathsf{d}s}=f~~\mathrm{and}~~
		      I_\alpha\left(\frac{\mathsf{d}h}
		      {\mathsf{d}s}\right)= h-h(\alpha).
		  \end{equation}
		  Moreover, any element $F\in C^{\infty}_D\big([a,b],\mathbb{C}\big)^0$ satisfying $\frac{\mathsf{d}F}
		  {\mathsf{d}s}=f$ and $F(\alpha)=0$ is equal
		  to $I_\alpha(f)$.
%		\item We have the Rota-Baxter rule (coming from
%		partial integration)
%		\begin{equation}
%	     I_\alpha(f)I_\alpha(g)=
%	      I_\alpha\Big(I_\alpha(f)g + fI_\alpha(g)\Big).
%		\end{equation}
		\item Let $a',b' \in \mathbb{R}$ with 
		$a'<b'$, let $D'$ be a finite set with
		$\{a',b'\}\subset D' \subset [a',b']$, let
		$\theta\in \mathcal{C}^{\infty}_{D'}
		\big([a',b'],\mathbb{R}\big)^0$ such that
		$\theta$ is compatible with $[a,b]$ and $D$,
		see (\ref{EqPiecewiseCompositionCompatible}).\\
		Then the composition
		$f\circ \theta$ (see
		 (\ref{EqPiecewiseDefComposition})) is in 
		 $\mathcal{C}^{\infty}_{D'\cup \theta|^{-1}(D)}
		 \big([a',b'],\mathbb{C}\big)$. Moreover,
		for each $\alpha'\in[a',b']$ there is the
		usual `change-of-variables-rule'
		\begin{equation}\label{EqPiecewiseChangeOfVar}
		  I_{\theta(\alpha')}(f)\circ \theta
		   = I_{\alpha'}\left((f\circ\theta)
		   \frac{\mathsf{d}\theta}
		   {\mathsf{d}s}\right)
		\end{equation}
		where both sides of the preceding equation are elements of 
$\mathcal{C}^{\infty}_{D'\cup \theta|^{-1}(D)}
		\big([a',b'],\mathbb{C}\big)^0$.
		\end{enumerate}
\end{theorem}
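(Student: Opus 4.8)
The plan is to prove the three assertions in turn, reducing everything to the ordinary Fundamental Theorem of Calculus applied on each subinterval of the regular set, together with continuity of the one-sided limits that defines membership in $\mathcal{C}^\infty_D([a,b],\mathbb{C})$. For part (1): fix $f\in\mathcal{C}^\infty_D([a,b],\mathbb{C})$ and $\alpha\in[a,b]$. On each open interval $]a_i,a_{i+1}[$ the function $f$ restricts to a $\mathcal{C}^\infty$-function whose continuous extension $f_i$ to $[a_i,a_{i+1}]$ is given by the one-sided limits, hence $f_i$ is Riemann integrable there; so $I^s_\alpha(f)$ is well-defined for all $s$ by Chasles's rule, and on the interior of each subinterval it equals a constant plus $\int^s_{a_i}f_i$, which is $\mathcal{C}^\infty$ with derivative $f$ and all one-sided derivatives existing at the endpoints. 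The only point that needs a word is continuity across the singular points: since $I_\alpha(f)$ is, on each closed subinterval, the continuous function $s\mapsto\int^s \hat f$ (independence from the choice of $\hat f$ at the finitely many singular points was recalled just before the theorem), the values at $a_i$ computed from the left and from the right agree, so $I_\alpha(f)$ has a continuous extension to $[a,b]$; thus $I_\alpha(f)\in\mathcal{C}^\infty_D([a,b],\mathbb{C})^0$. Linearity in $f$ is linearity of the Riemann integral, and $I_\alpha(f)(\alpha)=0$ is immediate from the definition (\ref{EqPiecewiseIntegralSign}).

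For part (2): the identity $\frac{\mathsf{d}I_\alpha(f)}{\mathsf{d}s}=f$ holds on the regular set because there it is the classical FTC applied inside each subinterval, and (\ref{EqPiecewiseDerivative}) is exactly the derivative on the regular set. For the converse identity $I_\alpha\!\big(\frac{\mathsf{d}h}{\mathsf{d}s}\big)=h-h(\alpha)$ with $h\in\mathcal{C}^\infty_D([a,b],\mathbb{C})^0$: apply the classical FTC on each $[a_i,a_{i+1}]$ to the continuous extension $h_i$, whose derivative is the continuous extension of $\frac{\mathsf{d}h}{\mathsf{d}s}|_{]a_i,a_{i+1}[}$, getting $I^{s}_{a_i}\!\big(\frac{\mathsf{d}h}{\mathsf{d}s}\big)=h(s)-h(a_i)$ for $s$ in that subinterval; then telescope over the subintervals between $\alpha$ and $s$ using Chasles's rule and the continuity of $h$ at the $a_i$ (this is where the superscript $0$ is used). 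Uniqueness of the primitive with prescribed value at $\alpha$ follows by applying this to $F-I_\alpha(f)$, whose derivative is $0$ on the regular set: a $\mathcal{C}^\infty_D([a,b],\mathbb{C})^0$ function with vanishing derivative on the regular set is locally constant on each subinterval and continuous across the singular points, hence constant, hence zero since it vanishes at $\alpha$.

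For part (3): that $f\circ\theta$ lies in $\mathcal{C}^\infty_{D'\cup\theta|^{-1}(D)}([a',b'],\mathbb{C})$ is exactly (\ref{EqPiecewiseDefComposition}), already established in the text before the theorem via Faà di Bruno and the existence of the one-sided limits, and both sides of (\ref{EqPiecewiseChangeOfVar}) lie in the continuous subspace by part (1) (the right-hand integrand is piecewise $\mathcal{C}^\infty$ with the enlarged singular set, and $\theta$ continuous ensures the left-hand side is a composition of continuous functions). For the change-of-variables identity itself: both sides are continuous on $[a',b']$, both vanish at $\alpha'$ (the left side because $I_{\theta(\alpha')}(f)$ vanishes at $\theta(\alpha')$, the right side by part (1)), and on the regular set $[a',b']\setminus\big(D'\cup\theta|^{-1}(D)\big)$ their derivatives agree by the classical chain rule: $\frac{\mathsf{d}}{\mathsf{d}s}\big(I_{\theta(\alpha')}(f)\circ\theta\big)=\big((f\circ\theta)\cdot\frac{\mathsf{d}\theta}{\mathsf{d}s}\big)$ using part (2) for the inner FTC. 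Hence by the uniqueness statement in part (2), applied on $[a',b']$ with the singular set $D'\cup\theta|^{-1}(D)$, the two sides coincide.

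The only genuinely delicate point is the bookkeeping across the singular points in part (2) — making sure the telescoping of the one-sided values of $h$ really does produce $h(s)-h(\alpha)$ and not a sum of jump terms — which is precisely why the statement is restricted to $h$ in the continuous subalgebra $\mathcal{C}^\infty_D([a,b],\mathbb{C})^0$; for a general $f$ there are jumps and the second identity of (\ref{EqPiecewisePrimitiveFundaTheo}) simply fails. Everything else is the classical one-dimensional theory applied subinterval by subinterval, so I would keep the write-up terse and emphasize only this localization-plus-telescoping mechanism and the role of the hypothesis $h\in\mathcal{C}^\infty_D([a,b],\mathbb{C})^0$.
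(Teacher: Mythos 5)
Your proof is correct and follows essentially the same route as the paper's own sketch (the paper mainly defers to Lang's textbook and a brief remark): apply the classical FTC subinterval-by-subinterval, use Chasles's rule and the continuity hypothesis to telescope across the singular points, establish uniqueness of the continuous primitive from vanishing of the derivative on the regular set, and deduce the change-of-variables rule by differentiating on regular points and invoking uniqueness plus the normalization at $\alpha'$. The only cosmetic difference is in part (1): the paper derives continuity of $I_\alpha(f)$ from boundedness of any extension $\hat f$ (giving a Lipschitz estimate), whereas you argue continuity on each closed subinterval and then match the one-sided values at the $a_i$; both are equivalent and equally elementary.
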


\noindent \textbf{Remark for the proof}: see e.g.~Lang's book \cite[p.272-274]{Lan97} for the proof of all the statements.
 The fact that the primitive
of a piecewise continuous function is continuous
is standard and follows from Chasles's rule and the
fact that piecewise continuity of $f$ implies that 
any extension $\hat{f}$ of $f$ is bounded on $[a,b]$.
The other statements follow on the regular set from their well-known analogues for continuous functions.
Note that the continuity of $F$ in statement
$ii.)$ is crucial: the derivative of $F-I_\alpha(f)$ vanishes on the regular points which implies by continuity that $F-I_\alpha(f)$ is an overall constant
continuous function being zero thanks to $F(\alpha)=0=I_\alpha(f)(\alpha)$. The last  equation (\ref{EqPiecewiseChangeOfVar}) follows from the preceding consideration by derivation of both sides on the regular points, the fundamental theorem
(left equation in 
(\ref{EqPiecewisePrimitiveFundaTheo}) and the normalization condition
(\ref{EqPiecewisePrimitiveNormalization}).\\  

We shall call a triple $(\theta,[a',b'],D')$
consisting of a continuous map $\theta:[a',b']\to [a,b]$
satisfying the hypotheses of statement $iii.)$ of the preceding Theorem \ref{TPiecewiseFundTheoCalc}
a \textbf{continuous piecewise $\mathcal{C}^{\infty}$ reparametrization of $\big([a,b],D\big)$}.

\subsection{Formal linear ODE's} 
%\addcontentsline{{EqPiecewisePrimitiveFundaTheo}toc}{subsection}{Formal linear ODE's} 
\label{SubSecFormalLinearODEs}

In this Section we review a particular case of the general theory
described in K.-T.Chen's classical work \cite[p.110-115]{Che61}.\\
Fix an arbitrary associative unital complex algebra $\mathcal{A}$, two arbitrary real numbers $a,b$ such that $a<b$, an arbitrary finite subset $D$ of the real numbers such that $\{a,b\}\subset D\subset [a,b]$.
Consider the complex commutative associative unital
algebra $\mathcal{C}_D^\infty\big([a,b],\mathbb{C}\big)$
of all piecewise complex-valued $\mathcal{C}^\infty$-functions
on $[a,b]$ with potential singular set $D$,
see $\S$\ref{SubSecElementaryAnalysis} for details. Form the algebraic
tensor product 
$\mathcal{C}_D^\infty\big([a,b],\mathbb{C}\big)\otimes
\mathcal{A}$  and consider the $\mathbb{C}[[\lambda]]$-module of all
formal power series with coefficients in
$\mathcal{C}_D^\infty\big([a,b],\mathbb{C}\big)\otimes
\mathcal{A}$,
\begin{equation}
 \label{EqFormalLinODEPiecewiseTensorAofLambda}
    \Big(\mathcal{C}_D^\infty\big([a,b],\mathbb{C}\big)
    \otimes
     \mathcal{A}\Big)[[\lambda]].
\end{equation}
Note that the preceding $\mathbb{C}[[\lambda]]$-module
is again an associative unital algebra over
$\mathbb{C}[[\lambda]]$ (with respect to the tensor product multiplication and the Cauchy product of formal power series). Note further that each element
$F$ in this algebra can canonically be considered as a function from
$[a,b]\setminus D$ to $\mathcal{A}[[\lambda]]$, and we shall sometimes use the notation $s\mapsto F(s)=\sum_{r=0}^\infty
F_r(s)\lambda^r$. However, the algebra (\ref{EqFormalLinODEPiecewiseTensorAofLambda})  is in general
much smaller than the space of all `suitable' functions $[a,b]\setminus D$ to $\mathcal{A}[[\lambda]]$ where
things like continuity are more delicate to deal with, in particular in the important case of an 
infinite-dimensional $\mathcal{A}$. Observe that every element $F$ in the
algebra (\ref{EqFormalLinODEPiecewiseTensorAofLambda})
is thus a formal power series $F=\sum_{r=0}^\infty F_r\lambda^r$ such that for each non-negative integer $r$
the component $F_r$ is a \textbf{finite sum of terms} 
of the form $f\otimes A$ with
$f\in \mathcal{C}_D^\infty\big([a,b],\mathbb{C}\big)$ and
$A\in \mathcal{A}$. We shall discuss this important point
in more detail in $\S$\ref{SubSec Norms and limits}.

Tensoring the usual derivative $\frac{\mathsf{d}}{\mathsf{d}s}$, see 
(\ref{EqPiecewiseDerivative}),  with the identity map on $\mathcal{A}$ and extending on formal
power series in the usual `componentwise' way we get a 
`derivative' of the algebra (\ref{EqFormalLinODEPiecewiseTensorAofLambda})
which is again a derivation of algebras.
%along the obvious extensions of
%the canonical injections
%((\ref{EqPiecewiseCanInj}). 
We shall denote it by
the same symbol $\frac{\mathsf{d}}{\mathsf{d}s}$.
In a completely analogous way we can extend the Riemann integral $I_\alpha^\beta$ and the primitive $I_\alpha$ to the algebras 
(\ref{EqFormalLinODEPiecewiseTensorAofLambda}) where
we shall continue to use the same symbols. It is obvious that $I_\alpha^\beta$ takes its values in
$\mathcal{A}[[\lambda]]$ and that all the statements of Theorem \ref{TPiecewiseFundTheoCalc}
remain true when $f,g$ and $h$ are replaced by elements in the corresponding algebras (\ref{EqFormalLinODEPiecewiseTensorAofLambda}).

Fix $Y\in \Big(\mathcal{C}_D^\infty\big([a,b],\mathbb{C}\big)
\otimes
\mathcal{A}\Big)[[\lambda]]$. We consider the following formal
linear ordinary differential equation (formal linear ODE)
\begin{equation}\label{EqFormalLinODEDefinition}
   \left\{\begin{array}{ccc}
      \frac{\mathsf{d}\omega}{\mathsf{d}s} &=&
          \lambda Y\omega \\
        \omega(\alpha)& =&\omega_\alpha
   \end{array}\right.
\end{equation}
where $\alpha\in[a,b]$, the `initial value' $\omega_\alpha$ is an element of
$\mathcal{A}[[\lambda]]$, and we look for solutions
\begin{equation*}%\label{EqFormalLinODEContinuityRequest}
   \omega\in \Big(\mathcal{C}_D^{\infty}\big([a,b],\mathbb{C}\big)^0
   \otimes
   \mathcal{A}\Big)[[\lambda]],
\end{equation*}
of the differential equation (\ref{EqFormalLinODEDefinition}) whence $\omega$ \textbf{is required to be continuous} and piecewise $\mathcal{C}^{\infty}$.
The theory of existence and uniqueness of these
formal linear ODEs is well-known to 
be much simpler than the one of the usual differential equations.
Indeed, 
first there is the usual reformulation in terms of
integral equations known from usual ODE theory: suppose first that $\omega$ is
a continuous piecewise $\mathcal{C}^{\infty}$ solution of
(\ref{EqFormalLinODEDefinition}). Taking primitives on
both sides gives --using in Theorem \ref{TPiecewiseFundTheoCalc} the second equation of (\ref{EqPiecewisePrimitiveFundaTheo})-- the `integral equation'
\begin{equation}\label{EqFormalLinODEIntEq}
     \omega =\omega_\alpha 
      +\lambda I_\alpha\big(Y\omega\big)
\end{equation}
where $\omega_\alpha$ is considered as the
constant function on $[a,b]$ with value $\omega_\alpha$.
On the other hand, if the continuous piecewise 
$\mathcal{C}^{\infty}$ element $\omega$ is a solution of the
formal integral equation (\ref{EqFormalLinODEIntEq}), then
$\omega(\alpha)=\omega_\alpha$ by (\ref{EqPiecewisePrimitiveNormalization}), and differentiation of the integral equation --using
the first equation of (\ref{EqPiecewisePrimitiveFundaTheo})-- gives the
formal linear ODE  (\ref{EqFormalLinODEDefinition}).

Next, solving the formal integral equation (\ref{EqFormalLinODEIntEq}) is quite simple due to the presence of the factor $\lambda$ in front of $Y$: consider the following $\mathbb{C}[[\lambda]]$-linear maps 
\begin{equation*}
\begin{split}
   L_Y:\Big(\mathcal{C}_D^{\infty}\big([a,b],
   \mathbb{C}\big)^0
   \otimes
   \mathcal{A}\Big)[[\lambda]]
   &\to\Big(\mathcal{C}_D^\infty\big([a,b],\mathbb{C}\big)
   \otimes
   \mathcal{A}\Big)[[\lambda]] \\
   F &\mapsto L_Y(F):=YF
   \end{split}
\end{equation*}
and
\[
    I_\alpha:
    \Big(\mathcal{C}_D^\infty\big([a,b],\mathbb{C}
    \big)
    \otimes
    \mathcal{A}\Big)[[\lambda]]
    \to
    \Big(\mathcal{C}_D^{\infty}
    \big([a,b],\mathbb{C}\big)^0
    \otimes
    \mathcal{A}\Big)[[\lambda]].
\]
Then the composition $I_\alpha\circ L_Y$ is a well-defined $\mathbb{C}[[\lambda]]$-linear endomorphism of the
$\mathbb{C}[[\lambda]]$-module 
$\Big(\mathcal{C}_D^{\infty}\big([a,b],\mathbb{C}\big)^0
\otimes
\mathcal{A}\Big)[[\lambda]]$. Hence, the formal integral
equation (\ref{EqFormalLinODEIntEq}) can be rewritten as
\begin{equation}\label{EqFormalLinODESolutionIntEq}
  \big(\mathrm{id}-\lambda I_\alpha\circ L_Y\big)(\omega)=\omega_\alpha,~~~
  \mathrm{hence}~~~\omega 
  = \big(\mathrm{id}
      -\lambda I_\alpha\circ L_Y\big)^{-1}(\omega_\alpha)
      =\sum_{r=0}^\infty \lambda^r 
      \big(I_\alpha\circ L_Y\big)^{\circ r}
      (\omega_\alpha)
\end{equation}
since it is obvious --thanks to the presence of the factor
$\lambda$-- that the formal series $\mathrm{id}-\lambda I_\alpha\circ L_Y$ is always invertible in the algebra of all $\mathbb{C}[[\lambda]]$-linear endomorphisms of 
$\Big(\mathcal{C}_D^{\infty}\big([a,b],\mathbb{C}\big)^0 \otimes \mathcal{A}\Big)[[\lambda]]$ seen as a $\mathbb{C}[[\lambda]]$-module
by the usual geometric series formula. Note that
the formula (\ref{EqFormalLinODESolutionIntEq}) is 
very often written out in terms of \textbf{iterated integrals}:
\begin{equation}\label{EqFormalLinODEIteratedIntegrals}
  \omega(s) = \omega_\alpha 
  + \sum_{r=1}^\infty\lambda^r\Bigg(
     \int_{\alpha}^{s}
     \left(\hat{Y}(s_1)\int_{\alpha}^{s_1}
     \left(\hat{Y}(s_2)\int_{\alpha}^{s_2}\left(\cdot \cdot
      \int_{\alpha}^{s_{r-1}}\hat{Y}(s_r)ds_r
      \right)
      \cdot \cdot ds_3\right)
     ds_2\right)ds_1\Bigg)\omega_\alpha
\end{equation}
where $\hat{Y}$ denotes any extension of $Y$ from 
$[a,b]\setminus D$ to $[a,b]$.
We shall write $W_{\cdot\alpha}:=(s\mapsto W_{s\alpha})$
for the particular solution $\omega$ of the formal ODE
(\ref{EqFormalLinODEDefinition}) with initial condition
$\omega_\alpha=1$, the unit element of the algebra
$\Big(\mathcal{C}_D^{\infty}\big([a,b],\mathbb{C}\big)^0
\otimes
\mathcal{A}\Big)[[\lambda]]$, hence
\begin{equation}\label{EqFormalLinODEDefinitionOfW}
\left\{\begin{array}{rcc}
\dfrac{\mathsf{d}W_{\cdot \alpha}}{\mathsf{d}s}& =&
\lambda Y W_{\cdot \alpha} \\
W_{\cdot \alpha}(\alpha) =W_{\alpha\alpha} &=&1
\end{array}\right. 
\end{equation}
and refer to it as the \emph{fundamental solution of
the formal ODE (\ref{EqFormalLinODEDefinition})
normalized at $\alpha$}, see e.g.~\cite[p.69]{CL55}.
Moreover, we shall refer to the value of the fundamental
solution $W_{\cdot \alpha}$ at $\beta\in [a,b]$,
\begin{equation}\label{EqFormalLinODEDefOfPropagatorW}
W_{\beta\alpha}:=W_{\cdot\alpha}(\beta)\in
\mathcal{A}[[\lambda]]
\end{equation}
 as the \emph{propagator} (from $\alpha$ to $\beta$).\\
We collect some properties of the above formal linear
ODE's in the following
\begin{proposition}
	\label{PFormalLinODEPropagatorPropert}
 With the above-mentioned hypotheses and notations we have the following:
 \begin{enumerate}
 	\item Every formal linear ODE (\ref{EqFormalLinODEDefinition}) has a unique (continuous!) solution $\omega$ given by the formulas (\ref{EqFormalLinODESolutionIntEq}) or
 	(\ref{EqFormalLinODEIteratedIntegrals}). It can be
 	expressed by the fundamental solution $W_{\cdot\alpha}$ normalized at $\alpha$ in the following way:
 	\begin{equation}\label{EqFormalLinODEGenInitCond}
 	    \omega=W_{\cdot\alpha}\omega_\alpha.
 	\end{equation}
 	\item \textbf{Groupoid properties}: Every fundamental solution $W_{\cdot\alpha}$
 	 has only invertible values in $\mathcal{A}[[\lambda]]$, and for all $\alpha,\beta,\gamma\in [a,b]$ we have the following identities for the propagators 
 	 \begin{equation}\label{EqFormalLinODEGroupoidProp}
 	    W_{\alpha\alpha}=1,
 	    ~~~W_{\gamma\beta}W_{\beta\alpha}
 	          =W_{\gamma\alpha},~~~
 	          W_{\alpha\beta}=W_{\beta\alpha}^{-1}.
 	 \end{equation}
 	 \item \textbf{Reparametrization}: Let $a',b' \in \mathbb{R}$ with 
 	 $a'<b'$, let $D'$ be a finite set with
 	 $\{a',b'\}\subset D' \subset [a',b']$, let
 	 $\theta\in \mathcal{C}^{\infty}_{D'}
 	 \big([a',b'],\mathbb{R}\big)^0$ be a continuous piecewise $\mathcal{C}^{\infty}$ reparametrization of $\big([a,b],D\big)$, see
 	 (\ref{EqPiecewiseCompositionCompatible}).
 	 Let $\alpha'\in [a',b']$, and
 	 let $W_{\cdot\theta(\alpha')}$ the fundamental
 	 solution of (\ref{EqFormalLinODEDefinitionOfW}) normalized at
 	 $\theta(\alpha')$.
 	 Then $W'_{\cdot\alpha'}:= W_{\cdot\theta(\alpha')}\circ\theta$
 	 is a fundamental solution normalized at $\alpha'$
 	 of the formal linear ODE
 	 \begin{equation}\label{EqFormalLinODEReparam}
 	    \frac{\mathsf{d}W'_{\cdot\alpha'}}{\mathsf{d}s'}
 	      =\big(Y\circ\theta\big)
 	      \frac{\mathsf{d}\theta}{\mathsf{d}s'}
 	      W'_{\cdot\alpha'},
 	      ~~\mathrm{hence}~~
 	      \forall~\alpha,\beta\in [a,b]:~~
 	      W'_{\beta'\alpha'}  
 	      = W_{\theta(\beta')\theta(\alpha')}
 	      \in \mathcal{A}[[\lambda]].
 	 \end{equation}
 	 Moreover, the propagator $W_{\beta\alpha}$ only depends on the values of $Y$ between $\alpha$ and $\beta$.
 	 \item \textbf{Factorization}: Let $Y=Y_0+Z$
 	 with $Y_0,Z\in \Big(\mathcal{C}_D^\infty\big([a,b],\mathbb{C}\big)
 	 \otimes
 	 \mathcal{A}\Big)[[\lambda]]$, and let $W_{\cdot\alpha}$ and $U_{\cdot\alpha}$ be the fundamental solutions normalized at $\alpha$ for the formal linear ODE's
 	 \[
 	     \frac{\mathsf{d}W_{\cdot \alpha}}{\mathsf{d}s} =
 	     \lambda Y W_{\cdot \alpha} ~~
 	     \mathrm{and}~~
 	     \frac{\mathsf{d}U_{\cdot \alpha}}{\mathsf{d}s} =
 	     \lambda Y_0 U_{\cdot \alpha}.
 	 \] 
 	 Then $W_{\cdot \alpha}$ factorizes in the following
 	 way:
 	 \begin{equation}\label{EqFormalLinODEFactorization}
 	   W_{\cdot \alpha}=U_{\cdot \alpha}
 	   \Xi_{\cdot \alpha}~~~\mathrm{where}~~~
 	   \frac{\mathsf{d}
 	   	\Xi_{\cdot \alpha}}{\mathsf{d}s} =
 	   \lambda \left(U_{\cdot \alpha}^{-1}Z 
 	       U_{\cdot \alpha}\right)\Xi_{\cdot \alpha}
 	       ~~\mathrm{and}~~\Xi_{\alpha\alpha}=1.
 	 \end{equation}
 	 \item Suppose that $YI_\alpha(Y)=I_\alpha(Y)Y$.
 	    Then the fundamental solution $W_{\cdot\alpha}$ of the formal linear ODE (\ref{EqFormalLinODEDefinitionOfW}) is explicitly given by
 	    \begin{equation}
 	    \label{EqFormalLinODEYCommutesWithIOfY}
 	       W_{\cdot\alpha} =e^{\lambda I_\alpha(Y)} .
 	    \end{equation}
 \end{enumerate}
\end{proposition}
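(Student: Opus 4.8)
The plan is to reduce every assertion to the existence and uniqueness statement already obtained from the geometric series formula~(\ref{EqFormalLinODESolutionIntEq}), using repeatedly the principle that two continuous piecewise $\mathcal{C}^\infty$ solutions of the same formal linear ODE with the same initial value must coincide. For statement $i.)$ the explicit solution and its iterated-integral form are exactly (\ref{EqFormalLinODESolutionIntEq}) and (\ref{EqFormalLinODEIteratedIntegrals}), while continuity is guaranteed because $I_\alpha$ takes values among the $\mathcal{C}_D^\infty([a,b],\mathbb{C})^0\otimes\mathcal{A}$-valued formal series, cf.~Theorem~\ref{TPiecewiseFundTheoCalc}. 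To get $\omega=W_{\cdot\alpha}\omega_\alpha$ I would observe that right multiplication by the constant $\omega_\alpha$ commutes with the operator $\lambda I_\alpha\circ L_Y$ (a left multiplication followed by an integration), so $W_{\cdot\alpha}\omega_\alpha$ solves (\ref{EqFormalLinODEIntEq}) and equals $\omega$ by uniqueness. For $ii.)$ invertibility is immediate from (\ref{EqFormalLinODEIteratedIntegrals}): the $\lambda^0$-component of $W_{s\alpha}$ is the unit $1\in\mathcal{A}$ for every $s$, so $W_{s\alpha}\in 1+\lambda\mathcal{A}[[\lambda]]$ is a unit; the cocycle identity $W_{\gamma\beta}W_{\beta\alpha}=W_{\gamma\alpha}$ follows by applying $i.)$ with reference point $\beta$ and initial value $W_{\beta\alpha}$ (both $s\mapsto W_{s\alpha}$ and $s\mapsto W_{s\beta}W_{\beta\alpha}$ solve the ODE and agree at $\beta$), and specializing $\gamma=\alpha$ and then swapping $\alpha,\beta$ yields $W_{\alpha\beta}=W_{\beta\alpha}^{-1}$.

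For $iii.)$ I would set $W'_{\cdot\alpha'}:=W_{\cdot\theta(\alpha')}\circ\theta$. By Theorem~\ref{TPiecewiseFundTheoCalc} this is a continuous piecewise $\mathcal{C}^\infty$ element over $([a',b'],D'\cup\theta|^{-1}(D))$, it takes the value $W_{\theta(\alpha')\theta(\alpha')}=1$ at $\alpha'$, and the chain rule (valid for the composition (\ref{EqPiecewiseDefComposition}) and extended componentwise to the $\mathcal{A}[[\lambda]]$-valued setting) gives
\[
\frac{\mathsf{d}W'_{\cdot\alpha'}}{\mathsf{d}s'}=\Big(\big(\lambda Y W_{\cdot\theta(\alpha')}\big)\circ\theta\Big)\frac{\mathsf{d}\theta}{\mathsf{d}s'}=\lambda\,(Y\circ\theta)\frac{\mathsf{d}\theta}{\mathsf{d}s'}\,W'_{\cdot\alpha'},
\]
so uniqueness identifies $W'_{\cdot\alpha'}$ with the fundamental solution of (\ref{EqFormalLinODEReparam}), and evaluation at $\beta'$ gives $W'_{\beta'\alpha'}=W_{\theta(\beta')\theta(\alpha')}$. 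The ``moreover'' is then read off (\ref{EqFormalLinODEIteratedIntegrals}), since in the $r$-th iterated integral all integration variables lie between $\alpha$ and $\beta$, so only the restriction of $\hat Y$ to the interval with endpoints $\alpha,\beta$ enters $W_{\beta\alpha}$ (equivalently, reparametrize by the affine bijection onto that interval).

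For $iv.)$ I would simply define $\Xi_{\cdot\alpha}:=U_{\cdot\alpha}^{-1}W_{\cdot\alpha}$, which lies in the continuous piecewise $\mathcal{C}^\infty$ class since inversion in $\mathcal{A}[[\lambda]]$ acts by the geometric series and hence componentwise polynomially; it equals $1$ at $\alpha$, and differentiating $U_{\cdot\alpha}U_{\cdot\alpha}^{-1}=1$ gives $\frac{\mathsf{d}U_{\cdot\alpha}^{-1}}{\mathsf{d}s}=-\lambda U_{\cdot\alpha}^{-1}Y_0$, whence the Leibniz rule produces $\frac{\mathsf{d}\Xi_{\cdot\alpha}}{\mathsf{d}s}=\lambda U_{\cdot\alpha}^{-1}(Y-Y_0)W_{\cdot\alpha}=\lambda\big(U_{\cdot\alpha}^{-1}ZU_{\cdot\alpha}\big)\Xi_{\cdot\alpha}$, i.e.~(\ref{EqFormalLinODEFactorization}). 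For $v.)$ I would set $V:=e^{\lambda I_\alpha(Y)}=\sum_{n\geqslant 0}\frac{\lambda^n}{n!}I_\alpha(Y)^n$, a well-defined element of the right algebra because the $n$-th summand carries $\lambda^n$; since $I_\alpha(Y)(\alpha)=0$ one has $V(\alpha)=1$, and because $\frac{\mathsf{d}}{\mathsf{d}s}$ is a derivation with $\frac{\mathsf{d}I_\alpha(Y)}{\mathsf{d}s}=Y$, the hypothesis $YI_\alpha(Y)=I_\alpha(Y)Y$ lets one collapse the $n$ summands of $\frac{\mathsf{d}}{\mathsf{d}s}I_\alpha(Y)^n$ into $nYI_\alpha(Y)^{n-1}$, giving $\frac{\mathsf{d}V}{\mathsf{d}s}=\lambda Y V$; uniqueness yields $V=W_{\cdot\alpha}$.

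The only genuinely delicate point I anticipate is the bookkeeping in $iii.)$: keeping the singular sets straight under composition and making sure the chain rule is invoked inside the correct algebras $\mathcal{C}^\infty_{D'\cup\theta|^{-1}(D)}([a',b'],\mathbb{C})$ and their $\mathcal{A}[[\lambda]]$-versions. The accompanying routine checks — that inversion (part $iv.)$) and the exponential series (part $v.)$) stay inside the continuous piecewise $\mathcal{C}^\infty$ class — are handled by Theorem~\ref{TPiecewiseFundTheoCalc} together with the $\lambda$-adic formalism, so no new ideas should be required.
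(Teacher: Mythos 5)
Your proof is correct and follows essentially the same route as the paper's own sketch: existence/uniqueness from the geometric-series inversion of $\mathrm{id}-\lambda I_\alpha\circ L_Y$, the cocycle law via the uniqueness principle with reference point $\beta$, reparametrization via the chain rule, factorization by differentiating $U_{\cdot\alpha}^{-1}W_{\cdot\alpha}$, and the commuting case by term-by-term differentiation of the exponential series. The only difference is cosmetic — you derive (\ref{EqFormalLinODEGenInitCond}) by noting that right multiplication by the constant $\omega_\alpha$ commutes with $I_\alpha\circ L_Y$, whereas the paper reads it off the iterated-integral formula (\ref{EqFormalLinODEIteratedIntegrals}); both are valid and equivalent.
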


\noindent \textbf{Sketch of the proof}: $i.)$ Existence and
uniqueness follow from the considerations in (\ref{EqFormalLinODESolutionIntEq}), and (\ref{EqFormalLinODEGenInitCond}) can be read off (\ref{EqFormalLinODEIteratedIntegrals}). \\
$ii.)$ Again by (\ref{EqFormalLinODESolutionIntEq})
and (\ref{EqFormalLinODEIteratedIntegrals}) it is immediate
that $W_{\cdot \alpha}$ is a formal series in the associative unital algebra $\Big(\mathcal{C}_D^\infty\big([a,b],\mathbb{C}\big)^0
\otimes
\mathcal{A}\Big)[[\lambda]]$
whose constant term is $1$, hence it is obviously invertible by a similar geometric series argument.
The first equation of (\ref{EqFormalLinODEGroupoidProp}) is part of the definition (\ref{EqFormalLinODEDefinitionOfW}). For the second note that $W_{\cdot \beta}$
and $W_{\cdot \alpha}$ satisfy the same formal linear ODE with initial condition (at $\beta$) $1$ and $W_{\beta\alpha}$, respectively. 
 Hence, by (\ref{EqFormalLinODEGenInitCond}) we get
$W_{\cdot \alpha}=W_{\cdot\beta}W_{\beta\alpha}$
which gives the second equation of (\ref{EqFormalLinODEGroupoidProp}) upon choosing $s=\gamma$. The third equation of (\ref{EqFormalLinODEGroupoidProp}) follows from the first and the second upon setting $\alpha=\gamma$.\\
$iii.)$ Equation (\ref{EqFormalLinODEReparam})  is an easy consequence of the chain rule
and equation (\ref{EqFormalLinODEDefinition}). The last statement follows either directly from the iterated integral form (\ref{EqFormalLinODEIteratedIntegrals})
or by choosing --assuming that $\alpha\leqslant \beta$ without loss of generality-- $[a',b']=[\alpha,\beta]$
and $\theta:[\alpha,\beta]\to [a,b]$ the canonical injection.\\
$iv.)$ Using the well-known formula 
$\frac{\mathsf{d}(U_{\cdot\alpha}^{-1})}{\mathsf{d}s}
=-U_{\cdot\alpha}^{-1}
\frac{\mathsf{d}U_{\cdot\alpha}}{\mathsf{d}s}
U_{\cdot\alpha}^{-1}$ gives the result upon differentiating $\Xi_{\cdot\alpha}=U_{\cdot\alpha}^{-1}W_{\cdot\alpha}$.\\
$v.)$ For each positive integer $r$, differentiating the $r$th power of $I_\alpha(Y)^r$ we get $rYI_\alpha(Y)^{r-1}$ thanks to the hypothesis $YI_\alpha(Y)=I_\alpha(Y)Y$ which shows the result when differentiating the exponential series
$e^{\lambda I_\alpha(Y)}=
\sum_{r=0}^\infty\frac{\lambda^r}{r!}I_\alpha(Y)^r$.

\subsection{Norms and limits}
 \label{SubSec Norms and limits}
%\addcontentsline{toc}{subsection}
%{Filtrations, norms and limits}

We shall have to discuss limits of solutions of formal
linear ODE's given by elements $Y$ of
the algebra (\ref{EqFormalLinODEPiecewiseTensorAofLambda}) depending on a parameter $\boldsymbol\delta$ in some
subset $\mathbf{J}\subset \mathbb{R}^\ell$, 
and we are interested in `limits' when $\boldsymbol\delta\to \boldsymbol\delta_0$
where $\boldsymbol\delta_0$ is an accumulation point of
$\mathbf{J}$.
Since the complex associative unital algebra $\mathcal{A}$ is completely arbitrary, we have to include a discussion to
make sense of these limits.

Recall that a \emph{norm} on a complex vector-space $E$ is
a map $||~||:E\to \mathbb{R}$ taking only non-negative values, satisfying $||\xi||=0$ iff $\xi=0$ for all $\xi\in E$, satisfying $||z\xi||=|z|~||\xi||$ for all
$z\in\mathbb{C}$ and $\xi\in E$, and satisfying the triangular inequality $||\xi+\eta||\leqslant ||\xi||+||\eta||$
for all $\xi,\eta\in E$. Every vector subspace $V\subset E$
is automatically a normed space with respect to the restriction of the norm to $V$. It is easy to see that every complex vector space has at least one norm: in fact, let $\mathsf{B}\coloneqq\left(e_i\right)_{i\in\mathfrak{S}}$ be a basis for $E$ labeled by the set $\mathfrak{S}$: every vector $\xi\in E$ is a linear combination $\xi=\sum_{i\in\mathfrak{S}}x_ie_i$ where all the $x_i\in\mathbb{C}$ and the subset of $\mathfrak{S}$ for which $x_i\neq 0$ is finite. Define
\begin{equation}\label{EqNormsLimDefSupNorm}
  ||\xi||_\mathsf{B}=||\xi||\coloneqq
    \max\big\{|x_i|~\big|~i\in\mathfrak{S}\big\},
\end{equation}
and the norm properties are easy to check directly.
$E$ can also be considered as a subspace of the Banach
space of all bounded functions $\mathsf{B}\to \mathbb{C}$
equipped with the $\sup$-norm, but this remark is not necessary for the elementary treatment presented here.\\
Having a norm allows us to define \emph{limits}: more precisely,
for a given positive integer $\ell$
let  $\mathbf{J}\subset \mathbb{R}^\ell$ be  a non-empty set, and let
$\mathrm{Fun}(\mathbf{J},E)$ denote the complex vector space of all
maps $\mathbf{J}\to E$. Fix a norm $||~||$ on $E$, some norm
$|~|$ on
$\mathbb{R}^\ell$, and a function
$f\in \mathrm{Fun}(\mathbf{J},E)$. Let $\boldsymbol\delta_0$ be an accumulation point of $\mathbf{J}$. For any $\zeta\in E$ recall the following definition of a limit:
\begin{eqnarray}
\lim_{\boldsymbol\delta\to \boldsymbol\delta_0} f(\boldsymbol\delta)=\zeta ~\mathrm{w.r.t.~}||~||
  & ~~\mathrm{iff}~~ &
\forall~\epsilon\in \mathbb{R},~\epsilon>0~\exists~
\epsilon'\in\mathbb{R},~\epsilon'>0:
~\forall~\boldsymbol\delta\in \mathbf{J}:~~\nonumber \\ 
& &\mathrm{if}~|\boldsymbol\delta-\boldsymbol\delta_0|<\epsilon'~~\mathrm{then}~~
||f(\boldsymbol\delta)-\zeta||<\epsilon. \label{EqNormsLimitDefLimit}
\end{eqnarray}
As usual, if the limit exists, it is unique. However,
the existence of the limit a priori depends on the norms
$|~|$ and $||~||$ used. Recall that two norms $||~||$ and
$||~||'$ on $E$ are called
\emph{equivalent}  if: 
\begin{equation*}
\exists~C_1,C_2\in\mathbb{R},~C_1,C_2>0~~
\forall~\xi\in E:~~~C_1||\xi||\leqslant ||\xi||'
\leqslant C_2||\xi||.
\end{equation*}
Hence, if the norms $||~||$ and $||~||'$ on $E$ are equivalent and
if the norms $|~|$ and $|~|'$ on $\mathbb{R}^\ell$ are equivalent,
it is easy to see that in (\ref{EqNormsLimitDefLimit})
the statement using $||~||$ and $|~|$ is equivalent
to the one using $||~||'$ and $|~|'$: in this case the
limit does not depend on the norms used.
In general, two given norms on a complex vector space are not equivalent; however,
in the very important case of a \emph{finite-dimensional} vector space it is well-known that any two norms are equivalent, see e.g.~\cite[p.145, Thm.4.3.]{Lan97}. This always applies to the norms $|~|$ and $|~|'$ on $\mathbb{R}^\ell$ in statement (\ref{EqNormsLimitDefLimit}), but in general not
to the norms $||~||$ and $||~||'$ on $E$. In this manuscript the relevant limits will always `take place' in
finite-dimensional subspaces of the algebra $\mathcal{A}=E$
thereby insuring that the computation of limits will not
depend on the norms chosen.
More precisely, consider the following algebra
\begin{equation}\label{EqNormsLimAlgebraFunOtimesAOfLambda}
   \Big(\mathrm{Fun}\big(\mathbf{J},\mathbb{C}\big)\otimes \mathcal{A}\Big)[[\lambda]].
\end{equation}
Each element $F$ of this algebra is a formal power series
$F=\sum_{r=0}^\infty \lambda^rF_r$ where each component
$F_r$ is an element of $\mathrm{Fun}\big(\mathbf{J},\mathbb{C}\big)\otimes \mathcal{A}$,
hence can be considered as a map $\mathbf{J}\to \mathcal{A}$, and choosing a norm $||~||$ on $\mathcal{A}$ and a norm 
$|~|$ on $\mathbb{R}^\ell\supset \mathbf{J}$ --among all the equivalent ones-- we can consider limits $F_r\to \xi_r\in
\mathcal{A}$ for each non-negative integer $r$ seperately
in the sense of definition 
(\ref{EqNormsLimitDefLimit}). For any $\xi=\sum_{r=0}^\infty\lambda^r\xi_r\in
\mathcal{A}[[\lambda]]$ we thus define limits componentwise
for each $F$ in the algebra
(\ref{EqNormsLimAlgebraFunOtimesAOfLambda}):
\begin{equation}\label{EqNormsLimDefLimFormPowerSer}
   \lim_{\boldsymbol\delta\to \boldsymbol\delta_0} F(\boldsymbol\delta)=\xi ~\mathrm{w.r.t.~}||~||
   ~~~ \mathrm{iff} ~~~
   \forall~r\in\mathbb{N}:~~
   \lim_{\boldsymbol\delta\to \boldsymbol\delta_0} F_r(\boldsymbol\delta)=\xi_r~\mathrm{w.r.t.~}||~||.
\end{equation}
We enumerate some important properties of limits in the
algebra (\ref{EqNormsLimAlgebraFunOtimesAOfLambda}) in the
following
\begin{proposition}\label{PNormsLimLimitRules}
  Let $\mathbf{J}\subset \mathbb{R}^\ell$ as above, let $\boldsymbol\delta_0\in\mathbb{R}^\ell$ be an accumulation point of $\mathbf{J}$, and let $F=\sum_{r=0}^\infty \lambda^rF_r$ be
  an element of $\Big(\mathrm{Fun}\big(\mathbf{J},\mathbb{C}\big)\otimes \mathcal{A}\Big)[[\lambda]]$.
  \begin{enumerate}
  	\item For each $r\in\mathbb{N}$ there is a finite-dimensional subspace $V^{(F)}_r=V_r$ 
  	of $\mathcal{A}$ such that for each $r\in\mathbb{N}$
  	\begin{equation}\label{EqNormsLimExFiniteDimSubspace}
  	  F_r\in \mathrm{Fun}\big(\mathbf{J},\mathbb{C}\big)\otimes
  	    V_r.
  	\end{equation}
  	\item Let $||~||$ and $||~||'$ be two norms on the complex vector space $\mathcal{A}$, 
  	%For each nonnegative integer $r$ let $V'_r$ another %finite-dimensional subspace of $\mathcal{A}$
  	%such that eqn (\ref{EqNormsLimExFiniteDimSubspace})
  	%is satisfied, 
  	and let $\xi=\sum_{r=0}^\infty\lambda^r\xi_r\in
  	\mathcal{A}[[\lambda]]$. Then the statement
  	\[
  	   \lim_{\boldsymbol\delta\to \boldsymbol\delta_0}F(\boldsymbol\delta)=\xi ~\mathrm{w.r.t.~}||~||
  	   ~~~\mathrm{is~equivalent~to~}~~~
  	   \lim_{\boldsymbol\delta\to \boldsymbol\delta_0}F(\boldsymbol\delta)=\xi ~\mathrm{w.r.t.~}||~||',
  	\]
  	hence limits in the algebra (\ref{EqNormsLimAlgebraFunOtimesAOfLambda}) do not depend on the norms used.
  	\item Let $\tilde{F}=\sum_{r=0}^\infty \lambda^r\tilde{F}_r$ be
  	another element of $\Big(\mathrm{Fun}\big(\mathbf{J},\mathbb{C}\big)\otimes \mathcal{A}\Big)[[\lambda]]$, and let 
  	$\tilde{\xi}\in\mathcal{A}[[\lambda]]$ such that
  	$\lim_{\boldsymbol\delta\to \boldsymbol\delta_0}\tilde{F}(\boldsymbol\delta)=\tilde{\xi}$ with respect to any norm on $\mathcal{A}$. Then for all $\alpha,\beta\in\mathbb{C}$:
  	\begin{equation}
  	\label{EqNormsLimLimitSumsAreSumsOfLimits}
  	\lim_{\boldsymbol\delta\to \boldsymbol\delta_0}
  	\Big(\alpha F(\boldsymbol\delta)+\beta \tilde{F}(\boldsymbol\delta)\Big)= \alpha\lim_{\boldsymbol\delta\to \boldsymbol\delta_0}F(\boldsymbol\delta)
  	+\beta \lim_{\boldsymbol\delta\to \boldsymbol\delta_0}\tilde{F}(\boldsymbol\delta)
  	=\alpha\xi+\beta\tilde{\xi}
  	\end{equation}
and
  	\begin{equation}
  	\label{EqNormsLimLimitProdAreProdOfLimits}
  	 \lim_{\boldsymbol\delta\to \boldsymbol\delta_0}\Big(F(\boldsymbol\delta)\tilde{F}(\boldsymbol\delta)\Big)= 
  	  \left(\lim_{\boldsymbol\delta\to \boldsymbol\delta_0}F(\boldsymbol\delta)\right)
  	   \left(\lim_{\boldsymbol\delta\to \boldsymbol\delta_0}\tilde{F}(\boldsymbol\delta)\right)
  	   =\xi\tilde{\xi}.
  	\end{equation}
  \end{enumerate}
\end{proposition}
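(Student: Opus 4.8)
The plan is to prove the three statements in order, reducing everything to the finite-dimensional situation, where any two norms are equivalent and every bilinear map is automatically continuous. For statement $i.)$ I would merely unwind the definition of the algebraic tensor product over $\mathbb{C}$: each component $F_r$ lies in $\mathrm{Fun}\big(\mathbf{J},\mathbb{C}\big)\otimes\mathcal{A}$, hence is a \emph{finite} sum $F_r=\sum_{k=1}^{N_r}f_k^{(r)}\otimes A_k^{(r)}$ with $f_k^{(r)}\in\mathrm{Fun}\big(\mathbf{J},\mathbb{C}\big)$ and $A_k^{(r)}\in\mathcal{A}$. Setting $V_r:=\mathrm{span}_{\mathbb{C}}\{A_1^{(r)},\dots,A_{N_r}^{(r)}\}$ gives a finite-dimensional subspace of $\mathcal{A}$ satisfying \eqref{EqNormsLimExFiniteDimSubspace}; equivalently, the map $\boldsymbol\delta\mapsto F_r(\boldsymbol\delta)$ takes all of its values in $V_r$.

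For $ii.)$, by the componentwise definition \eqref{EqNormsLimDefLimFormPowerSer} it suffices to fix $r$ and to show that $\lim_{\boldsymbol\delta\to\boldsymbol\delta_0}F_r(\boldsymbol\delta)=\xi_r$ with respect to $||~||$ is equivalent to the same statement with respect to $||~||'$. Here I would enlarge $V_r$ to the still finite-dimensional subspace $V_r':=V_r+\mathbb{C}\xi_r$; by $i.)$ every value $F_r(\boldsymbol\delta)$, as well as the candidate limit $\xi_r$, lies in $V_r'$, so the whole statement \eqref{EqNormsLimitDefLimit} only involves the restrictions of $||~||$ and $||~||'$ to $V_r'$. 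These restrictions are equivalent norms because $V_r'$ is finite-dimensional (cf.\ \cite[p.145, Thm.~4.3]{Lan97}), and the $\epsilon$-$\epsilon'$ condition then transfers verbatim between them; the auxiliary norm on $\mathbb{R}^\ell$ is the same on both sides and plays no role. This yields the equivalence and, in particular, shows that limits in the algebra \eqref{EqNormsLimAlgebraFunOtimesAOfLambda} are well defined independently of the chosen norm on $\mathcal{A}$.

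For $iii.)$ I may now fix one arbitrary norm $||~||$ on $\mathcal{A}$ once and for all. The additivity \eqref{EqNormsLimLimitSumsAreSumsOfLimits} is immediate from the componentwise definition together with the standard fact that $\lim(\alpha F_r+\beta\tilde F_r)=\alpha\lim F_r+\beta\lim\tilde F_r$ in a normed space. For the product \eqref{EqNormsLimLimitProdAreProdOfLimits} I would expand the Cauchy product $(F\tilde F)_r=\sum_{p+q=r}F_p\tilde F_q$ and, using the additivity just proved, reduce to showing $\lim_{\boldsymbol\delta\to\boldsymbol\delta_0}\big(F_p(\boldsymbol\delta)\tilde F_q(\boldsymbol\delta)\big)=\xi_p\tilde\xi_q$ for each fixed pair $(p,q)$. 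By $i.)$, $F_p$ has values in a finite-dimensional $V_p$ and $\tilde F_q$ in a finite-dimensional $\tilde V_q$, so the restriction of the multiplication of $\mathcal{A}$ to $V_p\times\tilde V_q$ is a bilinear map into the finite-dimensional subspace $W$ spanned by the products of basis vectors of $V_p$ and $\tilde V_q$; written out in bases it is bounded, i.e.\ $\|AB\|\leqslant C\,\|A\|\,\|B\|$ for all $A\in V_p$, $B\in\tilde V_q$ and some constant $C>0$. The classical telescoping estimate
\[
  \| F_p\tilde F_q-\xi_p\tilde\xi_q\|
   \leqslant C\,\| F_p\|\,\| \tilde F_q-\tilde\xi_q\|
    +C\,\| F_p-\xi_p\|\,\| \tilde\xi_q\|,
\]
combined with the boundedness of $\| F_p(\boldsymbol\delta)\|$ for $\boldsymbol\delta$ near $\boldsymbol\delta_0$ (it converges to $\xi_p$), then yields the claim, and summing over $p+q=r$ finishes the argument.

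The only genuine subtlety, and the step I would watch most carefully, is exactly this product rule: since $\mathcal{A}$ carries no compatible norm one cannot invoke submultiplicativity globally, so the argument has to be localized to the finite-dimensional subspaces provided by $i.)$, on which every bilinear map is automatically continuous. Everything else is a routine transcription of standard facts about limits in finite-dimensional normed spaces.
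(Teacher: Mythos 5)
Your proposal is correct and follows essentially the same approach as the paper: reduce to fixed $\lambda$-components, localize to the finite-dimensional subspaces coming from the algebraic tensor product, invoke equivalence of norms there for $ii.)$, and invoke boundedness of the restricted multiplication for the product rule in $iii.)$. The only cosmetic differences are that the paper treats the Cauchy product of a fixed order $r$ directly inside $V_{(r)}\tilde V_{(r)}$ rather than reducing first to individual pairs $(p,q)$, and it phrases the last step as continuity of the restricted bilinear map rather than spelling out the telescoping estimate.
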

\begin{proof}
$i.)$ By definition of the algebraic tensor product each $F_r$ is a finite sum $F_{r1}\otimes A_{r1}+
\cdots +F_{rN_r}\otimes A_{rN_r}$ where $N_r$ is a non-negative integer, $F_{r1},\ldots,F_{rN_r}$ are functions $J\to \mathbb{C}$, and $A_{r1},\ldots,A_{rN_r}$
are elements of $\mathcal{A}$. Defining $V_r$ as the complex
linear hull of $A_{r1},\ldots,A_{rN_r}$ proves the statement.\\
$ii.)$ We shall prove a slightly more general statement: for each non-negative integer $r$ let $V'_r$ be another finite-dimensional subspace of $\mathcal{A}$
such that (\ref{EqNormsLimExFiniteDimSubspace})
is satisfied. Then each $F_r$ clearly is an element
of $\mathrm{Fun}\big(J,\mathbb{C}\big)\otimes
\big(V_r\cap V'_r)$. We can enlarge each $V_r,V'_r$ by at most one dimension to include $\xi_r$. From the definition of
the limit (\ref{EqNormsLimitDefLimit}), it is clear that it suffices to look at the restrictions of the norms
$||~||$ and $||~||'$ on $\mathcal{A}$ to the finite-dimensional subspaces
$V_r$, $V'_r$ and $V_r\cap V'_r$: the restriction of
the norm $||~||$ to $V_r\cap V'_r$ is equivalent to the restriction of the norm $||~||'$ thanks to the finite
dimension of $V_r\cap V'_r$ which shows that the limit
statements
w.r.t.~the norms $||~||$ and $||~||'$ are equivalent.\\
$iii.)$ The first equation (\ref{EqNormsLimLimitSumsAreSumsOfLimits}) is the usual statement that in any normed vector space addition and
scalar multiplication are continuous. All the limits do
not depend on the norms (take for instance for each
$r\in\mathbb{N}$ the finite-dimensional vector space $V_r+\tilde{V}_r$) thanks to the preceding statement
$ii.)$. The second statement 
(\ref{EqNormsLimLimitProdAreProdOfLimits}) is slightly more involved since \textbf{the normed vector space
$\big(\mathcal{A},||~||\big)$ is in general 
NOT a normed algebra} in the sense that $||AA'||\leqslant ||A||~||A'||$
for all $A,A'\in\mathcal{A}$. We shall first prove an intermediate estimate: for each $r\in\mathbb{N}$ pick
a finite-dimensional subspace $\tilde{V}_r$ such that
$\tilde{F}_r\in \mathrm{Fun}(J,\mathbb{C})\otimes \tilde{V}_r$ (which is possible thanks to statement
$i.)$). We have for each $r\in\mathbb{N}$
\begin{eqnarray}
   \left(F(\boldsymbol\delta)\tilde{F}(\boldsymbol\delta)\right)_r
   =\sum_{u=0}^rF_u(\boldsymbol\delta)\tilde{F}_{r-u}(\boldsymbol\delta) \in
     \sum_{u=0}^r V_u\tilde{V}_{r-u}
     \subset
     \big(V_0+\cdot \cdot+V_r\big)
     \big(\tilde{V}_0+\cdot \cdot +\tilde{V}_r\big)
     =: V_{(r)}\tilde{V}_{(r)}.\nonumber \\
   \label{EqNormsLimFTildeFRInFiniteDim}   
\end{eqnarray}
Clearly, the subspaces $V_{(r)}$ and $\tilde{V}_{(r)}$ 
of $\mathcal{A}$ are finite-dimensional. Consider the
restriction of the algebra multiplication $\mu:
\mathcal{A}\otimes \mathcal{A}\to \mathcal{A}$ to
the finite-dimensional vector space $V_{(r)}\otimes\tilde{V}_{(r)}$: the image
$\mu\left(V_{(r)}\otimes\tilde{V}_{(r)}\right)=
V_{(r)}\tilde{V}_{(r)}$ is again a finite-dimensional
subspace of $\mathcal{A}$. Choosing a basis $e_1,\ldots,e_M$ of the finite-dimensional subspace
$V_{(r)}+\tilde{V}_{(r)}+V_{(r)}\tilde{V}_{(r)}$ of 
$\mathcal{A}$ which is compatible with the subspaces,
$V_{(r)}$, $\tilde{V}_{(r)}$, and $V_{(r)}\tilde{V}_{(r)}$,
expanding the elements $A\in V_{(r)}$,
$\tilde{A}\in\tilde{V}_{(r)}$ in that basis, and using
the norm $||~||$ as in (\ref{EqNormsLimDefSupNorm})
by extending the chosen basis to all of $\mathcal{A}$
we get the intermediate estimate
\begin{equation*}
 \exists~C_{V_{(r)}\tilde{V}_{(r)}}\in\mathbb{R},
  C_{V_{(r)}\tilde{V}_{(r)}}\geqslant 0~~
 \forall~A\in V_{(r)}~\forall~
 \tilde{A}\in \tilde{V}_{(r)}:~~~
 ||A\tilde{A}||\leqslant C_{V_{(r)}\tilde{V}_{(r)}}
           ||A||~||\tilde{A}||.
\end{equation*}
This shows that the restriction of the multiplication
to $V_{(r)}\otimes\tilde{V}_{(r)}$ is a continuous map
onto its image $V_{(r)}\tilde{V}_{(r)}$: this fact together with (\ref{EqNormsLimFTildeFRInFiniteDim})
proves the statement (\ref{EqNormsLimLimitProdAreProdOfLimits}).
\end{proof}

For the rest of this Section we choose the maximum norm $|~|$ on $\mathbb{R}^\ell$, see (\ref{EqNormsLimDefSupNorm}) w.r.t.~the canonical basis, and suppose that
\begin{equation}\label{EqNormsLimJWithZeroAccumPoint}
    \emptyset \neq \mathbf{J} \subset 
    \big\{\boldsymbol\delta\in \mathbb{R}^\ell\setminus \{0\}~\big|~|\boldsymbol\delta|\leqslant 1/4\big\}
    ~~~\mathrm{and}~~~0~
    \mathrm{is~an~accumulation~point~of~J}.
\end{equation}
We shall now distinguish three important subsets
$\mathcal{L}$, $\mathcal{B}$, and $\mathcal{H}$
of the algebra (\ref{EqNormsLimAlgebraFunOtimesAOfLambda}):
we shall refer to them as the set of all \textbf{at most logarithmically divergent}, \textbf{bounded} and
\textbf{harmless elements}, respectively:  for an element
$F=\sum_{r=0}^\infty F_r\lambda^r$ of
$\Big(\mathrm{Fun}\left(\mathbf{J},\mathbb{C}\right)
\otimes \mathcal{A}\Big)[[\lambda]]$ we say 
\begin{equation}\label{EqNormsLimDefRBH}
 \begin{array}{cccccccl}
    F \in \mathcal{L} & \mathrm{iff}
    & \forall r\in\mathbb{N}& \exists C_r,\alpha_r\in\mathbb{R},~C_r\geqslant 0,~\alpha_r>0 &\forall 
    \boldsymbol\delta\in \mathbf{J}: & ||F_r(\boldsymbol\delta)||&\leqslant& C_r|\ln(|\boldsymbol\delta|)|^{\alpha_r},\\
    F\in  \mathcal{B} &\mathrm{iff}
    & \forall r\in\mathbb{N}&
    \exists C_r\in\mathbb{R},~C_r\geqslant 0, 
    &\forall \boldsymbol\delta\in \mathbf{J}:&
      ||F_r(\boldsymbol\delta)||&\leqslant &C_r,\\
    F \in \mathcal{H} &\mathrm{iff}
    & \forall r\in\mathbb{N}&
    \exists C_r,\beta_r\in\mathbb{R},~C_r\geqslant 0,~\beta_r>0~& \forall 
     \boldsymbol\delta\in \mathbf{J}: &||F_r(\boldsymbol\delta)||&\leqslant& C_r |\boldsymbol\delta|^{\beta_r}.
 \end{array}
\end{equation}
In all the subsequent computations in this paper all the
terms which we shall deal with are at most logarithmically divergent in the above sense.
Let 
$\mathcal{G}
:=1+\lambda\Big(\mathrm{Fun}\left(J,\mathbb{C}\right)
\otimes \mathcal{A}\Big)[[\lambda]]$, and define the following subsets
\begin{equation}\label{EqFilNormsDefGroupsGRGBGH}
 \mathcal{G}_{\mathcal{L}}:=1+\lambda \mathcal{L},~~
 \mathcal{G}_{\mathcal{B}}:=1+\lambda \mathcal{B},~~
 \mathcal{G}_{\mathcal{H}}:=1+\lambda \mathcal{H}.
\end{equation}
and refer to them as the \emph{at most logarithmically divergent, bounded and harmless subgroups} of the group $\mathcal{G}$, respectively. These terms become clear in
the following
\begin{proposition}\label{PFilNormsRBHProperties}
 With the above hypotheses we have the following statements for the algebra
 (\ref{EqNormsLimAlgebraFunOtimesAOfLambda}):
 \begin{enumerate}
 	\item The definition (\ref{EqNormsLimDefRBH})
 	 does not depend on the chosen norm.
 	\item $\mathcal{L}$ and $\mathcal{B}$ are unital subalgebras over $\mathbb{C}[[\lambda]]$ of
 	$\Big(\mathrm{Fun}\left(\mathbf{J},\mathbb{C}\right)
 	\otimes \mathcal{A}\Big)[[\lambda]]$,
 	and $\mathcal{H}$ is a two-sided ideal of
 	$\mathcal{L}$. There are the following inclusions:
 	\begin{equation}\label{EqFilNormsRSupBSupB}
 	  \mathcal{L}\supset \mathcal{B}\supset
 	  \mathcal{H}.
 	\end{equation}
 	\item For all $H\in\mathcal{H}$:
 	 $\lim_{\boldsymbol\delta\to 0}H(\boldsymbol\delta)=0$.
 	\item $\mathcal{G}$ is a subgroup of the group of all invertible elements of 	$\Big(\mathrm{Fun}\left(\mathbf{J},\mathbb{C}\right)
 	\otimes \mathcal{A}\Big)[[\lambda]]$,
 	and $\mathcal{G}_\mathcal{L}\supset 
 	\mathcal{G}_\mathcal{B}\supset 
 	\mathcal{G}_\mathcal{H}$ are subgroups of
 	$\mathcal{G}$, where $\mathcal{G}_\mathcal{H}$
 	is a normal subgroup of $\mathcal{G}_\mathcal{L}$, 
 	i.e.~it is stable by conjugations with all elements
 	in $\mathcal{G}_\mathcal{L}$.
 	\item For all $\Psi\in \mathcal{G}_\mathcal{H}$:~~
 	  $\lim_{\boldsymbol\delta\to 0}\Psi(\boldsymbol\delta)=1$.
 \end{enumerate}
\end{proposition}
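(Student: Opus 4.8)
The plan is to take the five statements in order, the whole argument resting on two elementary facts that are already available: every coefficient $F_r$ lies in $\mathrm{Fun}(\mathbf{J},\mathbb{C})\otimes V_r$ for a \emph{finite-dimensional} subspace $V_r\subset\mathcal{A}$ by Proposition \ref{PNormsLimLimitRules}(i), so all norm estimates may be read inside finite-dimensional subspaces; and, by (\ref{EqNormsLimJWithZeroAccumPoint}), one has both $|\boldsymbol\delta|\leqslant 1$ and $|\ln(|\boldsymbol\delta|)|\geqslant\ln 4>1$ for every $\boldsymbol\delta\in\mathbf{J}$. For statement (i), given two norms on $\mathcal{A}$ I would restrict each to the span $V_r$ (enlarged if needed to contain the relevant limit); by equivalence of norms in finite dimension the bounds in (\ref{EqNormsLimDefRBH}) pass from one norm to the other at the cost of multiplying $C_r$ by a constant, leaving the exponents $\alpha_r,\beta_r$ untouched, so membership in $\mathcal{L},\mathcal{B},\mathcal{H}$ is norm-independent.

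For (ii) the inclusions (\ref{EqFilNormsRSupBSupB}) are immediate: $|\boldsymbol\delta|^{\beta_r}\leqslant 1$ gives $\mathcal{H}\subset\mathcal{B}$, and $|\ln(|\boldsymbol\delta|)|>1$ shows a constant bound is also a bound of the form $C_r|\ln(|\boldsymbol\delta|)|$, giving $\mathcal{B}\subset\mathcal{L}$. Closure of $\mathcal{L},\mathcal{B}$ under $\mathbb{C}[[\lambda]]$-linear combinations is routine, the only point being that a sum of bounds $|\ln(|\boldsymbol\delta|)|^{\alpha}$ and $|\ln(|\boldsymbol\delta|)|^{\alpha'}$ is $\leqslant|\ln(|\boldsymbol\delta|)|^{\max(\alpha,\alpha')}$ up to a constant, again because $|\ln(|\boldsymbol\delta|)|>1$. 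For products I would reuse the \emph{intermediate estimate} from the proof of Proposition \ref{PNormsLimLimitRules}(iii): since $\mathcal{A}$ need not be a normed algebra, one collects, for each $r$, the finitely many subspaces occurring in $(F\tilde F)_r=\sum_{u=0}^rF_u\tilde F_{r-u}$ into finite-dimensional $V_{(r)},\tilde V_{(r)}$ and obtains a constant $C_{V_{(r)}\tilde V_{(r)}}$ with $\|A\tilde A\|\leqslant C_{V_{(r)}\tilde V_{(r)}}\|A\|\,\|\tilde A\|$ there; then a log bound times a log bound is a log bound (add exponents, take the max over $u$) and a constant times a constant is a constant, so $\mathcal{L},\mathcal{B}$ are unital subalgebras ($1$ trivially satisfies the estimates, while $1\notin\mathcal{H}$). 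For $\mathcal{H}$ being a two-sided ideal of $\mathcal{L}$ the new input is that logarithms are dominated by powers: for $x\in\,]0,1/4]$ and $\alpha,\beta>0$ the function $x\mapsto|\ln x|^{\alpha}x^{\beta/2}$ tends to $0$ as $x\downarrow 0$ and is continuous, hence bounded, so $|\ln x|^{\alpha}x^{\beta}\leqslant C\,x^{\beta/2}$; combined with the intermediate estimate this turns a log bound times a power bound into a power bound, so $FH,HF\in\mathcal{H}$ whenever $F\in\mathcal{L}$, $H\in\mathcal{H}$.

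Statement (iii) is then immediate: for $H\in\mathcal{H}$ each component satisfies $\|H_r(\boldsymbol\delta)\|\leqslant C_r|\boldsymbol\delta|^{\beta_r}\to 0$ as $\boldsymbol\delta\to 0$ since $\beta_r>0$, so $\lim_{\boldsymbol\delta\to 0}H_r(\boldsymbol\delta)=0$ for every $r$, hence $\lim_{\boldsymbol\delta\to 0}H(\boldsymbol\delta)=0$ by the componentwise definition (\ref{EqNormsLimDefLimFormPowerSer}) (and this is norm-independent by (i)). For (iv), $\mathcal{G}$ consists of units because $1+\lambda F$ has the $\lambda$-adically convergent inverse $\sum_{k\geqslant 0}(-\lambda F)^{k}$, and one checks directly that $\mathcal{G}$ is closed under products and inverses. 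For $\mathcal{G}_\mathcal{L},\mathcal{G}_\mathcal{B},\mathcal{G}_\mathcal{H}$ (see (\ref{EqFilNormsDefGroupsGRGBGH})) one needs each of $\mathcal{L},\mathcal{B},\mathcal{H}$ to be closed under the $\lambda$-adically convergent sums in $(1+\lambda F)^{-1}=1+\lambda\sum_{k\geqslant 1}(-1)^{k}\lambda^{k-1}F^{k}$; this holds since each $\lambda^r$-coefficient of such a sum is a \emph{finite} combination of coefficients already estimated and products stay inside the relevant set by (ii) (for $\mathcal{H}$, use that it is an ideal of $\mathcal{L}\supset\mathcal{H}$, so $F^{k}\in\mathcal{H}$ for $k\geqslant 1$). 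The inclusions $\mathcal{G}_\mathcal{L}\supset\mathcal{G}_\mathcal{B}\supset\mathcal{G}_\mathcal{H}$ follow from (\ref{EqFilNormsRSupBSupB}); for normality, given $g=1+\lambda F\in\mathcal{G}_\mathcal{L}$ and $1+\lambda H\in\mathcal{G}_\mathcal{H}$ one computes $g(1+\lambda H)g^{-1}=1+\lambda\,gHg^{-1}$ with $g,g^{-1}\in\mathcal{L}$, so $gHg^{-1}\in\mathcal{H}$ by the ideal property. Finally (v) follows from (iii) together with Proposition \ref{PNormsLimLimitRules}(iii): writing $\Psi=1+\lambda H$ with $H\in\mathcal{H}$, the component $(\lambda H)_r=H_{r-1}$ tends to $0$ for every $r$, hence $\lim_{\boldsymbol\delta\to 0}\Psi(\boldsymbol\delta)=1$.

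The step I expect to be the main obstacle is the product/ideal part of (ii): because $\mathcal{A}$ carries no submultiplicative norm, one cannot simply multiply the scalar bounds and must route every estimate on a product through the finite-dimensional intermediate estimate borrowed from the proof of Proposition \ref{PNormsLimLimitRules}(iii); after that, the only genuinely analytic ingredient is the boundedness of $|\ln x|^{\alpha}x^{\beta}$ near $0$, which is exactly what lets $\mathcal{H}$ absorb the at-most-logarithmic factors and thus be an ideal of $\mathcal{L}$.
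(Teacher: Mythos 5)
Your proposal is correct and follows essentially the same route as the paper: finite-dimensional subspaces via Proposition \ref{PNormsLimLimitRules}(i) to dispense with norm dependence, the intermediate multiplicative estimate borrowed from the proof of Proposition \ref{PNormsLimLimitRules}(iii) to handle products in the absence of a submultiplicative norm, a domination of logarithms by powers to make $\mathcal{H}$ an ideal, and the geometric series plus the ideal property to get the subgroup and normality statements. The only (cosmetic) divergence is in that domination: you argue that $x\mapsto|\ln x|^{\alpha}x^{\beta/2}$ extends continuously by $0$ to $x=0$ and is hence bounded on $[0,1/4]$, whereas the paper establishes the explicit inequality $|\ln\delta|^{\alpha}\delta^{\beta}\leqslant(2\alpha/\beta)^{\alpha}\delta^{\beta/2}$ by an integral estimate — both are valid, the paper's just names the constant.
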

Before giving the proof of this Proposition we shall recall some elementary inequalities in the following
\begin{lemma} 
	For all $\delta\in~]0,1/4]$ and $\alpha,\beta \in \mathbb{R}$, $\alpha,\beta>0$, we have the following inequalities
	\begin{eqnarray}
	  \delta ~\leqslant ~\frac{1}{2} & \leqslant & 
	  |\ln(\delta)|,  \label{EqFilNormsDeltaLnDeltaIneq} \\
	  |\ln(\delta)|^\alpha \delta^\beta &\leqslant &
	  \left(\frac{2\alpha}{\beta}\right)^\alpha
	  \delta^{\beta/2}.
	  \label{EqFilNormsDeltaTimesLnDeltaIneq}
	\end{eqnarray}
\end{lemma}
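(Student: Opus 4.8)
The plan is to prove both lines by completely elementary one-variable estimates; the only nontrivial input is the elementary inequality $\ln y \leqslant y-1$ for $y>0$ (equivalently $e^{u}\geqslant 1+u$), and no genuine analysis is needed.

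For the first line, the left inequality is immediate from the hypothesis $\delta\leqslant 1/4<1/2$. For the right one I would rewrite $|\ln(\delta)|=\ln(1/\delta)$ and use $1/\delta\geqslant 4$ to get $\ln(1/\delta)\geqslant\ln 4=2\ln 2>1>\tfrac12$; in fact $1/\delta\geqslant e^{1/2}$ would already suffice, and this is a fortiori true. So this line is essentially a remark.

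For the second line, the key observation is a rescaled form of $\ln y\leqslant y-1$: for every $c>0$ and every $x>0$ one has $\ln x=\tfrac1c\ln(x^{c})\leqslant\tfrac1c(x^{c}-1)\leqslant\tfrac1c\,x^{c}$. Taking $x=1/\delta$ gives, for all $\delta\in\,]0,1]$ and all $c>0$, the bound $|\ln(\delta)|=\ln(1/\delta)\leqslant\tfrac1c\,\delta^{-c}$. One then matches the powers of $\delta$ by choosing $c=\beta/(2\alpha)>0$, obtaining $|\ln(\delta)|\leqslant\tfrac{2\alpha}{\beta}\,\delta^{-\beta/(2\alpha)}$. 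Since $\delta\leqslant 1$ forces $\ln(1/\delta)\geqslant 0$, both sides here are nonnegative, so raising to the power $\alpha>0$ and then multiplying by $\delta^{\beta}>0$ yields exactly $|\ln(\delta)|^{\alpha}\delta^{\beta}\leqslant(2\alpha/\beta)^{\alpha}\delta^{\beta/2}$.

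I do not expect any real obstacle here: the only thing to be discovered is the value of the free exponent, and $c=\beta/(2\alpha)$ is forced by the requirement $\delta^{-c\alpha}\cdot\delta^{\beta}=\delta^{\beta/2}$. An equivalent route, for readers who prefer calculus to the logarithm trick, substitutes $t=\ln(1/\delta)\geqslant 0$ and reduces the claim to $t^{\alpha}e^{-\beta t/2}\leqslant(2\alpha/\beta)^{\alpha}$; one differentiation shows the left side is maximized at $t=2\alpha/\beta$ with value $(2\alpha/\beta)^{\alpha}e^{-\alpha}$, which is even smaller than the stated constant. The point of the inequality is that it lets a logarithmic divergence in $|\ln(\delta)|$ be absorbed into any strictly positive power of $\delta$, which is precisely what will make $\mathcal{H}$ a two-sided ideal of $\mathcal{L}$ in Proposition \ref{PFilNormsRBHProperties}.
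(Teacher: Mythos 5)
Your proof is correct and follows essentially the same route as the paper: both establish the intermediate bound $|\ln\delta|\leqslant\frac{2\alpha}{\beta}\delta^{-\beta/(2\alpha)}$ and then perform the same power-and-multiply manipulation; the paper reaches that bound by comparing $\int_\delta^1 t^{-1}\,\mathsf{d}t$ with $\int_\delta^1 t^{-1-\beta/(2\alpha)}\,\mathsf{d}t$, whereas you rescale $\ln y\leqslant y-1$, which is the same elementary estimate in a different wrapper. Your treatment of the first inequality (directly from $\ln(1/\delta)\geqslant\ln 4>1/2$) is marginally shorter than the paper's $1-\delta\leqslant|\ln\delta|$ argument, but equally valid.
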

\begin{proof} 
 Recall the following elementary inequalities for
 every real number $t$ such that $0<t\leqslant 1$
 \[
      1\leqslant \frac{1}{t}
      ~~~~\mathrm{hence}~~~~
      1-\delta =\int_{\delta}^{1}dt \leqslant 
        \int_{\delta}^{1}\frac{1}{t}dt =-\ln(\delta)
        =|\ln(\delta)|
 \]
 which proves (\ref{EqFilNormsDeltaLnDeltaIneq}) upon noting that $\delta\leqslant 1/4 < 1/2 <1-\delta$.
 Moreover, since
 $\frac{1}{t}\leqslant 
 \left(\frac{1}{t}
 \right)^{1+\frac{\beta}{2\alpha}}$
 we get
 \[
 |\ln(\delta)|=-\ln(\delta)=
 \int_{\delta}^{1}\frac{1}{t}dt \leqslant
 \int_{\delta}^{1}\left(\frac{1}{t}
 \right)^{1+\frac{\beta}{2\alpha}}dt
 =\frac{2\alpha}{\beta}
 \left(\delta^{-\beta/(2\alpha)}-1\right)
 \leqslant \frac{2\alpha}{\beta}\delta^{-\beta/(2\alpha)},
 \]
 from which --upon multiplying both sides of this inequality by $\delta^{\beta/\alpha}$ and then raising to
 the power of $\alpha$-- we deduce the result
 (\ref{EqFilNormsDeltaTimesLnDeltaIneq}). 
\end{proof}

\noindent \textit{Proof} (of Proposition \ref{PFilNormsRBHProperties}): \\
$i.)$ Since each $F_r$ takes its values in a finite dimensional vector space the restriction of any other norm to this subspace is equivalent to the norm $||~||$: this would only change the `$C$-constants' of the definition, but not the criterion to be an element of $\mathcal{L}$,
$\mathcal{B}$ or $\mathcal{H}$.\\
$ii.)$ It is immediate that
$\mathcal{L}$, $\mathcal{B}$ and $\mathcal{H}$ are
complex vector spaces: if $F,F'$ are in one of the three subsets then by means of the upper bounds of the
norms of each $F_r(j)$ and $F'_r(j)$ we get an upper bound of the norm 
of each $zF_r+z'F'_r$ (where $z,z'\in\mathbb{C}$) by passing to twice the maximum of
the two constants $|z|C_r,|z'|C_r$ and to the maximum of the
exponents $\alpha_r,\alpha'_r$ of $|\ln(|\boldsymbol\delta|)|$ --the latter being $>1$-- (resp. to the minimum of
the exponents $\beta_r,\beta'_r$ of $|\boldsymbol\delta|$ --the latter being $<1$).
Next, for the multiplication of $FF'$ we have that
each $\big(F(\boldsymbol\delta)F'(\boldsymbol\delta)\big)_r$ ($r\in\mathbb{N}$) is equal to the
sum $\sum_{u=0}^rF_u(\boldsymbol\delta)F'_{r-u}(\boldsymbol\delta)$. Suppose first that
$F,F'\in \mathcal{L}$.
Since by Proposition \ref{PNormsLimLimitRules} for each $\boldsymbol\delta\in \mathbf{J}$ every $F_u(\boldsymbol\delta)$ is an element of some finite-dimensional subspace $V_u$ (only depending on $F_u$) 
and every $F'_{r-u}(\boldsymbol\delta)$ is an element of some other finite-dimensional subspace $V'_{r-u}$ (only depending on $F_{r-u}$) it follows as in the proof of Proposition \ref{PNormsLimLimitRules}, equation
(\ref{EqNormsLimFTildeFRInFiniteDim}) that --upon setting
$V_{(r)}=V_0+\cdots + V_r$ and 
$V'_{(r)}=V_0+\cdots + V'_r$-- the following estimate
holds for all $\boldsymbol\delta\in \mathbf{J}$
\begin{eqnarray*}
 ||\big(F(\boldsymbol\delta)F'(\boldsymbol\delta)\big)_r|| &\leqslant &
 \sum_{u=0}^r C_uC'_{r-u}C_{V_{(r)}V'_{(r)}}
  |\ln(|\boldsymbol\delta|)|^{\alpha_u+\alpha'_{r-u}}
  ~\leqslant ~C |\ln(|\boldsymbol\delta|)|^\alpha
\end{eqnarray*}
where $C$ is $r+1$ times the maximum of all the triple products of the `$C$-constants' and $\alpha$ is the maximum of all the numbers $\alpha_u+\alpha'_{r-u}$.
This is done in an analogous way for $\mathcal{B}$ and 
$\mathcal{H}$ proving that $\mathcal{L}$,
$\mathcal{B}$ and 
$\mathcal{H}$ are closed under multiplication. Evidently, $\mathbb{C}[[\lambda]]$ belongs to $\mathcal{L}$ and
$\mathcal{B}$, hence $\mathcal{L}$ and
$\mathcal{B}$ 
in particular are $\mathbb{C}[[\lambda]]$-submodules
and unital associative algebras. The inclusion (\ref{EqFilNormsRSupBSupB}) follows at once from inequality (\ref{EqFilNormsDeltaLnDeltaIneq}).
Finally, for any $F\in\mathcal{L}$ and $F'\in\mathcal{H}$
it is shown in a similar way as above that $FF'$ and $F'F$ are in $\mathcal{H}$ upon using the second inequality (\ref{EqFilNormsDeltaTimesLnDeltaIneq}).
This shows that $\mathcal{H}$ is a two-sided ideal of $\mathcal{L}$ and hence also a 
$\mathbb{C}[[\lambda]]$-submodule.\\
$iii.)$ and $v.)$ immediately follow from the upper bounds defining $\mathcal{H}$.\\
$iv.)$ We only have to observe that every element
$1+\lambda F$ (where $F$ is in a 
$\mathbb{C}[[\lambda]]$-subalgebra) always has an inverse, namely the well-known geometric series
$\sum_{r=0}^\infty(-\lambda)^r F^r$, for which the terms of
positive order are all in the given subalgebra, which proves that $\mathcal{G}_\mathcal{L}$, 
$\mathcal{G}_\mathcal{B}$ and
$\mathcal{G}_\mathcal{H}$ are subgroups of 
$\mathcal{G}$. The normality of
$\mathcal{G}_\mathcal{H}$ follows from the fact that
$\mathcal{H}$ is a two-sided ideal of $\mathcal{L}$.
\hfill $\Box$\\

We shall now apply these limit considerations to
 the term $Y$ appearing in a formal linear
ODE, see (\ref{EqFormalLinODEDefinition}). $Y$
normally belongs to the algebra (\ref{EqFormalLinODEPiecewiseTensorAofLambda}).
In order to incorporate limits we shall make $Y$ dependent on the parameter 
$\boldsymbol\delta$ in the set $\mathbf{J}\subset \mathbb{R}^\ell$, see (\ref{EqNormsLimJWithZeroAccumPoint}), i.e.~we consider
\begin{equation}
  \label{EqFilNormsFunDeltaToPiecewiseTensorAofLambda}
 Y\in \bigg(\mathrm{Fun}\Big(\mathbf{J},
 \mathcal{C}_D^\infty\big([a,b],\mathbb{C}\big)\Big)
  \otimes
  \mathcal{A}\bigg)[[\lambda]].
\end{equation}
Hence, each element $Y$ is a formal power series $\sum_{r=0}^\infty Y_r\lambda^r$, where each $Y_r$ is a (non unique) finite
sum $Y_r=Y_{r1}\otimes A_{r1}+\cdots+Y_{rN_r}\otimes
A_{rN_r}$ where $N_r$ is an non-negative integer, $A_{r1},\ldots,A_{rN_r}\in
\mathcal{A}$ and $Y_{r1},\ldots,Y_{rN_r}$ are functions
on $\mathbf{J}$ with values in
$\mathcal{C}^\infty_D([a,b],\mathbb{C})$ (see (\ref{EqPiecewiseDefCKD})) where $a<b$ are two fixed real numbers, and $D$ is a finite set such that 
$\{a,b\}\subset D\subset [a,b]$. It makes sense to consider the formal
linear ODE (\ref{EqFormalLinODEDefinition}) for these
$\mathbf{J}$-dependent $Y$:

\begin{proposition}
Let $Y$ be an element of the algebra
(\ref{EqFilNormsFunDeltaToPiecewiseTensorAofLambda}).\\
For each $\alpha\in [a,b]$ there exists a unique element
\begin{equation}\label{EqNormsLimDefFundamentalSolDepJ}
   W_{\cdot\alpha}\in 
   \bigg(\mathrm{Fun}\Big(\mathbf{J},
   \mathcal{C}_D^{\infty}\big([a,b],\mathbb{C}\big)^0\Big)
   \otimes
   \mathcal{A}\bigg)[[\lambda]]
\end{equation}
satisfying the formal linear $\mathbf{J}$-dependent ODE
(\ref{EqFormalLinODEDefinitionOfW}) w.r.t.~the parameter $s\in [a,b]$ for each $\boldsymbol\delta\in \mathbf{J}$ such that $W_{\alpha\alpha}=1$. We shall call
$W_{\cdot\alpha}$ the fundamental solution of the formal linear $\mathbf{J}$-dependent ODE
(\ref{EqFormalLinODEDefinitionOfW}) normalized at $\alpha$.
Moreover, for each $\beta\in[a,b]$ the $\mathbf{J}$-dependent propagator $W_{\beta\alpha}\coloneqq W_{\cdot\alpha}(s=\beta)$ satisfies
\begin{equation}\label{EqNormsLimDefPropagatorDepJ}
     W_{\beta\alpha}\in \Big(\mathrm{Fun}\left(\mathbf{J},\mathbb{C}\right)
     \otimes \mathcal{A}\Big)[[\lambda]].
\end{equation} 
\end{proposition}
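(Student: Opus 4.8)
The plan is to transplant the existence-and-uniqueness argument of Proposition~\ref{PFormalLinODEPropagatorPropert}$\,i.)$ from the parameter-free setting to the $\mathbf{J}$-dependent setting, simply by observing that the whole construction in $(\ref{EqFormalLinODESolutionIntEq})$ was purely algebraic and takes place inside a $\mathbb{C}[[\lambda]]$-module on which $\lambda$ acts topologically nilpotently. Concretely, for each fixed $\boldsymbol\delta\in\mathbf{J}$ the element $Y(\boldsymbol\delta)$ lies in the algebra $\big(\mathcal{C}_D^\infty([a,b],\mathbb{C})\otimes\mathcal{A}\big)[[\lambda]]$ of $(\ref{EqFormalLinODEPiecewiseTensorAofLambda})$, so Proposition~\ref{PFormalLinODEPropagatorPropert}$\,i.)$ already yields a unique continuous, piecewise $\mathcal{C}^\infty$ solution $s\mapsto W_{s\alpha}(\boldsymbol\delta)$ with $W_{\alpha\alpha}(\boldsymbol\delta)=1$. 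The content of the proposition is therefore only that these $\boldsymbol\delta$-indexed solutions assemble into a single element of the algebra $\big(\mathrm{Fun}(\mathbf{J},\mathcal{C}_D^\infty([a,b],\mathbb{C})^0)\otimes\mathcal{A}\big)[[\lambda]]$ in $(\ref{EqNormsLimDefFundamentalSolDepJ})$, i.e. that each coefficient of $\lambda^r$ is a \emph{finite} sum of terms $g\otimes A$ with $g\in\mathrm{Fun}(\mathbf{J},\mathcal{C}_D^\infty([a,b],\mathbb{C})^0)$ and $A\in\mathcal{A}$ — the finiteness being the only genuine point, since a priori a family of elements of a tensor product need not lie in the tensor product of the function spaces.

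First I would make the algebraic framework uniform. Replace in the whole of $\S\ref{SubSecFormalLinearODEs}$ the commutative algebra $\mathcal{C}_D^\infty([a,b],\mathbb{C})$ by $\mathrm{Fun}\big(\mathbf{J},\mathcal{C}_D^\infty([a,b],\mathbb{C})\big)$, which is again a commutative unital $\mathbb{C}$-algebra under pointwise operations in both variables; the derivative $\tfrac{\mathsf d}{\mathsf d s}$, the Riemann integral $I_\alpha^\beta$ and the primitive $I_\alpha$ act pointwise in $\boldsymbol\delta$, preserve the subalgebra $\mathrm{Fun}\big(\mathbf{J},\mathcal{C}_D^\infty([a,b],\mathbb{C})^0\big)$ of $\boldsymbol\delta$-wise continuous functions, and satisfy verbatim the statements of Theorem~\ref{TPiecewiseFundTheoCalc} (these are checked $\boldsymbol\delta$ by $\boldsymbol\delta$). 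Next, exactly as in $(\ref{EqFormalLinODESolutionIntEq})$, form the $\mathbb{C}[[\lambda]]$-linear operators $L_Y$ (left multiplication by $Y$) and $I_\alpha$ on the $\mathbb{C}[[\lambda]]$-module $\big(\mathrm{Fun}(\mathbf{J},\mathcal{C}_D^\infty([a,b],\mathbb{C})^0)\otimes\mathcal{A}\big)[[\lambda]]$: their composite $I_\alpha\circ L_Y$ is a well-defined $\mathbb{C}[[\lambda]]$-linear endomorphism, and since it is multiplied by $\lambda$ the operator $\mathrm{id}-\lambda\,I_\alpha\circ L_Y$ is invertible by the geometric series in the algebra of $\mathbb{C}[[\lambda]]$-linear endomorphisms of that module. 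Hence
\[
 W_{\cdot\alpha}:=\big(\mathrm{id}-\lambda\,I_\alpha\circ L_Y\big)^{-1}(1)
 =\sum_{r=0}^\infty\lambda^r\,(I_\alpha\circ L_Y)^{\circ r}(1)
\]
is a well-defined element of $\big(\mathrm{Fun}(\mathbf{J},\mathcal{C}_D^\infty([a,b],\mathbb{C})^0)\otimes\mathcal{A}\big)[[\lambda]]$, and it solves the integral equation, hence (by the primitive/derivative part of Theorem~\ref{TPiecewiseFundTheoCalc}, applied $\boldsymbol\delta$-wise) the $\mathbf{J}$-dependent ODE $(\ref{EqFormalLinODEDefinitionOfW})$ with $W_{\alpha\alpha}=1$. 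Uniqueness follows because any solution in that module satisfies the integral equation $(\ref{EqFormalLinODEIntEq})$, hence is the unique fixed point of the contraction $\omega\mapsto 1+\lambda\,I_\alpha(Y\omega)$.

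For the last assertion $(\ref{EqNormsLimDefPropagatorDepJ})$, evaluation at $s=\beta$ is the $\mathbb{C}[[\lambda]]$-linear map induced by $\mathrm{ev}_\beta\colon\mathcal{C}_D^\infty([a,b],\mathbb{C})^0\to\mathbb{C}$ (legitimate precisely because we are in the continuous subalgebra, where the value at a point is defined), tensored with $\mathrm{id}_\mathcal{A}$ and extended to formal power series; it therefore sends $W_{\cdot\alpha}$ into $\big(\mathrm{Fun}(\mathbf{J},\mathbb{C})\otimes\mathcal{A}\big)[[\lambda]]$, which is $(\ref{EqNormsLimDefPropagatorDepJ})$. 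The only step that needs a word of care is the finiteness/uniqueness noted above: this is automatic here because everything is built by applying $\mathbb{C}[[\lambda]]$-linear operators that preserve the \emph{algebraic} tensor product to the element $1=1_{\mathrm{Fun}}\otimes 1_\mathcal{A}$, so each $\lambda^r$-coefficient is by construction a finite sum of elementary tensors — there is no passage to any completion. I do not expect a genuine obstacle; the main thing to get right is bookkeeping, namely to insist throughout that the solution is sought in the $\boldsymbol\delta$-wise \emph{continuous} subalgebra $\mathrm{Fun}(\mathbf{J},\mathcal{C}_D^\infty([a,b],\mathbb{C})^0)$, since it is only there that the initial condition $W_{\alpha\alpha}=1$ and the propagator $W_{\beta\alpha}$ make pointwise sense, and to note that uniqueness is a statement \emph{within} that subalgebra (a priori two solutions could differ on the measure-zero singular set, but continuity in $s$ forbids this, exactly as in the remark following Theorem~\ref{TPiecewiseFundTheoCalc}).
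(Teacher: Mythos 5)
Your proposal is correct and follows essentially the same route as the paper: extend the commutative base algebra to $\mathrm{Fun}\big(\mathbf{J},\mathcal{C}_D^\infty([a,b],\mathbb{C})\big)$, invert $\mathrm{id}-\lambda\,I_\alpha\circ L_Y$ by the geometric series in that enlarged $\mathbb{C}[[\lambda]]$-module, and obtain the propagator by evaluation at $s=\beta$. The paper states this more tersely as ``complete analogy with $\S$\ref{SubSecFormalLinearODEs}''; you have merely unpacked the bookkeeping (in particular the remark that every step preserves the algebraic tensor product, so no completion issue arises), which is accurate and matches the intent.
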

\begin{proof}
 This is done in complete analogy to the treatment
 in $\S$\ref{SubSecFormalLinearODEs} where we can
 literally follow (\ref{EqFormalLinODEDefinition}),
 the integral equation (\ref{EqFormalLinODEIntEq}) --the primitive $I_\alpha$ being extended to the algebra occurring in (\ref{EqFilNormsFunDeltaToPiecewiseTensorAofLambda}) by first composing it with the functions of $\mathbf{J}$ with values in
 $\mathcal{C}_D^{\infty}\big([a,b],\mathbb{C}\big)$ on the left tensor factor, then tensoring with the identity on $\mathcal{A}$, and finally extending componentwise--
 and the iterated integrals equation (\ref{EqFormalLinODEIteratedIntegrals}). This proves
 the existence of a unique fundamental solution $W_{\cdot\alpha}$ as in (\ref{EqNormsLimDefFundamentalSolDepJ}). Evaluating $s$ at $\beta$ gives (\ref{EqNormsLimDefPropagatorDepJ}).
\end{proof}
The following elementary Lemma will be the key criterion later on to prove that certain factors in a propagator are in the `harmless group' 
$\mathcal{G}_\mathcal{H}$.
First, as usual, having fixed a norm $||~||$ on $\mathcal{A}$ we shall denote by the same symbol
$||~||$ the map
\begin{eqnarray*}
 \mathrm{Fun}\Big(\mathbf{J},\mathcal{C}_D^{\infty}
 \big([a,b],
 \mathbb{C}\big)\Big)\otimes \mathcal{A}
 &\to&
 \mathrm{Fun}\Big(\mathbf{J},\mathcal{C}_D^{0}
 \big([a,b],
 \mathbb{R}\big)\Big): \nonumber \\
 \Big((\boldsymbol\delta,s)\mapsto G(\boldsymbol\delta,s)\Big)  
 &\mapsto& \Big((\boldsymbol\delta,s)\mapsto ||G(\boldsymbol\delta,s)||\Big) 
\end{eqnarray*}
for all $s\in [a,b]\setminus D$. Writing $G$ in
 $\mathrm{Fun}\Big(\mathbf{J},\mathcal{C}_D^{\infty}
 \big([a,b],
 \mathbb{C}\big)\Big)\otimes \mathcal{A}$
 in a basis $\left((e_i)_{i\in\mathfrak{S}}\right)$ of $\mathcal{A}$ as $G=G_0e_0+\cdots+G_Ne_N$ we get the well-known estimate --using the monotonicity of the Riemann integral-- for any
 $\alpha\leqslant s\leqslant \beta\in[a,b]$:
 \begin{equation}\label{EqFilNormsElementaryIntEstim}
 \begin{split}
  \left|\left|\int_{\alpha}^{s}
  \hat{G}(\boldsymbol\delta,s_1)\mathsf{d}s_1\right|\right|
 & \leqslant \int_{\alpha}^{s}||\hat{G}(\boldsymbol\delta,s_1)||\mathsf{d}s_1 \\
 & \leqslant \int_{\alpha}^{\beta}||\hat{G}(\boldsymbol\delta,s_1)||\mathsf{d}s_1 
  \leqslant
  (\beta-\alpha) \sup\left.\left\{||\hat{G}(\boldsymbol\delta,s_1)||
       ~\right|~s_1\in[0,1] \right\}
 \end{split}
 \end{equation}
  where --as usual-- we have written $\hat{G}$ for any extension of
  the function 
  $G$ from $\mathbf{J}\times \big([a,b]\setminus D\big)$
  to $\mathbf{J}\times [a,b]$.
  \begin{lemma}\label{LNormsLimIntegralBounds}
  	Let $Y=\sum_{r=0}\lambda^rY_r$ be an element of the algebra
  (\ref{EqFilNormsFunDeltaToPiecewiseTensorAofLambda}).
   Fix an arbitrary norm  $||~||$ on $\mathcal{A}$ and two arbitrary elements $\alpha,\beta\in[a,b]$. 
  	Consider the following three conditions on $Y$ referred to as \textbf{upper bounds for $Y$}:
  	there is an extension $\hat{Y}$ of $Y$ to $[a,b]$
  	such that
  	for each non-negative integer $r$:
  	\begin{equation}\label{LEqFilNormsDefRBH}
  	\begin{array}{cccccl}
  	(L): & \exists C_r,\alpha_r\in\mathbb{R}, C_r\geqslant 0,\alpha_r>0 &\forall 
  	\boldsymbol\delta\in \mathbf{J}: 
  	\sup\left.\left\{||\hat{Y}_r(\boldsymbol\delta,s)||
  	~\right|~s\in[a,b] \right\}
  	\leqslant& C_r|\ln(|\boldsymbol\delta|)|^{\alpha_r},\\
  	(B): & \exists C_r\in\mathbb{R}, C_r\geqslant 0,&\forall 
  	 \boldsymbol\delta\in \mathbf{J}: 
  	 \sup\left.\left\{||\hat{Y}_r(\boldsymbol\delta,s)||
  	 ~\right|~s\in[a,b] \right\}
  	   \leqslant& C_r,\\
  	(H): &  \exists C_r,\beta_r\in\mathbb{R}, C_r\geqslant 0,\beta_r>0 &\forall 
  	 \boldsymbol\delta\in \mathbf{J}: 
  	 \sup\left.\left\{||\hat{Y}_r(\boldsymbol\delta,s)||
  	 ~\right|~s\in[a,b] \right\}
  	  \leqslant& C_r|\boldsymbol\delta|^{\beta_r}.
  	 \end{array}
  	\end{equation}
  	Then the three conditions do not depend on the norms used.
  	Moreover, if condition $(L)$ (resp.~$(B)$ resp.~$(H)$) is
  	satisfied then the propagator $W_{\beta\alpha}$ 
  	(see (\ref{EqNormsLimDefPropagatorDepJ}))
  	for $Y$ belongs to the subgroup 
  	$\mathcal{G}_\mathcal{L}$ (resp.~$\mathcal{G}_\mathcal{B}$ 
  	resp.~$\mathcal{G}_\mathcal{H}$) of $\mathcal{G}$.
  \end{lemma}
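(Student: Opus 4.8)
The plan is to read the propagator off its iterated--integral expansion (\ref{EqFormalLinODEIteratedIntegrals}) and to bound the coefficient of each power of $\lambda$ separately, using the mechanism of Proposition~\ref{PNormsLimLimitRules}: in any fixed $\lambda$--order only finitely many finite--dimensional subspaces of $\mathcal{A}$ are involved. First I would settle the preliminary claims. Norm--independence of $(L),(B),(H)$ is immediate: each $Y_r$ is a finite sum $\sum_j Y_{rj}\otimes A_{rj}$, so (choosing the extension $\hat Y$ of the same shape) $\hat Y_r(\boldsymbol\delta,s)$ always lies in the finite--dimensional subspace $V_r:=\mathrm{span}_{\mathbb{C}}\{A_{r1},\ldots,A_{rN_r}\}$, on which any two norms are equivalent; replacing $\|\cdot\|$ by another norm therefore only rescales the constants $C_r$ and leaves the exponents $\alpha_r,\beta_r$ untouched (and changing $\hat Y$ at the finitely many singular points changes neither the propagator --- the Riemann integral ignores finite sets --- nor, after enlarging $V_r$ by finitely many dimensions, the argument). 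Moreover, by the groupoid relations (\ref{EqFormalLinODEGroupoidProp}) we have $W_{\beta\alpha}=W_{\alpha\beta}^{-1}$, and $\mathcal{G}_\mathcal{L}\supset\mathcal{G}_\mathcal{B}\supset\mathcal{G}_\mathcal{H}$ are subgroups of $\mathcal{G}$ (Proposition~\ref{PFilNormsRBHProperties}), so it suffices to treat the case $\alpha\leqslant\beta$.

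Expanding $\hat Y=\sum_{r\geqslant 0}\lambda^r\hat Y_r$ in (\ref{EqFormalLinODEIteratedIntegrals}) (with initial value $1$) and collecting powers of $\lambda$ gives $(W_{\beta\alpha})_0=1$ and, for each $n\geqslant 1$,
\begin{equation*}
(W_{\beta\alpha})_n=\sum_{m=1}^{n}\ \sum_{\substack{r_1,\ldots,r_m\geqslant 0\\ r_1+\cdots+r_m=n-m}}\ \int_{\alpha\leqslant s_m\leqslant\cdots\leqslant s_1\leqslant\beta}\hat Y_{r_1}(\boldsymbol\delta,s_1)\cdots\hat Y_{r_m}(\boldsymbol\delta,s_m)\,\mathsf{d}s_m\cdots\mathsf{d}s_1 .
\end{equation*}
For fixed $n$ this is a \emph{finite} sum (of $2^{n-1}$ terms), and all the $\hat Y_{r_i}$ that occur take values in the single finite--dimensional space $V:=V_0+\cdots+V_{n-1}$. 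The analytic heart of the matter is then an iterated multiplication estimate: for each $k\leqslant n$ the span $V^{\cdot k}$ of all $k$--fold products of elements of $V$ is finite--dimensional (it is the image of $V^{\otimes k}$ under iterated multiplication), hence the bilinear map $V\times(V^{\cdot 1}+\cdots+V^{\cdot n})\to\mathcal{A}$ is continuous onto its finite--dimensional image exactly as in the proof of Proposition~\ref{PNormsLimLimitRules}; from this one extracts a single constant $\kappa=\kappa(n)\geqslant 1$ with $\|w_1w_2\cdots w_k\|\leqslant\kappa^{\,k-1}\|w_1\|\cdots\|w_k\|$ for all $k\leqslant n$ and all $w_i\in V$ (induction on $k$, multiplying $w_1$ onto $w_2\cdots w_k\in V^{\cdot(k-1)}$). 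Combining this with the elementary bound (\ref{EqFilNormsElementaryIntEstim}) applied from the innermost integral outwards, one finds that each summand above has norm at most $\kappa^{\,m-1}(\beta-\alpha)^m\prod_{i=1}^m\sup_{s\in[a,b]}\|\hat Y_{r_i}(\boldsymbol\delta,s)\|$.

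It remains to insert the hypothesis and sum the finitely many terms. If $(L)$ holds the product of suprema is at most $\big(\prod_i C_{r_i}\big)\,|\ln(|\boldsymbol\delta|)|^{\sum_i\alpha_{r_i}}$; since $\mathbf{J}\subset\{|\boldsymbol\delta|\leqslant 1/4\}$ forces $|\ln(|\boldsymbol\delta|)|>1$ (cf.\ (\ref{EqFilNormsDeltaLnDeltaIneq}), (\ref{EqNormsLimJWithZeroAccumPoint})), one gets $\|(W_{\beta\alpha})_n\|\leqslant\bar C_n\,|\ln(|\boldsymbol\delta|)|^{\bar\alpha_n}$ with $\bar\alpha_n:=\max\{\sum_i\alpha_{r_i}\}>0$ and $\bar C_n\geqslant 0$ collecting the remaining constants. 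Under $(B)$ the same computation gives $\|(W_{\beta\alpha})_n\|\leqslant\bar C_n$, and under $(H)$ it gives $\|(W_{\beta\alpha})_n\|\leqslant\bar C_n\,|\boldsymbol\delta|^{\bar\beta_n}$ with $\bar\beta_n:=\min\{\sum_i\beta_{r_i}\}>0$ (now using $|\boldsymbol\delta|<1$). Because $(W_{\beta\alpha})_0=1$ in every case, writing $W_{\beta\alpha}=1+\lambda\sum_{k\geqslant 0}\lambda^k(W_{\beta\alpha})_{k+1}$ and comparing with the definitions (\ref{EqNormsLimDefRBH}), (\ref{EqFilNormsDefGroupsGRGBGH}) shows $W_{\beta\alpha}\in\mathcal{G}_\mathcal{L}$ (resp.\ $\mathcal{G}_\mathcal{B}$, resp.\ $\mathcal{G}_\mathcal{H}$) --- this is exactly why the conclusion is membership in the groups $\mathcal{G}_\bullet$ rather than in $\mathcal{L},\mathcal{B},\mathcal{H}$. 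I expect the only real obstacle to be the iterated multiplication estimate: since $(\mathcal{A},\|\cdot\|)$ need not be a normed algebra, one must keep track of the finitely many finite--dimensional subspaces produced by multiplying up to $n$ factors and extract from them a single constant $\kappa(n)$; everything after that is bookkeeping over a finite index set.
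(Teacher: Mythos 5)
Your proof is correct and follows essentially the same route as the paper: expand $W_{\beta\alpha}$ in iterated integrals, bound each $\lambda$-coefficient using the elementary estimate (\ref{EqFilNormsElementaryIntEstim}), and exploit the fact that at fixed $\lambda$-order everything lives in a finite-dimensional subspace of $\mathcal{A}$ on which all norms are equivalent. You are, however, noticeably more explicit on the one genuinely delicate step, which the paper dismisses as an ``easy induction'': since $(\mathcal{A},\|\cdot\|)$ is not assumed to be a normed algebra, pulling the norm through an $m$-fold iterated integral requires a uniform submultiplicativity constant on the finite-dimensional subspaces spanned by up to $n$-fold products of the coefficients of $Y_0,\dots,Y_{n-1}$; your $\kappa^{m-1}(\beta-\alpha)^m\prod_i\sup\|\hat Y_{r_i}\|$ bound makes precise exactly what the paper leaves implicit there. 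A minor cosmetic difference: for $\alpha>\beta$ the paper re-runs the argument using the sign rule (\ref{EqPiecewiseIntegralSign}), whereas you reduce to $\alpha\leqslant\beta$ via the groupoid relation $W_{\beta\alpha}=W_{\alpha\beta}^{-1}$ and closure of $\mathcal{G}_\mathcal{L}\supset\mathcal{G}_\mathcal{B}\supset\mathcal{G}_\mathcal{H}$ under inversion; both are fine.
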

  \begin{proof} The norm independence follows from the fact
  	that the norms will always be restricted to finite-dimensional subspaces of $\mathcal{A}$. Concerning the second statement, we first do the case 
  	$\alpha\leqslant\beta$:
  	writing out the propagator
  	$W_{\beta\alpha}$ in terms of iterated integrals as in equation (\ref{EqFormalLinODEIteratedIntegrals}) 
  	(for $s=\beta$) 
  	it can be seen by an easy induction using the estimate 
  	(\ref{EqFilNormsElementaryIntEstim})
  	that each iterated integral has as upper bound a product of integrals of the form 
  	$\boldsymbol\delta\mapsto \int_{\alpha}^{\beta}\big|\big|
  	 \hat{Y}_{i}(\boldsymbol\delta,s)\big|\big|\mathsf{d}s$ where each such integral has an upper bound by the last
  	 inequality of (\ref{EqFilNormsElementaryIntEstim}) and thus the desired upper bound according to the
  	 conditions $(L)$, $(B)$ or $(H)$. Passing to
  	 suitable maxima of products of constants of `type $C$', to suitable maxima of sums of exponents 
  	 of `type $\alpha_i$', and to suitable minima of
  	 exponents of `type $\beta$' we get the desired
  	 upper bounds for (\ref{EqNormsLimDefRBH}).
  	 The case $\alpha\geqslant \beta$ is done in a completely analogous manner by
  	 using the rule (\ref{EqPiecewiseIntegralSign}).
  \end{proof}

\subsection{Formal connections and parallel transports}
%\addcontentsline{toc}{subsection}
%{Formal connections and parallel transports}
\label{SubSecFormalConnectionsAndParallelTransports}

Let $N\geqslant 1$ be an integer, and let $U\subset \mathbb{R}^N$ be a non-empty open subset.
\begin{definition}\label{DFormConnDefConnection}
	A \textbf{formal connection $\Gamma$} on $U$ is given by $N$ elements 
	\begin{equation*} %\label{EqFormConnDefConnection}
	\Gamma_1 , \ldots , \Gamma_N \in \big(C^\infty(U, \mathbb{C}) \otimes \mathcal{A}\big)[[\lambda]]. \end{equation*}
\end{definition}
Here $C^\infty(U, \mathbb{C})$ denotes the unital associative commutative complex algebra of all functions
of $N$ variables $f: U \to \mathbb{C}$ which are smooth, i.e. in the class $C^\infty$: this means that all the higher order partial derivatives of $f$
%\[ \dfrac{\partial^{n_1 + \cdots + n_N}f}{\partial %x_1^{n_1} \cdots \partial x_N^{n_N}}\]
exist and are continuous.
% for all integers $n_1,\ldots , n_N \in \mathbb{N}$.
 We shall consider each $\Gamma_i$ as a function
$\Gamma_i : U \to \mathcal{A}[[\lambda]]$
by evaluating at $x \in U$. Note however that these functions are particular thanks to the algebraic tensor product $\otimes$. Every element $\Gamma_i$ in the
algebra $\big(C^\infty(U, \mathbb{C}) \otimes \mathcal{A}\big)[[\lambda]]$
is thus a formal power series $\Gamma_i=\sum_{r=0}^\infty \Gamma_{ir}\lambda^r$ such that each non-negative integer $r$
the component $\Gamma_{ir}$ is a \textbf{finite sum of terms} 
of the form $f\otimes A$ with
$f\in \mathcal{C}^\infty\big(U,\mathbb{C}\big)$ and
$A\in \mathcal{A}$.
 A very common notation borrowed from differential geometry is 
$\Gamma = \sum_{i=1}^N \Gamma_i \mathsf{d}x_i$,
which relates to differential forms (connection $1$--forms). We shall, however, use a sign convention for $\Gamma$ which is different from the one used in differential geometry to avoid additional signs.

Next, fix two real numbers $a<b$, fix a finite subset
$D\subset [a,b]$ such that $\{a,b\}\subset D$. We consider \textbf{continuous piecewise $\mathcal{C}^\infty$-paths
$c:[a,b]\to U$}, hence
\begin{equation}\label{EqFormConnDefPWSmoothPath}
   c\in \mathcal{C}^\infty_D\big([a,b],U\big)^0.
\end{equation}
This means that each real component $c_1,\ldots, c_N$
of $c$ is an element of $\mathcal{C}^\infty_D\big([a,b],\mathbb{R}\big)^0$ and that for each $s\in [a,b]$ the value $c(s)$ lies in $U\subset\mathbb{R}^N$. The most elementary paths
are \emph{line segments}, i.e.~given two points
$\xi,\eta\in \mathbb{R}^N$ we can consider the \textbf{affine path joining the initial point $\xi$ and the final point $\eta$} which is defined in the usual way by
\begin{equation}\label{EqFormConnDefAffinePath}
   c_{\eta\leftarrow\xi}=c:[0,1]\to \mathbb{R}^N:
     s\mapsto (1-s)\xi +s\eta,
\end{equation}
hence $c(0)=\xi$ and $c(1)=\eta$.
In case $\xi,\eta$ are elements of the open subset $U$ it has of course to be checked whether all the values of the affine
path also lie in $U$. If this is the case then it is clear that there is $\epsilon\in\mathbb{R}$, $\epsilon>0$, such that the right hand side
of (\ref{EqFormConnDefAffinePath}) makes sense as a $\mathcal{C}^\infty$-function from the
larger open interval
$]-\epsilon,1+\epsilon[$ to $U$.

Returning to general continuous piecewise smooth paths, we can associate to each such path $c$ defined in (\ref{EqFormConnDefPWSmoothPath}) the element $Y:=\Gamma^{(c)}$ for a formal linear ODE by
\begin{equation*}%\label{EqFormConnDefGammaOfC}
  \Gamma^{(c)}\coloneqq 
    \sum_{i=1}^N \big(\Gamma_i\circ c\big)
                   \frac{\mathsf{d}c_i}{\mathsf{d}s}
                   ~~\in~
 \Big(\mathcal{C}^\infty_D\big([a,b],\mathbb{C}\big)
    \otimes
  \mathcal{A}\Big)[[\lambda]].
\end{equation*}
Fix $\alpha,\beta\in[a,b]$, then
we can consider the formal linear ODE
(\ref{EqFormalLinODEDefinitionOfW}) for
the choice $Y=\Gamma^{(c)}$ and its particular
solution ${}^\Gamma W^{(c)}_{\cdot\alpha}$ 
normalized at $\alpha$.
In differential geometry the propagator 
${}^\Gamma W^{(c)}_{\beta\alpha}=W_{\beta\alpha}
\in\mathcal{A}[[\lambda]]$, see (\ref{EqFormalLinODEDefOfPropagatorW}), is called the \textbf{parallel transport from $c(\alpha)$ to
	$c(\beta)$ along the path $c$}
(with respect to the connection $\Gamma$). Since parallel transports are propagators all the statements of Proposition \ref{PFormalLinODEPropagatorPropert}
are true for parallel transports. Note that for
a \textbf{constant path} $c_{\xi}(s)=\xi\in U$ for all
$s\in [a,b]$, the element $\Gamma^{(c_{\xi})}=0$,
and the parallel transport is reduced to the
unit element of $\mathcal{A}$.

Next, we shall need a very important tool for the computations to come, namely the \textbf{composition of two continuous piecewise smooth paths} $c_1:[0,1]\to U$ with singular set $D_1$
and $c_2:[0,1]\to U$ with singular set $D_2$ which are compatible in the groupoid sense $c_2(0)=c_1(1)$.
Recall the classical definition from algebraic topology (in the convention `from right-to-left') of the \emph{composed path}
$c_2*c_1:[0,1]\to U$ with singular set 
$D_{12}\coloneqq\left(\frac{1}{2}D_1\right)\cup 
\left(\frac{1}{2}D_2+\frac{1}{2}\right)$:
\begin{equation}\label{EqFormConnDefCompPaths}
  \mathrm{if}~c_1(1)=c_2(0):~~~~ (c_2*c_1)(s)\coloneqq\left\{\begin{array}{ccl}
       c_1(2s) & \mathrm{if} & 0\leqslant s\leqslant \frac{1}{2},\\
       c_2(2s-1) & \mathrm{if} & \frac{1}{2}\leqslant s\leqslant 1.
   \end{array}\right.
\end{equation}
It is evident from the definition that $c_2*c_1$ is continuous (thanks to the condition $c_2(0)=c_1(1)$
and piecewise smooth with singular set $D_{12}$. \textbf{The crucial fact  is that composition may create new singularities at the point $s=\frac{1}{2}$} for the higher $k$-fold derivatives of the path for $k\geqslant 1$.

We now mention another important tool, the
\textbf{pull-back of formal connections}: Let
$N'$ be a positive integer, let $U'$ be a non-empty open subset of $\mathbb{R}^{N'}$, and let
$\Theta:U'\to U$ be a $\mathcal{C}^\infty$-map. For any
formal connection $\Gamma$ on $U$ define the 
\emph{pulled-back
connection} $\Gamma':=\Theta^*\Gamma$ on $U'$ defined by
\begin{equation}\label{EqFormConnDefPullBackConn}
  \forall~j\in\mathbb{N},~1\leqslant j\leqslant N':~~\left(\Theta^*\Gamma\right)_j\coloneqq
    \sum_{i=1}^N
    \big(\Gamma_i\circ\Theta\big)~
    \frac{\partial \Theta_i}{\partial x'_j}.
\end{equation}

We now state how the above operations on formal connections and paths translate to parallel transports:
\begin{theorem}\label{TFormConnOperationsForParTransp}
 Let $U\subset \mathbb{R}^N$ be a non-empty open subset and
 let $\Gamma$ be a formal connection defined on $U$. Let
 $c:[a,b]\to U$ and $c_1,c_2:[0,1] \to U$ be continuous piecewise smooth paths. Then we have the following:
 \begin{enumerate}
 	\item Let $\theta$ be a continuous piecewise 
 	$\mathcal{C}^\infty$-reparametrization of $\big([a,b],D\big)$
 	(i.e.~$a'<b'$ are real numbers, $\{a',b'\}\subset D'\subset [a',b']$ is a finite subset, and $\theta\in \mathcal{C}_{D'}^\infty\big([a',b'],\mathbb{R}\big)^0$
 	satisfying (\ref{EqPiecewiseCompositionCompatible})). Then
 	\begin{equation}\label{EqFormConnWReparamInv}
 	\Gamma^{(c\circ \theta)}
 	=\left(\Gamma^{(c)}\circ\theta\right)
 	\frac{\mathsf{d}\theta}{\mathsf{d}s'},~~
 	\mathrm{and}~~~
 	{}^\Gamma W^{(c)}_{\theta(\beta')\theta(\alpha')}
 	={}^\Gamma W^{(c\circ\theta)}_{\beta'\alpha'}
 	~~\in~~\mathcal{A}[[\lambda]].
 	\end{equation}
 	This shows that
 	reparametrizations (in the sense of
 	(\ref{EqPiecewiseCompositionCompatible})) of these paths do not change
 	parallel transport as long as the initial and final points remain the same.
 	\item In the previous statement suppose that $a'=a$, $b'=b$, $D'=D$
 	such that the continuous piecewise smooth reparametrization $\theta$ of $\big([a,b],D\big)$ satisfying in addition the inversion
 	condition
 	\begin{equation*}%\label{EqFormConnInversionCondition}
 	\theta(\alpha)=\beta~~~\mathrm{and}~~~
 	\theta(\beta)=\alpha.
 	\end{equation*} 
 	Then  we get the well-known \textbf{inversion formula}
 	\begin{equation}\label{EqFormConnInversionFormula}
 	{}^\Gamma W^{(c\circ\theta)}_{\beta\alpha}
 	= \left({}^\Gamma W^{(c)}_{\beta\alpha}\right)^{-1}
 	\end{equation}
 	 \item The parallel transport along the composed path
 	 $c_2*c_1:[0,1]\to U$, see (\ref{EqFormConnDefCompPaths}) is given as follows
 	 \begin{equation}\label{EqFormConnComposOfPathsParTransp}
 	 {}^\Gamma W^{(c_2 * c_1)}_{10} 
 	 ~ = ~{}^\Gamma W^{(c_2)}_{10}~
 	 {}^\Gamma W^{(c_1)}_{10}.
 	 \end{equation}
 	 \item Let $U'\subset \mathbb{R}^{N'}$ be a non-empty open subset, let $\Theta:U'\to U$ be a smooth, let
 	 $\Gamma'=\Theta^*\Gamma$ be the pulled-back formal connection, and let $c':[a,b]\to U'$ be a continuous
 	 piecewise smooth path. Then
 	  for all $\alpha,\beta\in[a,b]$
 	 \begin{equation}\label{EqFormConnPullBackParTransp}
 	 \left(\Theta^*\Gamma\right)^{(c')} =
 	 \Gamma^{(\Theta\circ c')}~~\mathrm{and}~~
 	 {}^{\Theta^*\Gamma}W^{(c')}_{\beta\alpha}
 	 ={}^{\Gamma}W^{(\Theta\circ c')}_{\beta\alpha}.
 	 \end{equation}
 \end{enumerate}
\end{theorem}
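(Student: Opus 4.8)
The plan is to dispatch the four statements in order, reducing each to the chain rule for piecewise $\mathcal{C}^\infty$-functions (statement $iii.)$ of Theorem \ref{TPiecewiseFundTheoCalc}, after differentiation) together with the existence--uniqueness, groupoid, and reparametrization parts of Proposition \ref{PFormalLinODEPropagatorPropert} applied to the formal linear ODE with vector field $Y=\Gamma^{(c)}$.

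For statement $i.)$ I would first establish the pointwise identity $\Gamma^{(c\circ\theta)}=\big(\Gamma^{(c)}\circ\theta\big)\frac{\mathsf{d}\theta}{\mathsf{d}s'}$ by a direct computation: expand the left-hand side as $\sum_{i=1}^N\big(\Gamma_i\circ c\circ\theta\big)\frac{\mathsf{d}(c_i\circ\theta)}{\mathsf{d}s'}$ and apply the chain rule to each $c_i\circ\theta$, obtaining $\sum_{i=1}^N\big(\Gamma_i\circ c\circ\theta\big)\big(\tfrac{\mathsf{d}c_i}{\mathsf{d}s}\circ\theta\big)\frac{\mathsf{d}\theta}{\mathsf{d}s'}$, which is precisely $\big(\Gamma^{(c)}\circ\theta\big)\frac{\mathsf{d}\theta}{\mathsf{d}s'}$; the compatibility hypotheses on $\theta$ ensure all compositions are well-defined piecewise $\mathcal{C}^\infty$-functions and that $c\circ\theta$ is a continuous piecewise smooth path into $U$. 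With this identity the reparametrized ODE of Proposition \ref{PFormalLinODEPropagatorPropert}$\,iii.)$ applied to $Y=\Gamma^{(c)}$ has vector field $\big(\Gamma^{(c)}\circ\theta\big)\frac{\mathsf{d}\theta}{\mathsf{d}s'}=\Gamma^{(c\circ\theta)}$, so by uniqueness its fundamental solution normalized at $\alpha'$ is ${}^\Gamma W^{(c\circ\theta)}_{\cdot\alpha'}$, and evaluation at $\beta'$ gives ${}^\Gamma W^{(c\circ\theta)}_{\beta'\alpha'}={}^\Gamma W^{(c)}_{\theta(\beta')\theta(\alpha')}$. Statement $ii.)$ is then immediate: with $a'=a$, $b'=b$, $D'=D$ and $\theta(\alpha)=\beta$, $\theta(\beta)=\alpha$, statement $i.)$ gives ${}^\Gamma W^{(c\circ\theta)}_{\beta\alpha}={}^\Gamma W^{(c)}_{\alpha\beta}=\big({}^\Gamma W^{(c)}_{\beta\alpha}\big)^{-1}$, the last equality being the third groupoid identity in (\ref{EqFormalLinODEGroupoidProp}).

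For statement $iii.)$ the idea is to split the composed path at the midpoint and use the groupoid property. Set $W:={}^\Gamma W^{(c_2*c_1)}_{\cdot 0}$; by (\ref{EqFormalLinODEGroupoidProp}), $W_{10}=W_{1,1/2}\,W_{1/2,0}$. On $[0,\frac{1}{2}]$ the composed path of (\ref{EqFormConnDefCompPaths}) equals $c_1\circ\theta_1$ with the affine reparametrization $\theta_1(s)=2s$, hence $\Gamma^{(c_2*c_1)}$ restricted to $[0,\frac{1}{2}]$ coincides with $\Gamma^{(c_1\circ\theta_1)}$; since by Proposition \ref{PFormalLinODEPropagatorPropert}$\,iii.)$ the propagator depends only on the values of the vector field between the endpoints, $W_{1/2,0}={}^\Gamma W^{(c_1\circ\theta_1)}_{1/2,0}={}^\Gamma W^{(c_1)}_{\theta_1(1/2)\,\theta_1(0)}={}^\Gamma W^{(c_1)}_{10}$ by statement $i.)$. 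Likewise on $[\frac{1}{2},1]$ the composed path is $c_2\circ\theta_2$ with $\theta_2(s)=2s-1$, giving $W_{1,1/2}={}^\Gamma W^{(c_2)}_{10}$; multiplying yields (\ref{EqFormConnComposOfPathsParTransp}). Finally, for statement $iv.)$ I would again start from the pointwise level: expanding $\big(\Theta^*\Gamma\big)^{(c')}=\sum_{j=1}^{N'}\big((\Theta^*\Gamma)_j\circ c'\big)\frac{\mathsf{d}c'_j}{\mathsf{d}s}$, inserting the definition (\ref{EqFormConnDefPullBackConn}), interchanging the two finite sums, and recognising $\sum_{j=1}^{N'}\big(\tfrac{\partial\Theta_i}{\partial x'_j}\circ c'\big)\frac{\mathsf{d}c'_j}{\mathsf{d}s}=\frac{\mathsf{d}(\Theta_i\circ c')}{\mathsf{d}s}$ as the chain rule for $\Theta_i\circ c'$, one gets $\big(\Theta^*\Gamma\big)^{(c')}=\sum_{i=1}^N\big(\Gamma_i\circ(\Theta\circ c')\big)\frac{\mathsf{d}(\Theta_i\circ c')}{\mathsf{d}s}=\Gamma^{(\Theta\circ c')}$, where smoothness of $\Theta$ on all of $U'$ makes $\Theta\circ c'$ a continuous piecewise smooth path into $U$ with the same singular set as $c'$. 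The two ODEs defining ${}^{\Theta^*\Gamma}W^{(c')}_{\cdot\alpha}$ and ${}^\Gamma W^{(\Theta\circ c')}_{\cdot\alpha}$ then have the same vector field and the same initial value $1$ at $\alpha$, so uniqueness in Proposition \ref{PFormalLinODEPropagatorPropert}$\,i.)$ forces equality, and evaluation at $\beta$ gives the second identity in (\ref{EqFormConnPullBackParTransp}).

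I expect the only place requiring genuine care is statement $iii.)$: one must keep track of the singular sets when restricting $\Gamma^{(c_2*c_1)}$ to each half-interval, check that these restrictions are literally affine reparametrizations of $\Gamma^{(c_1)}$ and $\Gamma^{(c_2)}$, and rely on the clause that the propagator depends only on the vector field between its endpoints so that the new singularity at $s=\frac{1}{2}$ (harmlessly absorbed into $D_{12}$, where the fundamental solution is still continuous) and the behaviour of the path outside each half-interval play no role. Everything else reduces to routine applications of the chain rule and of Proposition \ref{PFormalLinODEPropagatorPropert}.
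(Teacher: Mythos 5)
Your proposal is correct and follows essentially the same route as the paper: chain-rule computations for the two pointwise identities, uniqueness of the fundamental solution plus the reparametrization statement of Proposition \ref{PFormalLinODEPropagatorPropert} for $i.)$ and $iv.)$, the third groupoid identity for $ii.)$, and the midpoint splitting via (\ref{EqFormalLinODEGroupoidProp}) for $iii.)$. The only cosmetic difference is in $iii.)$: the paper reparametrizes the composed path by the inclusions $\theta_1(s)=\tfrac{1}{2}s$, $\theta_2(s)=\tfrac{1}{2}s+\tfrac{1}{2}$ from $[0,1]$ into itself so that $(c_2*c_1)\circ\theta_1=c_1$ directly, whereas you go the other way with $\theta_1(s)=2s$, $\theta_2(s)=2s-1$ and lean on the ``propagator depends only on the values of $Y$ between the endpoints'' clause -- both work, the paper's choice just sidesteps the need for that extra observation.
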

\begin{proof}
$i.)$ The formula for $\Gamma^{(c\circ\theta)}$ is straight-forward and the equation for the parallel transport is deduced from (\ref{EqFormalLinODEReparam}).\\	
$ii.)$ This is an immediate consequence of (\ref{EqFormConnWReparamInv}) and the last equation of (\ref{EqFormalLinODEGroupoidProp}).\\
$iii.)$ Define the two smooth reparametrizations $\theta_1,\theta_2:
[0,1]\to [0,1]$ given by $\theta_1(s)=\frac{1}{2}s$
and $\theta_2(s)=\frac{1}{2}s+\frac{1}{2}$. We have
(suppressing the symbol $\Gamma$)
\[
W^{(c_2 * c_1)}_{10} 
~ \stackrel{(\ref{EqFormalLinODEGroupoidProp})}{=} 
~ 
W^{(c_2* c_1)}_{1\frac{1}{2}}~
W^{(c_2*c_1)}_{\frac{1}{2}0}
~ \stackrel{(\ref{EqFormConnWReparamInv})}{=} 
~ W^{((c_2*c_1)\circ \theta_2)}_{10}~
W^{((c_2*c_1)\circ \theta_1)}_{10} 
~ = ~ W^{(c_2)}_{10}~
W^{(c_1)}_{10}.
\]
$iv.)$ This is straight-forward from the definitions
and the chain rule for partial derivatives.
\end{proof}
\noindent For instance,
 the usual \textbf{affine inversion $j$ of the interval} $[a,b]$
 given by $\iota(s)= a+b-s$ for all $s\in[a,b]$ serves as
 such a reparametrization for the choice $\alpha=a$ and
 $\beta=b$ for an inversion in $ii.)$.\\
 It is well-known that composition of paths is in general NOT associative, i.e.~if $c_3:[0,1]\to U$ is a third path with $c_2(1)=c_3(0)$ then in general
 $c_3*(c_2*c_1)\neq (c_3*c_2)*c_1$, but the corresponding
 product of parallel transports does not depend on the bracketing, i.e.
 \begin{equation*}
 W^{(c_3*(c_2*c_1))}_{10}
 \stackrel{(\ref{EqFormConnComposOfPathsParTransp})}
 {=}
 W^{(c_3)}_{10}~W^{(c_2)}_{10}~W^{(c_1)}_{10}
 \stackrel{(\ref{EqFormConnComposOfPathsParTransp})}
 {=}  
 W^{((c_3*c_2)*c_1)}_{10}.
 \end{equation*}

It turns out that certain connections are formulated
by \emph{complex coordinates} which allow for much more compact
computations: we do not have to go into the detail of general
holomorphic connections, since for this work it suffices to
study complex rational ones. More precisely,
let $U\subset \mathbb{C}^N$ be a non-empty open set. Recall 
that a \emph{complex rational function} in $N$ complex variables
$z=(z_1,\ldots,z_N)$ defined on $U$ is a quotient 
$f(z)=\frac{g(z)}{h(z)}$ where $f,g\in\mathbb{C}[z_1,\ldots,z_N]$, hence are complex polynomials in $N$ variables such that $g$ is different
from the zero polynomial, and the zeros of $g$ all belong
to $\mathbb{C}^N\setminus U$. Hence, the function $z\mapsto \frac{g(z)}{h(z)}$ is a well-defined function on $U$. 
Decomposing each complex variable in real and imaginary part as usual,
\begin{equation*}
z_1=x_1+\mathbf{i}y_1,\ldots,z_N=x_N+\mathbf{i}y_N
~~~\mathrm{or}~~~z=x+\mathbf{i}y,
\end{equation*}
it is clear that each complex rational function $f$ is a
particular complex-valued rational function in $2N$ real variables $x_1,\ldots,x_N,y_1,\ldots,y_N=:(x,y)$ which we can write in the following way
\begin{equation}
  \label{EqFormConnComplexFunctionRealImPart}
f(z)= f(x+\mathbf{i}y) =: \check{f}(x,y) 
=: f^{(1)}(x,y) + 
\mathbf{i}f^{(2)}(x,y)
\end{equation} 
with unique real rational functions $f^{(1)},f^{(2)}$. Hence, each
complex rational function is a $\mathcal{C}^\infty$-function in the real variables. Let $\mathbb{C}_U(z)$ denote the set of all complex rational functions which are well-defined on $U$. It is easy to check that they form a complex unital subalgebra of
$\mathcal{C}^\infty(U,\mathbb{C})$. Recall the following well-known rules for the complex derivatives
for all integers $1\leqslant j \leqslant N$:
\begin{equation}\label{EqFormFlatHolomorphicCondandDer}
\left(\dfrac{\partial f}{\partial z_j}\right)^\vee
=\frac{1}{2}\left(
\dfrac{\partial \check{f}}{\partial x_j}
-\mathbf{i}
\dfrac{\partial \check{f}}{\partial y_j}\right)
~~\mathrm{and}~~
\dfrac{\partial \check{f}}{\partial x_j}
+\mathbf{i}
\dfrac{\partial\check{f} }{\partial y_j}=0,
~~\mathrm{hence}~~
\dfrac{\partial \check{f}}{\partial x_j}
 =\left(\dfrac{\partial f}{\partial z_j}\right)^\vee
   =-\mathbf{i}\dfrac{\partial \check{f}}{\partial y_j}
\end{equation}
for all complex rational functions $f$ where the first two equations are easy to check on polynomials and the second is nothing but the well-known `holomorphicity condition' $\partial f/\partial \bar{z}_j=0$ for the complex conjugate variables.
Next, let
$\Gamma$ be a formal connection on $U$ which is complex rational in the following way:
\begin{equation}\label{EqFormFlatDefComplexRationalConn}
\Gamma(z)=\sum_{j=1}^N\Gamma_j(z)\mathsf{d}z_j
~~~~\mathrm{and}~~~~
\forall~1\leqslant j\leqslant N:~~
\Gamma_j\in \Big(\mathbb{C}_U(z)\otimes 
\mathcal{A}\Big)[[\lambda]].
\end{equation}
Note that the linear combination is over the complex $\mathsf{d}z_j$! We can rewrite this expression in the $2N$ real $x$ and $y$ coordinates:
\begin{eqnarray}
\Gamma(z) &=& \sum_{j=1}^N\Gamma_j(z)\mathsf{d}z_j
=\sum_{j=1}^N\Gamma_j(x+\mathbf{i}y)(\mathsf{d}x_j
+\mathbf{i}\mathsf{d}y_j)
=\sum_{j=1}^N
\check{\Gamma}_j(x,y)\mathsf{d}x_j
+\sum_{j=1}^N
\mathbf{i}\check{\Gamma}_j(x,y)\mathsf{d}y_j\nonumber \\
&=:&\sum_{j=1}^N
\check{\Gamma}^{[1]}_j(x,y)\mathsf{d}x_j
+\sum_{j=1}^N
\check{\Gamma}^{[2]}_j(x,y)\mathsf{d}y_j
=:\check{\Gamma}(x,y). 
\label{EqFormFlatRealFormCompRatConn}
\end{eqnarray}
and get an ordinary formal connection with
components $\check{\Gamma}^{[1]}_j=\check{\Gamma}_j$ in the $x_j$-directions,
and $\check{\Gamma}^{[2]}_j=\mathbf{i}\check{\Gamma}_j$ in the $y_j$-directions.\\
We have the following completely unsurprising, but useful
result for complex pull-backs: let $N'$ be a positive integer, let $U'\subset \mathbb{C}^{N'}$ be a non-empty open set, let $z'=x'+\mathbf{i}y'=(z'_1,\ldots,z'_{N'})$ be complex coordinates, and let
$\Theta_1,\ldots \Theta_{N}:U'\to \mathbb{C}$ be complex rational functions such that the map 
$\Theta=(\Theta_1,\ldots,\Theta_N):U'\to \mathbb{C}^N$
takes its values in $U$. We shall write
\begin{equation}\label{EqFormConnComplexRealPhi}
   \Theta(z')=\Theta(x'+\mathbf{i}y')=\check{\Theta}(x',y')
   =\Theta^{(1)}(x',y')+\mathbf{i}\Theta^{(2)}(x',y')
\end{equation}
where $\Theta^{(1)}, \Theta^{(2)}:U'\to\mathbb{R}^N$ are real
rational functions. See the Appendix for the proof of the
following
\begin{proposition}
	\label{PFormConnComplexPullBackEqualReal}
	Let $\Gamma$ be a complex rational connection on 
	$U\subset \mathbb{C}^N$, and define the complex pullback $\Theta^*\Gamma$ by formula (\ref{EqFormConnDefPullBackConn}) with $x'_j$ replaced
	by $z'_j$. Then
	\begin{equation}\label{EqFormConnComplexPullBackConn}
	    \left(\Theta^*\Gamma\right)^\vee = 
	    \check{\Theta}^*\check{\Gamma}.
	\end{equation}
\end{proposition}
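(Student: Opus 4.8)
The plan is to unravel both sides of \eqref{EqFormConnComplexPullBackConn} into their defining formulas and show the components agree. On the left, the complex pull-back $\Theta^*\Gamma$ is defined, following \eqref{EqFormConnDefPullBackConn} with $x'_j$ replaced by $z'_j$, by $(\Theta^*\Gamma)_k = \sum_{i=1}^N (\Gamma_i\circ\Theta)\,\partial\Theta_i/\partial z'_k$ for $1\leqslant k\leqslant N'$; this is again a complex rational connection on $U'$, so by \eqref{EqFormFlatRealFormCompRatConn} its real form $(\Theta^*\Gamma)^\vee$ has $x'_k$-component $((\Theta^*\Gamma)_k)^\vee$ and $y'_k$-component $\mathbf{i}\,((\Theta^*\Gamma)_k)^\vee$. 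On the right, $\check{\Theta}^*\check{\Gamma}$ is the ordinary real pull-back (formula \eqref{EqFormConnDefPullBackConn}) of the $2N$-variable real connection $\check{\Gamma}$ from \eqref{EqFormFlatRealFormCompRatConn} along the real map $\check{\Theta}=(\Theta^{(1)},\Theta^{(2)}):U'\to\mathbb{R}^{2N}$, using the $2N'$ real coordinates $(x',y')$.

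First I would write out the $x'_k$-component of $\check{\Theta}^*\check{\Gamma}$ explicitly: it equals
\[
\sum_{i=1}^N\Big(\check{\Gamma}^{[1]}_i\circ\check{\Theta}\Big)\frac{\partial\Theta^{(1)}_i}{\partial x'_k}
+\sum_{i=1}^N\Big(\check{\Gamma}^{[2]}_i\circ\check{\Theta}\Big)\frac{\partial\Theta^{(2)}_i}{\partial x'_k}
=\sum_{i=1}^N\big(\check{\Gamma}_i\circ\check{\Theta}\big)\left(\frac{\partial\Theta^{(1)}_i}{\partial x'_k}+\mathbf{i}\frac{\partial\Theta^{(2)}_i}{\partial x'_k}\right),
\]
where I used $\check{\Gamma}^{[1]}_i=\check{\Gamma}_i$ and $\check{\Gamma}^{[2]}_i=\mathbf{i}\check{\Gamma}_i$. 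Now the key computational inputs are: (a) $\check{\Gamma}_i\circ\check{\Theta}=(\Gamma_i\circ\Theta)^\vee$, which holds because composing the real-and-imaginary-part decomposition of $\Gamma_i$ with that of $\Theta$ reproduces the real-and-imaginary-part decomposition of the composite complex rational function $\Gamma_i\circ\Theta$; and (b) the Cauchy--Riemann-type identity \eqref{EqFormFlatHolomorphicCondandDer} applied to each complex rational function $\Theta_i$, which gives $\partial\Theta^{(1)}_i/\partial x'_k+\mathbf{i}\,\partial\Theta^{(2)}_i/\partial x'_k = (\partial\Theta_i/\partial z'_k)^\vee$. Substituting (a) and (b), the $x'_k$-component of $\check{\Theta}^*\check{\Gamma}$ becomes $\sum_{i=1}^N(\Gamma_i\circ\Theta)^\vee(\partial\Theta_i/\partial z'_k)^\vee = \big(\sum_{i=1}^N(\Gamma_i\circ\Theta)\,\partial\Theta_i/\partial z'_k\big)^\vee = ((\Theta^*\Gamma)_k)^\vee$, which matches the $x'_k$-component of $(\Theta^*\Gamma)^\vee$. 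For the $y'_k$-component, the same computation with $\partial/\partial x'_k$ replaced by $\partial/\partial y'_k$, together with the second Cauchy--Riemann relation $\partial\Theta^{(1)}_i/\partial y'_k+\mathbf{i}\,\partial\Theta^{(2)}_i/\partial y'_k = \mathbf{i}\,(\partial\Theta_i/\partial z'_k)^\vee$ from \eqref{EqFormFlatHolomorphicCondandDer}, produces $\mathbf{i}\,((\Theta^*\Gamma)_k)^\vee$, again matching. Since both sides agree coordinate by coordinate, \eqref{EqFormConnComplexPullBackConn} follows.

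The only mildly delicate point — and the one I would state carefully rather than wave away — is claim (a), the compatibility $(\Gamma_i\circ\Theta)^\vee = \check{\Gamma}_i\circ\check{\Theta}$ of the real-imaginary decomposition with composition of complex rational maps. This is the statement that the functor ``take real and imaginary parts'' commutes with composition; it is true because both sides are the unique real rational functions $U'\to\mathcal{A}[[\lambda]]^{\oplus 2}$ representing the holomorphic-in-$z'$ function $z'\mapsto\Gamma_i(\Theta(z'))$, and uniqueness of the decomposition \eqref{EqFormConnComplexFunctionRealImPart} forces them to coincide. Everything else is bookkeeping with the formulas \eqref{EqFormConnDefPullBackConn}, \eqref{EqFormFlatHolomorphicCondandDer}, \eqref{EqFormFlatRealFormCompRatConn}, so there is no genuine obstacle beyond keeping the indices and the factors of $\mathbf{i}$ straight.
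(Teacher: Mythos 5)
Your proof is correct and follows essentially the same route as the paper's: unfold the real pull-back of $\check{\Gamma}$ along $\check{\Theta}$, combine the $\check{\Gamma}^{[1]}_i=\check{\Gamma}_i$ and $\check{\Gamma}^{[2]}_i=\mathbf{i}\check{\Gamma}_i$ terms, invoke the Cauchy--Riemann relations (\ref{EqFormFlatHolomorphicCondandDer}) and the compatibility of $\vee$ with composition, and reassemble into $\left(\left(\Theta^*\Gamma\right)_k\right)^\vee$. The only difference is organizational: you proceed coordinate by coordinate (separately for $\mathsf{d}x'_k$ and $\mathsf{d}y'_k$) and state the composition-compatibility fact (a) explicitly, whereas the paper folds the $\mathsf{d}x'_j$ and $\mathsf{d}y'_j$ pieces directly into $\mathsf{d}z'_j$ and leaves (a) implicit in the step labeled (\ref{EqFormConnComplexFunctionRealImPart}).
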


\subsection{Flat formal connections}
 %\addcontentsline{toc}{subsection}
 %{Flat formal connections}
\label{SubSecFlatFormalConnections}
 
Let $N$ be a positive integer,  $U\subset\mathbb{R}^N$ a non-empty open subset,
and $\Gamma$ a formal connection on $U$.
$\Gamma$ is called \textbf{flat} (more precisely \emph{formally flat}) if the following
conditions hold:
\begin{equation}
\label{EqFlatDefFlatConnection}
\forall~i,j\in\mathbb{N},~1\leqslant i,j\leqslant N:~~~
0 = \dfrac{\partial \Gamma_i}{\partial x_j} - \dfrac{\partial \Gamma_j}{\partial x_i} + \lambda \Big(\Gamma_i \Gamma_j - \Gamma_j \Gamma_i\Big).
\end{equation} 
Obviously, \textbf{any formal connection on an open set of $\mathbb{R}^1$ is flat}.
Moreover, complex rational flatness is equivalent to
flatness in the following sense:
\begin{proposition}\label{PFormFlatComplexFlatEqRealFlat}
	Let $U\subset \mathbb{C}^N$ be an open set and let
	$\Gamma$ be a formal connection which is complex rational in the sense of (\ref{EqFormFlatDefComplexRationalConn}). Let
	$\check{\Gamma}$ be the formal connection in the sense of
	(\ref{EqFormFlatRealFormCompRatConn}).
	Then $\Gamma$ is flat in the complex sense, i.e.
	\begin{equation}\label{EqFormFlatDefComplexFlatness}
	\forall~i,j\in\mathbb{N},~1\leqslant i,j\leqslant N:~~~
	0 = \dfrac{\partial \Gamma_i}{\partial z_j} - \dfrac{\partial \Gamma_j}{\partial z_i} + \lambda \Big(\Gamma_i \Gamma_j - \Gamma_j \Gamma_i\Big)
     \end{equation}
	if and only if $\check{\Gamma}$ is flat in the normal
	`real sense', see (\ref{EqFlatDefFlatConnection})
	for $2N$ real variables $(x,y)$.
\end{proposition}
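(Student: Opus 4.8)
The plan is to translate the $2N$-variable real flatness system (\ref{EqFlatDefFlatConnection}) for $\check{\Gamma}$ into the $N$-variable complex one (\ref{EqFormFlatDefComplexFlatness}) for $\Gamma$ by a direct inspection of pairs of real coordinates, using nothing beyond the Cauchy--Riemann-type identities recorded in (\ref{EqFormFlatHolomorphicCondandDer}) and the explicit shape of $\check{\Gamma}$ in (\ref{EqFormFlatRealFormCompRatConn}).

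First I would set up notation. Write the $2N$ real coordinates on $U\subset\mathbb{C}^N$ as $x_1,\dots,x_N,y_1,\dots,y_N$, and for $1\leqslant i,j\leqslant N$ abbreviate the ``complex curvature''
\[
   F_{ij}\coloneqq \frac{\partial\Gamma_i}{\partial z_j}-\frac{\partial\Gamma_j}{\partial z_i}
    +\lambda\big(\Gamma_i\Gamma_j-\Gamma_j\Gamma_i\big)\in\big(\mathbb{C}_U(z)\otimes\mathcal{A}\big)[[\lambda]],
\]
so that complex flatness (\ref{EqFormFlatDefComplexFlatness}) is precisely the statement that $F_{ij}=0$ for all $i,j$. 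Next I would record three elementary facts. (a) The assignment $f\mapsto\check{f}$ of (\ref{EqFormConnComplexFunctionRealImPart}) is simply ``the same function read in the variables $(x,y)$''; hence it is injective, it satisfies $(fg)^\vee=\check{f}\check{g}$, and, applied componentwise, it extends to the $\Gamma_i$ by $\mathcal{A}$-linearity and $\mathbb{C}[[\lambda]]$-linearity. (b) Applying (\ref{EqFormFlatHolomorphicCondandDer}) termwise to the components of $\Gamma_i$ gives, for all $i,j$,
\[
   \frac{\partial\check{\Gamma}_i}{\partial x_j}=\Big(\frac{\partial\Gamma_i}{\partial z_j}\Big)^\vee
   \qquad\mathrm{and}\qquad
   \frac{\partial\check{\Gamma}_i}{\partial y_j}=\mathbf{i}\,\Big(\frac{\partial\Gamma_i}{\partial z_j}\Big)^\vee .
\]
(c) By (\ref{EqFormFlatRealFormCompRatConn}), $\check{\Gamma}$ has components $\check{\Gamma}_i$ in the $x_i$-directions and $\mathbf{i}\check{\Gamma}_i$ in the $y_i$-directions.

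Then I would go through the flatness expression in (\ref{EqFlatDefFlatConnection}) for $\check{\Gamma}$, which ranges over all ordered pairs of the $2N$ coordinates, and split it into three families according to the types of the two chosen coordinates. Substituting (c), the identities of (b), and the multiplicativity of $\check{}$, one finds that the flatness expression of $\check{\Gamma}$ equals $F_{ij}^\vee$ for the pair $(x_i,x_j)$; equals $-F_{ij}^\vee$ for the pair $(y_i,y_j)$ (here the only substantive point is that the two factors $\mathbf{i}$ in the commutator $[\mathbf{i}\check{\Gamma}_i,\mathbf{i}\check{\Gamma}_j]$ produce $\mathbf{i}^2=-1$, matching the sign flip coming from (\ref{EqFormFlatHolomorphicCondandDer})); and equals $\mathbf{i}\,F_{ij}^\vee$ for the mixed pair $(x_i,y_j)$, the pair $(y_j,x_i)$ giving $-\mathbf{i}\,F_{ij}^\vee$ by antisymmetry. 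Since $f\mapsto\check{f}$ is injective and the scalars $1,-1,\pm\mathbf{i}$ are invertible, each of these real equations holds identically on $U$ if and only if $F_{ij}=0$; the diagonal cases ($i=j$, or a mixed pair with equal index) are trivially satisfied on both sides.

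Collecting this, the full real flatness system for $\check{\Gamma}$ is equivalent to $F_{ij}=0$ for all $1\leqslant i,j\leqslant N$, i.e.\ to complex flatness (\ref{EqFormFlatDefComplexFlatness}), which proves the proposition. I do not expect a genuine obstacle: the content sits entirely in the Cauchy--Riemann identities (\ref{EqFormFlatHolomorphicCondandDer}) and the form (\ref{EqFormFlatRealFormCompRatConn}) of $\check{\Gamma}$; the only things demanding care are keeping the index convention of (\ref{EqFlatDefFlatConnection}) straight for the mixed $x$--$y$ pairs and correctly tracking the factors of $\mathbf{i}$.
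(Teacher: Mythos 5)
Your proposal is correct and follows essentially the same route as the paper: both use the Cauchy--Riemann identities (\ref{EqFormFlatHolomorphicCondandDer}) together with the form (\ref{EqFormFlatRealFormCompRatConn}) of $\check{\Gamma}$ to express every real curvature component as a nonzero scalar multiple ($1$, $-1$, $\pm\mathbf{i}$) of $\check{R}_{ij}$. The paper packages this as a single chain of equalities showing the $x$-$x$, $y$-$y$ and mixed curvatures are proportional, while you organize it as an explicit case analysis over coordinate-pair types; the substance and the factors of $\mathbf{i}$ are tracked identically.
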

\begin{proof}
	We denote the right-hand side of
	(\ref{EqFormFlatDefComplexFlatness}) by $R_{ij}$.
	Equation (\ref{EqFormFlatHolomorphicCondandDer}) allows to replace complex derivatives $\partial/\partial z_i$ by the real ones, and thanks to (\ref{EqFormFlatRealFormCompRatConn})
	$\check{R}_{ij}$ equals
	\begin{equation*}
	\begin{split}
	  \dfrac{\partial \check{\Gamma}^{[1]}_i}{\partial x_j}
	  -\dfrac{\partial \check{\Gamma}^{[1]}_j}
	      {\partial x_i}
	  +\lambda \big[\check{\Gamma}^{[1]}_i,
	     \check{\Gamma}^{[1]}_j\big]
	   &= 
	   -\left(
	   \dfrac{\partial \check{\Gamma}^{[2]}_i}
	                  {\partial y_j}
	   -\dfrac{\partial \check{\Gamma}^{[2]}_j}
	       {\partial y_i}
	   +\lambda \big[\check{\Gamma}^{[2]}_i,
	       \check{\Gamma}^{[2]}_j\big]\right) \\
	   &= 
	   -\mathbf{i}\left(
	   \dfrac{\partial \check{\Gamma}^{[2]}_i}
	   {\partial x_j}
	   -\dfrac{\partial \check{\Gamma}^{[1]}_j}
	   {\partial y_i}
	   +\lambda \big[\check{\Gamma}^{[2]}_i,
	   \check{\Gamma}^{[1]}_j\big]\right)    
	   \end{split}
	\end{equation*}
	which exactly gives the components of the right hand
	side of 
	(\ref{EqFlatDefFlatConnection}) for $\check{\Gamma}$
	whence the result.
\end{proof}
\noindent In particular, \textbf{any complex rational formal connection on an open subset of $\mathbb{C}^1$ is flat}. Moreover, we mention the following well-known result (see the Appendix for a proof):
\begin{proposition}\label{PFlatConnPullBackOfFlatIsFlat}
	Let $N,N'$ be positive integers, let $U\subset \mathbb{R}^N$ and $U'\subset \mathbb{R}^{N'}$ be  non-empty open subsets, let
	$\Theta:U'\to U$ be a $\mathcal{C}^\infty$-map, and let
	$\Gamma$ be a flat formal connection on $U$.\\
	Then the pulled-back formal connection $\Gamma'\coloneqq\Phi^*\Gamma$, see (\ref{EqFormConnDefPullBackConn}), is also flat.
\end{proposition}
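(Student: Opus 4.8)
The plan is to verify the flatness conditions (\ref{EqFlatDefFlatConnection}) directly for $\Gamma':=\Theta^*\Gamma$ by a chain-rule computation, working order by order in $\lambda$ so that every sum over the tensor algebras $\mathcal{C}^\infty\otimes\mathcal{A}$ involved is finite and no question of convergence arises. Write $\partial_l$ for $\partial/\partial x_l$ on $U$ and $\partial_k'$ for $\partial/\partial x'_k$ on $U'$, and recall from (\ref{EqFormConnDefPullBackConn}) that
\[
  \Gamma'_j \;=\; \sum_{i=1}^N (\Gamma_i\circ\Theta)\,\partial_j'\Theta_i ,
\]
where each partial derivative $\partial_j'\Theta_i$ is a $\mathbb{C}$-valued smooth function on $U'$, hence a \emph{central} factor in the algebra $\big(\mathcal{C}^\infty(U',\mathbb{C})\otimes\mathcal{A}\big)[[\lambda]]$. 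Note also that composition with the smooth map $\Theta$ is, order by order, an algebra homomorphism compatible with products in $\mathcal{A}[[\lambda]]$, i.e.\ $(\Gamma_i\Gamma_l)\circ\Theta=(\Gamma_i\circ\Theta)(\Gamma_l\circ\Theta)$.

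First I would differentiate. The chain rule applied to $\Gamma_i\circ\Theta$ gives
\[
  \partial_k'\Gamma'_j
   \;=\; \sum_{i,l}\big((\partial_l\Gamma_i)\circ\Theta\big)\,(\partial_k'\Theta_l)(\partial_j'\Theta_i)
        \;+\; \sum_i (\Gamma_i\circ\Theta)\,\partial_k'\partial_j'\Theta_i ,
\]
and the analogous formula for $\partial_j'\Gamma'_k$. Subtracting the two, the Hessian-of-$\Theta$ terms $\sum_i(\Gamma_i\circ\Theta)\,\partial_k'\partial_j'\Theta_i$ cancel because $\Theta$ is $\mathcal{C}^\infty$ and mixed partials commute (Schwarz's theorem; $\mathcal{C}^2$ would already do). Relabelling the summation indices $i\leftrightarrow l$ in one of the two remaining double sums and using the centrality of the scalar factors, one reaches
\[
  \partial_k'\Gamma'_j-\partial_j'\Gamma'_k
   \;=\; \sum_{i,l}\big((\partial_l\Gamma_i-\partial_i\Gamma_l)\circ\Theta\big)\,(\partial_k'\Theta_l)(\partial_j'\Theta_i) .
\]
Now I would invoke the flatness of $\Gamma$ itself: (\ref{EqFlatDefFlatConnection}) with the index pair $(i,l)$ gives $\partial_l\Gamma_i-\partial_i\Gamma_l=-\lambda(\Gamma_i\Gamma_l-\Gamma_l\Gamma_i)$, and composing with $\Theta$ turns the last display into
\[
  \partial_k'\Gamma'_j-\partial_j'\Gamma'_k
   \;=\; -\lambda\sum_{i,l}\big((\Gamma_i\circ\Theta)(\Gamma_l\circ\Theta)-(\Gamma_l\circ\Theta)(\Gamma_i\circ\Theta)\big)\,(\partial_k'\Theta_l)(\partial_j'\Theta_i) .
\]
Pulling the central scalars back inside and factoring the double sum as a product of two sums, the right-hand side becomes exactly $-\lambda\big(\Gamma'_j\Gamma'_k-\Gamma'_k\Gamma'_j\big)$; that is, $0=\partial_k'\Gamma'_j-\partial_j'\Gamma'_k+\lambda(\Gamma'_j\Gamma'_k-\Gamma'_k\Gamma'_j)$, which is (\ref{EqFlatDefFlatConnection}) for $\Gamma'$. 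Hence $\Gamma'$ is flat.

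I expect the only genuine hazard to be the clerical one: keeping the index relabelling and the sign of the commutator straight when the two double sums are folded into one, and checking that the $\mathbb{C}[[\lambda]]$-bookkeeping (central scalar coefficients, the homomorphism property of $-\circ\Theta$) is applied per $\lambda$-order. Everything else --- the chain rule, the vanishing of the second-order terms by symmetry of mixed partials, and the factorisation of the double sum --- is routine. One could instead phrase the argument through the curvature $2$-form $\mathsf{d}\Gamma+\lambda\,\Gamma\wedge\Gamma$ together with the naturality of pull-back, but since the paper deliberately avoids the differential-forms formalism, the component computation above is the appropriate route.
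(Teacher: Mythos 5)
Your proof is correct and follows essentially the same route as the paper's: differentiate $\Gamma'_j=\sum_i(\Gamma_i\circ\Theta)\,\partial_j'\Theta_i$ by the chain rule, let the second-derivative terms of $\Theta$ cancel by Schwarz's rule, and recognize that the remaining double sum is the curvature of $\Gamma$ composed with $\Theta$, hence zero by flatness. The only (immaterial) organisational difference is that you substitute $\Gamma$'s flatness into the antisymmetrised derivative term and then match it against $-\lambda[\Gamma'_j,\Gamma'_k]$, whereas the paper groups the full curvature expression of $\Gamma'$ at once and exhibits the two vanishing summands directly.
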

The significance of flat connections is the following well-known result about the \textbf{path-indepen\-dence of parallel transports}, for which we give a proof in
the Appendix:
\begin{theorem}
	\label{TFormFlatPathIndep}
	Let $N\geqslant 1$, $U \subset \mathbb{R}^N$ be a non-empty open subset and $\Gamma$ be a flat formal connection. Let $p,q \in U$, $\epsilon\in\mathbb{R}$, $\epsilon>0$, and $c_0,c_1 : ]a-\epsilon,b+\epsilon[ ~\to U$ be two smooth paths (NOT only piecewise smooth!) such that
	\begin{itemize}
		\item[(i)] $c_0(a)= p = c_1(a)$, 
		  $c_0(b) = q = c_1(b)$;
		\item[(ii)] there exists a \textbf{smooth homotopy} $F$ between $c_0$ and $c_1$: more precisely, there is an open subset $\mathcal{O}\subset \mathbb{R}^2$ with $]a-\epsilon,b+\epsilon[ ~\times~ 
		]-\epsilon,1+\epsilon[~\subset~ \mathcal{O}$, and $F:\mathcal{O} \to U$ is a $\mathcal{C}^\infty$-map satisfying
		\begin{equation}
		\label{EqFormFlatBoundaryCondHomotF}
		\begin{array}{ccccc}
		\forall~s \in ~]a-\epsilon,b+\epsilon[  & : & F(s,0) = c_0(s) & \mathrm{and} & F(s,1) = c_1(s), \\
		\forall~t \in ~]-\epsilon,1+\epsilon[ & : & F(a,t) = p & \mathrm{and} & F(b,t) = q.
		\end{array}
		\end{equation}
	\end{itemize}
	Then the parallel transport with respect to $\Gamma$ from $p$ to $q$ along $c_0$ is equal to the parallel transport along $c_1$ with respect to $\Gamma$ from $p$ to $q$:
	\begin{equation*}
	{}^\Gamma W_{b a }^{(c_0)} = {}^\Gamma W_{b a }^{(c_1)}.
	\end{equation*}
\end{theorem}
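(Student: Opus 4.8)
The plan is to reduce the statement to the one-dimensional path-independence that is already available (any formal connection on an open subset of $\mathbb{R}^1$ is flat, and by Proposition \ref{PFormalLinODEPropagatorPropert} the propagator of a one-dimensional formal linear ODE depends only on the values of the relevant function between endpoints). Concretely, I would pull the connection $\Gamma$ back along the homotopy $F:\mathcal{O}\to U$ to obtain a flat formal connection $\tilde\Gamma:=F^*\Gamma$ on the open set $\mathcal{O}\subset\mathbb{R}^2$, which is flat by Proposition \ref{PFlatConnPullBackOfFlatIsFlat}. By the pull-back formula for parallel transports, equation (\ref{EqFormConnPullBackParTransp}), it then suffices to prove the equality ${}^{\tilde\Gamma}W^{(\gamma_0)}_{ba}={}^{\tilde\Gamma}W^{(\gamma_1)}_{ba}$ for the two paths $\gamma_i:[a,b]\to\mathcal{O}$, $\gamma_i(s):=(s,i)$ for $i=0,1$, joining $(a,0)$ (resp.\ $(a,1)$) to $(b,0)$ (resp.\ $(b,1)$); note here that because of the boundary conditions (\ref{EqFormFlatBoundaryCondHomotF}) the points $F(a,t)=p$ and $F(b,t)=q$ are constant in $t$, so the vertical edges of the rectangle contribute trivial parallel transports (the $\Gamma^{(c)}$ of a constant path is zero).

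The core analytic step is then a two-dimensional ``$t$-derivative of the horizontal propagator vanishes'' argument. Write $\tilde\Gamma=\tilde\Gamma_1\,\mathsf{d}x_1+\tilde\Gamma_2\,\mathsf{d}x_2$ on $\mathcal{O}$, and for each $t\in\,]-\epsilon,1+\epsilon[$ let $W(s,t)$ denote the fundamental solution normalized at $s=a$ of the $s$-ODE $\partial_s W=\lambda\,\tilde\Gamma_1(s,t)\,W$, so that ${}^{\tilde\Gamma}W^{(\gamma_t)}_{ba}=W(b,t)$ where $\gamma_t(s)=(s,t)$. The claim is that $V(s,t):=W(s,t)^{-1}\big(\partial_t W(s,t)-\lambda\,\tilde\Gamma_2(s,t)W(s,t)\big)$ is independent of $s$. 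Differentiating $V$ in $s$ and using $\partial_s W=\lambda\tilde\Gamma_1 W$, the formula $\partial_s(W^{-1})=-W^{-1}(\partial_s W)W^{-1}$, and the flatness identity $\partial_s\tilde\Gamma_2-\partial_t\tilde\Gamma_1+\lambda[\tilde\Gamma_1,\tilde\Gamma_2]=0$ (this is exactly (\ref{EqFlatDefFlatConnection}) for the pulled-back connection with $i=1,j=2$, after exchanging the order of $\partial_s\partial_t$), everything cancels and $\partial_s V=0$. At $s=a$, since $W(a,t)=1$ for all $t$ we get $\partial_t W(a,t)=0$, hence $V(a,t)=-\lambda\tilde\Gamma_2(a,t)$. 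But $\tilde\Gamma_2(a,t)=(F^*\Gamma)_2(a,t)=\sum_i(\Gamma_i\circ F)(a,t)\,\partial_t F_i(a,t)=0$ because $t\mapsto F(a,t)=p$ is constant; therefore $V\equiv 0$, i.e.\ $\partial_t W(s,t)=\lambda\,\tilde\Gamma_2(s,t)\,W(s,t)$ for all $s,t$. Evaluating at $s=b$ and using $\tilde\Gamma_2(b,t)=0$ (since $t\mapsto F(b,t)=q$ is constant) gives $\partial_t W(b,t)=0$, so $W(b,0)=W(b,1)$, which is the desired equality.

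Throughout, the computations are legitimate inside the algebra $\big(\mathcal{C}^\infty(\mathcal{O},\mathbb{C})\otimes\mathcal{A}\big)[[\lambda]]$ and its one-variable specializations: the solutions $W(s,t)$ exist, are unique, and are invertible by Proposition \ref{PFormalLinODEPropagatorPropert}, and their dependence on $t$ is smooth because the iterated-integral series (\ref{EqFormalLinODEIteratedIntegrals}) depends smoothly on parameters entering the integrand smoothly; the interchange of $\partial_s$ and $\partial_t$ is justified coefficientwise in $\lambda$, each coefficient being a finite iterated integral of smooth functions of two variables. One small technical point to handle carefully is that $\gamma_0,\gamma_1$ as defined are smooth paths on $[a,b]$ (indeed on $]a-\epsilon,b+\epsilon[$) with singular set $\{a,b\}$, so the framework of $\S\ref{SubSecFormalConnectionsAndParallelTransports}$ applies directly and no gluing issues arise.

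The main obstacle I anticipate is purely bookkeeping rather than conceptual: namely being scrupulous that the flatness identity is applied with the correct indices and signs (the paper warns it uses a sign convention for $\Gamma$ differing from differential geometry, so the ODE is $\partial_s W=\lambda\tilde\Gamma_1 W$ and the curvature term appears as $+\lambda[\tilde\Gamma_1,\tilde\Gamma_2]$), and that the mixed partial $\partial_t\partial_s W=\partial_s\partial_t W$ is the step where $\partial_t\tilde\Gamma_1$ enters and gets converted to $\partial_s\tilde\Gamma_2+\lambda[\tilde\Gamma_1,\tilde\Gamma_2]$ via flatness. Once the $\partial_s V=0$ computation is set up with the right conventions, the rest — evaluating at the two constant-vertical-edges and reading off $W(b,0)=W(b,1)$ — is immediate.
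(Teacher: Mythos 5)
Your proposal is correct and follows essentially the same approach as the paper: the key step in both is the flatness-driven computation showing that the ``defect'' between $\partial_t W$ and $\lambda\tilde\Gamma_2 W$ vanishes, combined with the boundary conditions $F(a,t)=p$, $F(b,t)=q$. The only cosmetic difference is that you conjugate the paper's quantity $H=\partial_t W - \lambda\tilde\Gamma_2 W$ by $W^{-1}$ to obtain $V$ with $\partial_s V=0$, whereas the paper argues that $H$ satisfies the same formal linear ODE $\partial_s H=\lambda\tilde\Gamma_1 H$ with vanishing initial value and concludes $H\equiv 0$ by uniqueness; the two are trivially equivalent.
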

%\begin{center}
%	\includegraphics[scale=0.3]{parallel-transport.jpg}
%\end{center}

We shall give a corollary to the preceding Theorem which
will cover all the cases we shall discuss later: we need to establish first a relation between continuous piecewise smooth paths (which will turn up while doing composition
of paths), and overall smooth paths: 
specializing to $[a,b]=[0,1]$ (which will be only parameter interval in the sequel) we can prove the following
 very useful Corollary, see the Appendix for a proof:
\begin{corollary}\label{CFormFlatContractibleLoops}
	Let $N\in\mathbb{N}\setminus \{0\}$, $U\subset \mathbb{R}^N$ be a non-empty open set, $\Gamma$ a flat formal connection on $U$, and $c_1,c_2:[0,1]\to U$ two \emph{continuous piecewise smooth paths having
	 the same initial point $p$ and final point $q$}.
	Suppose that there is an open set $U'\subset \mathbb{R}^N$ and a point $\varpi\in U'$ such that
	$c_1$ and $c_2$ take all their values in 
	$U'\subset U$ and which is \textbf{star-shaped around $\varpi\in U'$},
	i.e.
	\begin{equation*}
	 \forall~t\in[0,1]~\forall~x\in U':~~~
	    (1-t)x+t\varpi\in U'.
	\end{equation*}
	Then the parallel transports $p\to q$ along $c_1$ and
	along $c_2$ are equal.
	In particular, the parallel transport along any
	continuous piecewise smooth loop $c_3:[0,1]\to U'\subset U$ (recall $c_3(0)=c_3(1)$) is trivial,
	i.e.~equal to $1\in\mathcal{A}$.
\end{corollary}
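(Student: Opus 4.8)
The plan is to reduce the statement about continuous piecewise smooth paths in a star-shaped open set $U'$ to the smooth-homotopy setting of Theorem \ref{TFormFlatPathIndep}. First I would dispose of the last sentence: a loop $c_3$ with $c_3(0)=c_3(1)=p$ is covered by the main statement applied to $c_1=c_3$ and $c_2=$ the constant path at $p$, since for a constant path $\Gamma^{(c_{p})}=0$ and hence the parallel transport is $1\in\mathcal{A}$ (as recorded in $\S$\ref{SubSecFormalConnectionsAndParallelTransports}). So it suffices to prove that the parallel transports $p\to q$ along $c_1$ and along $c_2$ agree. By Proposition \ref{PFlatConnPullBackOfFlatIsFlat} (or rather by restriction, which is the trivial pull-back along the inclusion) $\Gamma$ restricted to $U'$ is still flat, so I may as well work inside $U'$ from now on.

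The key idea is that in a star-shaped set any two continuous piecewise smooth paths with the same endpoints are smoothly homotopic rel endpoints, via the straight-line homotopy through the centre $\varpi$. Concretely, given $c_1,c_2:[0,1]\to U'$ with $c_1(0)=c_2(0)=p$ and $c_1(1)=c_2(1)=q$, I would build the homotopy in two stages: a first homotopy contracting $c_1$ to the ``broken-line through $\varpi$'' path $\ell_1$ which goes straight from $p$ to $\varpi$ and then straight from $\varpi$ to $q$ (using $F(s,t)=(1-t)c_1(s)+t\varpi$ on a middle portion and interpolating near the ends so the endpoints stay fixed), and symmetrically a homotopy from $c_2$ to $\ell_2$ — but in fact $\ell_1=\ell_2$ since both are the same concatenation of the two segments $p\to\varpi$ and $\varpi\to q$. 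Star-shapedness guarantees every intermediate point stays in $U'$. There are two technical points to handle: (a) the homotopy must be genuinely smooth (not just continuous piecewise smooth) and defined on an open neighbourhood of $[0,1]\times[0,1]$ as demanded by Theorem \ref{TFormFlatPathIndep}, which forces a smoothing/reparametrization of $c_1$ and $c_2$ near the breakpoints; (b) one must know that such smoothing does not change the parallel transport. Point (b) is exactly the content of a smoothing lemma — this is Lemma \ref{LFlatFormConnSmoothing} in the paper, which I would invoke: reparametrizing a continuous piecewise smooth path so as to make it smooth (slowing down to infinite order at the breakpoints) leaves the parallel transport unchanged by reparametrization invariance, Theorem \ref{TFormConnOperationsForParTransp}$(i)$.

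So the steps, in order, are: (1) restrict to $U'$ and note flatness is preserved; (2) using Lemma \ref{LFlatFormConnSmoothing}, replace $c_1,c_2$ by smooth paths $\tilde c_1,\tilde c_2$ defined on an open interval around $[0,1]$, with the same endpoints and the same parallel transports; (3) likewise realize the broken-line path $\ell:=\ell_1=\ell_2$ as a smooth path $\tilde\ell$, again with unchanged parallel transport; (4) construct an explicit smooth homotopy rel endpoints from $\tilde c_1$ to $\tilde\ell$ inside $U'$ by pushing radially toward $\varpi$, with a smooth cutoff near $s=a$ and $s=b$ so that $F(a,t)=p$, $F(b,t)=q$ for all $t$, and verify the image lies in $U'$ by star-shapedness; (5) apply Theorem \ref{TFormFlatPathIndep} to conclude ${}^\Gamma W^{(\tilde c_1)}_{10}={}^\Gamma W^{(\tilde\ell)}_{10}$, and symmetrically ${}^\Gamma W^{(\tilde c_2)}_{10}={}^\Gamma W^{(\tilde\ell)}_{10}$; (6) combine with (2)--(3) to get ${}^\Gamma W^{(c_1)}_{10}={}^\Gamma W^{(c_2)}_{10}$; (7) specialize $c_2$ to the constant path for the loop statement. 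The main obstacle I expect is step (4) together with the smoothness bookkeeping in (2)--(3): writing down a homotopy that is simultaneously smooth on an open neighbourhood of the square, fixes both endpoints for all $t$, and stays in $U'$ requires a careful combination of the radial contraction with smooth bump functions, and one has to make sure the ``broken corner'' of $\ell$ at $\varpi$ has genuinely been smoothed out before it is used as $c_1$-homotopic target. None of this is deep, but it is where all the analytic care of $\S$\ref{SecSomeAnalysisETC} is actually spent.
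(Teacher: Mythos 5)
Your overall plan (restrict to $U'$, smooth via Lemma \ref{LFlatFormConnSmoothing}, apply Theorem \ref{TFormFlatPathIndep}, derive the loop case by specializing to a constant path) is sound in spirit and matches the paper's ingredients, but step (4) of your plan has a genuine gap. The radial contraction $F(s,t)=(1-t)c_1(s)+t\varpi$ does not fix the endpoints: $F(0,t)=(1-t)p+t\varpi$ sweeps from $p$ to $\varpi$ as $t$ increases, so the hypotheses of Theorem \ref{TFormFlatPathIndep} fail. Your proposed remedy -- ``interpolating near the ends so the endpoints stay fixed'' with bump functions -- is not spelled out, and there is a real obstruction to making it work: the target path $\ell_1$ near $s=0$ lies on the segment $p\to\varpi$, which is generically not on the radial ray from $c_1(s)$ to $\varpi$, so no modification of the form $(1-u(s,t))c_1(s)+u(s,t)\varpi$ with a smooth cutoff $u$ can simultaneously fix $F(0,t)=p$, $F(1,t)=q$, and reach $\ell_1$ at $t=1$. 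Since $U'$ is only assumed star-shaped (not convex), you cannot simply run a second straight-line homotopy between two points that are both near $p$ but off the ray, so the ``careful combination'' you gesture at is in fact the heart of the problem, not a routine technicality.

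What the paper does instead avoids this difficulty entirely by a conjugation trick. Let $d:[0,1]\to U'$ be the affine segment from $\varpi$ to $p$ (which lies in $U'$ by star-shapedness) and form the composed loop
\[
\check c := (d\circ\iota)*\bigl(c * d\bigr),\qquad c := (c_2\circ\iota)*c_1,
\]
which is a continuous piecewise smooth loop \emph{based at $\varpi$}. Because $\check c(0)=\check c(1)=\varpi$, the radial contraction $F(s,t)=(1-t)\,\check c(\theta(s))+t\,\varpi$ (with $\theta$ the smoothing reparametrization from Lemma \ref{LFlatFormConnSmoothing}) automatically satisfies $F(0,t)=\varpi=F(1,t)$ and $F(\cdot,1)\equiv\varpi$; no cutoff is required, and star-shapedness gives $F(s,t)\in U'$. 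Theorem \ref{TFormFlatPathIndep} then yields ${}^\Gamma W_{10}^{(\check c\circ\theta)}=1$. Finally, reparametrization invariance and the composition rule give
\[
{}^\Gamma W_{10}^{(\check c)}=\bigl({}^\Gamma W_{10}^{(d)}\bigr)^{-1}\,{}^\Gamma W_{10}^{(c)}\,{}^\Gamma W_{10}^{(d)},
\]
so ${}^\Gamma W_{10}^{(c)}=1$, i.e.\ ${}^\Gamma W_{10}^{(c_1)}={}^\Gamma W_{10}^{(c_2)}$, and the loop case is the specialization $c_2=\text{const}$. The essential missing idea in your plan is this basing of the loop at the star-centre $\varpi$ by conjugation with $d$, which is exactly what makes the endpoint condition of Theorem \ref{TFormFlatPathIndep} hold for free. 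You should also note that the paper proves the loop statement first and derives path equality from it, rather than the other way around.
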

\noindent

 %%%%%%%%%%%%%%%%%%%%%%%%%%%
 %%%%
 %%%%%%%%%%%%%%%%%%%%%%%%%%%
 
\section{The Drinfel'd associator and its identities}
%\addcontentsline{toc}{section}
%{The Drinfel'd associator}
\label{SecDrinfeldAssociator}

\subsection{The Knizhnik-Zamolodchikov connection and the Drinfel'd-Kohno (Lie) algebras }
	\label{SubSecDKAlgebrasKnZhConnections}

We shall only need the material of this Section for the treatment of the pentagon equations, see $\S$\ref{SubSecPentagonEquation}. The geometric considerations,
however, may serve as motivations for the constructions
in all the ensuing sections. The following definition appears in the work of T.Kohno, 
\cite[p.142, eqn (1.1.4)]{Koh87} and
\cite[p.146, eqn (1.3.2)]{Koh87}:

\begin{definition}
	\label{DK-algebras}
	Let $n \geqslant 2$ be an integer and let $\mathbb{K}$ be a field of characteristic zero. The 
	$\mathbf{n}$\textbf{th} \textbf{Drinfel'd--Kohno algebra} is the unital associative $\mathbb{K}$--algebra $\mathcal{T}_n$ generated by $n^2-n$ elements 
	\[
	t_{ij}, \qquad 1 \leqslant i\neq j \leqslant n
	\]
	subject to the relations, sometimes called 
	\textbf{infinitesimal braid relations}:
	\begin{subequations}
		\begin{align}
		t_{ij}-t_{ji}&= 0 \qquad \forall ~1\leqslant i\neq j\leqslant n,
		\label{eq:DK-definition-one}\\
		[t_{ij}+t_{ik}, t_{jk}] &= 0 \qquad \forall i,j,k \in \{1, \ldots, n \} \ \text{ such that } \# \{i,j,k \} = 3, \label{eq:DK-definition-two} \\
		[t_{ij},t_{kl}] &= 0 \qquad \forall i,j,k,l \in \{1, \ldots, n \} \ \text{ such that } \# \{i,j,k,l \} = 4.
		 \label{eq:DK-definition-three}
		\end{align}
	\end{subequations}
where $[~,~]$ denotes the commutator $[A,B]=AB-BA$ in 
associative algebras.
\end{definition}
\noindent In view of the first relation (\ref{eq:DK-definition-one}) one could have defined the Drinfel'd-Kohno algebra just by $\binom{n}{2}$ generators $t_{ij}$ with $i < j$. However, for concrete computations
it is much more practical to use `unordered' generators $t_{ij}=t_{ji}$ (one may think of the index being the two-element set $\{i,j\}$). The Lie counterpart of the definition above is better known: 
\begin{definition}
	Let $n \geqslant 2$ be an integer and let $\mathbb{K}$ be a field of characteristic zero. The
	$\mathbf{n}$\textbf{th} \textbf{Drinfel'd--Kohno Lie algebra} is the $\mathbb{K}$--Lie algebra $\mathfrak{t}_n$ generated by $n^2-n$ generators
	$t_{ij}$, $1 \leqslant i \neq  j \leqslant n$
	subject to the relations 
	(\ref{eq:DK-definition-one}),
	 (\ref{eq:DK-definition-two}), and 
	 (\ref{eq:DK-definition-three}), seen as Lie brackets in the free Lie algebra generated by the $t_{ij}$.
\end{definition}

\noindent It is not hard to see (but not necessary for
the proofs of the ensuing subsections), that the 
Drinfel'd-Kohno (Lie) algebras are \emph{free} (Lie) algebras with relations in the 
usual sense of a left adjoint functor, that is
that for any unital associative algebra $\mathcal{A}$ 
(resp.~Lie algebra $\mathfrak{g}$) the set of morphisms of algebras $\mathcal{T}_n\to \mathcal{A}$ 
(resp.~of Lie algebras $\mathfrak{t}_n\to \mathfrak{g}$) is 
in natural bijection with the set of elements
$A_{ij}=A_{ji}$ ($1\leqslant i<j\leqslant n$) of $A$ (resp.~of $\mathfrak{g}$) satisfying
relations (\ref{eq:DK-definition-one}),
(\ref{eq:DK-definition-two}), and (\ref{eq:DK-definition-three}) which is defined by
evaluation of a morphism on the generators $(t_{ij})$. \\
In the sequel we shall prefer the more general situation
of a general unital associative complex 
algebra $\mathcal{A}$ with elements
$A_{ij}=A_{ji}$ satisfying the above-mentioned relations.
It is not hard to check that each $\mathcal{T}_n$ is isomorphic to the universal enveloping algebra
of the Lie algebra $\mathfrak{t}_n$, but we shall not
need this result.

For example, denoting by $\mathbb{K}\langle a_1,\ldots,a_n\rangle$ (resp.~$\mathbb{K}[ a_1,\ldots,a_n]$) the free unital associative algebra (resp.~the free unital associative commutative algebra) generated by the symbols $a_1,\ldots,a_n$ there are the following isomorphisms
\begin{equation*}
    \mathcal{T}_2~~\cong~~ \mathbb{K}\langle t_{12}\rangle~~\cong~~ \mathbb{K}[t_{12}]~~~~
    \mathrm{and}~~~~\mathcal{T}_3
    ~~\cong~~ \mathbb{K}\langle t_{12},t_{23} \rangle
    ~\otimes~\mathbb{K}[t_{12}+t_{13}+t_{23}].
\end{equation*}
The first isomorphism is obvious. For the second, use first relation (\ref{eq:DK-definition-one}) to express everything in terms of $t_{ij}$ with $i<j$, perform the base change from $\big(t_{12},t_{13},t_{23}\big)$
to $\big(t_{12},t_{23},\Lambda
\coloneqq t_{12}+t_{13}+t_{23}\big)$ and
observe that the relations (\ref{eq:DK-definition-two})
 of Definition \ref{DK-algebras} for $\mathcal{T}_3$ are equivalent to $[t_{12},\Lambda]=0$
and $[t_{23},\Lambda]=0$. Condition (\ref{eq:DK-definition-three}) is of course empty for $n=3$. \\
Returning to general $n$, note further
that the group of all permutations $n$ letters, $S_n$, \textbf{acts on the
left} on $\mathcal{T}_n$ by sending each generator
$t_{ij}$ to $t_{\sigma(i)\sigma(j)}$ by automorphisms
of unital algebras.
Next, for each integer $n\geqslant 2$ let $Y_n\subset 
\mathbb{C}^n$ be the open subset (see e.g.~\cite[p.267]{Kas95})
\begin{equation*}%\label{EqDKKZDefConfSpace}
   Y_n\coloneqq \big\{z\in \mathbb{C}^n~\big| ~
   \forall~i,j\in\mathbb{N},~1\leqslant i,j\leqslant n:~
    \mathrm{if~}i\neq j~\mathrm{then}~z_i\neq z_j
   \big\}
\end{equation*}
which is the well-known \textbf{ordered configuration space}: every element of $Y_n$ contains the information of the coordinates of $n$ distinguishable particles in the plane where no two particles occupy the same position.

Recall that the usual permutation of coordinates defines
a well-defined \textbf{right action of the permutation group}
$S_n$ on $\mathbb{C}^n$ given by 
$z=(z_1,\ldots,z_n)\mapsto(z_{\sigma(1)},\ldots, z_{\sigma(n)})=:z\sigma$ for each permutation 
$\sigma\in S_n$. This right action obviously preserves $Y_n$ on which it acts freely, and the quotient $X_n\coloneqq
Y_n/S_n$ is  a complex $n$-dimensional manifold called the (unordered) \textbf{configuration space}. Configuration spaces have been studied a lot,
see e.g.~the book \cite{FD01}. The fundamental groups of
$X_n$ and of $Y_n$ are well-known to be isomorphic to the \textbf{braid group of $n$ strands}, $B_n$, and to the \textbf{pure braid group of $n$ strands}, $P_n\subset B_n$, respectively. Moreover, but this fact is not necessary for the sequel, the naming `infinitesimal braid
relations' stems from the fact that the completion
of the $n$th Drinfel'd-Kohno algebra with respect to
the obvious filtration induced by the free algebra is
isomorphic to the completion of the group algebra of the pure braid group
$P_n$ with respect to its augmentation ideal, see e.g.~\cite{Koh85} and \cite[p.147, Prop.~1.3.3]{Koh87} for details.

The following well-known (formal) connection is very important, see
\cite{KZ84}:
\begin{definition}
	Let $n \geqslant 2$ and $\mathcal{A}$ a unital associative complex algebra containing $n(n-1)/2$
	elements $A_{ij}=A_{ji}$ (indexed by $1\leqslant i\neq j\leqslant n$) satisfying the infinitesimal braid relations
%	(\ref{eq:DK-definition-one})
    (\ref{eq:DK-definition-two})
	and (\ref{eq:DK-definition-three}) where the generators
	$t_{ij}$ are replaced by the elements $A_{ij}$. The formal \textbf{Knizhnik--Zamolodchikov 
		$(\RuK \Run \RuZ \Rua)$-connection} ${}^{(n)}{\Gamma_{\RuK \Run \RuZ \Rua}}$ on $Y_n$ (with respect to $\mathcal{A}$) is defined as follows: 
	\begin{equation}\label{EqDKKZDefKnZaConnection}
	^{(n)}{\Gamma_{\RuK \Run \RuZ \Rua}}
	    (z_1,\ldots,z_n)
	 \coloneqq 
	  \sum_{1 \leqslant i < j \leqslant n} \frac{A_{ij}}{z_i-z_j} 
	  (\mathsf{d} z_i - \mathsf{d} z_j). 
	  \end{equation}
\end{definition}
\noindent Clearly, the $\RuK \Run \RuZ \Rua$-connection is complex rational in the sense of (\ref{EqFormFlatDefComplexRationalConn}). We have the following 
\begin{theorem}\label{TKnZhConnectionIsFlat}
	For all integers $n\geqslant 2$ the Knizhnik--Zamolodchikov connection is (formally) flat.
\end{theorem}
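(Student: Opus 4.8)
The plan is to verify the flatness condition \eqref{EqFormFlatDefComplexFlatness} directly for the connection components $\Gamma_i = \sum_{j\neq i} \frac{A_{ij}}{z_i-z_j}$ read off from \eqref{EqDKKZDefKnZaConnection}. Since $Y_n\subset\mathbb{C}^n$ is an open subset and the $\RuK\Run\RuZ\Rua$-connection is complex rational, Proposition \ref{PFormFlatComplexFlatEqRealFlat} reduces real flatness to complex flatness, so it suffices to check \eqref{EqFormFlatDefComplexFlatness}: for all $1\leqslant i,j\leqslant n$,
\[
   \frac{\partial \Gamma_i}{\partial z_j} - \frac{\partial \Gamma_j}{\partial z_i} + \lambda\big(\Gamma_i\Gamma_j - \Gamma_j\Gamma_i\big) = 0.
\]
First I would rewrite \eqref{EqDKKZDefKnZaConnection} in the form $\Gamma = \sum_{i=1}^n \Gamma_i\,\mathsf{d}z_i$ by collecting the $\mathsf{d}z_i$ terms; one finds $\Gamma_i = \sum_{k\colon k\neq i}\frac{A_{ik}}{z_i-z_k}$ where we use $A_{ik}=A_{ki}$ to make sense of the unordered notation, exactly as permitted by relation \eqref{eq:DK-definition-one} and the discussion following Definition \ref{DK-algebras}.

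The symmetric-derivative part is the easy half: a direct computation shows $\frac{\partial \Gamma_i}{\partial z_j} = -\frac{A_{ij}}{(z_i-z_j)^2}$ for $i\neq j$, which is manifestly symmetric in $i$ and $j$ (again using $A_{ij}=A_{ji}$), so $\frac{\partial \Gamma_i}{\partial z_j} - \frac{\partial \Gamma_j}{\partial z_i}=0$; and for $i=j$ the whole expression is trivially zero. Hence flatness is equivalent to the purely algebraic identity $[\Gamma_i,\Gamma_j]=0$ for all $i\neq j$. Expanding,
\[
   [\Gamma_i,\Gamma_j] = \sum_{k\neq i}\ \sum_{l\neq j}\ \frac{1}{z_i-z_k}\,\frac{1}{z_j-z_l}\,[A_{ik},A_{jl}],
\]
and the task is to show this double sum vanishes identically as a rational function of $z$. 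The main obstacle, and the heart of the proof, is the classical Arnold-type identity: one must organize the terms according to how many of the indices $\{i,k\}\cup\{j,l\}$ coincide. When $\#\{i,k,j,l\}=4$ the bracket $[A_{ik},A_{jl}]$ vanishes by \eqref{eq:DK-definition-three}; the terms with $\{i,k\}=\{j,l\}$ vanish because a bracket of an element with itself is zero. The remaining terms are those sharing exactly one index, and they come in groups of three (for each triple of distinct indices) whose rational-function coefficients satisfy the partial-fraction identity
\[
   \frac{1}{(z_a-z_b)(z_a-z_c)} + \frac{1}{(z_b-z_c)(z_b-z_a)} + \frac{1}{(z_c-z_a)(z_c-z_b)} = 0,
\]
while the corresponding commutators, after using $A_{ab}=A_{ba}$ and the Jacobi-type relation \eqref{eq:DK-definition-two} (which says $[A_{ab}+A_{ac},A_{bc}]=0$, equivalently $[A_{ab},A_{ac}] = [A_{ab}+A_{ac}+A_{bc},\,\cdot\,]$-type cyclic relations), combine so that the total contribution of each triple is zero.

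Concretely, I would fix a triple of distinct indices $\{a,b,c\}$ and collect from $[\Gamma_i,\Gamma_j]$ exactly the terms whose index multiset (after removing the forced coincidence) is $\{a,b,c\}$; there are six such ordered contributions, and after using the symmetry $A_{xy}=A_{xy}$ and grouping the three cyclic rational coefficients via the partial-fraction identity above, what remains is a constant (in $z$) multiple of a sum like $[A_{ab},A_{ac}] + [A_{bc},A_{ba}] + [A_{ca},A_{cb}]$, which one checks equals $[A_{ab}+A_{ac}+A_{bc},\text{(something)}]$ and vanishes by \eqref{eq:DK-definition-two}. Summing over all triples $\{a,b,c\}$ and recalling that the $\#=4$ and the self-bracket terms already vanished, we conclude $[\Gamma_i,\Gamma_j]=0$ for every $i\neq j$, hence \eqref{EqFormFlatDefComplexFlatness} holds, and by Proposition \ref{PFormFlatComplexFlatEqRealFlat} the $\RuK\Run\RuZ\Rua$-connection is (formally) flat. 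The only genuinely delicate point is the careful bookkeeping of which index coincides with which in the double sum; everything else is the two standard identities (partial fractions and the infinitesimal braid relations) applied termwise.
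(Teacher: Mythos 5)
Your proof is correct and follows the standard argument: reduce to complex flatness via Proposition \ref{PFormFlatComplexFlatEqRealFlat}, note the symmetric-derivative part cancels, and kill the commutator $[\Gamma_i,\Gamma_j]$ by classifying $[A_{ik},A_{jl}]$ according to $\#\{i,j,k,l\}$ and invoking, for each triple of indices, the infinitesimal braid relation together with the Arnold partial-fraction identity. This is precisely the argument that Kassel's book gives at the place the paper cites instead of reproducing a proof. One small slip: $\partial\Gamma_i/\partial z_j = +A_{ij}/(z_i-z_j)^2$ rather than $-A_{ij}/(z_i-z_j)^2$, but since the expression is symmetric in $i\leftrightarrow j$ either way (using $A_{ij}=A_{ji}$ and $(z_i-z_j)^2=(z_j-z_i)^2$), your conclusion that the derivative terms cancel is unaffected.
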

\noindent A very detailed proof of this statement can be found in 
C.Kassel's book \cite[p.452-454]{Kas95}.

It is easy to see that for all integers $n\geqslant 2$ 
the $\RuK \Run \RuZ \Rua$-connection 
$^{(n)}{\Gamma_{\RuK \Run \RuZ \Rua}}$ is \textbf{invariant by all}
pull-backs with respect to \textbf{translations} $T_v:Y_n\to Y_n:z\mapsto z+(v,v,\ldots,v)$ (for all $v\in\mathbb{C}$)
and with respect to all \textbf{complex homotheties} $H_p:Y_n\to Y_n$ given by $z\mapsto pz$ for all $p\in\mathbb{C}^\times=\mathbb{C}\setminus\{0\}$, in the sense that
$T_v^*\left(^{(n)}\Gamma_{\RuK \Run \RuZ \Rua}\right)=
^{(n)}{\Gamma_{\RuK \Run \RuZ \Rua}}$ and
$H_p^* \left(^{(n)}\Gamma_{\RuK \Run \RuZ \Rua}\right)=
^{(n)}{\Gamma_{\RuK \Run \RuZ \Rua}}$. Note further that,
for any integers $1\leqslant i\neq j\leqslant n$, if `particle $i$ is near to particle $j$', (i.e.~the distance $|z_i-z_j|$
becomes `very small') then the term containing $A_{ij}$ 
in the $\RuK \Run \RuZ \Rua$-connection will
be `very large' compared to the others: this intuition will motivate the choice of paths in the following sections.

For $n=2$ and $n=3$
there are the following isomorphisms of open sets of
$\mathbb{C}^2$ and of $\mathbb{C}^3$ which are given by
explicit bijective complex rational maps:
\begin{equation*}%\label{EqDKConfSpaces2And3Isos}
Y_2~\cong~ \mathbb{C}^\times\times \mathbb{C}
~~~\mathrm{and}~~~
Y_3~\cong~ \mathbb{C}^{\times\times}\times\mathbb{C}^\times\times \mathbb{C}
\end{equation*}
where
$\mathbb{C}^\times\coloneqq \mathbb{C}\setminus\{0\}$
and $\mathbb{C}^{\times\times}\coloneqq \mathbb{C}\setminus\{0,1\}$ are the simply punctured complex plane and the \textbf{doubly punctured complex plane} -which will be important in the sequel--, respectively.\\
In fact, the invertible linear map $(z_1,z_2)\mapsto (z_1-z_2,z_1+z_2)$ gives the first isomorphism. For $n=3$
the invertible rational map $Y_3~\to~ \mathbb{C}^{\times\times}\times\mathbb{C}^\times\times \mathbb{C}$ given by
\begin{equation}\label{EqDKKZIsoYThree}
(z_1,z_2,z_3)\mapsto 
\left(\frac{z_2-z_1}{z_3-z_1}, z_3-z_1, z_1\right)~~~
\mathrm{with~inverse~}~~\vartheta:(z,v,w)\mapsto (w,zv+w,v+w)
\end{equation}
defines an isomorphism concerning $Y_3$, see also \cite[p.1453]{Dri89} or
\cite[p.469, eqn (7.3)]{Kas95}.
 An elementary computation shows that for
$n=3$ the pullback of 
${}^{(3)}{\Gamma_{\RuK \Run \RuZ \Rua}}$ with respect to
the rational map $\vartheta:\mathbb{C}^{\times\times}\times\mathbb{C}^\times\times \mathbb{C}\to Y_3$, see (\ref{EqDKKZIsoYThree}), is equal to
\begin{equation}\label{EqDKKZPullBackOfKZThree}
  \left(\vartheta^*\left({}^{(3)}{\Gamma_{\RuK \Run \RuZ \Rua}}
  \right)\right)(z,v,w) =
  \left(\frac{A_{12}}{z}+\frac{A_{23}}{z-1}\right)
     \mathsf{d}z ~+~
    \frac{A_{12}+A_{13}+A_{23}}{v}\mathsf{d} v
\end{equation}
where we have used Proposition \ref{PFormConnComplexPullBackEqualReal} and equations
(\ref{EqFormConnDefPullBackConn}) and
(\ref{EqFormConnComplexPullBackConn}). It provides
an important motivation for the connections
used in the following Sections since they resemble the 
first summand on the right hand side of the previous equation (\ref{EqDKKZPullBackOfKZThree}).
Note further that the right action of the permutation group
$S_3$ on $Y_3$ can be transferred to $\mathbb{C}^{\times\times}\times\mathbb{C}^\times\times \mathbb{C}$ and projected to the doubly punctured plane
$\mathbb{C}^{\times\times}$ by means of the maps
(\ref{EqDKKZIsoYThree}). This gives the following
maps on $\mathbb{C}^{\times\times}$ as can easily be computed:
\begin{equation}\label{EqDKKZPermActionOnCTimesTimes}
  \begin{array}{ccccc}
  \tau_{12}(z)  = \frac{z}{z-1}, 
   & 
  \tau_{23}(z)  = \frac{1}{z},
  &
  \tau_{13}(z)  = 1-z, 
  & 
  \zeta(z)=\frac{1}{1-z},
  &
  \zeta^{-1}(z)=\zeta\big(\zeta(z)\big)=\frac{z-1}{z}
  \end{array}
\end{equation}
where $\tau_{ij}$ denotes the transposition exchanging
$i$ and $j$, and $\zeta$ denotes the cyclic permutation
$(1,2,3)\mapsto (3,1,2)$. The fact that the maps in (\ref{EqDKKZPermActionOnCTimesTimes}) are well-defined
complex rational bijections on the doubly punctured plane
satisfying the identities of a right action of the symmetric group $S_3$ can also be shown directly without reference to the configuration space $Y_3$.

\subsection{The Drinfel'd associator: Definition and
	  elementary properties}
	\label{SubSecDrinfeldAssociatorDefinitionAndProperties}
	
In this Section the Drinfel'd associator is treated:
we are not following the usual definition, but use the statement of \cite[p.465, Lemma XIX.6.3]{Kas95} as a definition. The parallel transport we are interested in
is denoted there by $G_a(1-a)$ with $a=\delta$.
	
Let $\mathcal{A}$ be an arbitrary complex unital associative algebra. Set $U \coloneqq\  ]0,1[ \subset \mathbb{R}$. For any two given elements
$A,B\in\mathcal{A}$ define the formal connection
\begin{equation}\label{EqDrinfeldAssDefConn}
 \Gamma(B,A)(x)\coloneqq \left(\frac{1}{x}A+\frac{1}{x-1}B\right)
                 \mathsf{d}x
\end{equation}
on $U$ which is obviously well-defined and in addition a flat formal connection, see $\S$\ref{SubSecFlatFormalConnections}, because $U$
is one-dimensional. One of the motivations to use it
is the real version of the first summand of (\ref{EqDKKZPullBackOfKZThree}). Note further that the interval inversion $\iota:~]0,1[~\to~]0,1[~$ defined by
\begin{equation}
\label{eq:inversion-formula-interval}
    \iota(x)=1-x
\end{equation}
is well-defined and smooth, and it is easy to compute
the pulled-back connection
\begin{equation}\label{EqDrinfeldAssInversionPullBack}
   \iota^*\Gamma(B,A)=\Gamma(A,B).
\end{equation}
For all $\delta,\epsilon\in J\coloneqq~]0,1/4]$ we define the affine
path $c_{(\delta\epsilon)}:[0,1]\to U$ from $\delta$ to $1-\epsilon$, viz.
\begin{equation}\label{EqDrinfeldAssDefPathDeltaEpsilon}
     c_{(\delta\epsilon)}(s)\coloneqq (1-s)\delta+s(1-\epsilon)=
       \delta+s(1-\delta-\epsilon).
\end{equation}
Then
\begin{equation}\label{EqDrinfeldAssConnectionOnAffinePath}
   \Gamma(B,A)^{(c_{(\delta\epsilon)})}(s)
   =\frac{1-\delta-\epsilon}{\delta+s(1-\delta-\epsilon)}
      A
      +
      \frac{1-\delta-\epsilon}
       {\delta-1+s(1-\delta-\epsilon)}
      B.
\end{equation}
We are interested in the parallel transport 
${}^{\Gamma(B,A)}W^{(c_{(\delta\epsilon)})}_{10}$ along the path $c_{(\delta\epsilon)}$ from $\delta$ to $1-\epsilon$,
see $\S$\ref{SubSecFormalConnectionsAndParallelTransports} for definitions and notations. Setting $J'\coloneqq J\times J$ it follows from the general theory described in $\S$\ref{SubSec Norms and limits} that 
the map $(\delta,\epsilon)\mapsto {}^{\Gamma(B,A)}W^{(c_{(\delta\epsilon)})}_{10}$ is  an element  of the algebra 
$\Big(\mathrm{Fun}\big(J',\mathbb{C}\big)\otimes 
\mathcal{A}\Big)[[\lambda]]$, see (\ref{EqNormsLimDefPropagatorDepJ}).
It can be expressed in terms of iterated integrals in the following way: we make a change of variables
$u\coloneqq \delta+s(1-\delta-\epsilon)$, and we set
$A_0\coloneqq A$, $A_1\coloneqq B$. Hence, 
\begin{eqnarray}
  \lefteqn{
  	{}^{\Gamma(B,A)}W^{(c_{(\delta\epsilon)})}_{10} = 1} 
      \nonumber \\
     & & +
     \sum_{r=1}^\infty \lambda^r\sum_{i_1,\ldots,i_r=0}^1 
     \left(
 \int_{\delta}^{1-\epsilon}\frac{1}{u_1-i_1}
  \left(\int_{\delta}^{u_1}\frac{1}{u_2-i_2} 
      \left(\cdots
   \left(\int_{\delta}^{u_{r-1}}\frac{1}{u_r-i_r}
        \mathsf{d}u_r\right)\cdots 
        \right)\mathsf{d}u_{2}\right)\mathsf{d}u_1\right)
        \nonumber \\
      & &  \qquad \qquad\qquad\qquad \qquad\qquad\qquad
      A_{i_1}\cdots A_{i_r} 
      \label{EqDrinfeldAssParTranspIterIntegrals}
\end{eqnarray}

It is to be expected that
the preceding expression becomes singular whenever $\delta\to 0$ or $\epsilon\to 0$: in order to see this assume for a moment that
$A$ and $B$ commute. Clearly, $\Gamma(B,A)^{(c_{(\delta\epsilon)})}$ commutes with its primitive, and a straight-forward computation following formula (\ref{EqFormalLinODEYCommutesWithIOfY}) of 
Proposition \ref{PFormalLinODEPropagatorPropert} gives
\begin{equation}\label{EqDrinfeldAssParTranspCommutingAB}
  \mathrm{if}~AB=BA~~~~\mathrm{then}~~~~
  {}^{\Gamma(B,A)}W^{(c_{(\delta\epsilon)})}_{10}
    =e^{\lambda \ln(\epsilon) B}
      e^{\lambda\big(\ln(1-\epsilon)A-\ln(1-\delta)B\big)}
      e^{-\lambda \ln(\delta)A}
\end{equation}
showing that the divergences of the parallel transport are the left and the right factors and are logarithmic for
$\delta\to 0$ or $\epsilon\to 0$ in that particular case
whereas the middle factor converges to $1$.\\
Returning to the general case, in order to capture the singular terms we shall break the computation in two parts
separated by the mid-point $1/2$: consider the following
`exponential half-paths' $\tilde{c}_{(1,\delta)},\tilde{c}_{(2,\epsilon)}:[0,1]\to U$ defined by
\begin{equation}\label{EqDrinfeldAssExponentialPaths}
  \begin{array}{ccccccc}
  \tilde{c}_{(1,\delta)}(s) & \coloneqq
    &\frac{1}{2}e^{\ln(2\delta)(1-s)} & \mathrm{joining}&
    \delta &\to &\frac{1}{2}, \\
   \tilde{c}_{(2,\epsilon)}(s) & \coloneqq
   &1-\frac{1}{2}e^{\ln(2\epsilon)s} & \mathrm{joining}&
   \frac{1}{2} &\to& 1-\epsilon.
  \end{array}
\end{equation}
Hence, the composed path $\tilde{c}_{(2,\epsilon)}*
\tilde{c}_{(1,\delta)}$ is continuous and piecewise
smooth with singular set $D=\{0,1/2,1\}$ and joins $\delta\to 1-\epsilon$.  The following
continuous piecewise smooth reparametrization
$\gamma:[0,1]\to [0,1]$ (with singular set $\{0,1/2,1\}$) obviously 
links the affine path
 $c_{(\delta\epsilon)}$ with $\tilde{c}_{(2,\epsilon)}*
 \tilde{c}_{(1,\delta)}$:
\begin{equation}\label{EqDrinfeldAssReparAffineToExp}
 \gamma(s)\coloneqq \left\{
      \begin{array}{cl}
        \frac{\frac{1}{2}e^{\ln(2\delta)(1-2s)}-\delta}
         {1-\delta-\epsilon} & \mathrm{if~} 0\leqslant s
                                  \leqslant \frac{1}{2}, \\
      \frac{1-\frac{1}{2}e^{\ln(2\delta)(2s-1)}-\delta}
      {1-\delta-\epsilon} & \mathrm{if~} \frac{1}{2} \leqslant s\leqslant 1 
      \end{array}                           
 \right.
   ~~~\mathrm{hence}~~~
  c_{(\delta\epsilon)}\circ\gamma = \tilde{c}_{(2,\epsilon)}*\tilde{c}_{(1,\delta)}.
\end{equation}
Using the interval inversion $\iota$ as a continuous piecewise smooth reparametrization $[0,1]\to [0,1]$ given by \eqref{eq:inversion-formula-interval} we can write 
\[
    \tilde{c}_{(1,\delta)}
    =\iota\circ\tilde{c}_{(2,\delta)}\circ \iota.   
\] 
Since parallel transport is
independent on reparametrizations, see (\ref{EqFormConnWReparamInv}), we get
\begin{eqnarray}
 {}^{\Gamma(B,A)}W^{(c_{(\delta\epsilon)})}_{10}
& = &
 {}^{\Gamma(B,A)}W^{(c_{(\delta\epsilon)}\circ\gamma)}_{10} \stackrel{(\ref{EqDrinfeldAssReparAffineToExp})}{=}
 {}^{\Gamma(B,A)}W_{10}^{(\tilde{c}_{(2,\epsilon)}*
	\tilde{c}_{(1,\delta)})} \nonumber \\
& \stackrel{(\ref{EqFormConnComposOfPathsParTransp})}{=}&
 {}^{\Gamma(B,A)}W_{10}^{(\tilde{c}_{(2,\epsilon)})}
 ~{}^{\Gamma(B,A)}W_{10}^{
 	(\iota\circ\tilde{c}_{(2,\delta)}\circ \iota)}
      \nonumber \\
   & \stackrel{(\ref{EqFormConnPullBackParTransp}),
   	(\ref{EqDrinfeldAssInversionPullBack})
   	 (\ref{EqFormConnInversionFormula})}{=} &
{}^{\Gamma(B,A)}W_{10}^{(\tilde{c}_{(2,\epsilon)})}
~\left({}^{\Gamma(A,B)}W_{10}^{(\tilde{c}_{(2,\delta)})}
   \right)^{-1}.
   \label{EqDrinfeldAssParTranspAffineFactorizationExpo}
\end{eqnarray}
It follows that it suffices to compute the parallel transport along the exponential half-path 
$\tilde{c}_{(2,\epsilon)}$, the parallel transport along the other
half $\tilde{c}_{(1,\delta)}$ follows from the symmetry and an exchange of $A$ and $B$.\\
The choice of the exponential function in the path
$\tilde{c}_{(2,\epsilon)}$ becomes clear when
computing
\begin{equation}\label{EqDrinfeldAssGammaBAComp}
   \Gamma(B,A)^{(\tilde{c}_{(2,\epsilon)})}(s) 
   =
   \ln(2\epsilon)B +
     \frac{-\ln(2\epsilon)}{2e^{-\ln(2\epsilon)s}-1}A,
\end{equation}
and we see that \textbf{the term in front of $B$ does not depend on $s$}.
\begin{lemma}\label{LDrinfeldAssFactorizationHalfAss}
	We have the following factorization of the parallel transport $s\mapsto {}^{\Gamma(B,A)}W_{s 0}^{(\tilde{c}_{(2,\epsilon)})}$
	in the algebra 
	$\Big(\mathrm{Fun}\big(]0,1/4],
	\mathcal{C}_{\{0,1\}}^\infty\big([0,1],\mathbb{C}\big)
	\big)
	\otimes \mathcal{A}\Big)[[\lambda]]$
	\begin{equation}
	\label{EqDrinfeldAssFactorizatonHalfParTransp}
     {}^{\Gamma(B,A)}W_{s0}^{(\tilde{c}_{(2,\epsilon)})}
     =
     e^{\lambda\ln(\epsilon)sB}\psi_\epsilon(B,A)(s)
	\end{equation}
	where $(s,\epsilon)\mapsto\psi_\epsilon(B,A)(s)$ 
	is in the group 
	$\mathcal{G}_{\mathcal{B}}$ of bounded terms 
	(w.r.t.~$(s,\epsilon)$, see (\ref{EqNormsLimDefRBH})
	and (\ref{EqFilNormsDefGroupsGRGBGH})). We set
	\begin{equation*}
	   \psi_\epsilon(B,A)\coloneqq \psi_\epsilon(B,A)(1).
	\end{equation*}
	Moreover,
	there is a well-defined element
	$\psi(B,A)\in\mathcal{A}[[\lambda]]$ such that the
	 following limit exists
	\begin{equation}\label{EqDrinfeldAssLimitOfLittlePhis}
	 \lim_{\epsilon\to 0}\psi_\epsilon(B,A)(s)=
	 \left\{
	       \begin{array}{cl}
	        1 & \mathrm{if}~s=0, \\
	        \psi(B,A)~\in~\mathcal{A}[[\lambda]]
	           & \mathrm{if}~0<s\leqslant 1,
	        \end{array}
	        \right.
	\end{equation}
	in the sense of limits discussed in $\S$\ref{SubSec Norms and limits}, see
	(\ref{EqNormsLimitDefLimit}),
	 (\ref{EqNormsLimDefLimFormPowerSer}) and
	 Proposition \ref{PNormsLimLimitRules}.
\end{lemma}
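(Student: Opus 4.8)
The plan is to isolate the divergent factor by the factorization rule of Proposition~\ref{PFormalLinODEPropagatorPropert}$iv.)$ applied to the splitting of $Y:=\Gamma(B,A)^{(\tilde{c}_{(2,\epsilon)})}$ read off from \eqref{EqDrinfeldAssGammaBAComp}: set $Y_0:=\ln(2\epsilon)\,B$ and $Z(s):=\frac{-\ln(2\epsilon)}{2e^{-\ln(2\epsilon)s}-1}\,A$, so $Y=Y_0+Z$ and, crucially, $Y_0$ is \emph{constant} in $s$. Since $Y_0$ commutes with its primitive $s\mapsto s\ln(2\epsilon)B$, Proposition~\ref{PFormalLinODEPropagatorPropert}$v.)$ gives $U_{s0}=e^{\lambda\ln(2\epsilon)sB}$ for the fundamental solution of $\mathsf{d}U/\mathsf{d}s=\lambda Y_0U$, and \eqref{EqFormalLinODEFactorization} then yields
\[
 {}^{\Gamma(B,A)}W_{s0}^{(\tilde{c}_{(2,\epsilon)})}=e^{\lambda\ln(2\epsilon)sB}\,\Xi_{s0},\qquad
 \frac{\mathsf{d}\Xi_{\cdot 0}}{\mathsf{d}s}=\lambda\,\widetilde{Z}\,\Xi_{\cdot 0},\quad \Xi_{00}=1,
\]
with $\widetilde{Z}(s):=U_{s0}^{-1}Z(s)U_{s0}$. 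As $\ln(2\epsilon)=\ln\epsilon+\ln 2$ and $e^{\lambda\ln(2)sB}$ is bounded and $\epsilon$-free, one may display $e^{\lambda\ln(\epsilon)sB}$ as the divergent factor and reabsorb $e^{\lambda\ln(2)sB}$ into the remainder; in this way $\psi_\epsilon(B,A)(s)$ is identified with $\Xi_{s0}$ up to a factor of the bounded group $\mathcal{G}_{\mathcal{B}}$. Expanding the conjugation via $e^{-\lambda\ln(2\epsilon)sB}A\,e^{\lambda\ln(2\epsilon)sB}=\sum_{k\geq 0}\frac{(-\lambda\ln(2\epsilon)s)^k}{k!}\,\mathrm{ad}_B^{\,k}(A)$, and using $\ln(2\epsilon)<0$ on $]0,1/4]$, gives the $\lambda^k$-component $\widetilde{Z}_k(\epsilon,s)=\frac{|\ln(2\epsilon)|^{\,k+1}\,s^k}{k!\,(2e^{|\ln(2\epsilon)|s}-1)}\,\mathrm{ad}_B^{\,k}(A)$.

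Next I would show that $\psi_\epsilon(B,A)(s)$ lies in the bounded group $\mathcal{G}_{\mathcal{B}}$, see \eqref{EqNormsLimDefRBH}. The obstruction is that $\widetilde{Z}_k(\epsilon,\cdot)$ is \emph{not} bounded in supremum norm as $\epsilon\to 0$ (its value at $s=0$ is $|\ln(2\epsilon)|\,\mathrm{ad}_B^{\,k}(A)$), so Lemma~\ref{LNormsLimIntegralBounds} does not apply directly; instead one bounds $\widetilde{Z}_k$ in $L^1$ over $s\in[0,1]$ uniformly in $\epsilon$. The substitution $t=|\ln(2\epsilon)|s$ turns $\int_0^1\|\widetilde{Z}_k(\epsilon,s)\|\,\mathsf{d}s$ into $\tfrac{\|\mathrm{ad}_B^{\,k}(A)\|}{k!}\int_0^{|\ln(2\epsilon)|}\tfrac{t^k}{2e^t-1}\,\mathsf{d}t$, and since $2e^t-1\geq e^t$ for $t\geq 0$ this is at most $\tfrac{\|\mathrm{ad}_B^{\,k}(A)\|}{k!}\int_0^\infty t^k e^{-t}\,\mathsf{d}t=\|\mathrm{ad}_B^{\,k}(A)\|$, uniformly in $\epsilon$. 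Feeding these uniform $L^1$-bounds into the iterated-integral expansion \eqref{EqFormalLinODEIteratedIntegrals} of $\Xi_{s0}$ — repeating the induction in the proof of Lemma~\ref{LNormsLimIntegralBounds} with $\int_0^1\|\widetilde{Z}_{k_i}(\epsilon,\cdot)\|$ in place of the supremum, bounding each simplex integral by the corresponding product over the unit cube, and controlling the (non-submultiplicative) norm on the finitely many products of the $\mathrm{ad}_B^{\,k}(A)$ by the finite-dimensional estimate in the proof of Proposition~\ref{PNormsLimLimitRules} — shows that each $\lambda^r$-coefficient of $\Xi_{s0}$ has norm bounded by a constant depending only on $r,A,B$, uniformly for $(s,\epsilon)\in[0,1]\times\,]0,1/4]$. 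Norm-independence is automatic because all coefficients live in finite-dimensional subspaces (Proposition~\ref{PNormsLimLimitRules}$ii.)$, Proposition~\ref{PFilNormsRBHProperties}$i.)$). Hence $\psi_\epsilon(B,A)(s)\in\mathcal{G}_{\mathcal{B}}$, which is the factorization part of the Lemma, and in particular $\psi_\epsilon(B,A)=\psi_\epsilon(B,A)(1)\in\mathcal{G}_{\mathcal{B}}$.

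For the limit, the case $s=0$ is immediate: every iterated integral in \eqref{EqFormalLinODEIteratedIntegrals} is then over the degenerate simplex, so $\Xi_{00}=1$ for all $\epsilon$. For fixed $s\in\,]0,1]$, the same substitution $t_i=|\ln(2\epsilon)|s_i$ rewrites the $\lambda^r$-coefficient of $\Xi_{s0}$ as a finite sum of iterated integrals of the \emph{$\epsilon$-independent} forms $\widehat{Z}_k(t):=\frac{t^k}{k!(2e^t-1)}\,\mathrm{ad}_B^{\,k}(A)$ over the simplices $\{0\leq t_m\leq\dots\leq t_1\leq |\ln(2\epsilon)|s\}$. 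As $\epsilon\to 0$ the upper limit $|\ln(2\epsilon)|s\to+\infty$; since each $\widehat{Z}_k$ is absolutely integrable on $[0,\infty)$ (same estimate), each of these iterated integrals converges — by monotone convergence of the integral of the norm, componentwise in the finite-dimensional span of the $\mathrm{ad}_B^{\,k}(A)$ — to the corresponding integral over the infinite simplex, a limit visibly independent of which $s\in\,]0,1]$ was chosen. This produces a well-defined $\psi(B,A)\in\mathcal{A}[[\lambda]]$ with $\lim_{\epsilon\to 0}\psi_\epsilon(B,A)(s)=\psi(B,A)$ for $s\in\,]0,1]$ and $=1$ for $s=0$, in the sense of \eqref{EqNormsLimDefLimFormPowerSer}, as claimed.

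I expect the main obstacle to be exactly the tension in the middle step: the sup-norm estimate underlying Lemma~\ref{LNormsLimIntegralBounds} is too lossy here because the prefactor $|\ln(2\epsilon)|$ in $\widetilde{Z}$ blows up as $\epsilon\to 0$. The resolution — and the reason the half-path $\tilde{c}_{(2,\epsilon)}$ was parametrized exponentially so as to make the $B$-coefficient of the connection constant in $s$ — is that the substitution $t=|\ln(2\epsilon)|s$ converts the divergent prefactor precisely into integration length against the exponentially decaying weight $1/(2e^t-1)$, so that the $L^1$-in-$s$ bounds survive uniformly in $\epsilon$ and in fact have finite limits; everything else is bookkeeping with iterated integrals and finite-dimensional norm comparisons.
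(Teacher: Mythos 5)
Your proof is correct and follows essentially the same route as the paper: the same decomposition $Y=Y_0+Z$ coming from \eqref{EqDrinfeldAssGammaBAComp}, the factorization via Proposition~\ref{PFormalLinODEPropagatorPropert}$iv.)$ into the exponential prefactor and the conjugated ODE for $\Xi$, the change of variables $t=|\ln(2\epsilon)|s$, the elementary bound $2e^{t}-1\geqslant e^{t}$, the identity $\int_0^\infty t^k e^{-t}\,\mathsf{d}t=k!$, and monotone convergence (increasing integrals bounded above). The one place where your organization differs from the paper's — and makes your life slightly harder — is the intermediate reformulation in terms of uniform $L^1$ bounds on the $\widetilde{Z}_k$. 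The paper sidesteps the norm-submultiplicativity bookkeeping you invoke: after separating $\widetilde{Z}$ into the $\mathrm{ad}_B$ series, it performs the substitution $\tau=\nu s$ \emph{inside} each iterated integral and then treats the resulting purely scalar iterated integrals $I_{r,\ell_1,\ldots,\ell_r}(s,\nu)$ (with $\nu$-independent integrands and $\nu$-dependent upper limit) as coefficients multiplying the fixed algebra elements $\mathrm{ad}_B^{\ell_1}(A)\cdots\mathrm{ad}_B^{\ell_r}(A)$. Since each $\lambda$-power involves only finitely many such products, boundedness and convergence are entirely reduced to real-analysis statements about the scalar $I_{r,\ell_1,\ldots,\ell_r}$, so no norm-comparison constants for products need to be produced. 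Your route still works — the finite-dimensional estimate of Proposition~\ref{PNormsLimLimitRules} does supply the needed constants — but the paper's ordering (substitute first, then bound the scalar integrals) is a cleaner implementation of the same idea and is worth adopting.
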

\begin{proof}
 In (\ref{EqDrinfeldAssGammaBAComp}) we set
 \[
     Y_\epsilon(s) \coloneqq \ln(2\epsilon)B~~~\mathrm{and}
     ~~~Z_\epsilon(s) \coloneqq \frac{-\ln(2\epsilon)}{2e^{-\ln(2\epsilon)s}-1}A
 \]	
 and use the factorization statement 
 (\ref{EqFormalLinODEFactorization}): 
 \begin{equation}\label{EqDrinfeldAssFirstFactorization}
 {}^{\Gamma(B,A)}W_{s0}^{(\tilde{c}_{(2,\epsilon)})}
 = U_{s0}^{(\epsilon)}\Xi_{s0}^{(\epsilon)}.
 \end{equation}
 Clearly, the formal linear ODE $\mathsf{d}U_{\cdot 0}^{(\epsilon)}/\mathsf{d}s
 =\lambda Y_\epsilon U_{\cdot 0}^{(\epsilon)}$ with initial condition
 $1$ is trivially given by the exponential function
 $U_{s 0}^{(\epsilon)}= e^{\lambda\ln(2\epsilon)sB}$, and we have
 to solve the formal linear ODE with initial condition
 $1$,
 \[
     \frac{\mathsf{d}\Xi_{s0}^{(\epsilon)}}{\mathsf{d}s}(s)
     =e^{-\lambda\ln(2\epsilon)sB}Z_\epsilon(s)
     e^{\lambda\ln(2\epsilon)sB}\Xi_{s0}^{(\epsilon)}
     =e^{-\lambda\ln(2\epsilon)s\mathsf{ad}_B}
     \big(Z_\epsilon(s)\big)\Xi_{s0}^{(\epsilon)}
 \]
 where $\mathsf{ad}_B:\mathcal{A}\to \mathcal{A}$ denotes the
 usual adjoint map $\xi\mapsto B\xi-\xi B$, and we have used the well-known identity that conjugation with exponentials
 is the exponential of $\mathsf{ad}$ which is standard
  in Lie group theory, see e.g.~\cite[p.38,~Cor.4.25]{KMS93},
 and can easily be proved algebraically. We can compute
 the solution $\Xi_{s0}^{(\epsilon)}$ in terms of iterated integrals, 
 see (\ref{EqFormalLinODEIteratedIntegrals}), where the
 following abbreviations make computations easier: set
 \[
    \nu\coloneqq -\ln(2\epsilon)
     ~~~\mathrm{and}~~~\tau\coloneqq \nu s, 
     \tau_i\coloneqq \nu s_i ~~\forall~i\in\mathbb{N}.
 \]
 Since $0<2\epsilon \leqslant 1/2 <1$ it follows
 that $\nu> 0$ and that the limit $\lim_{\epsilon\to 0}$ 
 corresponds to $\lim_{\nu\to +\infty}$. Then $\Xi_\epsilon$ is given by the following expression:
 \begin{eqnarray}
 \Xi_{s0}^{(\epsilon)}	& = & 1
 +\sum_{r=1}^\infty \lambda^r \sum_{\ell_1,\ldots,\ell_r=0}^\infty 
 \frac{\lambda^{\ell_1+\cdots +\ell_r}}{\ell_1!\cdots \ell_r!}
 \mathsf{ad}_B^{\ell_1}(A)\cdots\mathsf{ad}_B^{\ell_r}(A) 
     \nonumber \\
 & &
  \underbrace{\left(
  \int_{0}^{\nu s}\frac{\tau_1^{\ell_1}}{2e^{\tau_1}-1}
  \left(
  \int_{0}^{\tau_1}\frac{\tau_2^{\ell_2}}{2e^{\tau_2}-1}
  \left(
  \cdot \cdot
  \left(
  \int_{\delta}^{\tau_{r-1}}
          \frac{\tau_r^{\ell_r}}{2e^{\tau_r}-1}
  \mathsf{d}\tau_r\right)\cdot \cdot 
  \right)\mathsf{d}\tau_{2}\right)\mathsf{d}\tau_1\right)
  }_{\coloneqq I_{r,\ell_1,\ldots,\ell_r}(s,\nu)}.
    \label{EqDrinfeldAssIterIntForPsi}
 \end{eqnarray}
 We shall prove that for all non-negative integers
 $r,\ell_1,\ldots,\ell_r$ with $r\geqslant 1$ and all $s\in[0,1]$
 the iterated real integral $I_{r,\ell_1,\ldots,\ell_r}(s,\nu)$
 at the end of (\ref{EqDrinfeldAssIterIntForPsi}) converges to a non-negative real number for $\nu\to +\infty$: this will prove that the limit 
 $\lim_{\epsilon\to 0}\Xi_\epsilon(s)$ exists. In case $s=0$ this is of course obvious since all these integrals vanish. For $s>0$, the crucial observation is that all the real numbers 
 $\tau_1,\ldots,\tau_r$ are non-negative whence all the functions $\tau_i\mapsto \frac{\tau_i^{\ell_i}}{2e^{\tau_i}-1}$, $i\in \mathbb{N}\setminus \{0\}$, take non-negative values
 on the interval $[0,\nu s]$. Thanks to the monotonicity of the Riemann integral it follows that enlarging $\nu$
 makes the interval $[0,\nu s]$ bigger which in turn
 makes the value of the iterated integral larger: hence the
 function $[\ln(2),+\infty[~\mapsto ~[0,+\infty[$ given by
 $\nu\mapsto I_{r,\ell_1,\ldots,\ell_r}(s,\nu)$ is strictly increasing. By the well-known principle stating that every increasing bounded sequence of
 real numbers converges it suffices to show that all the
 integrals $I_{r,\ell_1,\ldots,\ell_r}(s,\nu)$ admit an upper bound independent on
 all $s\in[0,1]$ and $\nu\in[\ln(2),+\infty[$: indeed, the elementary inequality $e^{\tau_i}-1\geqslant 0$ 
 for all positive integer $i$ (since $\tau_i\geqslant 0$)
 implies
 \[
 \forall~i\in\mathbb{N}\setminus \{0\}:~~
   2e^{\tau_i}-1
   =e^{\tau_i}+e^{\tau_i}-1\geqslant e^{\tau_i},~~\mathrm{hence}~~
    \frac{\tau_i^{\ell_i}}{2e^{\tau_i}-1}\leqslant 
      \tau_i^{\ell_i}e^{-\tau_i},
 \]
 and the integral $I_{r,\ell_1,\ldots,\ell_r}(s,\nu)$ can be bounded by
 \[
 I_{r,\ell_1,\ldots,\ell_r}(s,\nu)\leqslant
 \left(
  \int_0^{\nu s}\tau_1^{\ell_1}e^{-\tau_1}\mathsf{d}\tau_1
  \right)
  \cdots
  \left(
  \int_0^{\nu s}\tau_r^{\ell_r}e^{-\tau_r}\mathsf{d}\tau_r
  \right) \leqslant \ell_1!\cdots \ell_r!
 \]
 thanks to the well-known integral (for all non-negative integers $n$)
 \[
       \int_0^\infty \tau^ne^{-\tau}\mathsf{d}\tau = n!.
 \]
 This shows that the limit 
 $\lim_{\epsilon\to 0}\Xi_\epsilon(s)$ exists and does not depend on $0<s\leqslant 1$.
 Using the factorization equation (\ref{EqDrinfeldAssFirstFactorization}), the trivial fact
 that $\ln(2\epsilon)=\ln(2)+\ln(\epsilon)$ and defining
 \[
     \psi_\epsilon(B,A)(s)\coloneqq e^{\lambda\ln(2) sB}
                             \Xi_\epsilon(s)          
                        \]
 shows the factorization equation (\ref{EqDrinfeldAssFactorizatonHalfParTransp})
 and the limit (\ref{EqDrinfeldAssLimitOfLittlePhis}).
 In particular, it implies that $\epsilon\mapsto
 \psi_\epsilon(B,A)$ is bounded, i.e.~it is an element of
 $\mathcal{G}_\mathcal{B}$.
\end{proof}
\noindent
This Lemma --together with the factorization equation
(\ref{EqDrinfeldAssParTranspAffineFactorizationExpo})-- has the following immediate and very important
consequence
\begin{theorem} \label{TDrinfeldAssDefAssociator}
	With the above notations:
The parallel transport 
	${}^{\Gamma(B,A)}W^{(c_{(\delta\epsilon)})}_{10}$
	along the path $c_{(\delta\epsilon)}$,
	see (\ref{EqDrinfeldAssDefPathDeltaEpsilon}),
	factorizes in the following way
	\begin{equation}
	\label{EqDrinfeldAssFactorizatonAffinePathParTransp}
	{}^{\Gamma(B,A)}W^{(c_{(\delta\epsilon)})}_{10}
	   =e^{\lambda\ln(\epsilon)B}
	     \Phi_{\delta,\epsilon}(A,B)
	     e^{-\lambda\ln(\delta)A},
	\end{equation}
	with
	\begin{equation}\label{EqDrinfeldAssDefAssDeltaEpsilon}
	  \Phi_{\delta,\epsilon}(A,B)\coloneqq 
	  \psi_\epsilon (B,A)
	  \left(\psi_\delta (A,B)\right)^{-1}.
	\end{equation}
	The following limit exists,
	\begin{equation}\label{EqDrinfeldAssDefAss}
	  \lim_{(\delta,\epsilon)\to (0,0)}
	  \Phi_{\delta,\epsilon}(A,B)
	  \coloneqq \Phi(A,B)~\in~\mathcal{A}[[\lambda]],
	\end{equation}
	and is called the \textbf{Drinfel'd associator} 
	w.r.t.~$A,B\in\mathcal{A}$.	
\end{theorem}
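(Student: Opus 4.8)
The plan is to obtain both the factorization and the limit as essentially immediate consequences of the half-path analysis in Lemma \ref{LDrinfeldAssFactorizationHalfAss}, together with the algebra of limits established in Proposition \ref{PNormsLimLimitRules}; no new analytic estimate will be required.

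First I would start from the already-proved identity (\ref{EqDrinfeldAssParTranspAffineFactorizationExpo}),
\[
{}^{\Gamma(B,A)}W^{(c_{(\delta\epsilon)})}_{10} = {}^{\Gamma(B,A)}W_{10}^{(\tilde{c}_{(2,\epsilon)})}\,\bigl({}^{\Gamma(A,B)}W_{10}^{(\tilde{c}_{(2,\delta)})}\bigr)^{-1},
\]
and substitute the factorization (\ref{EqDrinfeldAssFactorizatonHalfParTransp}) of Lemma \ref{LDrinfeldAssFactorizationHalfAss} evaluated at $s=1$. The first factor becomes $e^{\lambda\ln(\epsilon)B}\psi_\epsilon(B,A)$; applying the same Lemma with $A$ and $B$ interchanged and $\epsilon$ replaced by $\delta$, the second factor becomes $e^{\lambda\ln(\delta)A}\psi_\delta(A,B)$. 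Inverting the latter and carrying $e^{-\lambda\ln(\delta)A}$ past $\psi_\delta(A,B)^{-1}$ to the right of the product gives exactly (\ref{EqDrinfeldAssFactorizatonAffinePathParTransp}) with $\Phi_{\delta,\epsilon}(A,B)=\psi_\epsilon(B,A)\bigl(\psi_\delta(A,B)\bigr)^{-1}$ as in (\ref{EqDrinfeldAssDefAssDeltaEpsilon}); beyond the group properties of parallel transports nothing needs to be computed here.

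Next I would prove that the limit (\ref{EqDrinfeldAssDefAss}) exists. By Lemma \ref{LDrinfeldAssFactorizationHalfAss}, equation (\ref{EqDrinfeldAssLimitOfLittlePhis}) with $s=1$, the limits $\lim_{\epsilon\to 0}\psi_\epsilon(B,A)=\psi(B,A)$ and $\lim_{\delta\to 0}\psi_\delta(A,B)=\psi(A,B)$ exist in $\mathcal{A}[[\lambda]]$. Since $\psi_\delta(A,B)$ lies in the bounded subgroup $\mathcal{G}_\mathcal{B}$ it has constant term $1$, hence is invertible with inverse the geometric series $\sum_{k\ge 0}(-1)^k\bigl(\psi_\delta(A,B)-1\bigr)^k$; each coefficient of this series is, component by component, a polynomial expression in finitely many coefficients of $\psi_\delta(A,B)$, so by the sum and product rules (\ref{EqNormsLimLimitSumsAreSumsOfLimits})--(\ref{EqNormsLimLimitProdAreProdOfLimits}) of Proposition \ref{PNormsLimLimitRules} the limit $\lim_{\delta\to 0}\bigl(\psi_\delta(A,B)\bigr)^{-1}=\bigl(\psi(A,B)\bigr)^{-1}$ exists. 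Viewing $\psi_\epsilon(B,A)$ and $\bigl(\psi_\delta(A,B)\bigr)^{-1}$ as elements of $\bigl(\mathrm{Fun}(J',\mathbb{C})\otimes\mathcal{A}\bigr)[[\lambda]]$ with $J'=J\times J$ and applying (\ref{EqNormsLimLimitProdAreProdOfLimits}) once more, I would conclude
\[
\lim_{(\delta,\epsilon)\to(0,0)}\Phi_{\delta,\epsilon}(A,B)=\psi(B,A)\,\bigl(\psi(A,B)\bigr)^{-1}=:\Phi(A,B)\in\mathcal{A}[[\lambda]].
\]

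The only point requiring a small argument — and the closest thing to an obstacle — is the passage from the two separate one-variable limits to the joint limit as $(\delta,\epsilon)\to(0,0)$ in $\mathbb{R}^2$: one must observe that a function on $J'$ depending only on $\epsilon$ (resp.\ only on $\delta$) and converging as that single variable tends to $0$ also converges, to the same value, in the sense of (\ref{EqNormsLimitDefLimit}) for $\boldsymbol\delta\to 0$ with the maximum norm on $\mathbb{R}^2$, since $|\epsilon|\le|(\delta,\epsilon)|$; after this remark (\ref{EqNormsLimLimitProdAreProdOfLimits}) applies verbatim. Everything else is bookkeeping.
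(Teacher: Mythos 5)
Your proof is correct and follows exactly the route the paper has in mind: substitute the half-path factorization of Lemma \ref{LDrinfeldAssFactorizationHalfAss} (at $s=1$) into the product formula (\ref{EqDrinfeldAssParTranspAffineFactorizationExpo}), then deduce the limit from (\ref{EqDrinfeldAssLimitOfLittlePhis}) and the limit rules of Proposition \ref{PNormsLimLimitRules}; your careful treatment of the inverse via the geometric series, and the passage from single-variable to joint limits, spells out details the paper leaves implicit when it calls the Theorem ``immediate''. One wording remark: inverting $e^{\lambda\ln(\delta)A}\psi_\delta(A,B)$ directly yields $\psi_\delta(A,B)^{-1}e^{-\lambda\ln(\delta)A}$, so the exponential already sits at the right end of the product --- no ``carrying past'' $\psi_\delta(A,B)^{-1}$ (which would be an unjustified commutation) is needed or used.
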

\noindent We kept the notation for the Drinfel'd associator used in all the literature although a transposition of the arguments, $\Phi(B,A)$,
would `better concatenate' in the identities. 

We collect some properties of the Drinfel'd associator:
\begin{equation}\label{EqDrinfeldAssInverseEqAGoesToB}
  \Phi_{\delta,\epsilon}(A,B)^{-1}
  =\Phi_{\epsilon,\delta}(B,A)~~~
  \mathrm{hence}~~~\boxed{\Phi(A,B)^{-1}=\Phi(B,A)}
\end{equation}
which immediately follows from the definitions
(\ref{EqDrinfeldAssDefAssDeltaEpsilon}) and
(\ref{EqDrinfeldAssDefAss}). Next, computing the coefficient of $\lambda^1$ of (\ref{EqDrinfeldAssFactorizatonAffinePathParTransp}) we get from the right hand side
\[
     \ln(\epsilon)B +
     \left(\Phi_{\delta,\epsilon}(A,B)\right)_1
     -\ln(\delta)A
\] 
and from the left hand side the integral (compare (\ref{EqDrinfeldAssParTranspIterIntegrals}))
\[
 \int_{\delta}^{1-\epsilon}\frac{du}{u}A
 -\int_{\delta}^{1-\epsilon}\frac{du}{1-u}B
 = \ln(1-\epsilon)A -\ln(\delta)A
     +\ln(\epsilon)B -\ln(1-\delta)B
\]
showing 
\begin{equation*}
\left(\Phi_{\delta,\epsilon}(A,B)\right)_1
  =\ln(1-\epsilon)A-\ln(1-\delta)B,~~~
  \mathrm{hence}~~~
  \boxed{\Phi(A,B
  )-1~\in ~\lambda^2\mathcal{A}[[\lambda]]}.
\end{equation*}
because obviously $\ln(1-x)\to 0$ if $x\to 0$ whence
$\lim_{(\delta,\epsilon)\to(0,0)}
\left(\Phi_{\epsilon,\delta}(B,A)\right)_1=0$.\\
Suppose furthermore that there are elements $\Lambda,\Lambda'\in\mathcal{A}$ which are \textbf{central
for $A,B$} in the sense that
\begin{equation*}
 [\Lambda,A]=0=[\Lambda',A]~~\mathrm{and}~~
 [\Lambda,B]=0=[\Lambda',B]~~\mathrm{and}
 ~~[\Lambda,\Lambda']=0.
\end{equation*}
Since the connection $\Gamma(B+\Lambda',A+\Lambda)$ evaluated on the path $c_{(\delta\epsilon)}$, 
$\Gamma(B+\Lambda',A+\Lambda)^{(c_{(\delta\epsilon)})}$,
see (\ref{EqDrinfeldAssConnectionOnAffinePath}) is equal to $\Gamma(\Lambda',\Lambda)^{(c_{(\delta\epsilon)})}
+
\Gamma(B,A)^{(c_{(\delta\epsilon)})}$ we can use the factorization statement
(\ref{EqFormalLinODEFactorization}) with
$Y=\Gamma(\Lambda',\Lambda)^{(c_{(\delta\epsilon)})}$, 
$Z=\Gamma(B,A)^{(c_{(\delta\epsilon)})}$,
and the fact that $\Lambda$ and $\Lambda'$ commute with all 
words in $\mathcal{A}$ whose letters are $A,B,\Lambda$ or
$\Lambda'$ (hence $U_{\cdot 0}^{-1}ZU_{\cdot 0}=Z$ in 
(\ref{EqFormalLinODEFactorization})) we can use
(\ref{EqFormalLinODEYCommutesWithIOfY}) and
(\ref{EqDrinfeldAssParTranspCommutingAB}) to conclude
that
\begin{equation*}
  \Phi_{\delta,\epsilon}(A+\Lambda,B+\Lambda')
  =e^{\lambda\big(\ln(1-\epsilon)\Lambda
  	      -\ln(1-\delta)\Lambda'\big)}
  	   \Phi_{\delta,\epsilon}(A,B)  
\end{equation*}
hence, passing to the limit $(\delta,\epsilon)\to(0,0)$,
\begin{equation}\label{EqDrinfeldAssIndepOnCentralTerms}
 \mathrm{if}~\Lambda,\Lambda'~\mathrm{are~central~for~}
 A,B:
 ~~~\boxed{\Phi(A+\Lambda,B+\Lambda')=\Phi(A,B)}.
\end{equation}
Note that --but this is not needed later on-- if $\mathcal{A}$ carries the structure of a bialgebra (see \cite{Kas95}, Sect. III.2) and $A,B \in \mathcal{A}$ are primitive elements then $\Phi(A,B)$ is a (formally) group-like element (see \cite{Dri90}, p. 836).
In particular, when $\mathcal{A}=\mathbb{C}\langle A,B\rangle$ the Drinfel'd associator is thus a formal
exponential series whose exponent is an element of the
formal power series with coefficients in the free Lie algebra generated by two elements.

\subsection{The Hexagon Equation}
 \label{TheHexagonEquation}
%\addcontentsline{toc}{section}
%{The Hexagon Equation}
Let $\mathcal{A}$ be an arbitrary complex unital associative algebra. Let $A,B,C\in\mathcal{A}$ be three arbitrary
elements such that the sum $\Lambda\coloneqq A+B+C$ commutes with
all the three, i.e.
\begin{equation}\label{EqHexAPlusBPlusCCentral}
   A\Lambda=\Lambda A,~B\Lambda=\Lambda B,~C\Lambda=\Lambda C.
\end{equation}
An important particular case would be a free choice of $A$ and $B$, and $C=-A-B$ whence $\Lambda=0$. Another important particular case is $\mathcal{A}=\mathcal{T}_3$
with $A=t_{12}$, $B=t_{23}$, and $C=t_{13}$.
We wish to prove the \textbf{Hexagon Equation for the Drinfel'd associator}, i.e.
\begin{equation}\label{EqHexDefHexagonEqn}
 e^{\lambda \pi\mathbf{i}\Lambda}
 ~=~e^{\lambda \pi\mathbf{i}A}~\Phi(C,A)
 ~e^{\lambda \pi\mathbf{i}C}~\Phi(B,C)
 ~e^{\lambda \pi\mathbf{i}B}~\Phi(A,B).
\end{equation}
The equation has an obvious cyclic symmetry $A\to B\to C\to A$, and contains the complex numbers $\pi$ and $\mathbf{i}$.
It is not so far-fetched to choose the following data:
let $U\subset \mathbb{C}$ be the doubly punctured complex
plane, namely
\begin{equation*}
    U\coloneqq \mathbb{C}^{\times\times}\coloneqq
       \mathbb{C}\setminus \{0,1\}.
\end{equation*}
Choose the complex version of the connection
$\Gamma(B,A)$, see (\ref{EqDrinfeldAssDefConn}), i.e.
\begin{equation}\label{EqHexDefConn}
\Gamma(B,A)\coloneqq \left(\frac{1}{z}A+\frac{1}{z-1}B\right)
\mathsf{d}z 
\end{equation}
which is flat, see Proposition \ref{PFormFlatComplexFlatEqRealFlat}.
The original motivation is the use of the 
$\RuK \Run \RuZ \Rua$-connection 
${}^{(3)}\Gamma_{\RuK \Run \RuZ \Rua}$ on the configuration
space $Y_3$, see (\ref{EqDKKZDefKnZaConnection}),
and its pull-back, see the first summand of
(\ref{EqDKKZPullBackOfKZThree}). Some formulas would look more natural on $Y_3$, but the computations are quicker
in the doubly punctured complex plane.

Recall the rational maps 
$\zeta,\zeta\circ \zeta =\zeta^{-1}:
\mathbb{C}^{\times\times}\to \mathbb{C}^{\times\times}$
defined by $\zeta(z)=\frac{1}{1-z}$ and $\zeta^{-1}(z)
=\frac{z-1}{z}$
coming from the cyclic permutations in $Y_3$, see
(\ref{EqDKKZPermActionOnCTimesTimes}). We compute
the pull-backs of the connection (\ref{EqHexDefConn}):
using Proposition \ref{PFormConnComplexPullBackEqualReal}
we get, upon setting $\tilde{C}\coloneqq -A-B=C-\Lambda$,
\begin{eqnarray}
 \left(\zeta^*\Gamma(B,A)\right)(z)
    & = & \Gamma(B,A)\left(\frac{1}{1-z}\right)
            \frac{1}{(1-z)^2}
            =\frac{1}{1-z}A+\frac{1}{z(1-z)}B
             \nonumber \\
    & \stackrel{\mathrm{partial~fraction~dec.}}{=}&
       \frac{1}{z}B+ \frac{-A-B}{z-1}
       = \Gamma(\tilde{C},B)(z).
       \label{EqHexZetaStarGamma}
\end{eqnarray}
Iterating this formula (recall that $\zeta\circ\zeta=\zeta^{-1}$) gives
\begin{equation}\label{EqHexZetaSquareStarGamma}
  \left(\big(\zeta^{-1}\big)^*\Gamma(B,A)\right)(z)
  =\Gamma(A,\tilde{C})(z).
\end{equation}
We shall now consider the parallel transport 
w.r.t.~the connection $\Gamma(B,A)$ along a continuous piecewise smooth loop $c_\delta$ depending on a parameter $\delta\in J=~]0,1/4]$ based at the point 
$\delta\in\mathbb{C}^{\times\times}$, which is the 
composition of six paths,
\begin{equation}\label{EqHexComposedLoop}
 c_\delta \coloneqq 
 c_{(\mathrm{\RoM{6}},\delta)}
  *\Big(c_{(\mathrm{\RoM{5}},\delta)}
  *\Big(c_{(\mathrm{\RoM{4}},\delta)}
  *\big(c_{(\mathrm{\RoM{3}},\delta)}
  *\big(c_{(\mathrm{\RoM{2}},\delta)}
  *c_{(\mathrm{\RoM{1}},\delta)}
  \big)\big)\Big)\Big)
\end{equation}
given by
\begin{equation}
  \begin{array}{ccccccccc}
  c_{(\mathrm{\RoM{1}},\delta)}(s) & \coloneqq & (1-s)\delta+s(1-\delta)
        & =&
      \delta+s(1-2\delta) & \mathrm{joining} & 
          \delta &\to & 1-\delta, \\
  c_{(\mathrm{\RoM{2}},\delta)}(s) & \coloneqq & 
            \frac{
  	           1-\frac{\delta}{2}
  	              -\frac{\delta}{2}e^{\mathbf{i}\pi s}
  	                  }
                   {
     	       1-\frac{\delta}{2}
     		+\frac{\delta}{2}e^{\mathbf{i}\pi s}
     		       }
     	         & =&
     1-\frac{\delta}
      {\left(1-\frac{\delta}{2}\right)e^{-\mathbf{i}\pi s}
     +\frac{\delta}{2} } & \mathrm{joining} & 
  1-\delta &\to & \frac{1}{1-\delta}, \\  
  c_{(\mathrm{\RoM{3}},\delta)} (s) &\coloneqq &
     \zeta\left(c_{(\mathrm{\RoM{1}},\delta)}(s)\right)  & = &
      \frac{1}{1-\delta-s(1-2\delta)}  &\mathrm{joining} & 
     \frac{1}{1-\delta} &\to & \frac{1}{\delta}, \\ 
   c_{(\mathrm{\RoM{4}},\delta)} (s) &\coloneqq &
   \zeta\left(c_{(\mathrm{\RoM{2}},\delta)}(s)\right)  
   & = &
   \frac{1}{2}+\left(\frac{1}{\delta}-\frac{1}{2}\right)
      e^{-\mathbf{i}\pi s} 
    &\mathrm{joining} & 
   \frac{1}{\delta} &\to & -\frac{1}{\delta}+1, \\ 
   c_{(\mathrm{\RoM{5}},\delta)} (s) &\coloneqq &
   \zeta\left(\zeta\left( c_{(\mathrm{\RoM{1}},\delta)}(s)\right)\right) 
    & = &
   \frac{\delta-1+s(1-2\delta)}{\delta+s(1-2\delta)}  &\mathrm{joining} & 
   -\frac{1}{\delta}+1 &\to & -\frac{\delta}{1-\delta}, \\ c_{(\mathrm{\RoM{6}},\delta)} (s) &\coloneqq &
   \zeta\left(\zeta\left(c_{(\mathrm{\RoM{2}},\delta)}(s)
   \right)\right)  
   & = & \frac{\delta}
   {
   	-\left(1-\frac{\delta}{2}\right)
   e^{-\mathbf{i}\pi s}+\frac{\delta}{2}
   } 
   &\mathrm{joining} & 
   -\frac{\delta}{1-\delta} &\to & \delta.    
  \end{array}
  \label{EqHexagonSixPaths}
\end{equation}
It is fairly easy to check that all the six paths take all their values in
the lower half plane (including the $x$ axis and excluding $0$ and $1$). \\
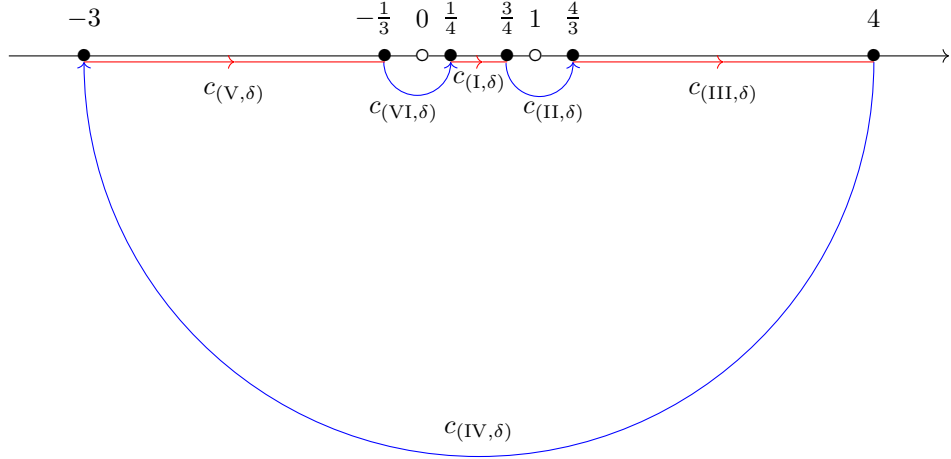
\begin{figure}[h!!]
	\centering
	\begin{tikzpicture}
	\draw (-5.5,0)--(0-0.08,0);
	\draw (0+0.08,0)--(1.5-0.08,0);
	\draw[->] (1.5+0.08,0)--(7,0);
	\node at (-4.5,0) {$\bullet$};
	\node at (-4.5,0.5) {\footnotesize $-3$};
	\node at (-0.5,0) {$\bullet$};
	\node at (-0.65,0.5) {\footnotesize $-\frac{1}{3}$};
	\node at (0,0) {$\circ$};
	\node at (0,0.5) {\footnotesize $0$};
	\node at (0.375,0) {$\bullet$};
	\node at (0.375,0.5) {\footnotesize $\frac{1}{4}$};
	\node at (1.125,0) {$\bullet$};
	\node at (1.125,0.5) {\footnotesize $\frac{3}{4}$};
	\node at (1.5,0) {$\circ$};
	\node at (1.5,0.5) {\footnotesize $1$};
	\node at (2,0) {$\bullet$};
	\node at (2,0.5) {\footnotesize $\frac{4}{3}$};
	\node at (6,0) {$\bullet$};
	\node at (6,0.5) {\footnotesize $4$};
	\draw[blue,->] (6,-0.08) arc (0:-180:5.25);
	\draw[blue,<-] (0.375,-0.08) arc (0:-180:0.445);
	\draw[blue,<-] (2,-0.08) arc (2:-180:0.445);
	\draw[red,->] (-4.5,-0.08)--(-2.5,-0.08);
	\draw[red] (-2.5,-0.08)--(-0.5,-0.08);
	\draw[red,->] (0.375,-0.08)--(0.8,-0.08);
	\draw[red] (0.8,-0.08)--(1.125,-0.08);
	\draw[red,->] (2,-0.08)--(4,-0.08);
	\draw[red] (4,-0.08)--(6,-0.08);
	\node at (0.75,-5) {\footnotesize $c_{(\mathrm{\RoM{4}}, \delta)}$};
	\node at (1.75,-0.75) {\footnotesize $c_{(\mathrm{\RoM{2}}, \delta)}$};
	\node at (-0.25,-0.75) {\footnotesize $c_{(\mathrm{\RoM{6}}, \delta)}$};
	\node at (-2.5,-0.5) {\footnotesize $c_{(\mathrm{\RoM{5}}, \delta)}$};
	\node at (0.76,-0.35) {\footnotesize $c_{(\mathrm{\RoM{1}}, \delta)}$};
	\node at (4,-0.5) {\footnotesize $c_{(\mathrm{\RoM{3}}, \delta)}$};
	\end{tikzpicture}
	\caption{The paths (\ref{EqHexagonSixPaths}) at $\delta=1/4$} \label{fig:Hexagon}
\end{figure}
The singular set $D$ for the loop $c_\delta$ is thus
equal to $\left\{0,\frac{1}{32},\frac{1}{16},\frac{1}{8},
    \frac{1}{4},\frac{1}{2}, 1\right\}$.

The following geometric description of the paths, see
Figure $\ref{fig:Hexagon}$, may perhaps clarify the whole procedure:  the three `odd' paths
$c_{(\mathrm{\RoM{1}},\delta)}$, 
$c_{(\mathrm{\RoM{3}},\delta)}$, and
$c_{(\mathrm{\RoM{5}},\delta)}$ parametrize the closed intervals $[\delta,1-\delta]$, 
$\zeta\big([\delta,1-\delta]\big)=[1/(1-\delta),1/\delta]$,
and
$\zeta^2\big([\delta,1-\delta]\big)=
[-(1/\delta)+1,-\delta/(1-\delta)]$, respectively, all along the $x$-axis. Note that $c_{(\mathrm{\RoM{3}},\delta)}$ and
$c_{(\mathrm{\RoM{5}},\delta)}$ are not affine paths in the sense of (\ref{EqFormConnDefAffinePath}). 
The three `even' paths
$c_{(\mathrm{\RoM{2}},\delta)}$, 
$c_{(\mathrm{\RoM{4}},\delta)}$, and
$c_{(\mathrm{\RoM{6}},\delta)}$ parametrize 
lower half circles with centres 
$1+\frac{\delta^2}{2-2\delta}$, $\frac{1}{2}$, and
$-\frac{\delta^2}{2-2\delta}$, respectively, having radii
$\frac{2\delta-\delta^2}{2-2\delta}$, $\frac{1}{\delta}-\frac{1}{2}$, and $\frac{2\delta-\delta^2}{2-2\delta}$, respectively, as can be checked by a lengthy, but elementary computation.
$c_{(\mathrm{\RoM{2}},\delta)}$ and 
$c_{(\mathrm{\RoM{6}},\delta)}$ are traced 
counterclockwise (where the parametrization is NOT uniform), and
 $c_{(\mathrm{\RoM{4}},\delta)}$ is traced clockwise
 with uniform parametrization.
 If one likes to get a motivation --which is not necessary
 for the arguments we are going to give-- on the
  `funny' form of the lower half cycle
 $c_{(\mathrm{\RoM{2}},\delta)}$: if we consider the uniformly parametrized half cycle 
 $[0,1]\to Y_3$ in the configuration space $Y_3$ given by
 \[
    s\mapsto \left(0, 
        1-\frac{\delta}{2}-\frac{\delta}{2}
           e^{\mathbf{i}\pi s},
         1-\frac{\delta}{2}+\frac{\delta}{2}
         e^{\mathbf{i}\pi s}  \right)
 \]
and apply the first component of the isomorphism
(\ref{EqDKKZIsoYThree}), $(z_1,z_2,z_3)\mapsto 
\frac{z_2-z_1}{z_3-z_1}$, then we get the path
$c_{(\mathrm{\RoM{2}},\delta)}$. The picture in
C.Kassel's book \cite[p.474,~Fig.~8.1]{Kas95}
illustrating these paths in $Y_3$ in a qualitative manner had proved to be very inspiring for us. Note finally that the whole picture
of the six paths in the doubly punctured plane has also
an obvious reflection symmetry (the `antiholomorphic' map 
$\mathsf{s}:z\mapsto 1-\bar{z}$) with respect to the straight line $x=\frac{1}{2}$. Hence, with 
the usual interval inversion $\iota$ of the interval $[0,1]$, $\iota(s)=1-s$, it is easy to see that the following holds by using the concrete formulas
(\ref{EqHexagonSixPaths}):
$\mathsf{s}\circ c_{(\mathrm{\RoM{1}},\delta)}\circ \iota
=c_{(\mathrm{\RoM{1}},\delta)}$, 
$\mathsf{s}\circ c_{(\mathrm{\RoM{2}},\delta)}\circ \iota
=c_{(\mathrm{\RoM{6}},\delta)}$,
$\mathsf{s}\circ c_{(\mathrm{\RoM{3}},\delta)}\circ \iota
=c_{(\mathrm{\RoM{5}},\delta)}$, and
$\mathsf{s}\circ c_{(\mathrm{\RoM{4}},\delta)}\circ \iota=c_{(\mathrm{\RoM{4}},\delta)}$.

We shall now compute the six parallel transports along the
six paths.
First, it is immediate that the parallel transport
${}^{\Gamma(B,A)}W^{(c_{(\mathrm{\RoM{1}},\delta)})}_{10}$
coincides with the parallel transport
${}^{\Gamma(B,A)}W^{(c_{\delta\delta})}_{10}$ of the
preceding Section, see (\ref{EqDrinfeldAssParTranspIterIntegrals}): this can be seen by using the smooth injection $]0,1[\to 
\mathbb{C}^{\times\times}$ to pull back the connection
(\ref{EqHexDefConn}) to the interval. Using the formulas
(\ref{EqHexZetaStarGamma}) and (\ref{EqHexZetaSquareStarGamma}) and the fact that
$c_{(\mathrm{\RoM{3}},\delta)}
=\zeta\circ c_{(\mathrm{\RoM{1}},\delta)}$ and
$c_{(\mathrm{\RoM{5}},\delta)}
=\zeta\circ \zeta\circ c_{(\mathrm{\RoM{1}},\delta)}$,
see (\ref{EqHexagonSixPaths}) we get --upon using (\ref{EqFormConnPullBackParTransp})-- the following
formulas
\begin{equation}
\begin{array}{ccccc}
  {}^{\Gamma(B,A)}W^{(c_{(\mathrm{\RoM{1}},\delta)})}_{10}
  	& 
  	=
  	& 
  {}^{\Gamma(B,A)}W^{(c_{\delta\delta})}_{10} 
  	& 
\stackrel
  {(\ref{EqDrinfeldAssFactorizatonAffinePathParTransp})}
  {=}
  	 &
  e^{\lambda\ln(\delta)B}
  \Phi_{\delta,\delta}(A,B)
  e^{-\lambda\ln(\delta)A}, \\	
   {}^{\Gamma(B,A)}W^{(c_{(\mathrm{\RoM{3}},\delta)})}_{10}
   & 
   \stackrel
   {(\ref{EqHexZetaStarGamma})	
   }
   {=}
   & 
   {}^{\Gamma(\tilde{C},B)}W^{(c_{\delta\delta})}_{10}
   &
   \stackrel
   {(\ref{EqDrinfeldAssFactorizatonAffinePathParTransp})}
   {=}
   &
   e^{\lambda\ln(\delta)\tilde{C}}
   \Phi_{\delta,\delta}(B,\tilde{C}
   )
   e^{-\lambda\ln(\delta)B}, \\
   {}^{\Gamma(B,A)}W^{(c_{(\mathrm{\RoM{5}},\delta)})}_{10}
   & 
   \stackrel
   {
   	(\ref{EqHexZetaSquareStarGamma})	
   }
   {=}
   & 
   {}^{\Gamma(A,\tilde{C})}W^{(c_{\delta\delta})}_{10} 
   &
   \stackrel
   {(\ref{EqDrinfeldAssFactorizatonAffinePathParTransp})}
   {=}
   &
   e^{\lambda\ln(\delta)A}
   \Phi_{\delta,\delta}(\tilde{C},A
   )
   e^{-\lambda\ln(\delta)\tilde{C}}.
\end{array}
\label{EqHexParTranspCICIICV}
\end{equation}
For the even paths we can proceed in exactly the same way: the fact that
$c_{(\mathrm{\RoM{4}},\delta)}
=\zeta\circ c_{(\mathrm{\RoM{2}},\delta)}$ and
$c_{(\mathrm{\RoM{6}},\delta)}
=\zeta\circ \zeta\circ c_{(\mathrm{\RoM{1}},\delta)}$,
see (\ref{EqHexagonSixPaths}) we get --upon using
(\ref{EqFormConnPullBackParTransp})-- the following
formulas:
\begin{equation}\label{EqHexParTranspCIVCVIByCII}
  {}^{\Gamma(B,A)}W^{(c_{(\mathrm{\RoM{4}},\delta)})}_{10}
   =   {}^{\Gamma(\tilde{C},B)}
   W^{c_{(\mathrm{\RoM{2}},\delta)}}_{10} 
   ~~~~\mathrm{and}~~~~
   {}^{\Gamma(B,A)}W^{(c_{(\mathrm{\RoM{6}},\delta)})}_{10}
   =   {}^{\Gamma(A,\tilde{C})}
   W^{c_{(\mathrm{\RoM{2}},\delta)}}_{10} ,
\end{equation}
hence it suffices to compute
${}^{\Gamma(B,A)}W^{(c_{(\mathrm{\RoM{2}},\delta)})}_{10}$.
\begin{lemma}
 With the above definitions and notations: the parallel
 transport
${}^{\Gamma(B,A)}W^{(c_{(\mathrm{\RoM{2}},\delta)})}_{10}$
 along the path $c_{(\mathrm{\RoM{2}},\delta)}$ with respect to the connection $\Gamma(B,A)$ factorizes in the following way
 \begin{equation}\label{EqHexParTranspCII}
   {}^{\Gamma(B,A)}W^{(c_{(\mathrm{\RoM{2}},\delta)})}_{10}
   =
   e^{\lambda\mathbf{i}\pi B}H^{(\delta)}(B,A)
 \end{equation}
 where the element $\delta\mapsto H^{(\delta)}(B,A)$ of
 $\Big(\mathrm{Fun}\left(]0,1/4],\mathbb{C}\right)\otimes
   \mathcal{A}\Big)[[\lambda]]$ is a harmless group term,
   see (\ref{EqNormsLimDefRBH}),
   (\ref{EqFilNormsDefGroupsGRGBGH}), and Proposition
   \ref{PFilNormsRBHProperties}.
\end{lemma}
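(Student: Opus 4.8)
The plan is to compute the connection $\Gamma(B,A)$ evaluated on the path $c_{(\mathrm{\RoM{2}},\delta)}$, recognize that the coefficient in front of $B$ is constant in $s$ (just like the exponential half-path in Lemma \ref{LDrinfeldAssFactorizationHalfAss}), apply the factorization statement \eqref{EqFormalLinODEFactorization} of Proposition \ref{PFormalLinODEPropagatorPropert}, and then estimate the remaining factor to show it is harmless. First I would write $c_{(\mathrm{\RoM{2}},\delta)}(s) = 1 - \frac{\delta}{w(s)}$ with $w(s) \coloneqq (1-\tfrac{\delta}{2})e^{-\mathbf{i}\pi s}+\tfrac{\delta}{2}$, compute $\frac{\mathsf{d}c_{(\mathrm{\RoM{2}},\delta)}}{\mathsf{d}s}$, and plug into $\Gamma(B,A)^{(c_{(\mathrm{\RoM{2}},\delta)})} = \left(\frac{1}{c_{(\mathrm{\RoM{2}},\delta)}}A + \frac{1}{c_{(\mathrm{\RoM{2}},\delta)}-1}B\right)\frac{\mathsf{d}c_{(\mathrm{\RoM{2}},\delta)}}{\mathsf{d}s}$. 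Since $c_{(\mathrm{\RoM{2}},\delta)}(s) - 1 = -\frac{\delta}{w(s)}$, the $B$-term becomes $\frac{w(s)}{-\delta}\cdot\frac{\mathsf{d}c_{(\mathrm{\RoM{2}},\delta)}}{\mathsf{d}s}$, and a direct computation of $\frac{\mathsf{d}c_{(\mathrm{\RoM{2}},\delta)}}{\mathsf{d}s} = \delta\,\frac{w'(s)}{w(s)^2}$ with $w'(s) = -\mathbf{i}\pi(1-\tfrac{\delta}{2})e^{-\mathbf{i}\pi s}$ should give that the $B$-coefficient equals $\mathbf{i}\pi$ times a factor that is \emph{independent of $s$} (indeed one expects exactly $\mathbf{i}\pi \cdot \frac{(1-\delta/2)e^{-\mathbf{i}\pi s}}{w(s)}$, which unfortunately still depends on $s$ — so more care is needed; the correct statement, mirroring \eqref{EqDrinfeldAssGammaBAComp}, is that after the substitution the integral of the $B$-part over $[0,1]$ evaluates cleanly). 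Let me instead follow the proven template: set $Y_\delta(s) \coloneqq$ (the $B$-part of $\Gamma(B,A)^{(c_{(\mathrm{\RoM{2}},\delta)})}$) and $Z_\delta(s) \coloneqq$ (the $A$-part), use \eqref{EqFormalLinODEFactorization} to write ${}^{\Gamma(B,A)}W^{(c_{(\mathrm{\RoM{2}},\delta)})}_{s0} = U^{(\delta)}_{s0}\,\Xi^{(\delta)}_{s0}$ where $U^{(\delta)}_{\cdot 0}$ solves $\frac{\mathsf{d}U^{(\delta)}_{\cdot 0}}{\mathsf{d}s} = \lambda Y_\delta U^{(\delta)}_{\cdot 0}$.

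For the factor $U^{(\delta)}$: I expect that $Y_\delta(s)$ commutes with its primitive $I_0(Y_\delta)$ (either $Y_\delta$ is a scalar function times $B$, or more precisely $Y_\delta(s) = g_\delta(s) B$ for some complex-valued $g_\delta$, so \eqref{EqFormalLinODEYCommutesWithIOfY} applies), and that $\int_0^1 g_\delta(s)\,\mathsf{d}s = \mathbf{i}\pi$ exactly — this is the arithmetic content, computing $\int_0^1 \frac{w'(s)}{w(s)}\,\mathsf{d}s$ as a logarithm of $w$ along the half-circle, which picks up precisely $\mathbf{i}\pi$ (one must check the branch: $w$ traces a lower half-arc from $w(0) = 1$ to $w(1) = \delta$, staying in the right half-plane, so $\ln w$ changes by a bounded amount — actually this is where I must be careful, since $\int_0^1 g_\delta \neq \mathbf{i}\pi$ for $\delta > 0$ in general; what is true is that $U^{(\delta)}_{10} = e^{\lambda \mathbf{i}\pi B}\cdot(\text{harmless})$, i.e. $\int_0^1 g_\delta(s)\,\mathsf{d}s = \mathbf{i}\pi + O(\delta^{\beta})$). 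So I would write $\int_0^1 g_\delta(s)\,\mathsf{d}s = \mathbf{i}\pi + \rho(\delta)$, prove $\rho(\delta) \to 0$ with a power-law bound $|\rho(\delta)| \leqslant C\delta^\beta$ for some $\beta > 0$ by an elementary estimate on the integrand, and conclude $U^{(\delta)}_{10} = e^{\lambda \mathbf{i}\pi B}\,e^{\lambda\rho(\delta)B}$ with $e^{\lambda\rho(\delta)B} \in \mathcal{G}_\mathcal{H}$ by Proposition \ref{PFilNormsRBHProperties} and Lemma \ref{LNormsLimIntegralBounds}.

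For the factor $\Xi^{(\delta)}$: it solves $\frac{\mathsf{d}\Xi^{(\delta)}_{\cdot 0}}{\mathsf{d}s} = \lambda\big((U^{(\delta)}_{\cdot 0})^{-1}Z_\delta\,U^{(\delta)}_{\cdot 0}\big)\Xi^{(\delta)}_{\cdot 0}$, and I would bound the conjugated vector field $(U^{(\delta)}_{\cdot 0})^{-1}Z_\delta\,U^{(\delta)}_{\cdot 0} = e^{-\lambda(\int_0^s g_\delta)\mathsf{ad}_B}(Z_\delta(s))$ using Lemma \ref{LNormsLimIntegralBounds}: the point is that $Z_\delta(s)$, the $A$-part, is of the form (complex function bounded by $C\delta^\beta$ uniformly in $s \in [0,1]$) times $A$, because near the half-circle $c_{(\mathrm{\RoM{2}},\delta)}(s) = 1 - \frac{\delta}{w(s)}$ stays at distance $\asymp 1$ from $0$ so $\frac{1}{c_{(\mathrm{\RoM{2}},\delta)}}$ is bounded, while $\frac{\mathsf{d}c_{(\mathrm{\RoM{2}},\delta)}}{\mathsf{d}s}$ carries a factor $\delta$. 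The exponential prefactors $e^{\pm\lambda(\cdots)\mathsf{ad}_B}$ are at most logarithmically divergent (their coefficients involve $\int_0^s g_\delta$ which is bounded, actually $O(1)$ here, not even logarithmic), hence $\mathcal{G}_\mathcal{L}$-type, and $\mathcal{H}$ is a two-sided ideal of $\mathcal{L}$ (Proposition \ref{PFilNormsRBHProperties}), so the conjugated field still satisfies bound $(H)$ of Lemma \ref{LNormsLimIntegralBounds}, giving $\Xi^{(\delta)}_{10} \in \mathcal{G}_\mathcal{H}$. Since $\mathcal{G}_\mathcal{H}$ is a subgroup and normal in $\mathcal{G}_\mathcal{L}$, the product $H^{(\delta)}(B,A) \coloneqq e^{\lambda\rho(\delta)B}\,\Xi^{(\delta)}_{10}$ lies in $\mathcal{G}_\mathcal{H}$, and $e^{\lambda\mathbf{i}\pi B} H^{(\delta)}(B,A) = U^{(\delta)}_{10}\Xi^{(\delta)}_{10} = {}^{\Gamma(B,A)}W^{(c_{(\mathrm{\RoM{2}},\delta)})}_{10}$, which is \eqref{EqHexParTranspCII}.

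The main obstacle I anticipate is the precise analysis of the $B$-coefficient integral $\int_0^1 g_\delta(s)\,\mathsf{d}s$: one must verify that it equals $\mathbf{i}\pi$ up to a power-law-small error, which requires correctly identifying $g_\delta$, recognizing the integral as $\log$ of $w(s)$ along the arc, and controlling the branch/endpoint contributions (the endpoint $w(1) = \delta \to 0$ is where a naive $\log$ would blow up, so the cancellation producing a harmless — not merely logarithmic — remainder must be checked carefully, presumably because the $\mathbf{i}\pi$ comes entirely from the angular part $e^{-\mathbf{i}\pi s}$ while the modulus part contributes the small remainder).
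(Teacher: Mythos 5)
Your plan is correct in its essential structure and would yield the result, but it takes a genuinely different (and slightly heavier) decomposition $Y+Z$ than the paper. You split $\Gamma(B,A)^{(c_{(\mathrm{\RoM{2}},\delta)})}$ into the full $B$-part $Y_\delta(s)=g_\delta(s)B$ (scalar-times-$B$, $s$- and $\delta$-dependent) plus the $A$-part $Z_\delta(s)$. This works, since $Y_\delta$ commutes with its primitive so \eqref{EqFormalLinODEYCommutesWithIOfY} gives $U^{(\delta)}_{s0}=e^{\lambda(\int_0^sg_\delta)B}$, but it leaves you with the extra task of proving $\int_0^1 g_\delta=\mathbf{i}\pi+\rho(\delta)$ with $\rho(\delta)$ power-small, and of then recombining $e^{\lambda\rho(\delta)B}$ with $\Xi^{(\delta)}_{10}$ (for which only the subgroup property of $\mathcal G_\mathcal H$ is needed; normality is a red herring here). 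The paper instead observes that the $B$-coefficient $g_\delta(s)=\mathbf{i}\pi\frac{1-\delta/2}{1-\delta/2+\frac{\delta}{2}e^{\mathbf{i}\pi s}}$ decomposes exactly as $\mathbf{i}\pi$ plus an $O(\delta)$-small term, so it sets $Y_0=\mathbf{i}\pi B$ (constant in both $s$ and $\delta$), pushes the $B$-remainder into $Z^{(\delta)}$ together with the $A$-part, gets $U_{s0}=e^{\lambda\mathbf{i}\pi s B}$ immediately, and lands on a single propagator $H^{(\delta)}$ with harmless vector field $e^{-\lambda\mathbf{i}\pi s\,\mathsf{ad}_B}(Z^{(\delta)}(s))$, $\|\tilde Z^{(\delta)}_r(s)\|\leqslant\check C_r\,\delta$. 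This avoids the branch-of-logarithm discussion entirely and is cleaner; your route buys nothing extra but is correct.

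Two computational slips in your narration are worth flagging because the branch-tracking you worry about in your final paragraph is precisely where they matter. First, with $w(s)=(1-\tfrac{\delta}{2})e^{-\mathbf{i}\pi s}+\tfrac{\delta}{2}$ you have $w(1)=-(1-\tfrac{\delta}{2})+\tfrac{\delta}{2}=\delta-1$, \emph{not} $\delta$. Second, $w$ does not stay in the right half-plane: it traces a lower half-circle centred at $\tfrac{\delta}{2}$ with radius $1-\tfrac{\delta}{2}$ from $1$ down through $\tfrac{\delta}{2}-\mathbf{i}(1-\tfrac{\delta}{2})$ to the negative real number $\delta-1$. So $\arg w$ does change by $-\pi$ along the path, and that is exactly where your $\mathbf{i}\pi$ comes from: $\int_0^1 g_\delta = -[\log w]_0^1 = -\ln(1-\delta)+\mathbf{i}\pi$, giving $\rho(\delta)=-\ln(1-\delta)$ which is $\leqslant 2\delta$ for $\delta\leqslant 1/4$. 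Your fallback plan (estimate the integrand of $\int_0^1(g_\delta-\mathbf{i}\pi)$ directly) is the safer route and does work: using $|1-\tfrac{\delta}{2}+\tfrac{\delta}{2}e^{\mathbf{i}\pi s}|\geqslant 1-\delta\geqslant\tfrac{3}{4}$ gives $|\rho(\delta)|\leqslant\tfrac{2\pi}{3}\delta$ without ever touching a complex logarithm, which is exactly in the spirit of the paper's estimates.
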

\begin{proof}
 We compute
 \begin{eqnarray}
   \Gamma(B,A)^{(c_{(\mathrm{\RoM{2}},\delta)})}(s)
   & = &   
   -\mathbf{i}\pi
        \frac
          {
          	\delta\left(1-\frac{\delta}{2}\right)
   	      e^{\mathbf{i}\pi s}
   	      	}{
   	      	\left(1-\frac{\delta}{2}\right)^2
   	      	-\frac{\delta^2}{4}
   	      	 e^{\mathbf{i}2\pi s}
   	      	 }
   	      	 A
   	      	 +
   	 \mathbf{i}\pi
   	   \frac
   	   {
   	   	1-\frac{\delta}{2}
   	   }{
   	   1-\frac{\delta}{2}
   	   +
   	   \frac{\delta}{2}e^{\mathbf{i}\pi s}
   	}
   	B  \nonumber\\
   	& = &
   \underbrace{\mathbf{i}\pi B}_{=: Y_0(s)}
   ~~\underbrace{-
   	\mathbf{i}	\pi\delta \left(
   \frac
   {
   	\frac{1}{2}e^{\mathbf{i}\pi s}
   }{
   1-\frac{\delta}{2}
   +
   \frac{\delta}{2}e^{\mathbf{i}\pi s}
}
B 
   +
   \frac
   {
   	\left(1-\frac{\delta}{2}\right)
   	e^{\mathbf{i}\pi s}
   }{
   \left(1-\frac{\delta}{2}\right)^2
   -\frac{\delta^2}{4}
   e^{\mathbf{i}2\pi s}
}
A
   \right)
}_{=: Z^{(\delta)}(s)}.\label{EqHexGammaBAOfCII}
 \end{eqnarray}
 We can now apply the factorization statement $iv)$ of Proposition \ref{PFormalLinODEPropagatorPropert}, see
(\ref{EqFormalLinODEFactorization}): clearly, the fundamental solution $U_{\cdot 0}(s)$ of the formal linear
ODE $\mathsf{d}U_{\cdot 0}/\mathsf{d}s
=\lambda Y_0U_{\cdot 0}$
is simply given by the formal exponential
$U_{\cdot 0}(s)=e^{\lambda\mathbf{i}\pi s B}$,
and the parallel transport
${}^{\Gamma(B,A)}
W^{(c_{(\mathrm{\RoM{2}},\delta)})}_{\cdot 0}$ thus factorizes thanks to (\ref{EqFormalLinODEFactorization}) as follows
for all $s\in[0,1]$
\[
 {}^{\Gamma(B,A)}
 W^{(c_{(\mathrm{\RoM{2}},\delta)})}_{s 0}  
 ~~=~~ e^{\lambda\mathbf{i}\pi s B}~H^{(\delta)}(B,A)(s)
\] 
where $H^{(\delta)}(B,A)(s)$ is a fundamental solution
for the formal linear ODE
\[
\dfrac{\mathsf{d}H^{(\delta)}(B,A)}{\mathsf{d}s}(s)
= \lambda e^{-\lambda\mathbf{i}\pi s B}
Z^{(\delta)}(s)e^{\lambda\mathbf{i}\pi s B}
H^{(\delta)}(B,A)(s)
=\lambda \underbrace{
	\left(e^{-\lambda\mathbf{i}\pi s\mathsf{ad}_B}\left(
Z^{(\delta)}(s)\right)\right)}_{=:\tilde{Z}^{(\delta)}(s)}
H^{(\delta)}(B,A)(s)
\]
with initial condition $H^{(\delta)}(B,A)(0)=1$.
We shall make the upper bound test (\ref{LEqFilNormsDefRBH})
for $\tilde{Z}^{(\delta)}(s)$, see Lemma 
\ref{LNormsLimIntegralBounds}: writing
$\tilde{Z}^{(\delta)}(s)=
\sum_{r=0}^\infty\lambda^r\tilde{Z}_r^{(\delta)}(s)$ we
get --upon using an arbitrary norm $||~||$ on
the complex vector space $\mathcal{A}$ (see $\S$\ref{SubSec Norms and limits} for more details)--
for each $r\in\mathbb{N}$ upon setting $d_{r0}\coloneqq 1$
if $r=0$ and $d_{r0}\coloneqq 0$ otherwise:
\begin{eqnarray*}
\left|\left|\tilde{Z}_r^{(\delta)}(s)\right|\right|
 & = & 
 \left|\left|\frac{\mathbf{i}^r\pi^rs^r}{r!}
    \mathsf{ad}_B^{\circ r}\left(Z^{(\delta)}(s)\right)
\right|\right| \nonumber \\
& \stackrel{(\ref{EqHexGammaBAOfCII})}{=}& 
\frac{\pi^{r+1}s^r}{r!}\delta\left|\left| 
\frac
{
d_{r0}~	\frac{1}{2}e^{\mathbf{i}\pi s}
}{
1-\frac{\delta}{2}
+
\frac{\delta}{2}e^{\mathbf{i}\pi s}
}
B 
+
\frac
{
	\left(1-\frac{\delta}{2}\right)
	e^{\mathbf{i}\pi s}
}{
\left(1-\frac{\delta}{2}\right)^2
-\frac{\delta^2}{4}
e^{\mathbf{i}2\pi s}
}
\mathsf{ad}_B^{\circ r}\left(A\right)
\right|\right|\nonumber \\
&\leqslant &
\underbrace{\frac{\pi^{r+1}}{r!}
 \left(||B||+
   2\left|\left|
    \mathsf{ad}_B^{\circ r}
    \left(A\right)\right|\right|\right)
    }_{=:\check{C}_r}\delta
    %\label{EqHexNormOfZTildeDelta}
\end{eqnarray*}
where we have used $|e^{\mathbf{i}\tau}|=1$ for each real number $\tau$, and the elementary lower bounds
\begin{eqnarray*}
  \left|1-\frac{\delta}{2}
  +
  \frac{\delta}{2}e^{\mathbf{i}\pi s}\right|
 & \geqslant & 1-\frac{\delta}{2}-\left|\frac{\delta}{2}
  e^{\mathbf{i}\pi s}\right| =1-\delta \geqslant \frac{1}{2}, \\
\left| \left(1-\frac{\delta}{2}\right)^2
 -\frac{\delta^2}{4}
 e^{\mathbf{i}2\pi s} \right|
 &\geqslant &\left(1-\frac{\delta}{2}\right)^2-
 \left|\frac{\delta^2}{4}
 e^{\mathbf{i}2\pi s} \right| = 1-\delta\geqslant \frac{1}{2}.
\end{eqnarray*}
for the denominators. It follows
that $\delta\mapsto H^{(\delta)}(B,A)$ is a harmless
group term.
\end{proof}
We now need to put the loop $c_\delta$ in a star-shaped
open set $U'$ of $\mathbb{C}^{\times\times}$ because
$\mathbb{C}^{\times\times}$ is NOT star-shaped
(it is not even simply connected). Define
\begin{equation}
\label{EqHexUPrimeStarshaped}
  U'\coloneqq
  \mathbb{C}\setminus \left.\left\{
    \frac{1-\mathbf{i}}{2}+
      t(\gamma+\mathbf{i})\in\mathbb{C}~\right|~
        t\in\mathbb{R},~t\geqslant \frac{1}{2},~
        \gamma\in\{-1,1\}\right\}
\end{equation}
which is the complex plane minus two closed half-lines
emanating from $0$ with slope $-1$ and from $1$ with slope
$1$, both in the direction of non-negative imaginary part.
Clearly, $0$ and $1$ do not belong to $U'$ whence
$U'\subset\mathbb{C}^{\times\times}$ is an open subset.
By elementary linear algebra it is shown that every
point $z$ of the plane $\mathbb{C}$ lies on a straight 
 (real) line
passing through the point $q=(1-\mathbf{i})/2$. If $z$
does not lie on one of the two straight lines having slope
$1$ or $-1$, it is clear by the definition of $U'$
that the unique straight line passing through $q$ and $z$ belongs to $U'$. On the other hand, if a point of $z\in U'$ is on one of the straight lines with slope $1$ or $-1$, its imaginary part is necessarily strictly negative. Clearly, the so-called lower half plane $H^-$ (i.e.~the subset of all points having
strictly negative imaginary part) belongs to $U'$, contains
$q$, and is obviously convex, i.e.~the line segment joining two distinct points $z_1,z_2\in H^-$ is again contained in $H^-$. It follows from the preceding discussion that $U'$ is star-shaped around $q=\frac{1-\mathbf{i}}{2}$ and $\forall \delta \in ]0, 1/4]$ we have $c_\delta(s)\in U'$.
We now have all the necessary information to prove the following
\begin{theorem}[Hexagon Equation]
	Let $\mathcal{A}$ be an arbitrary complex associative unital algebra, and let $A,B,C\in\mathcal{A}$ be three
	elements satisfying
	(\ref{EqHexAPlusBPlusCCentral}).
	Then the Hexagon equation (\ref{EqHexDefHexagonEqn}) for the Drinfel'd associator holds.
\end{theorem}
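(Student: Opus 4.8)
The plan is to recognise the Hexagon Equation as the $\delta\to 0$ limit of the triviality of the parallel transport of $\Gamma(B,A)$ along the loop $c_\delta$. Since $\Gamma(B,A)$ is a complex rational connection on an open subset of $\mathbb{C}^1$ it is flat (Proposition \ref{PFormFlatComplexFlatEqRealFlat}), and $c_\delta$ is a continuous piecewise smooth loop based at $\delta$ whose image lies in the star-shaped open set $U'$ of \eqref{EqHexUPrimeStarshaped}; so Corollary \ref{CFormFlatContractibleLoops} gives ${}^{\Gamma(B,A)}W^{(c_\delta)}_{10}=1$ for every $\delta\in{]0,1/4]}$. Expanding this by the multiplicativity of parallel transport under composition \eqref{EqFormConnComposOfPathsParTransp} and inserting the six transports from \eqref{EqHexParTranspCICIICV}, \eqref{EqHexParTranspCIVCVIByCII}, together with the preceding Lemma, see \eqref{EqHexParTranspCII}, applied also to the connections $\Gamma(\tilde C,B)$ and $\Gamma(A,\tilde C)$ where $\tilde C\coloneqq -A-B$ (so that the transports of $c_{(\mathrm{\RoM{2}},\delta)},c_{(\mathrm{\RoM{4}},\delta)},c_{(\mathrm{\RoM{6}},\delta)}$ equal $e^{\lambda\mathbf{i}\pi B}H^{(\delta)}(B,A)$, $e^{\lambda\mathbf{i}\pi\tilde C}H^{(\delta)}(\tilde C,B)$, $e^{\lambda\mathbf{i}\pi A}H^{(\delta)}(A,\tilde C)$), one obtains the explicit identity
\[
1=e^{\lambda\mathbf{i}\pi A}H^{(\delta)}(A,\tilde C)\,e^{\lambda\ln(\delta)A}\Phi_{\delta,\delta}(\tilde C,A)e^{-\lambda\ln(\delta)\tilde C}\,e^{\lambda\mathbf{i}\pi\tilde C}H^{(\delta)}(\tilde C,B)\,e^{\lambda\ln(\delta)\tilde C}\Phi_{\delta,\delta}(B,\tilde C)e^{-\lambda\ln(\delta)B}\,e^{\lambda\mathbf{i}\pi B}H^{(\delta)}(B,A)\,e^{\lambda\ln(\delta)B}\Phi_{\delta,\delta}(A,B)e^{-\lambda\ln(\delta)A}.
\]

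The key step is to remove the singular exponentials $e^{\pm\lambda\ln(\delta)(\cdot)}$. Each of them lies in $\mathcal{G}_{\mathcal{L}}$, and $\mathcal{G}_{\mathcal{H}}$ is a normal subgroup of $\mathcal{G}_{\mathcal{L}}$ (Proposition \ref{PFilNormsRBHProperties}), so conjugating a harmless factor by such an exponential keeps it in $\mathcal{G}_{\mathcal{H}}$. Because $c_\delta$ is a loop the singular factors pair up telescopically: $e^{-\lambda\ln(\delta)\tilde C}$ commutes with the adjacent $e^{\lambda\mathbf{i}\pi\tilde C}$, so $\tilde H^{(\delta)}_1\coloneqq e^{-\lambda\ln(\delta)\tilde C}H^{(\delta)}(\tilde C,B)e^{\lambda\ln(\delta)\tilde C}\in\mathcal{G}_{\mathcal{H}}$ absorbs the pair flanking $c_{(\mathrm{\RoM{4}},\delta)}$; likewise $\tilde H^{(\delta)}_2\coloneqq e^{-\lambda\ln(\delta)B}H^{(\delta)}(B,A)e^{\lambda\ln(\delta)B}\in\mathcal{G}_{\mathcal{H}}$ absorbs the pair flanking $c_{(\mathrm{\RoM{2}},\delta)}$; and $e^{\lambda\mathbf{i}\pi A}H^{(\delta)}(A,\tilde C)e^{\lambda\ln(\delta)A}=e^{\lambda\ln(\delta)A}e^{\lambda\mathbf{i}\pi A}\tilde H^{(\delta)}_0$ with $\tilde H^{(\delta)}_0\coloneqq e^{-\lambda\ln(\delta)A}H^{(\delta)}(A,\tilde C)e^{\lambda\ln(\delta)A}\in\mathcal{G}_{\mathcal{H}}$. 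After these rewritings the identity reads $e^{\lambda\ln(\delta)A}Y_\delta e^{-\lambda\ln(\delta)A}=1$ with
\[
Y_\delta\coloneqq e^{\lambda\mathbf{i}\pi A}\tilde H^{(\delta)}_0\Phi_{\delta,\delta}(\tilde C,A)\,e^{\lambda\mathbf{i}\pi\tilde C}\tilde H^{(\delta)}_1\Phi_{\delta,\delta}(B,\tilde C)\,e^{\lambda\mathbf{i}\pi B}\tilde H^{(\delta)}_2\Phi_{\delta,\delta}(A,B),
\]
and conjugating by $e^{-\lambda\ln(\delta)A}$ forces $Y_\delta=1$ for all $\delta\in{]0,1/4]}$, now with no singular term left.

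Now I let $\delta\to 0$ in $Y_\delta=1$: each $\tilde H^{(\delta)}_i\to 1$ (Proposition \ref{PFilNormsRBHProperties}), each $\Phi_{\delta,\delta}(\cdot,\cdot)\to\Phi(\cdot,\cdot)$ (Theorem \ref{TDrinfeldAssDefAssociator}, see \eqref{EqDrinfeldAssDefAss}), and limits commute with products (Proposition \ref{PNormsLimLimitRules}), so
\[
e^{\lambda\mathbf{i}\pi A}\Phi(\tilde C,A)\,e^{\lambda\mathbf{i}\pi\tilde C}\Phi(B,\tilde C)\,e^{\lambda\mathbf{i}\pi B}\Phi(A,B)=1.
\]
Finally $\tilde C=C-\Lambda$ with $\Lambda$ central for $A,B,C$, hence $e^{\lambda\mathbf{i}\pi\tilde C}=e^{\lambda\mathbf{i}\pi C}e^{-\lambda\mathbf{i}\pi\Lambda}$ and, by \eqref{EqDrinfeldAssIndepOnCentralTerms}, $\Phi(\tilde C,A)=\Phi(C,A)$ and $\Phi(B,\tilde C)=\Phi(B,C)$; since $e^{-\lambda\mathbf{i}\pi\Lambda}$ commutes with every factor, pulling it to the right and multiplying by $e^{\lambda\mathbf{i}\pi\Lambda}$ gives exactly \eqref{EqHexDefHexagonEqn}. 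The main obstacle is the middle step: one must verify carefully that the singular exponentials genuinely match up in pairs (this is precisely where the loop being closed is used) and that the factor sandwiched between a matched pair is exactly a harmless term, so that the normality of $\mathcal{G}_{\mathcal{H}}$ in $\mathcal{G}_{\mathcal{L}}$ can be invoked \emph{before} any limit is taken.
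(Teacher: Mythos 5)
Your proposal is correct and follows essentially the same route as the paper's proof: flatness and the star-shaped $U'$ give a trivial loop transport, the six transports are substituted in, the singular logarithmic exponentials are absorbed by conjugation (via normality of $\mathcal{G}_{\mathcal{H}}$ in $\mathcal{G}_{\mathcal{L}}$), and the $\delta\to 0$ limit together with the central-shift identity \eqref{EqDrinfeldAssIndepOnCentralTerms} yields \eqref{EqHexDefHexagonEqn}. The only (harmless) difference is that you are a bit more explicit than the paper about the last step, isolating the outermost pair $e^{\pm\lambda\ln(\delta)A}$ and concluding $Y_\delta=1$ by a final conjugation before taking the limit.
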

\begin{proof}
	Since the composed loop $c_\delta$, see
	(\ref{EqHexComposedLoop}), is contained in the
	star-shaped open subset $U'$,
	see (\ref{EqHexUPrimeStarshaped}), we can apply
    Corollary \ref{CFormFlatContractibleLoops} to conclude that the parallel transport around the loop $c_\delta$
    along the flat connection $\Gamma(B,A)$, see
    (\ref{EqHexDefConn}), is equal to $1$. Abbreviating
    ${}^{\Gamma(B,A)}W_{10}^{(c)}$ by $W^{(c)}_{10}$ for
    any piecewise smooth path $c:[0,1]\to U'$ we get
    \begin{eqnarray}
    1=W^{(c_\delta)}_{10} 
    &
  \stackrel{(\ref{EqFormConnComposOfPathsParTransp})}{=}
     &
      W^{(c_{(\mathrm{\RoM{6}},\delta)})}_{10}
      W^{(c_{(\mathrm{\RoM{5}},\delta)})}_{10}
      W^{(c_{(\mathrm{\RoM{4}},\delta)})}_{10}
       W^{(c_{(\mathrm{\RoM{3}},\delta)})}_{10}
       W^{(c_{(\mathrm{\RoM{2}},\delta)})}_{10}
       W^{(c_{(\mathrm{\RoM{1}},\delta)})}_{10}
       \nonumber\\
     &
  \stackrel{
  	   (\ref{EqHexParTranspCIVCVIByCII}),
  	   (\ref{EqHexParTranspCII}),
  	   (\ref{EqHexParTranspCICIICV})
  	   }{=}
  	 &    e^{\lambda\mathbf{i}\pi A}~
  	      H^{(\delta)}(A,\tilde{C})~~~
  	      e^{\lambda\ln(\delta) A}~
  	      \Phi_{\delta,\delta}(\tilde{C},A)~
  	      e^{-\lambda\ln(\delta) \tilde{C}}~
  	      \nonumber\\
  	 & &  e^{\lambda\mathbf{i}\pi \tilde{C}}~
  	 H^{(\delta)}(\tilde{C},B)~~~
  	 e^{\lambda\ln(\delta) \tilde{C}}~
  	 \Phi_{\delta,\delta}(B,\tilde{C})~
  	 e^{-\lambda\ln(\delta) B}~\nonumber\\
  	 & & e^{\lambda\mathbf{i}\pi B}~
  	 H^{(\delta)}(B,A)~~~
  	 e^{\lambda\ln(\delta) B}~
  	 \Phi_{\delta,\delta}(A,B)~
  	 e^{-\lambda\ln(\delta) A}.
  	  \label{EqHexHexEqFirstIdentity}
     \end{eqnarray}
     Clearly, the three singular terms 
     $\delta\mapsto e^{\lambda\ln(\delta) A}$, 
     $\delta\mapsto e^{\lambda\ln(\delta) B}$, and 
     $\delta\mapsto e^{\lambda\ln(\delta) \tilde{C}}$
     belong to the at most logarithmically diverging
     group terms, $\mathcal{G}_{\mathcal{L}}$, see  (\ref{EqFilNormsDefGroupsGRGBGH}) for
     $J=]0,1/4]$. Hence, the
 following three conjugations again
define harmless group terms according to statement $iv.)$
of Proposition \ref{PFilNormsRBHProperties}:
\[
\begin{array}{ccc}
\tilde{H}^{(\delta)}(A,\tilde{C}) &\coloneqq &
e^{-\lambda\ln(\delta) A}~
  H^{(\delta)}(A,\tilde{C})~e^{\lambda\ln(\delta) A},\\
 \tilde{H}^{(\delta)}(\tilde{C},B) &\coloneqq &
 e^{-\lambda\ln(\delta) \tilde{C}}~
 H^{(\delta)}(\tilde{C},B)
    ~e^{\lambda\ln(\delta) \tilde{C}},\\
 \tilde{H}^{(\delta)}(B,A) &\coloneqq &
 e^{-\lambda\ln(\delta) B}~
 H^{(\delta)}(B,A)~e^{\lambda\ln(\delta) B}. 
 \end{array} 
\]
Rewriting (\ref{EqHexHexEqFirstIdentity}) by means of
these harmless group terms we see that the three singular
terms mentioned above, $e^{\lambda\ln(\delta) A}$, 
$e^{\lambda\ln(\delta) B}$, and 
$e^{\lambda\ln(\delta) \tilde{C}}$, cancel out, and we are left with the following identity:
\[
1=e^{\lambda\mathbf{i}\pi A}~
  \tilde{H}^{(\delta)}(A,\tilde{C})~
  \Phi_{\delta,\delta}(\tilde{C},A)~
e^{\lambda\mathbf{i}\pi \tilde{C}}~
  \tilde{H}^{(\delta)}(\tilde{C},B)~
  \Phi_{\delta,\delta}(B,\tilde{C})~
e^{\lambda\mathbf{i}\pi B}~
\tilde{H}^{(\delta)}(B,A)~
\Phi_{\delta,\delta}(A,B).
\]
Passing to the limit $\delta\to 0$ we get --thanks to the limit rules (\ref{EqNormsLimLimitProdAreProdOfLimits}),
the definition of the Drinfel'd associator
(\ref{EqDrinfeldAssDefAss}), and the fact that harmless
group terms tend to $1$ for $\delta\to 0$ (see
statement $v.)$ of Proposition \ref{PFilNormsRBHProperties})-- 
the following equation (recall that $\tilde{C}=C-\Lambda$)
\[
1=e^{\lambda \pi\mathbf{i}A}\Phi(C-\Lambda,A)
e^{\lambda \pi\mathbf{i}(C-\Lambda)}\Phi(B,C-\Lambda)
e^{\lambda \pi\mathbf{i}B}\Phi(A,B).
\]
This equation immediately results in the Hexagon equation
(\ref{EqHexDefHexagonEqn}) thanks to the fact that
$\Lambda$ commutes with $A,B$ and $C$ whence
$\Phi(C-\Lambda,A)=\Phi(C,A)$, 
$\Phi(B,C-\Lambda)=\Phi(B,C
)$ by (\ref{EqDrinfeldAssIndepOnCentralTerms}).
\end{proof}

\subsection{The Pentagon Equation}  
%\addcontentsline{toc}{section}
%{The Pentagon Equation}
\label{SubSecPentagonEquation}

Let $\mathcal{A}$ be an arbitrary complex unital associative algebra. Let 
$A_{12}=A_{21},A_{13}=A_{31},A_{14}=A_{41},A_{23}=A_{32},
A_{24}=A_{42},A_{34}=A_{43}\in\mathcal{A}$ six elements satisfying the infinitesimal braid relations
(\ref{EqIntroIB-definition-two})
and (\ref{EqIntroIB-definition-three}). 
We wish to prove the \textbf{Pentagon Equation for the Drinfel'd associator}, see (\ref{EqPentDefPentagonEquation}).

The first naive observations are the following: the element
$A_{14}$ does not occur in the equation, and there is an
obvious symmetry by passing to the inverse and sending
$(1,2,3,4)$ to $(4,3,2,1)$ where (\ref{EqDrinfeldAssInverseEqAGoesToB}) is used.
Moreover, the right associator on the right hand side only depends on the numbers $1,2,3$ and
the left associator on the right hand side only depends on the numbers $2,3,4$. Finally, the element $A_{23}$ does not occur in the middle factor on the right hand side.\\
We shall try to represent each side of the Pentagon Equation (\ref{EqPentDefPentagonEquation}) by the parallel transport along the composition of three paths, 
$c_{(\mathrm{\RoM{3}},\delta)}*
(c_{(\mathrm{\RoM{2}},\delta)}*
c_{(\mathrm{\RoM{1}},\delta)})$, for the right hand side,
and along the composition of two paths 
$c_{(\mathrm{\RoM{5}},\delta)}*
c_{(\mathrm{\RoM{4}},\delta)}$ for the left hand side,
both having the same initial and final points.
Since the numbers $1,2,3,4$ occur it seems plausible to use the open set $U\subset \mathbb{R}^4$,
\begin{equation*}
    U\coloneqq \big\{ x\in\mathbb{R}^4~\big|~
      x_1<x_2<x_3<x_4\big\},
\end{equation*}
which can be interpreted as the space of all ordered
configurations of four particles on the real line, see
\cite[p. 834, line 2]{Dri90}. Clearly, $U$ is a subset
of the configuration space $Y_4$. Following Drinfel'd
we shall use the pull-back of the $\RuK \Run \RuZ \Rua$-connection
${}^{(4)}{\Gamma_{\RuK \Run \RuZ \Rua}}$  
from $Y_4$ to $U$ (with respect to the canonical injection $U\to Y_4$) which is still
flat, see Theorem \ref{TKnZhConnectionIsFlat} and
Proposition \ref{PFlatConnPullBackOfFlatIsFlat}. Since
the $\RuK \Run \RuZ \Rua$-connections are invariant under
simultaneous translations, it is not unreasonable to assume that the first
coordinate $x_1$ of all the paths is fixed to be $0$. Next, the fact that
$A_{14}$ does not occur in the Pentagon equation may lead
us to the ansatz that the difference $x_4-x_1$ should remain constant; on the other hand the fact that there are terms in the Pentagon Equation not containing $1$ and not containing $4$ suggests that $x_4$ should be
`far away from $x_1=0$', hence we set $x_1=0$ and $x_4=1$.
 Define the open full triangle
\begin{equation*}
   U'\coloneqq \big\{(x_2,x_3)\in
   \mathbb{R}^2~\big|~0<x_2<x_3<1\big\}.
\end{equation*}
Note that $U'$ is invariant under the involutive
diffeomorphism
\begin{equation*}
  \Theta: U'\to U':
  (x_2,x_3)\mapsto \left(1-x_3,
              1-x_2\right).
\end{equation*}
Moreover, it is easy to see that $U'$
is convex, i.e.~that for each pair of elements
$(x_2,x_3)$ and $(y_2,y_3)$ the line segment
$s\mapsto \big((1-s)x_2+sy_2,(1-s)x_3+sy_3\big)$
is contained in $U'$. Moreover, the point
$\left(\frac{1}{2},\frac{2}{3}\right)$ is contained in $U'$ 
whence \textbf{$U'$ is star-shaped around}
$\left(\frac{1}{2},\frac{2}{3}\right)$.\\
Using the injection $i:U'\to U\to Y_4:
(x_2,x_3)\mapsto (0,x_2,x_3,1)$ we can
pull back the $\RuK \Run \RuZ \Rua$-connection
${}^{(4)}{\Gamma_{\RuK \Run \RuZ \Rua}}$ to $U'$.
Writing $(A_{ij})$ for 
$(A_{12},A_{13},A_{14},A_{23},A_{24},A_{34})$
an easy computation gives
\begin{eqnarray}
\Gamma(x_2,x_3) \coloneqq
 \Gamma\big((A_{ij})\big)(x_2,x_3)&\coloneqq&
  \left( i^*\left({}^{(4)}{\Gamma_{\RuK \Run \RuZ \Rua}}\right)
  \right)(x_2,x_3)\nonumber \\
 & = &
 \left(\frac{1}{x_2}A_{12}
       +\frac{1}{x_2-x_3}A_{23}
       +\frac{1}{x_2-1}A_{24}\right)\mathsf{d}x_2
          \nonumber \\
   & &  +
   \left(\frac{1}{x_3}A_{13}
   -\frac{1}{x_2-x_3}A_{23}
   +\frac{1}{x_3-1}A_{34}\right)\mathsf{d}x_3. 
   \label{EqPentEqGammaEqualsPullBackKnZhConn}
\end{eqnarray}
Clearly, $\Gamma$ is (formally) flat according to
Theorem \ref{TKnZhConnectionIsFlat} and Proposition
\ref{PFlatConnPullBackOfFlatIsFlat}, but its 
(formal) flatness can
easily be computed directly from formula
(\ref{EqPentEqGammaEqualsPullBackKnZhConn}). Moreover, it is easy to compute that
\begin{equation}\label{EqPentEqThetaPullBackOfGamma}
  \Theta^*\big(\Gamma\big((A_{ij})\big)\big) = 
     \Gamma\big((A_{\sigma(i)\sigma(j)})\big)
     ~~~\mathrm{with}~~~
     \sigma: (1,2,3,4)\mapsto (4,3,2,1).
\end{equation}
Next, we would like to substantiate in terms of paths the
five `zones' which Drinfel'd mentiones in his articles,
see cf.~\cite[p.1454]{Dri89} or
\cite[p.834, line 3,4]{Dri90} (where the fifth zone in
\cite{Dri90} has been forgotten in the English translation, see the
original article in Russian language, p.154, paragraph before the Lemma, for a complete description):
here certain pairs of coordinates are `very close' to each others, others are `medium close' and still others are
`far' which is expressed in terms of inequalities using the symbol $\ll$: using the real number $\delta\in~]0,1/4]$ 
--which is meant to be sent to zero-- we interpret --as a
rule of thumb-- `very close' as $\approx\delta^2$,
`medium close' as around $\approx \delta$, and `far' as 
$\approx 1$. 
Inspired by the picture \cite[p.478, Fig.8.2.]{Kas95}
we first use the following subdivision of the interval $]0,1[$,
%(recall that $0<\delta\leqslant 1/4$) 
in which we 
imagine that both $x_2$ and $x_3$ `move' between the selected positions,
\begin{equation*}
0~<~ \delta^2 ~<~ \delta-\delta^2 ~<~ \delta ~<~
1-\delta ~<~ 1-\delta+\delta^2
~<~1-\delta^2~<~ 1,
\end{equation*}
 and --being fully aware of the nonuniqueness of our choice-- associate the following five points in $U'$ (as part of the $(x_2,x_3)$ plane) as an interpretation of the five zones (recall that $x_1=0$
 and $x_4=1$):
\begin{equation}
  \begin{array}{ccccccccc}
 \mathrm{zone}~1: & "x_2-x_1 &\ll& x_3-x_1& \ll& x_4-x_1" &
          \mathrm{interpreted~as}&  (\delta^2,\delta)
             & =: p_1,\\
 \mathrm{zone}~2: & "x_3-x_2 &\ll& x_3-x_1& \ll& x_4-x_1" &
 \mathrm{interpreted~as}&  (\delta-\delta^2,\delta)
          & =: p_2,\\ 
 \mathrm{zone}~3: & "x_3-x_2 &\ll& x_4-x_2& \ll& x_4-x_1" &
 \mathrm{interpreted~as}&  
 \left(1-\delta,1-\delta+\delta^2 \right)
       & =: p_3 ,\\ 
  \mathrm{zone}~4: & "x_4-x_3 &\ll& x_4-x_2& \ll
    & x_4-x_1" &
  \mathrm{interpreted~as}& 
    \left(1-\delta,1-\delta^2 \right)
        & =: p_4 ,\\ 
  \mathrm{zone}~5: & "x_2-x_1 
  & \ll &  & \ll &  x_4-x_1 & \mathrm{and} &  \\
  & x_4-x_3 & \ll & 
  & \ll& x_4-x_1" & 
  \mathrm{interpreted~as}&  \left(\delta^2,
  1-\delta^2\right) & =: p_5.      
  \end{array} \label{EqPentEqDrinfeldZones}
\end{equation}
There is thus the following simple ansatz for the following five 
affine paths subsequently joining the above five points by the unique line segments between them, see
(\ref{EqFormConnDefAffinePath}), where $\iota:[0,1]\to
[0,1]$ denotes the usual interval inversion $s\mapsto 1-s$:
\begin{equation}\label{EqPentEqAffinePathsOfPentagon}
 \begin{array}{ccc}
 c_{(\mathrm{\RoM{1}},\delta)} \coloneqq 
   c_{p_2\leftarrow p_1}, &
 c_{(\mathrm{\RoM{2}},\delta)} \coloneqq
 c_{p_3\leftarrow p_2}=\Theta\circ c_{(\mathrm{\RoM{2}},\delta)}\circ \iota,&
 c_{(\mathrm{\RoM{3}},\delta)}\coloneqq 
 c_{p_4\leftarrow p_3}
 =\Theta\circ c_{(\mathrm{\RoM{1}},\delta)}\circ \iota,
     \\
 c_{(\mathrm{\RoM{4}},\delta)}\coloneqq 
 c_{p_5\leftarrow p_1},~~&
 c_{(\mathrm{\RoM{5}},\delta)}\coloneqq 
 c_{p_4\leftarrow p_5}
 =\Theta\circ c_{(\mathrm{\RoM{4}},\delta)}\circ \iota. &
 \end{array}
\end{equation}
%\begin{equation}
% \begin{array}{ccccccc}
%   c_{(\mathrm{\RoM{1}},\delta)}(s) & \coloneqq &
%       \big(\delta+s(1-2\delta),1\big)
%     & \mathrm{joining} & (\delta,1) & \to & (1-\delta,1),
%     \\
%    c_{(\mathrm{\RoM{2}},\delta)}(s) & \coloneqq &
%    (1-\delta,1)
%        +s\left(\frac{1}{\delta}-2+\delta\right)(1,1)
%    & \mathrm{joining} & (1-\delta,1) & \to &
%         \left(\frac{1}{\delta}-1,
%            \frac{1}{\delta}-1+\delta\right),
%            \\
%    c_{(\mathrm{\RoM{3}},\delta)}(s) & \coloneqq &
%   \Theta\big(c_{(\mathrm{\RoM{1}},\delta)}(1-s)\big)
%    & \mathrm{joining} & \left(\frac{1}{\delta}-1,
%         \frac{1}{\delta}-1+\delta\right) & \to & %\left(\frac{1}{\delta}-1,
%         \frac{1}{\delta}-\delta\right),
%         \\ 
%   c_{(\mathrm{\RoM{4}},\delta)}(s) & \coloneqq &
%         \left(\delta,
%         1+s\left(\frac{1}{\delta}-1-\delta\right)\right)
%         & \mathrm{joining} & (\delta,1) & \to & %\left(\delta,\frac{1}{\delta}-\delta\right),
%         \\
%    c_{(\mathrm{\RoM{5}},\delta)}(s) & \coloneqq &
%    \Theta\big(c_{(\mathrm{\RoM{4}},\delta)}(1-s)\big)
%    & \mathrm{joining} &
%     \left(\delta,\frac{1}{\delta}-\delta\right) & \to & %\left(\frac{1}{\delta}-1,
%     \frac{1}{\delta}-\delta\right).     
% \end{array} \label{EqPentEqAffinePathsOfPentagon}
%\end{equation}
which can be depicted in Figure $\ref{fig:Pentagon}$ describing
a non-regular pentagon whose vertices are the five `zone' points (\ref{EqPentEqDrinfeldZones}) and whose edges
are the images of the five affine paths (\ref{EqPentEqAffinePathsOfPentagon}).

\begin{figure}[h!!]
	\centering
	\begin{tikzpicture}
	\draw[->] (-0.5, 0) -- (8.5, 0) node[right] {$x_2$};
	\draw[->] (0, -0.5) -- (0, 8.5) node[above] {$x_3$};
	\draw[dashed] (0,0)--(7.7,7.7);
	\draw[dashed] (0,7.7)--(7.7,7.7);
	\draw[blue,->] (0.25,7.25)--(3.25,7.25);
	\draw[blue](3.25,7.25)--(5.25,7.25);
	\draw[blue] (0.25,7.25)--(0.25,5.25); 
	\draw[blue,->] (0.25,2.25)--(0.25,5.25); 
	\draw[red,->] (0.25,2.25)--(1,2.25); %lunga 1.5
	\draw[red](1,2.25)--(1.75,2.25);
	\draw[red,->] (5.25,5.75)--(5.25,6.5);
	\draw[red] (5.25,6.5)--(5.25,7.25);
	\draw[red] (5.25,5.75)--(3.75,4.25);
	\draw[red,->] (1.75,2.25)--(3.75,4.25);
	\draw (0.25,0.1)--(0.25,-0.1);
	\node at (0.25,-0.4) {\footnotesize $\delta^2$};
	\draw (5.25,0.1)--(5.25,-0.1);
	\node at (5.25,-0.4) {\footnotesize $1-\delta$};
	\draw (1.75,0.1)--(1.75,-0.1);
	\node at (1.75,-0.4) { \footnotesize $\delta - \delta^2$};
	\draw (7.7,0.1)--(7.7,-0.1);
	\node at (7.7,-0.4) {\footnotesize $1$};
	\node at (-0.4,7.7) {\footnotesize $1$};
	\draw (-0.1,7.25)--(0.1,7.25);
	\node at (-0.55,7.25) {\footnotesize $1-\delta^2$};
	\draw (-0.1,5.75)--(0.1,5.75);
	\node at (-0.85,5.75) {\footnotesize $1-\delta+ \delta^2$};
	\draw (-0.1,2.25)--(0.1,2.25);
	\node at (-0.4,2.25) {\footnotesize $\delta$};
	\node at (0.7,5) {\footnotesize $c_{(\mathrm{\RoM{4}}, \delta)}$};
	\node at (3.1,6.8) {\footnotesize $c_{(\mathrm{\RoM{5},\delta})}$};
	\node at (1.2,2.5) {\footnotesize $c_{(\mathrm{\RoM{1}},\delta)}$};
	\node at (4.8,6.3) {\footnotesize $c_{(\mathrm{\RoM{3}},\delta)}$};
	\node at (3.6,4.7) {\footnotesize $c_{(\mathrm{\RoM{2}},\delta)}$};
	\end{tikzpicture}
	\caption{The paths (\ref{EqPentEqAffinePathsOfPentagon}) in the 
		$x_2$-$x_3$-plane } \label{fig:Pentagon}
\end{figure}
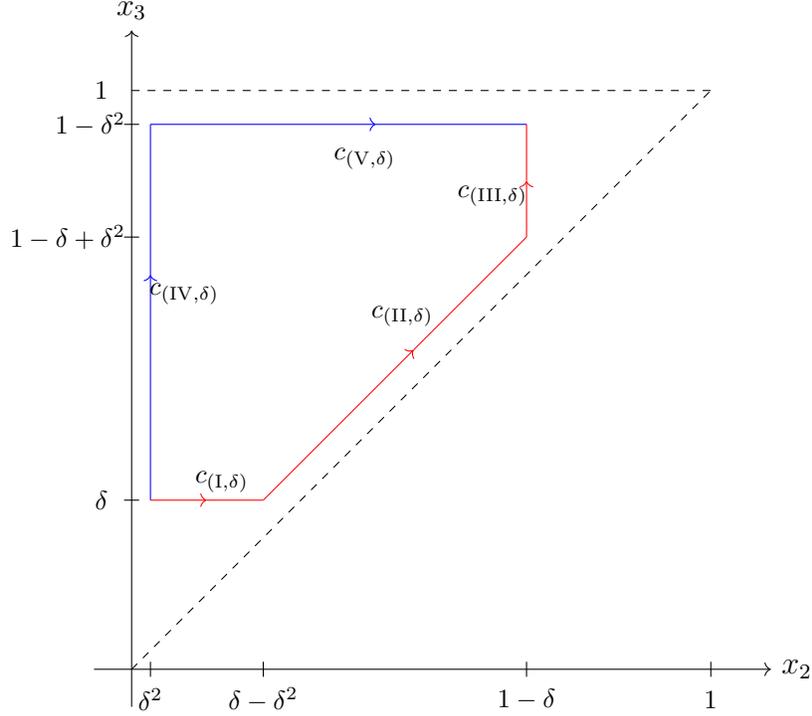
Note that the symmetry $\Theta$ --which is a symmetry of the pentagon-- is the reflection with respect to the straight line whose equation is 
$x_3=1-x_2$. Note furthermore that the
second path is the only path where both coordinates
$x_2,x_3$ are moving, but their distance is kept constant
(inspired by the observation of the absence of $A_{23}$
in the middle factor on the right hand side of the Pentagon Equation (\ref{EqPentDefPentagonEquation})).

We are now going to compute the parallel transports
along the five paths of (\ref{EqPentEqAffinePathsOfPentagon}) with respect to
 the connection $\Gamma$, see
  (\ref{EqPentEqGammaEqualsPullBackKnZhConn}). Thanks to (\ref{EqPentEqThetaPullBackOfGamma}),
  (\ref{EqFormConnInversionFormula}), and 
  (\ref{EqFormConnPullBackParTransp}) it suffices to compute the parallel transports $W_{10}^{(c_{(\mathrm{\RoM{1}},\delta)})}$,
  $W_{10}^{(c_{(\mathrm{\RoM{2}},\delta)})}$, and
  $W_{10}^{(c_{(\mathrm{\RoM{4}},\delta)})}$ --we henceforth suppress the symbol $\Gamma$ attached to
  $W$--, the other
  two will be immediate from the above identities for
  symmetries and reparametrizations of parallel 
  transports.\\
  We get the following Lemma:
  \begin{lemma}
  	\label{TPentEqFiveParTrDeltaPositive}
  	With the above notations, we get the following results 
  	for the five parallel transports for all 
  	$\delta\in \mathbf{J}=J=~]0,1/4]$:
  	\begin{equation}\label{EqPentEqFiveParTrDeltaPositive}
  	\begin{array}{ccl}
  	W_{10}^{(c_{(\mathrm{\RoM{1}},\delta)})}
  	 & = &
  	 e^{\lambda\ln(\delta)A_{23}}
  	 ~\psi_\delta(A_{23},A_{12})
  	 ~H^{(\mathrm{\RoM{1}},\delta)}
  	 ~\psi_\delta(A_{12},A_{23})^{-1}
  	 ~e^{-\lambda\ln(\delta)A_{12}},\\
  	 W_{10}^{(c_{(\mathrm{\RoM{2}},\delta)})}
  	 & = &
  	 e^{\lambda\ln(\delta)(A_{24}+A_{34})}
  	 ~\psi_\delta(A_{24}+A_{34},A_{12}+A_{23})
  	 ~H^{(\mathrm{\RoM{2}},\delta)}
  	 ~\psi_\delta(A_{12}+A_{13},A_{24}+A_{34})^{-1}\\
  	 & & ~e^{-\lambda\ln(\delta)(A_{12}+A_{13})},\\
  	 W_{10}^{(c_{(\mathrm{\RoM{3}},\delta)})}
  	 & = &
  	 e^{\lambda\ln(\delta)A_{34}}
  	 ~\psi_\delta(A_{34},A_{23})
  	 ~H^{(\mathrm{\RoM{3}},\delta)}
  	 ~\psi_\delta(A_{23},A_{34})^{-1}
  	 ~e^{-\lambda\ln(\delta)A_{23}},\\
  	 W_{10}^{(c_{(\mathrm{\RoM{4}},\delta)})}
  	 & = &
  	 e^{\lambda\ln(\delta^2)A_{34}}
  	 ~\psi_{\delta^2}(A_{34},A_{13}+A_{23})
  	 ~H^{(\mathrm{\RoM{4}},\delta)}
  	 ~\psi_\delta(A_{13}+A_{23},A_{34})^{-1}
  	 ~e^{-\lambda\ln(\delta)(A_{13}+A_{23})},\\
  	 W_{10}^{(c_{(\mathrm{\RoM{5}},\delta)})}
  	 & = &
  	 e^{\lambda\ln(\delta)(A_{23}+A_{24})}
  	 ~\psi_{\delta}(A_{23}+A_{24},A_{12})
  	 ~H^{(\mathrm{\RoM{5}},\delta)}
  	 ~\psi_{\delta^2}(A_{12},A_{23}+A_{24})^{-1}
  	 ~e^{-\lambda\ln(\delta^2)A_{12}}.
  	 \end{array}
  	\end{equation}
  	where the $\psi$-terms are defined in (\ref{EqDrinfeldAssFactorizatonHalfParTransp}), see also (\ref{EqDrinfeldAssLimitOfLittlePhis}) and
  	(\ref{EqDrinfeldAssDefAssDeltaEpsilon}), and the terms $\delta\mapsto H^{(i,\delta)}$
  	for $i=
  	\mathrm{\RoM{1}},\mathrm{\RoM{2}},\mathrm{\RoM{3}},
  	\mathrm{\RoM{4}},\mathrm{\RoM{5}}$ are harmless
  	group terms, see (\ref{EqNormsLimDefRBH}),
  	(\ref{EqFilNormsDefGroupsGRGBGH}), and Proposition
  	\ref{PFilNormsRBHProperties}. Note that the appearance of
  	$\delta^2$ in the terms $W_{10}^{(c_{(\mathrm{\RoM{4}},\delta)})}$
  	and $W_{10}^{(c_{(\mathrm{\RoM{5}},\delta)})}$ is crucial for
  	Theorem \ref{TPentEqFinalProofOfPentEq}.
  \end{lemma}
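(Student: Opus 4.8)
\emph{Approach.} The plan is to reduce the five computations to three by the reflection symmetry, and to bring each of the three surviving propagators into the shape ``Drinfel'd connection plus a power-law-small perturbation'', so that Theorem~\ref{TDrinfeldAssDefAssociator} and Lemma~\ref{LDrinfeldAssFactorizationHalfAss} can be re-used almost verbatim, the perturbations being swept into the harmless factor $H^{(i,\delta)}$.

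\emph{Step 1 (reduction).} By (\ref{EqPentEqAffinePathsOfPentagon}) one has $c_{(\mathrm{III},\delta)}=\Theta\circ c_{(\mathrm{I},\delta)}\circ\iota$, $c_{(\mathrm{II},\delta)}=\Theta\circ c_{(\mathrm{II},\delta)}\circ\iota$ and $c_{(\mathrm{V},\delta)}=\Theta\circ c_{(\mathrm{IV},\delta)}\circ\iota$. Substituting these into the pull-back formula (\ref{EqFormConnPullBackParTransp}), the symmetry (\ref{EqPentEqThetaPullBackOfGamma}) and the inversion formula (\ref{EqFormConnInversionFormula}) rewrites $W_{10}^{(c_{(\mathrm{III},\delta)})}$, $W_{10}^{(c_{(\mathrm{V},\delta)})}$ (and, in a second way, $W_{10}^{(c_{(\mathrm{II},\delta)})}$) as the inverse of $W_{10}^{(c_{(\mathrm{I},\delta)})}$, $W_{10}^{(c_{(\mathrm{IV},\delta)})}$, $W_{10}^{(c_{(\mathrm{II},\delta)})}$ with every index $ij$ replaced by $\sigma(i)\sigma(j)$, $\sigma:(1,2,3,4)\mapsto(4,3,2,1)$; since the inverse of a harmless group term is harmless (Proposition~\ref{PFilNormsRBHProperties}), the formulas for $i=\mathrm{III},\mathrm{V}$ follow from those for $i=\mathrm{I},\mathrm{IV}$, and the self-symmetry of path~$\mathrm{II}$ is consistent with its asserted form. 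It therefore suffices to establish the factorisations for $W_{10}^{(c_{(\mathrm{I},\delta)})}$, $W_{10}^{(c_{(\mathrm{II},\delta)})}$, $W_{10}^{(c_{(\mathrm{IV},\delta)})}$.

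\emph{Step 2 ($\Gamma^{(c)}$ and the model connection).} Evaluate (\ref{EqPentEqGammaEqualsPullBackKnZhConn}) on the affine paths (\ref{EqFormConnDefAffinePath}). The zone points (\ref{EqPentEqDrinfeldZones}) are chosen so that along $c_{(\mathrm{I},\delta)}$ the coordinate $x_3$ is constantly $\delta$ (only the $\mathsf{d}x_2$-component survives), along $c_{(\mathrm{IV},\delta)}$ the coordinate $x_2$ is constantly $\delta^2$ (only the $\mathsf{d}x_3$-component survives), and along $c_{(\mathrm{II},\delta)}$ the difference $x_3-x_2$ is constantly $\delta^2$ (so $\mathsf{d}(x_2-x_3)=0$ and the $A_{23}$-term drops out). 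In each case $\Gamma^{(c)}$ is a one-variable connection $\bigl(\sum_k\tfrac{B_k}{x-a_k}\bigr)\mathsf{d}x$ whose poles form two clusters, one near each end of the $x$-interval; each cluster has a dominant pole, at scale $\delta$ or $\delta^2$, and possibly one further pole a distance $\delta^2$ away, while the poles of the opposite cluster sit at distance $\geqslant\tfrac14$. The cluster residue sums are $A_{12}$ and $A_{23}$ (path~$\mathrm{I}$), $A_{13}+A_{23}$ and $A_{34}$ (path~$\mathrm{IV}$), $A_{12}+A_{13}$ and $A_{24}+A_{34}$ (path~$\mathrm{II}$) --- exactly the residues occurring in the displayed formulas. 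For path~$\mathrm{I}$ the single rescaling $x_2=\delta v$ turns $\Gamma^{(c)}$ into $\Gamma(A_{23},A_{12})+Z$ with $\Gamma(A_{23},A_{12})$ as in (\ref{EqDrinfeldAssDefConn}) and $Z=\tfrac{\delta A_{24}}{\delta v-1}\,\mathsf{d}v$ of norm $O(\delta)$ along the path; for paths~$\mathrm{II}$ and~$\mathrm{IV}$ the two clusters are at ``distance $1$'' or carry different endpoint scales, so one first splits the affine path at its midpoint and reparametrises each half exponentially, exactly as in (\ref{EqDrinfeldAssExponentialPaths}).

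\emph{Step 3 (peeling, estimates, reassembly).} For path~$\mathrm{I}$ apply Theorem~\ref{TDrinfeldAssDefAssociator} to the $\Gamma(A_{23},A_{12})$-transport from $\delta$ to $1-\delta$ and statement~iv.) of Proposition~\ref{PFormalLinODEPropagatorPropert} to factor out the perturbation $Z$; since $Z$ is harmless and the exponentials $e^{\pm\lambda\ln(\delta)(\cdot)}$ and the $\psi_\delta$-factors lie in $\mathcal G_{\mathcal L}$, with $\mathcal G_{\mathcal H}\trianglelefteq\mathcal G_{\mathcal L}$ (Proposition~\ref{PFilNormsRBHProperties}), the $Z$-factor may be conjugated into the middle to form $H^{(\mathrm{I},\delta)}\in\mathcal G_{\mathcal H}$. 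For paths~$\mathrm{II}$ and~$\mathrm{IV}$, on each exponential half-path apply Proposition~\ref{PFormalLinODEPropagatorPropert}~iv.) with $Y_0$ the $s$-independent part of the dominant-cluster contribution: this yields $U_{\cdot0}=e^{\lambda\ln(2\epsilon)s(\cdot)}$, $\epsilon$ being the scale ($\delta$ or $\delta^2$) of that endpoint --- the origin of the $\ln(\delta^2)$-terms in $W_{10}^{(c_{(\mathrm{IV},\delta)})}$, $W_{10}^{(c_{(\mathrm{V},\delta)})}$ --- times exactly the factor governed by the ODE of Lemma~\ref{LDrinfeldAssFactorizationHalfAss} (whence the $\psi_\epsilon$-factors at the indicated residues) plus a remaining vector field from (a) the sub-dominant pole of the near cluster and (b) the far cluster. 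After the exponential reparametrisation (b) is of the same type as in Lemma~\ref{LDrinfeldAssFactorizationHalfAss} and merges into the $\psi_\epsilon$-factor, while (a) and the excess of (b) over the model carry a surplus numerator factor $\delta$ or $\delta^2$ (the pole shift) divided only by a fixed multiple of the local scale, hence have norm $\leqslant C\,|\ln\delta|\,\delta^{\mu}$ ($\mu\in\{1,2\}$); by (\ref{EqFilNormsDeltaTimesLnDeltaIneq}) this is $\leqslant C'\delta^{\beta}$ with $\beta>0$, so by the upper-bound criterion $(H)$ of Lemma~\ref{LNormsLimIntegralBounds} it contributes an element of $\mathcal G_{\mathcal H}$, which is then conjugated past the $\psi_\epsilon$ and exponential factors into $H^{(i,\delta)}$. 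Reassembling the two halves by (\ref{EqFormConnComposOfPathsParTransp}) and $\iota^{*}$, as in (\ref{EqDrinfeldAssParTranspAffineFactorizationExpo}), gives the five displayed identities with $H^{(i,\delta)}$ harmless.

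\emph{Main obstacle.} The critical point is the estimate (a) on path~$\mathrm{II}$: there the sub-dominant pole of a cluster sits only a distance $\delta^2$ --- not $\delta$ --- from the dominant one, so the naive bound is merely $O(|\ln\delta|)$, which would break harmlessness; it is precisely the extra power $\delta^2$ in the numerator, forced on us by interpreting Drinfel'd's zones with the two scales $\delta$ and $\delta^2$, together with inequality~(\ref{EqFilNormsDeltaTimesLnDeltaIneq}), that restores it --- and this is exactly the feature that Theorem~\ref{TPentEqFinalProofOfPentEq} exploits when cancelling the singular factors.
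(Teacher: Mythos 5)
Your Step~1 (reduction by the reflection $\Theta$) and your treatment of paths~$\mathrm{II}$ and~$\mathrm{IV}$---midpoint split, exponential reparametrisation of each half, peeling off the $s$-independent part to get $e^{\lambda\ln(2\epsilon)sB}$, extracting $\psi_\epsilon$ from the far-pole model, absorbing the small remainder into a harmless factor---reproduce the paper's proof faithfully, and your ``main obstacle'' paragraph correctly locates the delicate estimate: the pole separation $\delta^2$ together with the local scale $\delta$ (i.e.\ $e^{-\ln(2\hat{\delta})s}\leqslant\delta^{-1}$, up to a factor) leaves a residue of size $O(\delta|\ln\delta|)$, which (\ref{EqFilNormsDeltaTimesLnDeltaIneq}) converts into $O(\delta^{1/2})$.

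Your path-$\mathrm{I}$ argument, however, has a genuine gap. The rescaling $x_2=\delta v$ does give $\Gamma^{(c_{(\mathrm{I},\delta)})}=\Gamma(A_{23},A_{12})^{(c_{\delta\delta})}+Z$ with $\|Z(s,\delta)\|=O(\delta)$ uniformly in $s$, and the factorisation $W^{(c_{(\mathrm{I},\delta)})}_{\cdot 0}=U_{\cdot 0}\,\Xi_{\cdot 0}$ of Proposition~\ref{PFormalLinODEPropagatorPropert}~iv.) with $U_{\cdot 0}=W^{(c_{\delta\delta})}_{\cdot 0}$ is legitimate. But to conclude $\Xi_{10}\in\mathcal G_{\mathcal H}$ via the $(H)$-criterion of Lemma~\ref{LNormsLimIntegralBounds} you need $\sup_s\|(U_{s0}^{-1}Z(s,\delta)U_{s0})_r\|\leqslant C_r\delta^{\beta_r}$, hence a bound of the form $\sup_s\|(W^{(c_{\delta\delta})}_{s0})_r\|\leqslant C_r|\ln\delta|^{\alpha_r}$ that is \emph{uniform in $s$}. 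Theorem~\ref{TDrinfeldAssDefAssociator} only supplies the $\mathcal G_{\mathcal L}$-factorisation at the endpoint $s=1$, and the $(L)$-criterion of Lemma~\ref{LNormsLimIntegralBounds} applied to $Y=\Gamma(A_{23},A_{12})^{(c_{\delta\delta})}$ is vacuous here: along the \emph{affine} parametrisation one has $\sup_s\|Y_r(\delta,s)\|\sim\delta^{-1}$ (the path passes within distance $\delta$ of the poles at $0$ and $1$), not $|\ln\delta|^{\alpha}$. The uniform logarithmic bound on the propagator is in fact true, but it comes from a cancellation inside the iterated integrals that the pointwise sup-criterion cannot see. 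The paper's proof handles path~$\mathrm{I}$ exactly as it handles~$\mathrm{II}$ and~$\mathrm{IV}$: it splits $c_{(\mathrm{I},\delta)}$ too into the exponential half-paths $\check c_{(\mathrm{I},1,\delta)}$, $\tilde c_{(\mathrm{I},2,\delta)}$, on each of which the dominant part $\ln(2\delta)B+\tfrac{-\ln(2\delta)}{2e^{-\ln(2\delta)s}-1}A$ has, by Lemma~\ref{LDrinfeldAssFactorizationHalfAss}, the explicit $s$-uniformly-$\mathcal G_{\mathcal L}$ propagator $e^{\lambda\ln(\delta)sB}\psi_\delta(B,A)(s)$, after which the $(H)$-criterion applies directly to the conjugated remainder. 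To close your argument you must perform that half-path step for path~$\mathrm{I}$ as well (or independently establish that $(s,\delta)\mapsto W^{(c_{\delta\delta})}_{s0}$ lies in $\mathcal G_{\mathcal L}$, which is precisely what the half-path decomposition provides); the rescaling by itself does not circumvent it.
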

  \begin{proof}
  	In order to compute the parallel transports we shall
  	again use the half exponential paths already used in $\S$\ref{SubSecDrinfeldAssociatorDefinitionAndProperties},
  	of the type (\ref{EqDrinfeldAssExponentialPaths}).
  		More precisely, for each of the three paths
  	$c_{(\mathrm{\RoM{1}},\delta)},
  	c_{(\mathrm{\RoM{2}},\delta)},
  	c_{(\mathrm{\RoM{4}},\delta)}$ --recall that the parallel transport along
  	$c_{(\mathrm{\RoM{3}},\delta)}$ and $c_{(\mathrm{\RoM{5}},\delta)}$ can be computed using the symmetry indicated in (\ref{EqPentEqAffinePathsOfPentagon})-- we choose a
  	mid-point on the corresponding line segment in $U'$, and we replace each affine path $c_{(i,\delta)}$ by a
  	composition of two exponential half paths tracing the same line segment, $\tilde{c}_{(i,2,\delta)}*
  	\big(\check{c}_{(i,1,\delta)}\circ \iota\big)$
  	where $i=
  	\mathrm{\RoM{1}},\mathrm{\RoM{2}},\mathrm{\RoM{3}},
  	\mathrm{\RoM{4}},\mathrm{\RoM{5}}$, $\iota:[0,1]\to [0,1]:s\mapsto 1-s$ is the usual
  	interval inversion, $\check{c}_{(i,1,\delta)}$ joins the
  	midpoint to the initial point, and  $\tilde{c}_{(i,2,\delta)}$ joins the midpoint to the final point. Since $c_{(i,\delta)}$ and $\tilde{c}_{(i,2,\delta)}*
  	\big(\check{c}_{(i,1,\delta)}\circ \iota\big)$
  	just differ by a reparametrization they induce the same parallel transport, see (\ref{EqFormConnWReparamInv}), which implies
  	\begin{equation}\label{EqPentEqParTrAffinePEqProdExpP}
  	W^{(c_{(i,\delta)})}_{10}=
  	 W^{(\tilde{c}_{(i,2,\delta)})}_{10}
  	 \left(W^{(\check{c}_{(i,1,\delta)})}_{10}\right)^{-1}
  	\end{equation}
  	where the inversion fomula (\ref{EqFormConnInversionFormula}) has been used.
  	Hence, we choose the following paths
  	(where $\hat{\delta}\coloneqq \delta(1-\delta/2)$)
  	\begin{equation}
  	\begin{array}{ccccccc}
  	\check{c}_{(\mathrm{\RoM{1}},1,\delta)}(s) & \coloneqq
  	&\left(\frac{\delta}{2}e^{\ln(2\delta)s},\delta\right) & \mathrm{joining}&
  	\left(\frac{\delta}{2},\delta\right) &\to &\left(\delta^2,\delta\right), \\
  	\tilde{c}_{(\mathrm{\RoM{1}},2,\delta)}(s) & \coloneqq
  	&\left(\delta-\frac{\delta}{2}e^{\ln(2\delta)s},
  	 \delta\right) & \mathrm{joining}&
  	\left(\frac{\delta}{2},\delta\right) &\to& 
  	  \left(\delta-\delta^2, \delta\right),\\
  	\check{c}_{(\mathrm{\RoM{2}},1,\delta)}(s) & \coloneqq
    &    
\Theta\big(\tilde{c}_{(\mathrm{\RoM{2}},2,\delta)}(s)\big)&
\mathrm{joining}&
\left(\frac{1-\delta^2}{2},
  \frac{1+\delta^2}{2}\right) &\to& (\delta-\delta^2, \delta),\\
  \tilde{c}_{(\mathrm{\RoM{2}},2,\delta)}(s) & \coloneqq
  & \left(1-
  \frac{1}{2}e^{\ln(2\hat{\delta})s}\right)(1,1)
     +(-\frac{\delta^2}{2},\frac{\delta^2}{2})&
\mathrm{joining}&
\left(\frac{1-\delta^2}{2},
\frac{1+\delta^2}{2}\right) &\to&
\left(1-\delta, 1-\delta+\delta^2\right), \\
  \check{c}_{(\mathrm{\RoM{4}},1,\delta)}(s) & \coloneqq
  &  \left(\delta^2, \frac{1}{2}
                     e^{\ln(2\delta)s}\right) &
\mathrm{joining}& \left(\delta^2,\frac{1}{2}\right)&
\to &  (\delta^2,\delta), \\ 
  \tilde{c}_{(\mathrm{\RoM{4}},2,\delta)}(s) & \coloneqq
  &  \left(\delta^2, 1
  -\frac{1}{2}e^{\ln(2\delta^2)s}\right) &
  \mathrm{joining}& \left(\delta^2,\frac{1}{2}\right)&
  \to &  \left(\delta^2,1-\delta^2\right).              
  	\end{array}
  	\end{equation}
 When we compute $\Gamma^{(c_{(i,u,\delta)})}(s)$
 (for $i=
 \mathrm{\RoM{1}},\mathrm{\RoM{2}},\mathrm{\RoM{3}},
 \mathrm{\RoM{4}},\mathrm{\RoM{5}}$ and $u=1,2$)
 we shall see further down that we always get the following form --writing $d$ for the paths
 $\check{c}_{(i,1,\delta)}$ or
 $\tilde{c}_{(i,2,\delta)}$:
 \begin{equation}\label{EqPentEqGeneralFormOfGammaOfPath}
  \Gamma^{(d)}(s)=
  \underbrace{\ln(2\epsilon(\delta))B +
  \frac{-\ln(2\epsilon(\delta))}
    {2e^{-\ln(2\epsilon(\delta))s}-1}A
}_{=:Y_0(B,A)(s,\epsilon)}
  +
  \underbrace{
  	\sum_{1\leqslant i<j\leqslant 4}f_{ij}(s,\delta)A_{ij}
  	}_{=:Z(s,\delta)},
 \end{equation}
 --which is well-known from $\S$\ref{SubSecDrinfeldAssociatorDefinitionAndProperties}, (\ref{EqDrinfeldAssGammaBAComp})-- where $\epsilon:
 J\to J$
 is a monomial of $\delta$ of the form
 $\epsilon(\delta)=\delta^\ell$ where $\ell$ is a positive integer, $A,B$ are certain linear combinations
 of the algebra elements $A_{ij}$, and $f_{ij}$ are real-valued functions of $s\in[0,1]$ and 
 $\delta\in J\coloneqq ~]0,1/4]$. From the general factorization statement (\ref{EqFormalLinODEFactorization}) and the solution (\ref{EqDrinfeldAssFactorizatonHalfParTransp})
 of Lemma \ref{LDrinfeldAssFactorizationHalfAss} --which lead to associators-- we have the factorization
 \begin{equation}\label{EqPentEqFactorOfParTrHalfExpPath}
    W^{(c_{(i,u,\delta)})}_{s0}~=~
    \underbrace{
    e^{\lambda\ln(\epsilon)sB}~\psi_\epsilon(B,A)(s)
      }_{=:U_{s0}^{(\epsilon)} }~
     H^{(i,u,\delta)}_{s0},
 \end{equation}
 where $s\mapsto H^{(i,u,\delta)}_{s0}$ is a fundamental solution to the formal linear ODE
 \begin{equation*}
  \dfrac{\mathsf{d} H^{(i,u,\delta)}_{s0}}{\mathsf{d}s}
    ~=~\lambda {U_{s0}^{(\epsilon)}}^{-1}~Z(s,\delta)
    ~U_{s0}^{(\epsilon)}
    ~H^{(i,u,\delta)}_{s0}
    ~=:~\lambda \tilde{Z}(s,\delta)~H^{(i,u,\delta)}_{s0}.
 \end{equation*} 
 We shall show later that all these factors $\delta\mapsto H^{(i,u,\delta)}_{10}$ are harmless group terms: first,
 we shall prove that it suffices to
 show that for each $1\leqslant i<j\leqslant 4$ there
 are non-negative real numbers $C_{ij}$ and $\beta_{ij}>0$
 such that
 \begin{equation}\label{EqPentEqfijUpperBound}
  \forall~s\in[0,1],~\forall~\delta\in J:~~
   |f_{ij}(s,\delta)|\leqslant C_{ij}\delta^{\beta_{ij}}.
 \end{equation}
 Indeed, we check that the preceding condition (\ref{EqPentEqfijUpperBound}) implies the estimate of type $(H)$, (\ref{LEqFilNormsDefRBH}):
 write $\psi_\epsilon(B,A)(s)
 =\sum_{r=0}^\infty\psi_r(s,\epsilon)\lambda^r$ and its inverse
 $\psi_\epsilon(B,A)(s)^{-1}$ as 
 $\sum_{r=0}^\infty\hat{\psi}_r(s,\epsilon)\lambda^r$ where of course $\psi_0(s,\epsilon)=1
 =\hat{\psi}_0(s,\epsilon)$. Fix a norm $||~||$ on the complex vector space $\mathcal{A}$. Since 
 $s=|s|\leqslant 1$ and for each non-negative integer $r$ there are positive real constants $C_r$ and $\hat{C}_r$ with $||\psi_r(s,\epsilon)||\leqslant C_r$ and $||\hat{\psi}_r(s,\epsilon)||\leqslant \hat{C}_r$
 independent on $s,\epsilon$ thanks
 to Lemma \ref{LDrinfeldAssFactorizationHalfAss} we get
 for each non-negative integer $r$
 \begin{equation*}
 \begin{split}
 \left|\left|\tilde{Z}(s,\delta)_r\right|\right| &= \left|\left|\left(
  e^{\lambda\ln(\epsilon)s\mathrm{ad}_{B}}
  	\left(\sum_{u,v=0}^\infty\psi_u(s,\epsilon)
  	Z(s,\delta)
  	\hat{\psi}_v(s,\epsilon)\lambda^{u+v}\right)
  	\right)_r\right|\right| \\
  	&= \left|\left|\sum_{v,w=0}^r
    \sum_{1\leqslant i<j\leqslant 4}f_{ij}(s,\delta)
    \frac{(\ln(\epsilon))^w s^w}{w!}f_{ij}(s,\delta)
      \left(\mathrm{ad}_{B}\right)^{\circ w}
  \left(\psi_{r-v-w}(s,\epsilon)A_{ij}
  \hat{\psi}_v(s,\epsilon)\right)
     \right|\right| \\
     & \leqslant 
  \sum_{w=0}^r
  	\sum_{1\leqslant i<j\leqslant 4}\left|f_{ij}(s,\delta)\right|
  	\frac{\ell^w|\ln(\delta)|^w}{w!}
  	\left|\left|\sum_{v=0}^r
  	\left(\mathrm{ad}_{B}\right)^{\circ w}
  	\left(\psi_{r-v-w}(s,\epsilon)A_{ij}
  	\hat{\psi}_v(s,\epsilon)\right)
  	\right|\right| \\   
  	 &\stackrel{(\ref{EqPentEqfijUpperBound})}
    {\leqslant} 
    \sum_{w=0}^r
    \sum_{1\leqslant i<j\leqslant 4} C_{ij} \delta^{\beta_{ij}}
    |\ln(\delta)|^wC'_w	
    ~\stackrel{(\ref{EqFilNormsDeltaTimesLnDeltaIneq})}
    		{\leqslant}~C_r''\delta^{\beta_r}     
 \end{split}
 \end{equation*}
 where the non-negative real constant $C'_w$ 
 ($0\leqslant w\leqslant r$) is an upper bound for the finite sum over $v$ of bounded algebra
 elements, $0<\beta_r$ is the minimum of all
 $\beta_{ij}/2$, $1\leqslant i<j\leqslant 4$, coming from inequality 
 (\ref{EqFilNormsDeltaTimesLnDeltaIneq}), and 
 the non-negative real number $C_r''$ is the maximum of all appearing non-egative multiplicative upper bounds. 
 This proves the last inequality in 
 (\ref{LEqFilNormsDefRBH}) and, according to
 Lemma \ref{LNormsLimIntegralBounds}, the fact that
 each $\delta\mapsto H^{(i,u,\delta)}_{10}$ is a harmless
 group term.\\
 In the following we prove the criterion (\ref{EqPentEqfijUpperBound}) for each path where the following
 elementary inequality will occur quite often:
 \begin{equation}\label{EqPentEqExponentialInequality}
   \forall~s\in[0,1]~\forall~\gamma\in J=~]0,1[:~~
     1\leqslant e^{-\ln(\gamma)s}
      \leqslant \frac{1}{\gamma}.
 \end{equation}
 Recall that the logarithms $\ln(2\delta)$,
 $\ln(2\hat{\delta})$, and $\ln\left(1-\frac{\delta}{2}\right)$ are non-positive numbers.\\
 
 \noindent $\mathrm{\RoM{1}}$. An elementary computation
 gives the following formulas for $\Gamma^{(\check{c}_{(\mathrm{\RoM{1}},1,\delta)})}$ and 
 $\Gamma^{(\tilde{c}_{(\mathrm{\RoM{1}},2,\delta)})}$
 showing that they are of the form 
 (\ref{EqPentEqGeneralFormOfGammaOfPath}) with $\epsilon=
 \delta$, with $B=A_{12}$, $A=A_{23}$ for the first path,
 and with $B=A_{23}$, $A=A_{12}$ for the second path:
 \begin{eqnarray*}
 \Gamma^{(\check{c}_{(\mathrm{\RoM{1}},1,\delta)})}(s)
 & = &
 \ln(2\delta)A_{12}
   +
 \frac{-\ln(2\delta)}
 {2e^{-\ln(2\delta)s}-1} A_{23}
 +
 \underbrace{
 \delta \frac{(-\ln(2\delta))}
        {2e^{-\ln(2\delta)s}-\delta}
       }_{=:f_{24}^{(\mathrm{\RoM{1}},1,\delta)}(s)} A_{24}, 
          \\
  \Gamma^{(\tilde{c}_{(\mathrm{\RoM{1}},2,\delta)})}(s)
  & = &
  \ln(2\delta)A_{23}
  +
  \frac{-\ln(2\delta)}
  {2e^{-\ln(2\delta)s}-1} A_{12}
  +
  \underbrace{
  \delta \frac{\ln(2\delta)}
  {2(1-\delta)e^{-\ln(2\delta)s}+\delta}
  }_{=:f_{24}^{(\mathrm{\RoM{1}},2,\delta)}(s)} A_{24}.         
 \end{eqnarray*}
 For both denominators in the expressions for
 $f^{(\mathrm{\RoM{1}},1,\delta)}(s)$
 and $f^{(\mathrm{\RoM{1}},2,\delta)}(s)$ the inequality
 (\ref{EqPentEqExponentialInequality}) gives us 
 the obvious lower bound $2-\delta> 1$ (for $s=0$), hence
 \begin{eqnarray*}
  \left|f_{24}^{(\mathrm{\RoM{1}},1,\delta)}(s)\right|
    & \leqslant & \delta\big(\ln(2)+|\ln(\delta)|\big)
   ~\stackrel{(\ref{EqFilNormsDeltaTimesLnDeltaIneq})}
       {\leqslant}~3\delta^{1/2},\\
   \left|f_{24}^{(\mathrm{\RoM{1}},2,\delta)}(s)\right|
   & \leqslant & \delta\big(\ln(2)+|\ln(\delta)|\big)
   ~\stackrel{(\ref{EqFilNormsDeltaTimesLnDeltaIneq})}
   {\leqslant}~3\delta^{1/2},     
 \end{eqnarray*}
 thanks to $\ln(2)\leqslant 1$ and $\delta\leqslant \delta^{1/2}$
 for all $\delta\in]0,1]$.
 By the criterion (\ref{EqPentEqfijUpperBound}) the terms $\delta\mapsto H^{(\mathrm{\RoM{1}},1,\delta))}$ and 
  $\delta\mapsto H^{(\mathrm{\RoM{1}},2,\delta))}$ in the factorization equation
   (\ref{EqPentEqFactorOfParTrHalfExpPath}) are thus harmless group terms. The factorization equation
   (\ref{EqPentEqFactorOfParTrHalfExpPath}) and (\ref{EqPentEqParTrAffinePEqProdExpP}) prove
  the first equation in (\ref{EqPentEqFiveParTrDeltaPositive}) upon setting
  $H^{(\mathrm{\RoM{1}},\delta))}\coloneqq
  H^{(\mathrm{\RoM{1}},2,\delta))} \big(H^{(\mathrm{\RoM{1}},1,\delta))}\big)^{-1}$.\\
  
  \noindent $\mathrm{\RoM{2}}$: An elementary, but lengthy computation
  gives the following formula for $\Gamma^{(\tilde{c}_{(\mathrm{\RoM{2}},2,\delta)})}$
  showing that it is of the form 
  (\ref{EqPentEqGeneralFormOfGammaOfPath}) with $\epsilon=
  \delta$, $B=A_{24}+A_{34}$, and $A=A_{12}+A_{13}$:
  \begin{eqnarray*}
   \Gamma^{(\tilde{c}_{(\mathrm{\RoM{2}},2,\delta)})}
   & = &
   \ln(2\delta)\left(A_{24}+A_{34}\right)
   +
   \frac{-\ln(2\delta)}
   {2e^{-\ln(2\delta)s}-1}\left(A_{12}+A_{13}\right) 
     \nonumber \\
   & &
   + ~\underbrace{
     \frac{\ln\left(1-\frac{\delta}{2}\right)
   	    -\delta^2\ln(2\delta)e^{-\ln(2\hat{\delta})s}}
   	      {1+\delta^2e^{-\ln(2\hat{\delta})s}}
   	   }_{=:f_{24}^{(\mathrm{\RoM{2}},2,\delta)}(s)} 
   	      A_{24}~
%   	  \nonumber \\
%   	  & &
   	  + ~\underbrace{
   	  \frac{\ln\left(1-\frac{\delta}{2}\right)
   	  	+\delta^2\ln(2\delta)e^{-\ln(2\hat{\delta})s}}
   	  {1-\delta^2e^{-\ln(2\hat{\delta})s}}
   	}_{=:f_{34}^{(\mathrm{\RoM{2}},2,\delta)}(s)}
   	  A_{34} 
   	   \nonumber \\ 
   & &
   + ~\underbrace{
   \frac{
   	  \ln\left(1-\frac{\delta}{2}\right) 
   	  -\ln(2\hat{\delta})2e^{-\ln(2\delta)s}
   	  +\ln(2\delta)(2-\delta^2)e^{-\ln(2\hat{\delta})s}
   	   }{
   	   \left((2-\delta^2)e^{-\ln(2\hat{\delta})s}-1\right)
   	   \left(2e^{-\ln(2\delta)s}-1\right)
   	   }
   	}_{=:f_{12}^{(\mathrm{\RoM{2}},2,\delta)}(s)} 
   	   ~A_{12}
   	   \nonumber \\ 
   	 & &
   	 + ~\underbrace{
   	 	\frac{
   	 		\ln\left(1-\frac{\delta}{2}\right) 
   	 		-\ln(2\hat{\delta})2e^{-\ln(2\delta)s}
   	 		+\ln(2\delta)(2+\delta^2)e^{-\ln(2\hat{\delta})s}
   	 	}{
   	 	\left((2+\delta^2)e^{-\ln(2\hat{\delta})s}-1\right)
   	 	\left(2e^{-\ln(2\delta)s}-1\right)
   	 }
 }_{=:f_{13}^{(\mathrm{\RoM{2}},2,\delta)}(s)} 
 ~A_{13}   
  \end{eqnarray*}
  We shall now prove the upper bound (\ref{EqPentEqfijUpperBound}) for the four functions
  $f_{24}^{(\mathrm{\RoM{2}},2,\delta)}$,
  $f_{34}^{(\mathrm{\RoM{2}},2,\delta)}$,
  $f_{12}^{(\mathrm{\RoM{2}},2,\delta)}$, and
  $f_{13}^{(\mathrm{\RoM{2}},2,\delta)}$. Note first the
  following elementary inequality for all
   $0<\delta\leqslant 1/4$
  \begin{equation}
  \label{EqPentEqLnOneMinusDeltaHalfSmalDelta}
  \left|\ln\left(1-\frac{\delta}{2}\right)\right|
   =-\ln\left(1-\frac{\delta}{2}\right) \leqslant \delta.
  \end{equation}
  Indeed, for all $0<x\leqslant 1$ we have 
  $\frac{1}{x}\leqslant \frac{1}{x^2}$, hence
  \[
   -\ln\left(1-\frac{\delta}{2}\right)
   ~=~\int_{1-\frac{\delta}{2}}^1 \frac{1}{x}\mathsf{d}x
   ~\leqslant~
   \int_{1-\frac{\delta}{2}}^1 \frac{1}{x^2}\mathsf{d}x
   ~=~\frac{\delta}{2-\delta}~\leqslant~ \delta.
  \]
  For $f_{24}^{(\mathrm{\RoM{2}},2,\delta)}$ and
  $f_{34}^{(\mathrm{\RoM{2}},2,\delta)}$ we can bound
  both denominators by $1/2$ from below thanks to the lower bound $1$ in (\ref{EqPentEqExponentialInequality}).
  In the numerators the exponential function
  $e^{-\ln(2\hat{\delta})s}$ has an upper bound
  $\frac{1}{2\hat{\delta}}=
  \frac{1}{2\delta\left(1-\frac{\delta}{2}\right)}
  \leqslant \frac{1}{\delta}$ by (\ref{EqPentEqExponentialInequality}). Hence, both
  functions have the following upper bounds
  (where we also use the inequality
   (\ref{EqPentEqLnOneMinusDeltaHalfSmalDelta})):
   for all $s\in[0,1]$ and $\delta\in~]0,1/4]$
  \begin{equation*}
   \left\{
    \begin{array}{c}
     \left|f_{24}^{(\mathrm{\RoM{2}},2,\delta)}(s)\right| \\
     \left|f_{34}^{(\mathrm{\RoM{2}},2,\delta)}(s)
      \right|
    \end{array} \right.
    ~\leqslant~ 2\delta +2\ln(2\delta)\delta^2\frac{1}{\delta}
    \stackrel{(\ref{EqFilNormsDeltaTimesLnDeltaIneq})}
    {\leqslant} 8\delta^{\frac{1}{2}},  
  \end{equation*}
  where the inequalities $\delta\leqslant\delta^{\frac{1}{2}}$
  and $\ln(2) \leqslant 1$ have been used. It follows that
  the criterion (\ref{EqPentEqfijUpperBound})
  holds for 
   $f_{24}^{(\mathrm{\RoM{2}},2,\delta)}$ and
   $f_{34}^{(\mathrm{\RoM{2}},2,\delta)}$.\\
   For $f_{12}^{(\mathrm{\RoM{2}},2,\delta)}$ and
   $f_{13}^{(\mathrm{\RoM{2}},2,\delta)}$ note first
   that their numerators can be expressed in the following form
   where we have extracted a factor 
   $\frac{1}{2}e^{-\ln(2\delta)s}$:
   \[
    \frac{1}{2}e^{-\ln(2\delta)s}
    \left(
    -2\ln\left(1-\frac{\delta}{2}\right)
       \left(2-e^{\ln(2\delta)s}\right)
       +4\ln(2\delta)\left(
       e^{-\ln\left(1-\frac{\delta}{2}\right)s}-1
       \right)
       -2g\ln(2\delta)\delta^2
       e^{-\ln\left(1-\frac{\delta}{2}\right)s}
    \right).
   \]
   with $g\in\{-1,1\}$. On the other hand, in the denominators of $f_{12}^{(\mathrm{\RoM{2}},2,\delta)}$ and
   $f_{13}^{(\mathrm{\RoM{2}},2,\delta)}$
   we can bound
   both left factors from below by
   $\frac{1}{2}$ thanks to the lower bound $1$ in (\ref{EqPentEqExponentialInequality}), and both right factors from below
   by $e^{-\ln(2\delta)s}$. It follows that
   for all $s\in[0,1]$ and $\delta\in~]0,1/4]$
   \begin{eqnarray*}
    \left\{
    \begin{array}{c}
    \left|f_{12}^{(\mathrm{\RoM{2}},2,\delta)}(s)\right| \\
    \left|f_{13}^{(\mathrm{\RoM{2}},2,\delta)}(s)
    \right|
    \end{array} \right.
     & \leqslant &
      2\left|\ln\left(1-\frac{\delta}{2}\right)\right|
            \left(2-e^{\ln(2\delta)s}\right)
            + 4|\ln(2\delta)|
            \left(
            e^{-\ln\left(1-\frac{\delta}{2}\right)s}-1
            \right)
            \nonumber \\
            & &
            +
            2|\ln(2\delta)|\delta^2
            e^{-\ln\left(1-\frac{\delta}{2}\right)s}
            \nonumber \\
       & \leqslant & 4\delta 
               +4\big(\ln(2)+|\ln(\delta)|\big)
               \left(
               e^{-\ln\left(1-\frac{\delta}{2}\right)s}-1
               \right)
               +4\big(\ln(2)+|\ln(\delta)|\big)\delta^2. \nonumber \\
   \end{eqnarray*}
   Here we have used the inequality
    (\ref{EqPentEqLnOneMinusDeltaHalfSmalDelta}), the fact
    that $e^{\ln(2\delta)s}\geqslant 2\delta$ and
    $e^{-\ln\left(1-\frac{\delta}{2}\right)s}\leqslant
    1/(1-\delta/2)\leqslant 2$ thanks to the upper bound in (\ref{EqPentEqExponentialInequality}). Finally, we get,
    again by (\ref{EqPentEqExponentialInequality}) for
    $\gamma= \left(1-\frac{\delta}{2}\right)$,
    \[
      e^{-\ln\left(1-\frac{\delta}{2}\right)s}-1
      \leqslant \frac{1}{1-\frac{\delta}{2}} -1
      =\frac{\delta}{2-\delta}\leqslant \delta,
    \]
    and again by the inequality (\ref{EqFilNormsDeltaTimesLnDeltaIneq}) we get the final upper bound
    \[
    \left|f_{12}^{(\mathrm{\RoM{2}},2,\delta)}(s)\right| 
     \leqslant
     24\delta^{\frac{1}{2}}
     ~~~\mathrm{and}~~~
    \left|f_{13}^{(\mathrm{\RoM{2}},2,\delta)}(s)
    \right|
    \leqslant
      24\delta^{\frac{1}{2}}
    \]
    since $\ln(2)\leqslant 1$ and $\delta\leqslant\delta^{\frac{1}{2}}$.
    It follows that
    the criterion (\ref{EqPentEqfijUpperBound})
    holds for 
    $f_{12}^{(\mathrm{\RoM{2}},2,\delta)}$ and
    $f_{13}^{(\mathrm{\RoM{2}},2,\delta)}$.\\
    These upper bounds show that the parallel transport
    along the path 
    $\tilde{c}_{(\mathrm{\RoM{2}},2,\delta)}$ factorizes
    according to
    (\ref{EqPentEqFactorOfParTrHalfExpPath}):
    \begin{equation}
    \label{EqPentEqFactorOfParTrPathII2}
    W^{(c_{(\mathrm{\RoM{2}},2,\delta)})}_{10}~=~
    	e^{\lambda\ln(\delta)(A_{24}+A_{34})}~
    	\psi_\delta(A_{24}+A_{34},A_{12}+A_{13})
    ~
    H^{(\mathrm{\RoM{2}},2,\delta)},
    \end{equation}
    where $\delta \mapsto H^{(\mathrm{\RoM{2}},2,\delta)}$
    is a harmless term. Using
    (\ref{EqPentEqThetaPullBackOfGamma}) and the second equation of
    (\ref{EqPentEqAffinePathsOfPentagon})
    we can conclude that the parallel transport $W^{(\check{c}_{(\mathrm{\RoM{2}},1,\delta)})}_{10}$
    is given by formula (\ref{EqPentEqFactorOfParTrPathII2}) with the index
    change induced by the inversion permutation $\sigma$, see (\ref{EqPentEqThetaPullBackOfGamma}). Passing to the inverse of $W^{(\check{c}_{(\mathrm{\RoM{2}},1,\delta)})}_{10}$
    and using formula (\ref{EqPentEqParTrAffinePEqProdExpP}) we get the proof of the second equation of
    (\ref{EqPentEqFiveParTrDeltaPositive}) upon setting
    $H^{(\mathrm{\RoM{2}},\delta))}\coloneqq
    H^{(\mathrm{\RoM{2}},2,\delta))} \big(H^{(\mathrm{\RoM{2}},1,\delta))}\big)^{-1}$.\\
    
    \noindent $\mathbf{\mathrm{\RoM{3}}}$: Due to the symmetry
    $c_{(\mathrm{\RoM{3}},\delta)} =\Theta\circ 
    c_{(\mathrm{\RoM{1}},\delta)}\circ \iota$ we get the third formula of (\ref{EqPentEqFiveParTrDeltaPositive})
    by taking the first one, applying the inversion
    permutation $\sigma$, see (\ref{EqPentEqThetaPullBackOfGamma}), and passing
     to the inverse.\\
     
     \noindent $\mathrm{\RoM{4}}$:
     An elementary computation
     gives the following formulas for $\Gamma^{(\check{c}_{(\mathrm{\RoM{4}},1,\delta)})}$ and 
     $\Gamma^{(\tilde{c}_{(\mathrm{\RoM{4}},2,\delta)})}$
     showing that they are of the form 
     (\ref{EqPentEqGeneralFormOfGammaOfPath}) with $\epsilon=
     \delta$, with $B=A_{13}+A_{23}$, $A=A_{34}$ for the first path,
     and with $\epsilon=\delta^2$ (!), $B=A_{34}$, $A=A_{13}+A_{23}$ for the second path:
     \begin{eqnarray*}
     \Gamma^{(\check{c}_{(\mathrm{\RoM{4}},1,\delta)})}(s)
     & = &
     \ln(2\delta)\left(A_{13}+A_{23}\right)
     +
     \frac{-\ln(2\delta)}
     {2e^{-\ln(2\delta)s}-1} A_{34}
     +
     \underbrace{
     	\frac{2\ln(2\delta)\delta^2e^{-\ln(2\delta)s}}
     	{1-2\delta^2e^{-\ln(2\delta)s}}
     }_{=:f_{23}^{(\mathrm{\RoM{4}},1,\delta)}(s)} A_{23}, 
     \\
     \Gamma^{(\tilde{c}_{(\mathrm{\RoM{4}},2,\delta)})}(s)
     & = &
     \ln(2\delta^2)A_{34}
     +
     \frac{-\ln(2\delta^2)}
     {2e^{-\ln(2\delta^2)s}-1}\left( A_{13}+A_{23}\right)
     \nonumber \\
     & & -
     \underbrace{
     	 \frac{2\ln(2\delta^2)\delta^2e^{-\ln(2\delta^2)s}}
     	{
     	\left(2e^{-\ln(2\delta^2)s}-1\right)
     	\left(	2(1-\delta^2)
     	  e^{-\ln(2\delta^2)s}-1\right)}
     }_{=:f_{23}^{(\mathrm{\RoM{4}},2,\delta)}(s)} A_{23}.         
     \end{eqnarray*}
     The denominator of $\left|f_{23}^{(\mathrm{\RoM{4}},1,\delta)}(s)
    \right|$ can be bounded from below by $1/2$ upon using
    the upper bound in inequality
    (\ref{EqPentEqExponentialInequality}) for
    $\gamma=2\delta$. In the numerator we get the upper bound $\frac{1}{2\delta}$ for the exponential function,
    again thanks to (\ref{EqPentEqExponentialInequality}),
    hence
    \begin{equation}\label{EqPentEqIneqFIV1}
     \left|f_{23}^{(\mathrm{\RoM{4}},1,\delta)}(s)
    \right|\leqslant 
    2(\ln(2)+|\ln(\delta)|)\delta
     \stackrel{(\ref{EqFilNormsDeltaTimesLnDeltaIneq})}
    {\leqslant} 6\delta^{\frac{1}{2}},   
    \end{equation}
    hence the criterion 
    (\ref{EqPentEqfijUpperBound}) holds for
    $f_{23}^{(\mathrm{\RoM{4}},1,\delta)}(s)$.\\
    Next, the right factor in the denominator of
    $\left|f_{23}^{(\mathrm{\RoM{4}},2,\delta)}(s)
    \right|$ can be bounded from below by $1/2$ upon using
    the lower bound in inequality
    (\ref{EqPentEqExponentialInequality}) for
    $\gamma=2\delta^2$. We can bound the left factor
    in that denominator from below by 
    $e^{-\ln(2\delta^2)s}$, hence
    \begin{equation}\label{EqPentEqIneqFIV2}
     \left|f_{23}^{(\mathrm{\RoM{4}},2,\delta)}(s)
     \right| \leqslant
      4\big(\ln(2)+2|\ln(\delta)|\big)\delta^2
      \stackrel{(\ref{EqFilNormsDeltaTimesLnDeltaIneq})}
      {\leqslant}
      12\delta,
    \end{equation}
    hence the criterion 
    (\ref{EqPentEqfijUpperBound}) holds for
    $f_{23}^{(\mathrm{\RoM{4}},2,\delta)}(s)$.
    Both upper bounds (\ref{EqPentEqIneqFIV1}) and
    (\ref{EqPentEqIneqFIV2}) prove that both
    parallel transports
    $W^{(\check{c}_{(\mathrm{\RoM{4}},1,\delta)})}_{10}$
    and
    $W^{(\tilde{c}_{(\mathrm{\RoM{4}},2,\delta)})}_{10}$
    factorize in the way described in (\ref{EqPentEqFactorOfParTrHalfExpPath})
    with harmless group terms $\delta\mapsto H^{(\mathrm{\RoM{4}},1,\delta)}$ and
    $\delta\mapsto H^{(\mathrm{\RoM{4}},2,\delta)}$,
    respectively. This proves the fourth parallel transport
    equation in (\ref{EqPentEqFiveParTrDeltaPositive}) upon setting
    $H^{(\mathrm{\RoM{4}},\delta))}\coloneqq
    H^{(\mathrm{\RoM{4}},2,\delta))} \big(H^{(\mathrm{\RoM{4}},1,\delta))}\big)^{-1}$.
     \\

     \noindent $\mathrm{\RoM{5}}$: Due to the symmetry
     $c_{(\mathrm{\RoM{4}},\delta)} =\Theta\circ 
     c_{(\mathrm{\RoM{1}},\delta)}\circ \iota$ we get the fifth formula of (\ref{EqPentEqFiveParTrDeltaPositive})
     by taking the fourth one, applying the inversion
     permutation $\sigma$, see
     (\ref{EqPentEqThetaPullBackOfGamma}), and passing
     to the inverse.
  \end{proof}
  By means of these informations we can prove the Pentagon
  Equation:
  \begin{theorem}\label{TPentEqFinalProofOfPentEq}
  	The Pentagon Equation (\ref{EqPentDefPentagonEquation})
  	for the Drinfel'd associator holds.
  \end{theorem}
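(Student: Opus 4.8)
The plan is to repeat, for the five paths \eqref{EqPentEqAffinePathsOfPentagon}, the strategy already used for the Hexagon equation. The only geometric input is path-independence: the triangle $U'$ is convex, hence star-shaped around $\left(\tfrac12,\tfrac23\right)$, and the two composite paths $c_{(\mathrm{\RoM{3}},\delta)}*\big(c_{(\mathrm{\RoM{2}},\delta)}*c_{(\mathrm{\RoM{1}},\delta)}\big)$ and $c_{(\mathrm{\RoM{5}},\delta)}*c_{(\mathrm{\RoM{4}},\delta)}$ both run inside $U'$ and share the initial point $p_1$ and final point $p_4$. Hence Corollary~\ref{CFormFlatContractibleLoops}, applied to the flat pulled-back $\RuK\Run\RuZ\Rua$-connection $\Gamma$ of \eqref{EqPentEqGammaEqualsPullBackKnZhConn}, together with the multiplicativity \eqref{EqFormConnComposOfPathsParTransp}, yields for every $\delta\in\,]0,1/4]$
\begin{equation*}
 W^{(c_{(\mathrm{\RoM{3}},\delta)})}_{10}\;W^{(c_{(\mathrm{\RoM{2}},\delta)})}_{10}\;W^{(c_{(\mathrm{\RoM{1}},\delta)})}_{10}
 \;=\;
 W^{(c_{(\mathrm{\RoM{5}},\delta)})}_{10}\;W^{(c_{(\mathrm{\RoM{4}},\delta)})}_{10},
\end{equation*}
the left-hand side being designed to reproduce the right-hand side of \eqref{EqPentDefPentagonEquation} and conversely. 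Everything else is algebraic manipulation of the two sides.

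Next I would substitute the five factorizations of Lemma~\ref{TPentEqFiveParTrDeltaPositive}. Each side then becomes an alternating product of \emph{singular} exponentials $e^{\pm\lambda\ln(\delta)X}$, $e^{\pm\lambda\ln(\delta^2)X}$ --- all of which lie in the at-most-logarithmically-divergent group $\mathcal{G}_{\mathcal{L}}$ --- of \emph{bounded} factors $\psi_{\delta}(\cdot,\cdot)^{\pm1}$, $\psi_{\delta^2}(\cdot,\cdot)^{\pm1}\in\mathcal{G}_{\mathcal{B}}$, and of the \emph{harmless} terms $H^{(i,\delta)}\in\mathcal{G}_{\mathcal{H}}$. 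Since $\mathcal{G}_{\mathcal{H}}$ is normal in $\mathcal{G}_{\mathcal{L}}$ (Proposition~\ref{PFilNormsRBHProperties}), each $H^{(i,\delta)}$ remains harmless when conjugated past the neighbouring singular and bounded factors, so repeated conjugation sweeps all the harmless factors together and out of the way, producing modified harmless terms $\tilde H^{(i,\delta)}$. Simultaneously the infinitesimal braid relations \eqref{EqIntroIB-definition-two}, \eqref{EqIntroIB-definition-three} allow me to commute singular exponentials past one another, and past the bounded $\psi$-blocks, whenever their generators commute: the arguments occurring are precisely arranged so that the relevant cluster sums behave well, e.g.\ $[A_{12}+A_{13},A_{23}]=0$, $[A_{23}+A_{24},A_{34}]=0$, $[A_{12},A_{34}]=0$. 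Carrying this out, the singular exponentials on the two sides are brought into a common form and cancel: the interior ones against each other, and the ones located at $p_1$ and at $p_4$ by conjugating the whole identity, exactly as in the Hexagon proof. Here it is essential that $c_{(\mathrm{\RoM{4}},\delta)}$ and $c_{(\mathrm{\RoM{5}},\delta)}$ were parametrized on the nested scale $\delta^2$, so that the exponents they contribute ($\ln(\delta^2)=2\ln\delta$) match those arising from the three-path side.

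Once all singular exponentials have been removed from the equation, both sides possess a limit as $\delta\to 0$: by Proposition~\ref{PNormsLimLimitRules} the harmless terms $\tilde H^{(i,\delta)}$ tend to $1$ (Proposition~\ref{PFilNormsRBHProperties}), while each surviving bounded block $\psi(B,A)\,\psi(A,B)^{-1}$ is by construction $\Phi(A,B)$, see \eqref{EqDrinfeldAssDefAssDeltaEpsilon}--\eqref{EqDrinfeldAssDefAss}. Reading off the five blocks in order on each side produces precisely the Pentagon equation \eqref{EqPentDefPentagonEquation}; any residual mismatch between the arguments of two associators that ought to coincide is absorbed by \eqref{EqDrinfeldAssIndepOnCentralTerms}, since by the braid relations the difference of such arguments is central for the relevant pair.

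I expect the real work to be in the middle step. After substitution one must keep scrupulous track of the order in which singular exponentials, $\psi$-blocks and harmless terms occur on each side, distinguish carefully between exponents built from $\ln\delta$ and those built from $\ln(\delta^2)$, and verify that after the harmless-sweep and the braid-relation commutations every singular exponential genuinely cancels while the remaining $\psi$-blocks recombine into the five associators in the correct order. This is the combinatorics of Drinfel'd's five zones, and it is exactly where the choice of the paths \eqref{EqPentEqAffinePathsOfPentagon} and of the two scales $\delta$, $\delta^2$ must pay off; the surrounding steps are routine given the machinery of \S\ref{SubSec Norms and limits}.
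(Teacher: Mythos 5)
Your proposal follows the same strategy as the paper's proof: invoke Corollary~\ref{CFormFlatContractibleLoops} on the convex triangle $U'$ to get the parallel-transport identity $W^{(c_{(\mathrm{\RoM{5}},\delta)})}_{10}W^{(c_{(\mathrm{\RoM{4}},\delta)})}_{10}=W^{(c_{(\mathrm{\RoM{3}},\delta)})}_{10}W^{(c_{(\mathrm{\RoM{2}},\delta)})}_{10}W^{(c_{(\mathrm{\RoM{1}},\delta)})}_{10}$, substitute Lemma~\ref{TPentEqFiveParTrDeltaPositive}, use the infinitesimal braid relations and normality of $\mathcal{G}_{\mathcal{H}}$ in $\mathcal{G}_{\mathcal{L}}$ to push singular exponentials outward (with the factor~$2$ from $\ln(\delta^2)$ making the outer exponentials match on both sides), cancel them, and pass to the limit $\delta\to0$. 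One small remark: the safety net you keep in reserve, namely absorbing a ``residual mismatch'' via \eqref{EqDrinfeldAssIndepOnCentralTerms}, is not needed here --- unlike in the Hexagon proof, the arguments of the five associators emerge exactly as in \eqref{EqPentDefPentagonEquation}, so that step is vacuous.
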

  \begin{proof}
  	According to Corollary 
  	\ref{CFormFlatContractibleLoops} we have the following
  	equation of parallel transports along the paths
  	(\ref{EqPentEqAffinePathsOfPentagon}) because $U'$ is star-shaped around $\big(\frac{1}{2},\frac{2}{3}\big)$, and the
  	two composed paths 
  	$c_{(\mathrm{\RoM{5}},\delta)}*
  	c_{(\mathrm{\RoM{4}},\delta)}$ and
  	$c_{(\mathrm{\RoM{3}},\delta)}*
  	   \big(c_{(\mathrm{\RoM{2}},\delta)}
  	*c_{(\mathrm{\RoM{1}},\delta)}\big)$ are both continuous and piecewise smooth and have the same
  	initial point $(\delta^2,\delta)$ and final point
  	$(1-\delta,1-\delta^2)$:
  	\begin{equation}\label{EqPentEqIdentityOfFiveParTransp}
  	W^{ (c_{(\mathrm{\RoM{5}},\delta)}) }
  	 ~W^{ (c_{(\mathrm{\RoM{4}},\delta)}) }
  	    ~=~
  	 W^{ (c_{(\mathrm{\RoM{3}},\delta)}) }
  	 ~W^{ (c_{(\mathrm{\RoM{2}},\delta)}) }
  	 ~W^{ (c_{(\mathrm{\RoM{1}},\delta)}) }.
  	\end{equation}
  	In view of the length of the formulas
  	(\ref{EqPentEqFiveParTrDeltaPositive}) of the preceding
  	Lemma \ref{TPentEqFiveParTrDeltaPositive} we define
  	the following abbreviations where $\epsilon,\epsilon'$
  	are monomials in $\delta$ (in practice $\delta$ or 
  	$\delta^2$), $A,B$ are certain linear combinations
  	of the elements $A_{ij}=A_{ji}\in\mathcal{A}$ for 
  	$1\leqslant i<j\leqslant 4$, and $i$ is an element
  	of
  	 $\{\mathrm{\RoM{1}},\mathrm{\RoM{2}},\mathrm{\RoM{3}},
  	\mathrm{\RoM{4}},\mathrm{\RoM{5}}\}$:
  	\begin{equation}\label{EqPentEqPhiEEPrimeBAHDElta}
  	  \Phi_{\epsilon,
  	  \epsilon'}\left(A,B,H^{(i,\delta)}\right)~\coloneqq~
  	  \psi_{\epsilon'}(B,A)~H^{(i,\delta)}
  	   ~\psi_{\epsilon}(A,B)^{-1}.
  	\end{equation}
  	We recall the relevant commutation relations for
  	the elements $A_{ij}$ coming from the conditions
  	(\ref{eq:DK-definition-two}) and
  	(\ref{eq:DK-definition-three}):
  	\begin{eqnarray}
  	\left[A_{12},A_{34}\right] &   =0, ~~&   
  	  \label{EqPentEqComm12With34}\\
  	\left[A_{12},A_{13}+A_{23}\right] &  =0= &  
  	\left[A_{12},A_{12}+A_{13}+A_{23}\right], 
  	   \label{EqPentEqComm12With13Plus23}\\
  	\left[A_{23},A_{12}+A_{13}\right] &  =0= &  
  	\left[A_{23},A_{12}+A_{13}+A_{23}\right], 
  	\label{EqPentEqComm23With12Plus13}\\
  	\left[A_{23},A_{24}+A_{34}\right] &  =0= &  
  	\left[A_{23},A_{23}+A_{24}+A_{34}\right], 
  	  \label{EqPentEqComm23With24Plus34}\\
  	\left[A_{34},A_{23}+A_{24}\right] &  =0= &  
  	\left[A_{34},A_{23}+A_{24}+A_{34}\right] .
  	  \label{EqPentEqComm34With23Plus24}
  	\end{eqnarray}
  Moreover, recall that if $A\in\mathcal{A}$ commutes with $B_1,\ldots,B_N\in\mathcal{A}$ then the formal
  exponential $e^{\lambda \gamma A}$ ($\gamma\in\mathbb{C}$) commutes with
  any formal series whose  coefficients consist of noncommutative polynomials in 
   $B_1,\ldots,B_N\in\mathcal{A}$, hence in particular
   \begin{equation}
   \label{EqPentEqCommAWithBZeroThenCommExp}
    [A,B]=0~~~\mathrm{implies}~~
         e^{\lambda \gamma A}e^{\lambda \gamma' B}
         =e^{\lambda (\gamma A+\gamma' B)}
         =e^{\lambda \gamma' B}e^{\lambda \gamma A}.
   \end{equation}
   As in the proof of the Hexagon Equation
   (\ref{EqHexDefHexagonEqn}) we denote the conjugation
   $L_\delta H^{(i,\delta)}L_\delta^{-1}$
   of a harmless term $\delta\mapsto H^{(i,\delta)}$ by an at most logarithmically divergent term 
   $\delta\mapsto L_\delta$ (as elements in
   $\big(\mathrm{Fun}\big(]0,1/4],\mathbb{C}\big)\otimes \mathcal{A}\big)[[\lambda]]$) by
   $\tilde{H}^{(i,\delta)}$. Note also that
   \begin{equation}\label{EqPentEqConjOfPhiH}
      L_\delta~\Phi_{\epsilon, \epsilon'}\left(A,B,H^{(i,\delta)}\right)~
      L_\delta^{-1}
      ~=~\Phi_{\epsilon, \epsilon'}\left(L_\delta A 
       L_\delta^{-1},L_\delta B 
       L_\delta^{-1},\tilde{H}^{(i,\delta)}\right).
   \end{equation}\\
   	We compute the left hand side of 
   	(\ref{EqPentEqIdentityOfFiveParTransp}) and try
   	to `push' the `singular terms' of the form
   	$e^{\lambda \ln(\epsilon) A}$ (with $\epsilon=\delta$
   	or $\epsilon=\delta^2$) to the left and to the right:
   	here (\ref{EqPentEqCommAWithBZeroThenCommExp})
   	will be used:  
   	
   \begin{equation}
   \label{EqPentEqParTransIVAndV}	
   \begin{split}
   W_{10}^{ (c_{(\mathrm{\RoM{5}},\delta)}) }
   	~W_{10}^{ (c_{(\mathrm{\RoM{4}},\delta)}) }  
   	  =\  &e^{\lambda\ln(\delta)(A_{23}+A_{24})}
   ~\Phi_{\delta^2, \delta
   }\left(A_{12},A_{23}+A_{24},
   H^{(\mathrm{\RoM{5}},\delta)}\right)
   ~e^{-\lambda\ln(\delta^2)A_{12}}	 \\
   & e^{\lambda\ln(\delta^2)A_{34}}
      ~\Phi_{\delta,\delta^2}\left(A_{13}+A_{23},A_{34},
      ~H^{(\mathrm{\RoM{4}},\delta)}\right)
      ~e^{-\lambda\ln(\delta)(A_{13}+A_{23})} \\
      =\ & e^{\lambda\ln(\delta)(A_{23}+A_{24})}
     ~\Phi_{\delta^2, \delta}\left(A_{12},A_{23}+A_{24},
     H^{(\mathrm{\RoM{5}},\delta)}\right)
     ~e^{\lambda\ln(\delta^2)A_{34}}	  \\
     & e^{\lambda\ln(\delta^2)A_{34}}
      ~\Phi_{\delta,\delta^2}\left(A_{13}+A_{23},A_{34},
      ~H^{(\mathrm{\RoM{4}},\delta)}\right)
      ~e^{-\lambda\ln(\delta)(A_{13}+A_{23})} \\
      = \ & e^{\lambda\ln(\delta)(A_{23}+A_{24}+2A_{34})}
      ~\Phi_{\delta^2, \delta}\left(A_{12},A_{23}+A_{24},
      \tilde{H}^{(\mathrm{\RoM{5}},\delta)}\right)	   \\
     & \Phi_{\delta,\delta^2}\left(A_{13}+A_{23},A_{34},
      ~\tilde{H}^{(\mathrm{\RoM{4}},\delta)}\right)
      ~e^{-\lambda\ln(\delta)(2A_{12}+A_{13}+A_{23})} ,
   \end{split}
   \end{equation}
   	where the first equality follows by (\ref{EqPentEqFiveParTrDeltaPositive}), the second by (\ref{EqPentEqComm12With34}), and the third by (\ref{EqPentEqComm12With34}),(\ref{EqPentEqComm34With23Plus24}),(\ref{EqPentEqComm12With13Plus23}),
(\ref{EqPentEqConjOfPhiH}),(\ref{EqPentEqCommAWithBZeroThenCommExp}).
   	Next, we compute the right hand side of (\ref{EqPentEqIdentityOfFiveParTransp}) in a similar way: 
   	\begin{eqnarray}
   	\lefteqn{
   	W_{10}^{ (c_{(\mathrm{\RoM{3}},\delta)}) }~
   	W_{10}^{ (c_{(\mathrm{\RoM{2}},\delta)}) }~
   	W_{10}^{ (c_{(\mathrm{\RoM{1}},\delta)}) }
   	} \nonumber \\
   	&\stackrel{(\ref{EqPentEqFiveParTrDeltaPositive})}
   	{=}&
   	 e^{\lambda\ln(\delta)A_{34}}
   	 ~\Phi_{\delta,\delta}\left(A_{23},A_{34},
   	 ~H^{(\mathrm{\RoM{3}},\delta)}\right)
   	 ~e^{-\lambda\ln(\delta)A_{23}}
   	  \nonumber \\
   	 &  &
   	 e^{\lambda\ln(\delta)(A_{24}+A_{34})}
   	 ~\Phi_{\delta,\delta}\left(A_{12}+A_{13},A_{24}+A_{34},
   	   H^{(\mathrm{\RoM{2}},\delta)}\right)
   	    ~e^{-\lambda\ln(\delta)(A_{12}+A_{13})}
   	 \nonumber \\
   	 & &
   	 e^{\lambda\ln(\delta)A_{23}}
   	 ~\Phi_{\delta,\delta}\left(A_{12},A_{23},
   	 H^{(\mathrm{\RoM{1}},\delta)}\right)
   	 ~e^{-\lambda\ln(\delta)A_{12}}
   	    \nonumber \\
   	 &\stackrel{(\ref{EqPentEqComm23With24Plus34}),
   	 	(\ref{EqPentEqComm23With12Plus13})}
   	 {=}&
   	 e^{\lambda\ln(\delta)A_{34}}
   	 ~\Phi_{\delta,\delta}\left(A_{23},A_{34},
   	 ~H^{(\mathrm{\RoM{3}},\delta)}\right)
   	 ~e^{\lambda\ln(\delta)(A_{24}+A_{23}+A_{34})}
   	  \nonumber \\
   	  & &
   	  e^{-\lambda\ln(\delta)2A_{23}}
   	  ~\Phi_{\delta,\delta}\left(A_{12}+A_{13},A_{24}+A_{34},
   	  H^{(\mathrm{\RoM{2}},\delta)}\right)
   	  ~e^{\lambda\ln(\delta)2A_{23}}
   	  \nonumber \\
   	  & &
   	  e^{-\lambda\ln(\delta)(A_{12}+A_{13}+A_{23})}
   	  ~\Phi_{\delta,\delta}\left(A_{12},A_{23},
   	  H^{(\mathrm{\RoM{1}},\delta)}\right)
   	  ~e^{-\lambda\ln(\delta)A_{12}}
   	  \nonumber \\
   	 &\stackrel{(\ref{EqPentEqComm23With24Plus34}),
   	 	(\ref{EqPentEqComm34With23Plus24}),
   	 	(\ref{EqPentEqComm12With13Plus23}),
   	 	(\ref{EqPentEqComm23With12Plus13})
   	 	(\ref{EqPentEqConjOfPhiH})}{=} &
   	  e^{\lambda\ln(\delta)(A_{23}+A_{24}+2A_{34})}
   	  ~\Phi_{\delta,\delta}\left(A_{23},A_{34},
   	  ~\tilde{H}^{(\mathrm{\RoM{3}},\delta)}\right)
   	  \nonumber \\
   	 & & \Phi_{\delta,\delta}\left(A_{12}+A_{13},A_{24}+A_{34},
   	  \tilde{H}^{(\mathrm{\RoM{2}},\delta)}\right)
   	  \nonumber \\
   	 & & \Phi_{\delta,\delta}\left(A_{12},A_{23},
   	  \tilde{H}^{(\mathrm{\RoM{1}},\delta)}\right)
   	  ~e^{-\lambda\ln(\delta)(2A_{12}+A_{13}+A_{23})}.
   	  \label{EqPentEqParTransIAndIIAndIII}
   	\end{eqnarray}
   	A comparison of the preceding equations
   	(\ref{EqPentEqParTransIVAndV})
   	and (\ref{EqPentEqParTransIAndIIAndIII}) 
   	immediately shows that
   	the singular terms
   	 $e^{\lambda\ln(\delta)(A_{23}+A_{24}+2A_{34})}$
   	 and $e^{-\lambda\ln(\delta)(2A_{12}+A_{13}+A_{23})}$
   	 cancel out in (\ref{EqPentEqIdentityOfFiveParTransp}),
   	 leaving only products of terms of type
   	 (\ref{EqPentEqPhiEEPrimeBAHDElta}) which tend to
   	 the desired product of Drinfel'd associators
   	 yielding the Pentagon Equation
   	 (\ref{EqPentDefPentagonEquation}) in the limit
   	 $\delta\to 0$
   	  thanks to the limit rules (\ref{EqNormsLimLimitProdAreProdOfLimits}),
   	  the definition of the Drinfel'd associator
   	  (\ref{EqDrinfeldAssDefAss}), and the fact that harmless
   	  group terms tend to $1$ for $\delta\to 0$ (see
   	  statement $v.)$ of Proposition \ref{PFilNormsRBHProperties}). 
  \end{proof}

\appendix

\section{Some Proofs of Theorems in Section 1.}

\subsection{Proof of Proposition
	 \ref{PFormConnComplexPullBackEqualReal}}
	
	\begin{proof}
		We compute using (\ref{EqFormFlatRealFormCompRatConn}):
		\begin{eqnarray*}
			\left(\check{\Theta}^*\check{\Gamma}\right)(x',y')
			& = & \sum_{j=1}^{N'}\sum_{i=1}^{N}\Bigg(
			\check{\Gamma}^{[1]}_i\left(\Theta^{(1)}(x',y'),
			\Theta^{(2)}(x',y')\right)
			\dfrac{\partial\Theta_i^{(1)}}{\partial x'_j}(x',y')
			\mathsf{d}x'_j \\
			&   & \qquad \qquad +\check{\Gamma}^{[1]}_i
			\left(\Theta^{(1)}(x',y'),\Theta^{(2)}(x',y')\right)
			\dfrac{\partial\Theta_i^{(1)}}{\partial y'_j}(x',y')
			\mathsf{d}y'_j \\
			&   & \qquad \qquad +\check{\Gamma}^{[2]}_i
			\left(\Theta^{(1)}(x',y'),\Theta^{(2)}(x',y')\right)
			\dfrac{\partial\Theta_i^{(2)}}{\partial x'_j}(x',y')
			\mathsf{d}x'_j \\
			&   & \qquad \qquad +\check{\Gamma}^{[2]}_i
			\left(\Theta^{(1)}(x',y'),\Theta^{(2)}(x',y')\right)
			\dfrac{\partial\Theta_i^{(2)}}{\partial y'_j}(x',y')
			\mathsf{d}y'_j \Bigg),
		\end{eqnarray*}	
		hence with (\ref{EqFormFlatRealFormCompRatConn})
		and (\ref{EqFormConnComplexRealPhi}) we get

		\begin{eqnarray*}
			\left(\check{\Theta}^*\check{\Gamma}\right)(x',y')
			& 
			\stackrel{(\ref{EqFormConnComplexFunctionRealImPart})}
			{=}
			& \sum_{j=1}^{N'}\sum_{i=1}^{N}
			\Gamma_i\big(\Theta(z')\big)
			\left(\dfrac{\partial \check{\Theta_i}}{\partial x'_j}(x',y')
			\mathsf{d}x'_j
			+\dfrac{\partial \check{\Theta_i}}{\partial y'_j}(x',y')
			\mathsf{d}y'_j
			\right)\\
			& \stackrel{(\ref{EqFormFlatHolomorphicCondandDer})}{=} &
			\sum_{j=1}^{N'}\sum_{i=1}^{N}
			\big(\Gamma_i\circ\Theta\big)^\vee (x',y')
			\left(\dfrac{\partial \Theta_i}
			{\partial z'_j}\right)^\vee(x',y')
			\mathsf{d}z'_j
			= \left(\sum_{j=1}^{N'}\left(\Theta^*\Gamma\right)_j
			\mathsf{d}z'_j\right)^\vee(x',y'),
		\end{eqnarray*}
		which proves the Proposition.
	\end{proof}
	
\subsection{Proof of Proposition
	\ref{PFlatConnPullBackOfFlatIsFlat}}

\begin{proof}
	We get
	\begin{eqnarray*}
		\dfrac{\partial \Gamma'_u}{\partial x'_v} - \dfrac{\partial \Gamma'_v}{\partial x'_u} + \lambda \Big(\Gamma'_u \Gamma'_v - \Gamma'_v \Gamma'_u\Big)
		& = & 
		\sum_{i=1}^N \dfrac{\partial \left((\Gamma_i\circ \Theta)
			\dfrac{\partial \Theta_i}{\partial x'_u}\right) }
		{\partial x'_v} 
		- 
		\sum_{j=1}^N \dfrac{\partial \left((\Gamma_j\circ \Theta)
			\dfrac{\partial \Theta_j}{\partial x'_v}\right) }
		{\partial x'_u}  \\ 
		& &  + \lambda\sum_{i,j=1}^N 
		\Big((\Gamma_i\circ\Theta)(\Gamma_j\circ\Theta)
		-(\Gamma_j\circ\Theta)(\Gamma_i\circ\Theta)\Big)
		\dfrac{\partial \Theta_i}{\partial x'_u} 
		\dfrac{\partial \Theta_j}{\partial x'_v}\\
		& = & \sum_{i=1}^N \left(\dfrac{\partial^2 \Theta_i}
		{\partial x'_v\partial x'_u}
		-
		\dfrac{\partial^2 \Theta_i}
		{\partial x'_u\partial x'_v}\right)
		(\Gamma_i\circ \Theta) \\
		& & +\sum_{i,j=1}^N\left(\left(
		\dfrac{\partial \Gamma_i}{\partial x_j} - \dfrac{\partial \Gamma_j}{\partial x_i} + \lambda \Big(\Gamma_i \Gamma_j - \Gamma_j \Gamma_i\Big)\right)\circ \Phi\right)
		\dfrac{\partial \Theta_i}{\partial x'_u} 
		\dfrac{\partial \Theta_j}{\partial x'_v}\\
		& = & 0+0=0,
	\end{eqnarray*}
	thanks to the chain rule, Schwartz's rule and to the
	flatness of $\Gamma$ whence $\Gamma'$ is flat.
\end{proof}

 \subsection{Proof of Theorem \ref{TFormFlatPathIndep}}

\begin{proof}
	Consider the smooth map 
	$\Gamma^{(F)}\in \Big(\mathcal{C}^\infty(\mathcal{O},\mathbb{C})\otimes
	\mathcal{A}\Big)[[\lambda]]$ given by
	\begin{equation}
	\label{eq:gammact}
	\Gamma^{(F)}(s,t) \coloneqq \sum_{i=1}^N \Gamma_i \big(F(s,t)\big) \dfrac{\partial F_i}{\partial s}(s,t).
	\end{equation}
	Moreover, for each $(s,t)\in\mathcal{O}$ let $W^{(F)}_{\cdot a}(s,t)$ denote the parallel transport from $p$ to $F(s,t)$ along the smooth path
	$s\to F(s,t)$, i.e.~$W^{(F)}_{\cdot a}$ (we suppress the symbol $\Gamma$ attached to $W$ in this proof) satisfies the differential equation
	\begin{equation}
	\label{eq:differential-equation}
	\dfrac{\partial W_{\cdot a}^{(F)} }{\partial s} = \lambda\Gamma^{(F)} W_{\cdot a}^{(F)}.
	\end{equation}
	Since $\Gamma$ and $F$ are smooth, it follows that $(s,t) \mapsto W_{\cdot a}^{(F)}$ is smooth, and so it is an element of 
	$ \big( C^\infty(\mathcal{O}, \mathbb{C}) \otimes A \big) [[\lambda]]$: indeed since $W_{\cdot a}^{(F)}$ is made out of iterated integrals (in the $s$-direction) the claim follows from the usual rule of differentiation of integrals
	depending on a parameter:
	\[ 
	\dfrac{\partial}{\partial t} \int_a^s f(t,s') \mathsf{d} s' 
	= 
	\int_a^s \dfrac{\partial f}{\partial t} (t,s') \mathsf{d} s'.
	\]
	Differentiating equation \eqref{eq:differential-equation} with respect to $t$, and using equation \eqref{eq:gammact}, we get
	--upon using the Schwartz rule that all partial derivatives commute--
	\begin{equation*}
	\footnotesize
	\begin{split}
	\dfrac{\partial^2 W_{\cdot a}^{(F)}}
	{\partial t \partial s}
	&=\lambda \dfrac{\partial}{\partial t} 
	\bigg( \sum_{i=1}^N \big(\Gamma_i\circ F\big) \dfrac{\partial F_i}{\partial s}
	W_{\cdot a}^{(F)}\bigg) \\
	&= \lambda\sum_{i=1}^N \sum_{j=1}^N 
	\left(\dfrac{\partial \Gamma_i}{\partial x_j} 
	\circ F \right)
	\dfrac{\partial F_j}{\partial t} \dfrac{\partial F_i}{\partial s} 
	W_{\cdot a}^{(F)} 
	+ \lambda\sum_{i=1}^N \left(\Gamma_i\circ F\right)
	\dfrac{\partial^2 F_i}{\partial t \partial s}
	W_{\cdot a}^{(F)}
	+ 
	\lambda\sum_{i=1}^N \left(\Gamma_i\circ F\right) \dfrac{\partial F_i}{\partial s} 
	\dfrac{\partial W_{\cdot a}^{(F)}}{\partial t} \\
	&
	\stackrel{(\ref{EqFlatDefFlatConnection})}{=} 
	\underline{
		\lambda\sum_{i=1}^N \sum_{j=1}^N 
		\left(\dfrac{\partial \Gamma_j}{\partial x_i}
		\circ F\right)
		\dfrac{\partial F_j}{\partial t} 
		\dfrac{\partial F_i}{\partial s} 
		W_{\cdot a}^{(F)}
	} \\
	& ~~~~~~	  -\lambda^2 
	\sum_{i=1}^N \sum_{j=1}^N 
	\left(\Gamma_i\circ F\right)
	\left(\Gamma_j\circ F\right)
	\dfrac{\partial F_j}{\partial t} 
	\dfrac{\partial F_i}{\partial s} 
	W_{\cdot a}^{(F)}
	+\underline{
		\lambda^2
		\sum_{i=1}^N \sum_{j=1}^N 
		\left(\Gamma_j\circ F\right)
		\left(\Gamma_i\circ F\right)
		\dfrac{\partial F_j}{\partial t} 
		\dfrac{\partial F_i}{\partial s} 
		W_{\cdot a}^{(F)}
	}
	\\
	& ~~~~~~ 	
	+\underline{\lambda 
		\sum_{i=1}^N \left(\Gamma_i\circ F\right)
		\dfrac{\partial^2 F_i}{\partial t \partial s}
		W_{\cdot a}^{(F)}
	}
	+ 
	\lambda\sum_{i=1}^N \left(\Gamma_i\circ F\right) \dfrac{\partial F_i}{\partial s} 
	\dfrac{\partial W_{\cdot a}^{(F)}}{\partial t}
	\\
	&
	\stackrel{(\ref{eq:differential-equation})}{=}	
	\underline{
		\lambda\dfrac{\partial}{\partial s} \left( 
		\sum_{i=1}^N \left(\Gamma_i\circ F\right) \dfrac{\partial F_i}{\partial t}
		W_{\cdot a}^{(F)}
		\right)
	} 
	+ \lambda\Gamma^{(F)}
	\left( \dfrac{\partial 
		W^{(F)}_{\cdot a}}{\partial t} 
	- \lambda \sum_{j=1}^N 
	\left(\Gamma_j \circ F\right) 
	\dfrac{\partial F_j}{\partial t} 
	W_{\cdot a}^{(F)}\right).
	\end{split}
	\end{equation*}
	Hence, setting 
	\[ 
	H  \coloneqq 
	\dfrac{\partial 
		W^{(F)}_{\cdot a}}{\partial t} 
	- \lambda \sum_{j=1}^N 
	\left(\Gamma_j \circ F\right) 
	\dfrac{\partial F_j}{\partial t} 
	W_{\cdot a}^{(F)}~
	\in ~\Big(\mathcal{C}^\infty(\mathcal{O},\mathbb{C})\otimes
	\mathcal{A}\Big)[[\lambda]],
	\]
	%and by recalling that 
	%	\[ \dfrac{\partial^2 \omega_{s a}^{(c_t)}}{\partial %t \partial s} (s) = \dfrac{\partial^2 \omega_{s %a}^{(c_t)}}{\partial s \partial t} (s)\]
	the preceding equation gives us the formal linear ODE
	\begin{equation} \label{EqFormFlatODEForH}
	\dfrac{\partial H }{\partial s} = 	\lambda \Gamma^{(F)}  H
	\end{equation}
	with initial condition at $s=a$ for each 
	$t\in \mathcal{O}''$ with $\mathcal{O}''=\{t\in\mathbb{R}~|~(a,t)\in \mathcal{O}\}$, note that $[0,1]\subset 
	\mathcal{O}''$:
	\begin{equation*}
	\begin{split}
	H(a,t) &= \dfrac{\partial W_{a a}^{(F)}(t) }
	{\partial t} 
	- \lambda \sum_{j=1}^N \Gamma_j\big(F(a,t)\big)
	\dfrac{\partial F_j}{\partial t} (a,t)  W_{a a}^{(F)}(t) 
	\stackrel{(\ref{EqFormFlatBoundaryCondHomotF})}{=}	 
	\dfrac{\partial 1 }{\partial t}(t)   
	- \lambda \sum_{j=1}^N \Gamma_j(p) 
	\dfrac{\partial p }{\partial t}(t)  = 0.
	\end{split}
	\end{equation*}
	Hence the formal linear ODE (\ref{EqFormFlatODEForH}) has the unique solution
	\[ 
	H(s,t) = 0 \qquad \forall (s,t) \in 
	\mathcal{O}\supset [a,b] \times [0,1]. 
	\]
	It follows by the definition of $H$ that there is the following formal linear ODE with respect to $t$: 
	\[ 
	\dfrac{W_{\cdot a}^{(F)}}{\partial t} (s,t)
	= \lambda \sum_{j=1}^N  \Gamma_j\big(F(s,t)\big) \dfrac{\partial F_j}{\partial t}(s,t) 
	W_{\cdot a}^{(F)}(s,t),
	\] 
	and since there is no derivative with respect to $s$ in this equation, we can set $s=b$ and get 
	\[ 
	\dfrac{W_{b a}^{(F)}}{\partial t}(t) 
	= 
	\lambda \sum_{j=1}^N  \Gamma_j\big(F(b,t)\big) \dfrac{\partial F_j}{\partial t}(b,t) 
	W_{b a}^{(F)}(t) \stackrel{(\ref{EqFormFlatBoundaryCondHomotF})}{=}
	\lambda \sum_{j=1}^N  \Gamma_j\big(q\big) \dfrac{\partial q}{\partial t}(t) 
	W_{b a}^{(F)}(t)
	= 0.
	\] 
	It follows that the parallel transport 
	$W_{b a }^{(F)}$ does not depend on $t$, hence in particular 
	\[ 
	W_{b a}^{(c_0)} =W_{ba}^{(F)}(t=0)= 
	W_{ba}^{(F)}(t=1) = W_{b a }^{(c_1)}.
	\]
\end{proof}

\subsection{Proof of 
	   Corollary \ref{CFormFlatContractibleLoops}}
We first need 
the
following well-known Smoothing Lemma which is a technical tool allowing for smoothing reparametrizations: it will only be needed in the proof of Corollary \ref{CFormFlatContractibleLoops}:
\begin{lemma}[\textbf{Smoothing Lemma}] 
	\label{LFlatFormConnSmoothing}
	Let $a<b$ be real numbers,
	$\{a,b\}\subset D\subset [a,b]$ a finite subset
	whose elements are given by $a_0=a<a_1<\cdots<a_m<
	a_{m+1}=b$, and
	let $c:[a,b]\to U\subset \mathbb{R}^N$ be a continuous piecewise smooth path.
	Then there exists a smooth map 
	$\theta:\mathbb{R}\to [a,b]$ such that its restriction to $[a,b]$ is strictly monotonous
	and surjective (hence has a continuous inverse on $[a,b]$),
	it induces the identity map on $D$, and whose higher derivatives all vanish at the points of
	$D$.
	Moreover, the composition $c\circ\theta:\mathbb{R}\to [a,b]$ (in the sense of (\ref{EqPiecewiseDefComposition})) is an everywhere well-defined smooth
       map all of whose higher derivatives (for $r\geqslant 1$) vanish at all points of $D$.
       For all $s\leqslant a$ the map $c\circ\theta$ takes the constant value $c(a)$, and for
       all $s\geqslant b$ it takes the constant value $c(b)$. In particular, for every positive
     $\epsilon$ the restriction of
      $c\circ\theta$ to $[a-\epsilon,b+\epsilon]$ yields a reparametrized path which is
     smooth.
\end{lemma}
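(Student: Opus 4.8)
The plan is to build $\theta$ by gluing rescaled copies of one fixed ``flat'' monotone transition function, extend it by constants outside $[a,b]$, and then analyse $c\circ\theta$ at the finitely many gluing points via the chain rule for higher derivatives.

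First I would set up the standard flat bump. Let $g\colon\mathbb{R}\to\mathbb{R}$ be $g(t)=e^{-1/t}$ for $t>0$ and $g(t)=0$ for $t\leqslant 0$; it is smooth with $g^{(r)}(0)=0$ for all $r$. Put $h(t):=g(t)\,g(1-t)$, which is smooth, non-negative, positive exactly on $]0,1[$, and (by the Leibniz rule and the chain rule together with the flatness of $g$ at $0$) has all derivatives vanishing at $0$ and at $1$. Define
\[
 \beta(t):=\frac{\int_0^t h(u)\,\mathsf{d}u}{\int_0^1 h(u)\,\mathsf{d}u}.
\]
Then $\beta\colon\mathbb{R}\to[0,1]$ is smooth, $\beta\equiv 0$ on $]-\infty,0]$, $\beta\equiv 1$ on $[1,\infty[$, $\beta'=h/\!\int_0^1 h\geqslant 0$ with $\beta'>0$ on $]0,1[$ (so $\beta|_{[0,1]}$ is strictly increasing and surjective onto $[0,1]$), and $\beta^{(r)}(0)=\beta^{(r)}(1)=0$ for every $r\geqslant 1$ since $\beta^{(r)}=(\beta')^{(r-1)}$ and $h$ is flat at $0,1$. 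Next I would define $\theta$ on $[a_i,a_{i+1}]$, for $0\leqslant i\leqslant m$, by $\theta(s):=a_i+(a_{i+1}-a_i)\beta\!\left(\frac{s-a_i}{a_{i+1}-a_i}\right)$, and extend by $\theta(s):=a$ for $s\leqslant a$ and $\theta(s):=b$ for $s\geqslant b$. Because at every interior $a_i$ the two one-sided pieces agree in value ($=a_i$) and all their one-sided derivatives of order $\geqslant 1$ vanish, and likewise at the ends $a,b$ against the constant pieces, $\theta\colon\mathbb{R}\to[a,b]$ is genuinely $\mathcal{C}^\infty$; by construction its restriction to $[a,b]$ is strictly increasing and surjective (hence a homeomorphism onto $[a,b]$), it fixes each point of $D$, and all its derivatives of order $\geqslant 1$ vanish on $D$. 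This settles all the assertions about $\theta$ itself.

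For $c\circ\theta$ I would first note that, although $\theta^{-1}(D)$ contains $]-\infty,a]$ and $[b,\infty[$ and so is infinite, the composition is nevertheless well defined everywhere: on the outermost subintervals $c$ restricts to a map that extends smoothly past $a$, resp.\ past $b$ (each $c_i$ is $\mathcal{C}^\infty$ on the closed subinterval $[a_i,a_{i+1}]$), so one defines $c\circ\theta$ to be the constant $c(a)$ on $]-\infty,a]$, to be $c_i\circ\theta|_{[a_i,a_{i+1}]}$ on $[a_i,a_{i+1}]$, and to be the constant $c(b)$ on $[b,\infty[$, and these definitions agree on the overlaps since $\theta$ fixes each $a_i$ and $c$ is continuous. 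On each subinterval $c\circ\theta$ is a composition of honestly smooth maps, hence smooth; the only points requiring attention are the finitely many $a=a_0<a_1<\dots<a_{m+1}=b$. At such a point the two one-sided values coincide (continuity of $c$), and by Fa\`a di Bruno every one-sided derivative $(c_i\circ\theta)^{(r)}(a_i)$ with $r\geqslant 1$ is a finite sum of terms each carrying at least one factor $\theta^{(k)}(a_i)$ with $k\geqslant 1$, all of which are $0$ (the same being trivially true for the constant pieces). Hence all one-sided derivatives of $c\circ\theta$ of order $\geqslant 1$ vanish at every point of $D$, so $c\circ\theta$ is $\mathcal{C}^\infty$ on all of $\mathbb{R}$ with these vanishing derivatives, it equals $c(a)$ for $s\leqslant a$ and $c(b)$ for $s\geqslant b$, and it takes values in $U$ because $c$ does. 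Consequently, for any $\epsilon>0$ the restriction of $c\circ\theta$ to $[a-\epsilon,b+\epsilon]$ is a genuinely smooth path, and being $c$ precomposed with the reparametrization $\theta$ it has the same endpoints (and, via Theorem~\ref{TFormConnOperationsForParTransp}, the same parallel transport) as $c$.

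The routine part is the construction of $\beta$ and its rescaled gluing. The main obstacle I anticipate is the bookkeeping at the gluing points: one must make precise that the composition $c\circ\theta$ is legitimate even though $c$ is only piecewise smooth and $\theta$ is far from injective near $a$ and $b$, and then run the Fa\`a di Bruno count carefully to see that the flatness of $\theta$ along $D$ forces all positive-order derivatives of $c\circ\theta$ to vanish there, so that the potential singularities of the piecewise path are genuinely smoothed away.
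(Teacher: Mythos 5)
Your proposal is correct and follows essentially the same route as the paper: you both build $\theta$ from the standard flat bump $e^{-1/t}$ so that it fixes $D$, is strictly increasing on $[a,b]$, and has all positive-order derivatives vanishing on $D$, and then invoke Fa\`a di Bruno / the iterated chain rule to conclude that $c\circ\theta$ is globally smooth with flat derivatives at $D$. The only cosmetic difference is packaging — the paper defines $\theta$ as the primitive of a single globally-defined finite sum of bump functions (so smoothness of $\theta$ is automatic), whereas you glue rescaled copies of a fixed transition function $\beta$ and verify smoothness at the seams — and you are slightly more explicit than the paper about why $c\circ\theta$ is still well defined despite $\theta^{-1}(D)$ being infinite, which is a real (if minor) gap the paper leaves implicit.
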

\begin{proof}
	Let $\rho:\mathbb{R}\to \mathbb{R}$ be the following function,
	well-known from analysis and differential geometry:
	\begin{equation*}
	\rho(s)\coloneqq \left\{\begin{array}{cl}
	e^{-{1/s}} & \mathrm{if}~s>0, \\
	0       & \mathrm{if}~s\leqslant 0.
	\end{array}\right.
	\end{equation*}
	It is well-known and not hard to see that $\rho$
	is $\mathcal{C}^\infty$, has all of its higher derivatives equal to zero at $0$ and only strictly positive values for $s>0$. Define the function
	$\vartheta:\mathbb{R}\to \mathbb{R}$ by
	\begin{equation*}
	\vartheta(s)\coloneqq
	\mathlarger{\sum}_{i=0}^m \big(a_{i+1}-a_i\big)
	\frac{\rho(s-a_i)\rho(a_{i+1}-s)}
	{\int_{a_i}^{a_{i+1}}\rho(s'-a_i)\rho(a_{i+1}-s')
		\mathsf{d}s'}
	\end{equation*}
	and $\theta:\mathbb{R}\to \mathbb{R}$ by the primitive of $\vartheta$:
	\begin{equation*}
	\theta(s)\coloneqq a+\int_a^s\vartheta(s')\mathsf{d}s'.
	\end{equation*}
	Then all the properties of $\theta$ follow from the fact that $a_{i+1}-a_{i}>0$ and that the smooth
	function $s\mapsto \rho(s-a_i)\rho(a_{i+1}-s)$
	is strictly positive on $]a_i,a_{i+1}[$ and
	zero outside $]a_i,a_{i+1}[$. Moreover, it is clear
	that the composition $c\circ \theta$ is smooth on
	$[a,b]\setminus D$ as a composition of smooth maps.
	By the iterated chain rule it follows that all
	the higher derivatives of $c\circ \theta$ tend to zero at the points of $D$ since all the higher derivatives of
	$\theta$ go to zero at these points whereas all the higher left-side and right-side derivatives of $c$
	remain bounded.
\end{proof} 

\begin{proof} (of 
	Corollary \ref{CFormFlatContractibleLoops}):\\
	Let $c:[0,1]\to U'\subset U$ be the continuous piecewise smooth loop with $c(0)=c(1)=p$ defined
	by the composition $(c_2\circ \iota)*c_1$
	where $\iota:[0,1]\to[0,1]$ is the interval
	inversion $\iota(s)=1-s$. Furthermore,
	let $d:[0,1]\to U'$ be the affine path joining
	$\varpi$ with $p$, i.e.~$d(s)=(1-s)\varpi+sp$. Let
	$\check{c}:[0,1]\to U'$ be the piecewise smooth path
	$\check{c}\coloneqq \big(d\circ \iota\big)*(c*d)$
	obtained by composition of piecewise smooth paths.  Clearly, $\check{c}$ is a continuous piecewise
	smooth loop based at $\varpi$.
	Choose a smooth reparametrization $\theta$ of the
	path $\check{c}$ in the sense of the preceding Lemma 
	\ref{LFlatFormConnSmoothing}. Recall that
	$\theta$ is a smooth map $\mathbb{R}\to [0,1]$
	with $\theta(s)=0$ for all $s\leqslant 0$ and
	$\theta(s)=1$ for all $s\geqslant b$. Thanks to
	(\ref{EqFormConnWReparamInv}) and to the fact that
	$\theta(0)=0$, $\theta(1)=1$ we have the following equality of parallel transports
	\begin{equation}\label{EqFlatConnConjugationOfLoop}
	{}^\Gamma W_{10}^{(\check{c}\circ\theta)}
	~=~{}^\Gamma W_{10}^{(\check{c})}
	~=~{}^\Gamma W_{10}^{((d\circ \iota)*(c*d))}
	~\stackrel{(\ref{EqFormConnComposOfPathsParTransp})}{=}~
	{ {}^\Gamma W_{10}^{(d)}}^{-1}
	~{}^\Gamma W_{10}^{(c)}
	~{}^\Gamma W_{10}^{(d)}.
	\end{equation}
	Next, the map $\tilde{F}:\mathbb{R}^2\to \mathbb{R}^N$
	defined by
	\begin{equation*}
	\tilde{F}(s,t)= (1-t)\check{c}\big(\theta(s)\big)+tq    	
	\end{equation*}
	is clearly smooth, hence in particular continuous, whence the inverse image $\mathcal{O}
	:= \tilde{F}^{-1}(U')$ is an open subset of 
	$\mathbb{R}^2$ which contains the rectangle
	$[0,1]\times [0,1]$ thanks to the hypothesis that
	$U'$ is star-shaped around $\varpi$ and that all the points of the loop $c$ and hence of $\check{c}$ are in $U'$. By compactness of
	$[0,1]$ there is a strictly positive real number $\epsilon$ such that
	the open rectangle $]-\epsilon,1+\epsilon[ ~\times~ 
	]-\epsilon,1+\epsilon[$ is still contained in
	$\mathcal{O}$ thanks to the Heine-Borel Theorem.  If you think that this is no longer undergraduate analysis there is the following first year argument: if for each non-negative integer $n$ there was $t_n\in [0,1]$ such that $\big(-\frac{1}{n+1},t_n\big)\notin \mathcal{O}$ there would be subsequence
	$\left(t_{n_k}\right)_{k\in\mathbb{N}}$ converging to $\tau\in[0,1]$ by the Bolzano-Weierstrass Theorem.
	However, the limit point $(0,\tau)$ in $\mathbb{R}^2$ is contained in the open subset $\mathcal{O}$, hence
	there is a strictly positive real number $\delta$ with
	$(0,\tau)\in [0,0-\delta[~\times~]\tau-\delta,\tau+\delta[~\subset
	\mathcal{O}$. But then nearby elements of the subsequence $\big(-\frac{1}{n_k+1},t_{n_k}\big)$ would also be in $\mathcal{O}$
	contrary to the hypothesis. By a reasonable iteration of this argument the statement is proved. Let $F:\mathcal{O}\to U'\subset \mathcal{U}\subset \mathbb{R}^N$ denote the restriction of $\tilde{F}$ to $\mathcal{O}$. Then
	$F$ clearly satisfies all the hypotheses
	of Theorem \ref{TFormFlatPathIndep}: for all
	$s\in ~]-\epsilon,1+\epsilon[$ we have 
	$F(s,0)=\check{c}\big(\theta(s)\big)=:c_{(0)}(s)$ and
	$F(s,1)=\varpi =:c_{(1)}(s)$ (the constant loop at $\varpi$), and
	of course for all $t\in~ ]-\epsilon,1+\epsilon[$ we get $F(0,t)=\varpi=F(1,t)$.\\
	By Theorem \ref{TFormFlatPathIndep} we get
	\[
	{}^\Gamma W_{10}^{(\check{c}\circ \theta)}
	={}^\Gamma W_{10}^{(c_{(1)})}=1
	\]
	since $c_{(1)}$ is the constant loop whence
	$\Gamma^{(c_{(1)})}=0$. Thanks to equation
	(\ref{EqFlatConnConjugationOfLoop}) we get
	\[
	1={}^\Gamma W_{10}^{(c)} = 
	\left({}^\Gamma W_{10}^{(c_2)}\right)^{-1}
	{}^\Gamma W_{10}^{(c_1)}
	\]
	which proves the statement. The case of a 
	continuous piecewise smooth loop
	$c_3$ 
	based at $p$ is a particular case of the preceding
	statement upon choosing
	the constant loop $c_4$ at $p$ as a second path.
\end{proof}

\vskip24pt

{\bigskip{\footnotesize%
 (M.~Bordemann) \textsc{ D\'{e}partement de Math\'{e}matiques, Laboratoire IRIMAS,
   Universit\'{e} de Haute Alsace,
    18, rue des Fr\`{e}res Lumi\`{e}re,
    68093 Mulhouse, France.} \\ 
  \textit{E-mail address:}  \texttt{Martin.Bordemann@uha.fr}, \par
  \addvspace{\medskipamount}

  (A.~Rivezzi) \textsc{Dipartimento di Matematica e Applicazioni, Universit\`{a} di Milano-Bicocca,
        Via R.Cozzi 55, 20125 Milano, Italy.}

 Current address: \textsc{ D\'{e}partement de Math\'{e}matiques, Laboratoire IRIMAS,
   Universit\'{e} de Haute Alsace,
    18, rue des Fr\`{e}res Lumi\`{e}re,
    68093 Mulhouse, France.}\\
  \textit{E-mail address:} \texttt{a.rivezzi@campus.unimib.it}, \par
  
   \addvspace{\medskipamount}
   (Th.~Weigel) \textsc{Dipartimento di Matematica e Applicazioni, Universit\`{a} di Milano-Bicocca,
        Via R.Cozzi 55, 20125 Milano, Italy}
        \textit{E-mail address:} \texttt{thomas.weigel@unimib.it}.
   
}}

\end{document}